\tikzset{negated/.style={
        decoration={markings,
            mark= at position 0.5 with {
                \node[transform shape] (tempnode) {\scalebox{2}{$\times$}};
            }
        },
        postaction={decorate}
    }
}
\numberwithin{equation}{section}
\theoremstyle{plain}
\newtheorem{thm}[equation]{Theorem}
\newtheorem*{thm*}{Theorem}
\newtheorem{thmx}{Theorem}
\crefname{thmx}{Theorem}{Theorems}
\newtheorem{prop}[equation]{Proposition}
\crefname{prop}{Proposition}{Propositions}
\crefname{question}{Question}{Questions}
\newtheorem{cor}[equation]{Corollary}       
\newtheorem{lem}[equation]{Lemma}
\theoremstyle{definition} 
\newtheorem{defn}[equation]{Definition} 
\newtheorem{ex}[equation]{Example}
\newtheorem*{ex*}{Example}
\crefname{ex}{Example}{Examples}
\newtheorem{exs}[equation]{Examples}
\newtheorem{rem}[equation]{Remark}   
\newtheorem{nota}[equation]{Notation}
\newtheorem{chunk}[equation]{}
\newcommand{\Z}{\mathbb{Z}}
\newcommand{\Q}{\mathbb{Q}}
\newcommand{\Hom}{\mathrm{Hom}}
\newcommand{\iHom}{\underline{\mathrm{Hom}}}
\newcommand{\msf}[1]{\mathsf{#1}}
\newcommand{\mc}[1]{\mathcal{#1}}
\newcommand{\mrm}[1]{\mathrm{#1}}
\newcommand{\mbb}[1]{\mathbb{#1}}
\newcommand{\scr}[1]{\mathscr{#1}}
\newcommand{\p}{\mathfrak{p}}
\newcommand{\m}{\mathfrak{m}}
\newcommand{\B}{\mathscr{B}}
\newcommand{\C}{\mathscr{C}}
\newcommand{\D}{\mathscr{D}}
\newcommand{\K}{\mathcal{K}}
\newcommand{\W}{\mathcal{W}}
\newcommand{\1}{\mathds{1}}
\newcommand{\res}{\mathrm{res}}
\newcommand{\Sp}{\mathrm{Sp}}
\newcommand{\Mod}[1]{\mathrm{Mod}_{#1}}
\newcommand{\CAlg}[1]{\mathrm{CAlg}(#1)}
\newcommand{\Loc}{\operatorname{Loc}}
\newcommand{\thick}{\operatorname{thick}}
\newcommand{\Loctensor}[1]{\operatorname{Loc}_\otimes({#1})}
\newcommand{\thicktensor}[1]{\operatorname{thick}_\otimes({#1})}
\renewcommand{\hat}{\widehat}
\renewcommand{\hat}[1]{\widehat{#1}}
\renewcommand{\bar}{\overline}
\newcommand{\wt}{\widetilde}
\newcommand{\op}{\mrm{op}}
\renewcommand{\phi}{\varphi}
\newcommand{\Pic}{\mathrm{Pic}}
\newcommand{\ev}{\mrm{ev}}
\newcommand{\Gammaf}{\Gamma_{\!f}}
\newcommand{\Lambdaf}{\Lambda_{\!f}}
\newcommand{\Ef}{\mathcal{E}_{\!f}}
\newcommand{\DCh}{\msf{D}_\mrm{Ch}}
\newcommand{\DDG}{\msf{D}_\mrm{DG}}
\newcommand{\EffLift}{\msf{EffLift}}
\title{Duality in tensor-triangular geometry via proxy-smallness}
\begin{document}
    \author{Thomas Peirce}
    \address[Peirce]{Mathematics Institute, University of Warwick, Zeeman Building, Coventry, CV4 7AL, UK}
    \email{Thomas.Peirce@warwick.ac.uk}
	\author{Jordan Williamson}
	\address[Williamson]{Department of Algebra, Faculty of Mathematics and Physics, Charles University in Prague, Sokolovsk\'{a} 83, 186 75 Praha, Czech Republic}
	\email{williamson@karlin.mff.cuni.cz}
    \begin{abstract}
    We make a systematic study of duality phenomena in tensor-triangular geometry, generalising and complementing previous results of Balmer--Dell'Ambrogio--Sanders and Dwyer--Greenlees--Iyengar. A key feature of our approach is the use of proxy-smallness to remove assumptions on functors preserving compact objects, and to this end we introduce proxy-small geometric functors and establish their key properties. Given such a functor, we classify the rigid objects in its associated torsion category, giving a new perspective on results of Benson--Iyengar--Krause--Pevtsova. As a consequence, we show that any proxy-small geometric functor satisfies Grothendieck duality on a canonical subcategory of objects, irrespective of whether its right adjoint preserves compact objects. We use this as a tool to classify Matlis dualising objects and to provide a suitable generalisation of the Gorenstein ring spectra of Dwyer--Greenlees--Iyengar in tensor-triangular geometry. We illustrate the framework developed with various examples and applications, showing that it captures Matlis duality and Gorenstein duality in commutative algebra, duality phenomena in chromatic and equivariant stable homotopy theory, and Watanabe's theorem in polynomial invariant theory.
\end{abstract}
    \subjclass[2020]{18F99, 18G80, 55U35, 55P60, 13D45}
	\maketitle
    
    \setcounter{tocdepth}{1}
\tableofcontents

\newpage
\section{Introduction}
Tensor-triangulated (`tt') categories arise across mathematics: for instance in algebra, geometry, topology, representation theory, and non-commutative geometry~\cite{TTGICM}. The machinery of tensor-triangular geometry allows one to give unified systematic treatments of various phenomena, for instance the classification of thick tensor ideals, or more pertinently for this paper, duality theorems.
The goal of this paper is to develop a framework which is broad enough to capture various duality results in tensor-triangular geometry, building on work of Balmer--Dell'Ambrogio--Sanders~\cite{BDS} and Dwyer--Greenlees--Iyengar~\cite{DGI}, and to demonstrate their applicability in various concrete examples. For example, we give a new conceptual perspective on Watanabe's theorem from polynomial invariant theory, realising it as the shadow of a more general equivariant duality theorem.  

\subsection*{The context}
We work in the setting of rigidly-compactly generated tt-categories (see \cref{sec:ttbackground} for details). A geometric functor $f^*\colon \C \to \D$ between rigidly-compactly generated tt-categories is a coproduct-preserving, strong symmetric
monoidal, triangulated functor. Brown representability ensures that such a geometric functor fits into an adjoint triple
\[\begin{tikzcd}[column sep=2cm]
        \C \ar[r, "f^*", yshift=2.5mm] \ar[r, "f^{!}"', yshift=-2.5mm] & \D. \ar[l, "f_*" description]
    \end{tikzcd}\]
with left adjoints denoted on top. We emphasise that we use geometric notation: for example, one may take $f^*$ to be the (derived) extension of scalars functor along a map of commutative rings, or the restriction functor from $G$-spectra/$G$-representations to $H$-spectra/$H$-representations for $H$ a subgroup of $G$. 

To explain the main goals and results of this paper, we recall the following results of Balmer--Dell'Ambrogio--Sanders~\cite{BDS} where we write $\omega_f = f^!(\1_\C)$. Given a geometric functor $f^*\colon \C \to \D$ between rigidly-compactly generated tt-categories, they prove that the following are equivalent:
 \begin{enumerate}[label=(\roman*)]
        \item $f_*$ preserves compacts;
        \item Grothendieck duality holds for all $X \in \C$, in that there is a natural isomorphism \[\omega_f \otimes f^*X \simeq f^!X;\]
        \item there is an adjoint quintuple $f_! \dashv f^* \dashv f_* \dashv f^! \dashv f_\#$.
    \end{enumerate}
    If these equivalent conditions hold, then we say that $f^*$ has \emph{Grothendieck--Neeman duality}.
    Moreover, if $f^*$ has Grothendieck--Neeman duality, then they prove the equivalence of the following: 
    \begin{enumerate}[label=(\roman*)]
        \item $\omega_f$ is invertible;
        \item the Wirthm\"uller isomorphism holds for all $Y \in \D$ in that there is an invertible object $\omega \in \D$ together with a natural isomorphism \[f_!(\omega^{-1} \otimes Y) \simeq f_*Y;\]
        \item there is an infinite chain of adjoints on both sides:
        \[\cdots \dashv f_! \dashv f^* \dashv f_* \dashv f^! \dashv f_\# \dashv \cdots\]
    \end{enumerate}

They use this to give an abstract viewpoint on a more general form of Grothendieck duality in terms of \emph{external dualising objects}: those objects $X$ for which $\iHom(-,X)$ gives an antiequivalence on a suitable full subcategory. Dwyer--Greenlees--Iyengar~\cite{DGI} have also given a general approach to duality phenomena through the notion of a Gorenstein map $R \to k$ of ring spectra. In this paper, we connect and generalise these two approaches to duality results in a general tt-framework. One key aspect is the use of proxy-smallness as a weakening of the assumption that $f_*$ preserves compact objects in the statement of Grothendieck--Neeman duality. 

\subsection*{Summary of main results} In short, the main results of this paper are the following, which we expand upon and motivate further throughout the rest of the introduction:
\begin{enumerate}
    \item We prove that Grothendieck duality still holds on a canonical subcategory of $\C$ if one relaxes the assumption that $f_*$ preserves compacts, to $f_*(\1_\D)$ being proxy-small, see \cref{thmx:Grothendieckduality}.
    \item Even without Grothendieck--Neeman duality, we show that the invertibility of $\omega_f$ provides a good framework for studying generalised Matlis-type duality statements, see \cref{thmx:external,thmx:dualising}.
    \item We demonstrate that the invertibility of $\omega_f$ provides a good notion of Gorenstein morphisms and duality in tt-geometry which extends the approach of Dwyer--Greenlees--Iyengar to more general settings, see \cref{thmx:duality}. 
\end{enumerate}

\subsection*{Grothendieck duality via proxy-smallness}
The condition that $f_*$ preserves compact objects is rather restrictive. For example, for a local Noetherian ring $R$ with residue field $k$, this condition is that $k$ is a compact $R$-module (i.e., $k$ has finite projective dimension), which holds if and only if $R$ is a regular local ring. Dwyer--Greenlees--Iyengar~\cite{DGI} introduced the notion of proxy-smallness as a weakening of compactness (recall that compact objects are sometimes also called small). In our context, an object $X$ is \emph{proxy-small} if there exists a set of compact objects $\W$ such that $X \in \Loctensor{\W}$ and $\W \subseteq \thicktensor{X}$. Informally, $\W$ is a witness for the fact that the tensor-triangular information of $X$ is seen by compact objects. Returning to the above example, for any local Noetherian ring $R$, the residue field $k$ is always proxy-small as an $R$-module, with the witness set $\W$ given by the Koszul complex on the generators of the maximal ideal $\m$. Therefore for any local Noetherian ring $R$, the residue field $k$ is proxy-small, but $k$ is compact if and only if $R$ is regular.


Throughout the paper, we will often assume that $f_*(\1_\D)$ is proxy-small as a replacement for $f_*$ preserving compacts. A geometric functor $f^*\colon \C \to \D$ whose right adjoint $f_*$ has this property is called \emph{proxy-small}. We lay some basic groundwork on the behaviour of such functors, for example, demonstrating their good base change properties (\cref{prop:changeofbase}). In particular, if $f^*\colon \C \to \D$ is a proxy-small geometric functor, then $f^*$ factors over the category $\Gammaf\C \coloneqq \Loctensor{f_*\1_\D}$ of torsion objects. Although $\C$ is rigidly-compactly generated, the torsion category $\Gammaf\C$ rarely is: the unit object is rigid (a.k.a., strongly dualisable) but not compact in general, so there are more rigid objects than compacts. As such it is of interest to classify the rigid objects in $\Gammaf\C$. Our first main result gives a characterisation of the rigid objects in $\Gammaf\C$ in terms of passage under $f^*$ to $\D$:
\begin{thmx}[{\cref{f*rigid}}]\label{thmx:f*rigid}
    Let $f^*\colon\C\to\D$ be a proxy-small geometric functor. Let $M\in\C$.
    The following are equivalent:
        \begin{enumerate}[label=(\arabic*)]
        \item the map $\rho_{M,N}\colon DM\otimes N\to\iHom(M,N)$ is an $f^*$-isomorphism for all $N \in \C$ (i.e., $f^*(\rho_{M,N})$ is an isomorphism for all $N \in \C$);
        \item $\Gammaf M\in\Gammaf\C$ is rigid;
        \item $\Lambdaf M\in\Lambdaf\C$ is rigid;
        \item $f^*M\in\D$ is rigid.
        \end{enumerate}
\end{thmx}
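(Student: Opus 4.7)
The plan is to establish the equivalences via $(2)\Leftrightarrow(3)$, $(1)\Leftrightarrow(2)$ and $(2)\Leftrightarrow(4)$, with the hardest step being $(4)\Rightarrow(2)$.

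The equivalence $(2)\Leftrightarrow(3)$ should follow directly from local duality for proxy-small geometric functors: this provides a strong symmetric monoidal equivalence $\Gammaf\C\simeq\Lambdaf\C$ matching $\Gammaf M$ with $\Lambdaf M$, and rigidity is preserved by any such equivalence.

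For $(1)\Leftrightarrow(2)$, I would first observe that proxy-smallness gives $\ker(f^*)=\{X\in\C:\Gammaf X=0\}$: since $\Gammaf\C=\Loctensor{f_*\1_\D}$, we have $f^*X=0$ precisely when $X$ is right-orthogonal to the image of $f_*$, which is equivalent to $\Gammaf X=0$. Hence $f^*(\rho_{M,N})$ is an isomorphism iff $\Gammaf(\rho_{M,N})$ is. The smashing property of $\Gammaf$ (giving $\Gammaf(DM\otimes N)\simeq DM\otimes\Gammaf N$) combined with the vanishing of $\iHom$ from torsion to local objects (so that $\Gammaf\iHom(M,N)\simeq\Gammaf\iHom(\Gammaf M,\Gammaf N)$) then identifies $\Gammaf(\rho_{M,N})$ with the intrinsic rigidity map $\rho_{\Gammaf M,\Gammaf N}$ computed inside $\Gammaf\C$. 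Since every object of $\Gammaf\C$ has the form $\Gammaf N$, condition $(1)$ then translates exactly to $\Gammaf M$ being rigid in $\Gammaf\C$.

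For $(2)\Rightarrow(4)$, the factorisation $f^*\simeq f^*\circ\Gammaf$ exhibits $f^*$ as a composite featuring a strong symmetric monoidal functor $\Gammaf\C\to\D$; since such a functor preserves rigid objects, $\Gammaf M$ rigid forces $f^*\Gammaf M\simeq f^*M$ rigid. The main obstacle is the reverse direction $(4)\Rightarrow(2)$. My strategy is to show that $f^*\colon\Gammaf\C\to\D$ is conservative --- a consequence of proxy-smallness, since $\Gammaf\C$ is generated as a localising tensor-ideal by $f_*\1_\D$ whose $f^*$-image is the unit of $\D$ --- and then to use conservativity to reflect rigidity. Given $f^*M$ rigid, the maps $\rho^\D_{f^*M,L}$ are isomorphisms for all $L\in\D$; specialising to $L=f^*N$ for $N\in\Gammaf\C$ and threading through the natural comparison maps $f^*D^{\Gammaf\C}X\to D^\D f^*X$ and $f^*\iHom^{\Gammaf\C}(X,Y)\to\iHom^\D(f^*X,f^*Y)$, one deduces that $f^*$ applied to $\rho_{\Gammaf M,N}$ is an isomorphism; conservativity of $f^*$ on $\Gammaf\C$ then forces $\rho_{\Gammaf M,N}$ itself to be an isomorphism, yielding $(2)$. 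The delicate technical point is verifying the good behaviour of these comparison maps in the absence of rigidity of $M$ in $\C$, which is precisely where proxy-smallness of $f_*\1_\D$ enters in an essential way.
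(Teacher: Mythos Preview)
Your treatment of $(2)\Leftrightarrow(3)$, $(1)\Leftrightarrow(2)$ and $(2)\Rightarrow(4)$ is essentially the paper's (the paper runs $(1)\Leftrightarrow(3)$ via $\Lambdaf$ rather than $(1)\Leftrightarrow(2)$ via $\Gammaf$, but both are closed monoidal so either works).

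The gap is in $(4)\Rightarrow(2)$. Conservativity of $f^*_t\colon\Gammaf\C\to\D$ is correct, but conservativity of a strong monoidal functor does \emph{not} reflect rigidity. Concretely, chasing the diagram relating $\rho_{\Gammaf M,N}$ in $\Gammaf\C$ to $\rho_{f^*M,f^*N}$ in $\D$ gives
\[
\alpha_{\Gammaf M,N}\circ f^*_t(\rho_{\Gammaf M,N})\;=\;\rho_{f^*M,f^*N}\circ(\alpha_{\Gammaf M,\1}\otimes f^*N)\circ(\text{iso}).
\]
Rigidity of $f^*M$ makes the right-hand side an isomorphism, and $\alpha_{\Gammaf M,\1}$ is an isomorphism since the unit is rigid. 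But to isolate $f^*_t(\rho_{\Gammaf M,N})$ you need $\alpha_{\Gammaf M,N}$ to be an isomorphism, and the standard criterion for that is precisely that $\Gammaf M$ (or $N$) be rigid --- which is what you are trying to prove. Your final sentence acknowledges this is the delicate point, but no mechanism is supplied; proxy-smallness does not by itself force $\alpha_{\Gammaf M,N}$ to be an isomorphism for general $N$.

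The paper breaks this circularity by a different device: it introduces the three-variable map
\[
\nu_{M,X,N}\colon\iHom(M,X)\otimes N\to\iHom(M,X\otimes N),
\]
proves a coherence lemma showing that $\nu_{M,f_*Y,N}$ is identified (via the projection formula and internal adjunction, both of which \emph{are} always isomorphisms for geometric functors) with $f_*(\nu_{f^*M,Y,f^*N})$, and the latter is an isomorphism since $f^*M$ is rigid. A thick $\otimes$-ideal argument then propagates from $X=f_*\1_\D$ to the witnesses $W\in\W$, where $\nu_{M,W,N}\simeq W\otimes\rho_{M,N}$, giving $(1)$. The point is that this route uses only comparison maps ($p$ and $a$) that are unconditionally invertible, rather than $\alpha$.
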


We note that this is related to the work of Benson--Iyengar--Krause--Pevtsova~\cite{BIKP} on local dualisability and we refer the reader to \cref{ex:BIKPRigid} for discussion.

We say that an object satisfying the equivalent conditions of \cref{thmx:f*rigid} is \emph{$f^*$-rigid}. Using the equivalent characterisations of the previous theorem, we show that if one relaxes the assumption that $f_*$ preserves compacts, to $f^*$ being proxy-small, then Grothendieck duality holds on the subcategory of $\C$ consisting of the $f^*$-rigid objects. This result is interesting in its own right, but is also a key input into our study of more general duality phenomena.
\begin{thmx}[{\cref{f*compactisos}}]\label{thmx:Grothendieckduality}
    Let $f^*\colon \C \to \D$ be a proxy-small geometric functor and let $M \in \C$. If $f^*M$ is rigid, then there is a natural isomorphism
    \[
    f^{!}X \otimes f^*M \simeq f^{!}(X \otimes M)
    \]
    for all $X \in \C$. In particular, if $f^*M$ is rigid there is an isomorphism
    \[
    \omega_f \otimes f^*M \simeq f^{!}M.
    \]
\end{thmx}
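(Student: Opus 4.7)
The natural map $\mu_{X,M}\colon f^!X\otimes f^*M\to f^!(X\otimes M)$ is the adjunct under the $(f_*,f^!)$-adjunction of the composite
\[
f_*(f^!X\otimes f^*M)\xrightarrow{\sim}f_*f^!X\otimes M\xrightarrow{\varepsilon_X\otimes 1_M}X\otimes M,
\]
where the first map is the projection formula (always an isomorphism in our rigidly-compactly generated setting, as $f_*$ preserves colimits) and the second uses the counit of $(f_*,f^!)$. The second assertion $\omega_f\otimes f^*M\simeq f^!M$ is the specialisation $X=\1_\C$ of the first, so it suffices to prove the first.

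My plan is to verify $\mu_{X,M}$ is an isomorphism by testing with the Yoneda lemma on $\D$. For $Y\in\D$, the right-hand $\Hom_\D(Y,f^!(X\otimes M))$ unfolds via the $(f_*,f^!)$-adjunction to $\Hom_\C(f_*Y,X\otimes M)$. For the left-hand side, rigidity of $f^*M$ supplies $f^!X\otimes f^*M\simeq\iHom_\D(D(f^*M),f^!X)$ and $Y\otimes D(f^*M)\simeq\iHom_\D(f^*M,Y)$; combined with the enriched $(f^*,f_*)$-adjunction $f_*\iHom_\D(f^*M,Y)\simeq\iHom_\C(M,f_*Y)$, this rewrites the left side as $\Hom_\C(\iHom_\C(M,f_*Y),X)$. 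The task thus reduces to producing a natural isomorphism
\[
\Hom_\C(\iHom_\C(M,f_*Y),X)\simeq\Hom_\C(f_*Y,X\otimes M).
\]

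If $M$ were rigid in $\C$ this would be immediate by standard duality; the content of the theorem is that rigidity of $f^*M$ already suffices. The plan is to leverage \cref{thmx:f*rigid}, particularly the equivalence of (4) with (2) saying that $\Gammaf M$ is rigid in the torsion subcategory $\Gammaf\C$. The crucial input is that $f_*Y$ belongs to $\Gammaf\C$: the lax monoidal structure on $f_*$ endows $f_*Y$ with an $f_*\1_\D$-module structure in $\C$, which places it in the localising tensor ideal $\Loctensor{f_*\1_\D}=\Gammaf\C$. Together with the rigidity of $\Gammaf M$ in $\Gammaf\C$, this lets us treat $M$ as if it were rigid on the class of $\Gammaf\C$-objects and carry out the usual rigid-duality manipulations inside $\Gammaf\C$, yielding the required Hom bijection.

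The main obstacle is making precise how the rigidity of $\Gammaf M$ inside $\Gammaf\C$ governs the $\C$-internal hom $\iHom_\C(M,f_*Y)$, so that the standard rigid-object manipulations translate into genuine bijections of $\C$-level $\Hom$ groups. I expect this to be handled by the structural results on proxy-small geometric functors and the behaviour of internal homs under the torsion colocalisation that are established earlier in the paper.
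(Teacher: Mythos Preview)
Your approach is viable and, once completed, uses the same key ingredients as the paper, but it is more roundabout. The paper avoids the Yoneda detour entirely by invoking the formal identity $f^{!}\iHom(A,B)\simeq\iHom(f^*A,f^{!}B)$ (which is the right-adjoint form of the projection formula). Concretely, the paper chains together
\[
f^{!}X\otimes f^*M\;\simeq\;\iHom\bigl(D(f^*M),f^{!}X\bigr)\;\simeq\;\iHom\bigl(f^*(DM),f^{!}X\bigr)\;\simeq\;f^{!}\iHom(DM,X),
\]
using rigidity of $f^*M$, the isomorphism $f^*(DM)\simeq D(f^*M)$, and the formal identity above. This reduces everything to showing that the canonical map $M\otimes X\to\iHom(DM,X)$ is an $f^{!}$-isomorphism, equivalently an $f^*$-isomorphism, which is then verified by a short diagram chase using $f^*$-reflexivity of $M$ and the characterisation of $f^*$-rigidity applied to $DM$.

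Your route reaches the same destination. The obstacle you flag is genuine but resolvable: since $f_*Y\in\Gammaf\C$ (this follows from proxy-smallness and the projection formula directly, and does not require the module-structure argument you sketch, which strictly speaking needs an enhancement), both $DM\otimes f_*Y$ and $\iHom_\C(M,f_*Y)$ are torsion, so the $f^*$-isomorphism $\rho_{M,f_*Y}$ from \cref{thmx:f*rigid}(1) is an honest isomorphism. Unwinding then gives $\Hom_\C(\iHom_\C(M,f_*Y),X)\simeq\Hom_\C(f_*Y,\iHom_\C(DM,X))$, and since $f_*Y$ is torsion you need only that $M\otimes X\to\iHom_\C(DM,X)$ is a $\Lambdaf$-isomorphism---which is exactly the $f^*$-isomorphism the paper establishes. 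So your Yoneda reduction does not save any work: it ultimately demands the same core step, just reached less directly.
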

In addition to this, one also obtains various other isomorphisms which one would typically only expect to hold for rigid objects, rather than this larger class of \emph{$f^*$-rigid} objects, see \cref{f*compactclosed}. We also note that this provides a partial answer to a question of Sanders~\cite{SandersCompactness}, who showed that one can always force Grothendieck--Neeman duality to hold by taking a finite colocalisation of $\D$, and asks whether it is possible to instead localise $\C$. Our previous result shows that under the proxy-smallness assumption, we obtain Grothendieck duality for free on the thick $\otimes$-ideal of $f^*$-rigid objects.

\subsection*{Dualising objects and the Gorenstein property}
It is already noted in \cite{BDS} that $\omega_f$ can be invertible without Grothendieck--Neeman duality holding. In fact, this happens often:

\begin{ex*} If $f^*$ is extension of scalars along $R \to k$ for a local Noetherian ring $(R,\m,k)$, the functor $f^*$ has Grothendieck--Neeman duality if and only if $R$ is regular, and $\omega_f$ is invertible if and only if $R$ is Gorenstein. In particular, in this case Grothendieck--Neeman duality forces the invertibility of $\omega_f$ (since any regular ring is Gorenstein). However, if $R$ is not regular but is Gorenstein, then $\omega_f$ is invertible without Grothendieck--Neeman duality holding. \end{ex*}

This provides one motivation for undertaking a systematic study of the general behaviour encountered when $\omega_f$ is invertible without Grothendieck--Neeman duality holding, as a suitable form of the Gorenstein condition in tt-geometry. To this end, we say that a geometric functor $f^*\colon \C \to \D$ is \emph{Gorenstein} if $\omega_f = f^!(\1_\C)$ is invertible. This is consistent with the usual definitions in commutative algebra and algebraic geometry, and gives a slight generalisation of the definition of Dwyer--Greenlees--Iyengar~\cite{DGI} in the setting of commutative ring spectra, see \cref{rem:Gorenstein} for further details. In particular, one specific motivation for us was to develop a formal framework for studying Gorenstein \emph{equivariant} ring spectra in line with the non-equivariant setting pioneered in \cite{DGI}. We explain how this Gorenstein property often yields a Gorenstein duality statement later in the introduction.

Although $\omega_f$ plays a distinguished role in Grothendieck duality as demonstrated in \cref{thmx:Grothendieckduality}, from a categorical perspective, the above definition of Gorenstein admits a natural generalisation: we say that an object $M \in \C$ is \emph{Matlis dualising} if $f^!M$ is an invertible object. The terminology arises because of the following result, which shows that Matlis dualising objects give an abstract form of Matlis duality, specialising to the classical statement in local algebra. Moreover, the following can also be interpreted as a characterisation of Matlis dualising objects as a particularly well-behaved class of the external dualising objects studied in \cite{BDS}.

\begin{thmx}[{\cref{Mduality}, \cref{cor-Matlisdualvsextdual}}]\label{thmx:external}
    Let $f^*\colon \C \to \D$ be a geometric functor. If $M \in \C$ is Matlis dualising, then the functor $\iHom(-,M)\colon \C^\op \to \C$ restricts to an equivalence of categories
    \[\iHom(-,M)\colon \thicktensor{f_*(\D^c)}^\op \xrightarrow{~\simeq~} \thicktensor{f_*(\D^c)}.\]
    Moreover, if $\D$ is pure semisimple (see \cref{puresemisimple}) and $f_*$ is conservative, then the converse holds.
\end{thmx}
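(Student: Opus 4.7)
The plan is to prove the forward direction by explicitly computing $\iHom(-, M)$ on the generating family $f_*(\D^c)$ via a change-of-base identity, and to obtain the converse from the structural properties of pure semisimple categories.

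For the forward direction, the key identity is the change-of-base formula
\[\iHom_\C(f_*L, M) \simeq f_*\iHom_\D(L, f^!M) \qquad \text{for all } L \in \D,\]
obtained by testing against an arbitrary $X \in \C$ and chasing through the $(f^*, f_*)$- and $(f_*, f^!)$-adjunctions together with the projection formula. Specialising to $L = K \in \D^c$ and using rigidity of $K$ together with the invertibility of $f^!M$, this yields
\[\iHom_\C(f_*K, M) \simeq f_*(K^\vee \otimes f^!M) \in f_*(\D^c),\]
so $\iHom(-, M)$ preserves $f_*(\D^c)$ via the antiequivalence $K \mapsto K^\vee \otimes f^!M$ of $\D^c$. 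Iterating, and using $K \simeq K^{\vee\vee}$ for rigid $K$ together with $(f^!M)^{-1} \otimes f^!M \simeq \1_\D$, I obtain $\iHom(\iHom(f_*K, M), M) \simeq f_*K$ naturally, and identify this with the canonical biduality map by tracing through the relevant unit-counit transformations.

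Since $\iHom(-, M)$ is a contravariant exact functor, the class of objects on which the biduality transformation is an isomorphism is closed under retracts and fibre sequences, and consequently the equivalence extends to $\thick(f_*(\D^c))$. To propagate it to the full tensor-thick ideal $\thicktensor{f_*(\D^c)}$, one combines projection and change-of-base to rewrite
\[\iHom(X \otimes f_*K, M) \simeq \iHom_\C(X, f_*(K^\vee \otimes f^!M))\]
for $X \in \C$ and $K \in \D^c$, and then uses the invertibility of $f^!M$ to argue that this object again lies in $\thicktensor{f_*(\D^c)}$ while preserving biduality, iterating through the tensor-thick structure. For the converse, assume $\D$ is pure semisimple (see \cref{puresemisimple}) and $f_*$ is conservative. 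Applying the claimed equivalence to $f_*\1_\D \in f_*(\D^c)$ yields $f_*f^!M \simeq \iHom(f_*\1_\D, M) \in \thicktensor{f_*(\D^c)}$; pure semisimplicity then decomposes $f^!M$ into indecomposable compact summands, and $f_*$-conservativity combined with the tensor-ideal equivalence applied at the unit forces $f^!M$ to be invertible.

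The main obstacle is the upgrade from $\thick(f_*(\D^c))$ to $\thicktensor{f_*(\D^c)}$ in the forward direction: $\iHom(-, M)$ does not commute in any direct way with tensor products by non-rigid objects, so keeping $\iHom(-,M)$ inside the thick tensor ideal requires essential use of the invertibility of $f^!M$ rather than merely its rigidity; this is the step at which $f^!M$ being invertible (and not just compact) plays a decisive role. The converse similarly relies on extracting genuine invertibility, rather than mere compactness, from the abstract duality hypothesis, which is precisely why the pure semisimplicity assumption on $\D$ is required.
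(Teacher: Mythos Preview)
Your forward argument is on the right track and uses the same key identity $\iHom(f_*K, M) \simeq f_*(DK \otimes f^!M)$ as the paper. However, your stated ``main obstacle'' rests on a misconception: in this paper's conventions a thick $\otimes$-ideal is only required to be closed under tensoring with \emph{compact} objects of $\C$, and in a rigidly-compactly generated tt-category these are exactly the rigid objects. So there is no need to handle tensoring by non-rigid $X$, and invertibility of $f^!M$ plays no special role at this step. The paper's treatment is correspondingly simpler: for $C \in \C^c$ one has a natural identification of $\phi^M_{C \otimes X}$ with $C \otimes \phi^M_X$, so the collection of $X$ for which the biduality map $\phi^M_X\colon X \to D_M^2 X$ is an isomorphism is automatically a thick $\otimes$-ideal, and the passage from $\thick(f_*(\D^c))$ to $\thicktensor{f_*(\D^c)}$ is immediate.

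Your converse has a genuine gap. From the hypothesis you only extract that $D_M(f_*\1_\D) \simeq f_*f^!M$ lies in $\thicktensor{f_*(\D^c)}$, but this would hold whenever $f^!M$ is merely compact and does not force invertibility. The essential content you are not using is the \emph{biduality} part of the equivalence, namely the isomorphism $\phi^M_{f_*\1_\D}\colon f_*\1_\D \xrightarrow{\sim} D_M^2(f_*\1_\D)$. The paper exploits a commutative square identifying $\phi^M_{f_*Y}$ with $f_*(\psi^M_Y)$ up to the natural isomorphisms $D_M f_* \simeq f_* D_{f^!M}$, so that biduality at $f_*\1_\D$ together with conservativity of $f_*$ yields an isomorphism $\psi^M_{\1_\D}\colon \1_\D \xrightarrow{\sim} \iHom(f^!M, f^!M)$. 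Writing $f^!M \simeq \bigoplus_{i \in I} A_i$ as a coproduct of indecomposable compacts (pure semisimplicity), one gets $\1_\D \simeq \prod_i \bigoplus_j \iHom(A_i, A_j)$; the Krull--Schmidt property then forces $I$ to be finite, whence $f^!M$ is compact, hence rigid, and finally $D(f^!M) \otimes f^!M \simeq \iHom(f^!M, f^!M) \simeq \1_\D$ gives invertibility. Your sketch (``$f_*$-conservativity combined with the tensor-ideal equivalence applied at the unit forces $f^!M$ to be invertible'') does not articulate any of these steps.
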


In the Gorenstein setting, we give a global characterisation of Matlis dualising objects. When specialised to the setting of chromatic homotopy theory, this recovers the approach of Dwyer--Greenlees--Iyengar to Gross--Hopkins duality~\cite{DGIGrossHopkins}. In light of \cref{thmx:external}, the following can also be viewed as a classification of external dualising objects for $\thicktensor{f_*(\D^c)}$ when $f_*$ is conservative.

\begin{thmx}[{\cref{thm:dualising}}]\label{thmx:dualising}
    Let $f^*\colon \C \to \D$ be a proxy-small geometric functor and assume that $\D$ is pure semisimple. If $f^*$ is Gorenstein, the following are equivalent for $M \in \C$:
    \begin{enumerate}
        \item $M$ is Matlis dualising (i.e., $f^{!}M$ is invertible in $\D$);
        \item $f^*M$ is invertible in $\D$;
        \item $\Gammaf M$ is invertible in $\Gammaf\C$;
        \item $\Lambdaf M$ is invertible in $\Lambdaf\C$.
    \end{enumerate}
\end{thmx}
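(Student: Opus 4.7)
My plan pivots on Theorem~\ref{thmx:Grothendieckduality}: once $f^*M$ is known to be rigid in $\D$, Grothendieck duality yields $f^!M \simeq \omega_f \otimes f^*M$, and because $\omega_f$ is invertible by the Gorenstein hypothesis, invertibility passes freely between $f^!M$ and $f^*M$. The whole strategy is therefore to reduce all four conditions to the single assertion that $f^*M$ is invertible, and to arrange the implications so that the Gorenstein twist $\omega_f^{\pm 1}$ carries the data back and forth.

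The easy direction (2)$\Rightarrow$(1) is immediate: an invertible object is rigid, so Theorem~\ref{thmx:Grothendieckduality} yields $f^!M \simeq \omega_f \otimes f^*M$, a tensor product of two invertibles and hence invertible. For (2)$\Leftrightarrow$(3) and (2)$\Leftrightarrow$(4), I would appeal to the Matlis--Greenlees--May style tensor equivalence $\Gammaf\C \simeq \Lambdaf\C$ that the paper's torsion/completion formalism should supply, together with the fact that $f^*$ restricted to $\Gammaf\C$ is symmetric monoidal and sends the unit $\Gammaf\1_\C$ to $\1_\D$ (essentially by $\Gammaf\C = \Loctensor{f_*\1_\D}$ and $f^*f_*\1_\D \simeq \1_\D$). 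Proxy-smallness should then guarantee that this restriction induces an isomorphism on Picard groups, transporting invertibility cleanly between $\Gammaf M$, $\Lambdaf M$, and $f^*M$.

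The main obstacle is (1)$\Rightarrow$(2), and this is where the pure semisimple hypothesis must do genuine work. Given $f^!M$ invertible, I want to conclude $f^*M$ is rigid, so that Theorem~\ref{thmx:Grothendieckduality} applies and $f^*M \simeq \omega_f^{-1}\otimes f^!M$ is forced to be invertible. My plan is to exploit the adjunction identity $f_*f^!M \simeq \iHom(f_*\1_\D, M)$ together with pure semisimplicity: the invertible object $f^!M$ is then controlled by the compacts in a very rigid way, and combined with the proxy-smallness of $f_*\1_\D$ this should allow me to verify the $f^*$-isomorphism criterion $\rho_{M,N}\colon DM\otimes N \to \iHom(M,N)$ of Theorem~\ref{thmx:f*rigid}(1), yielding $f^*$-rigidity of $M$. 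The delicate point I anticipate is packaging pure semisimplicity correctly so that the compactness-type information it supplies upgrades into the genuinely universal statement that $f^*\rho_{M,N}$ is an isomorphism for \emph{all} $N \in \C$, not merely on a collection of compact test objects; I expect this to be the technical heart of the argument.
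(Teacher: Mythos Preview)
Your $(2)\Rightarrow(1)$ is correct and matches the paper. But there are two genuine gaps elsewhere.

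\textbf{On $(2)\Leftrightarrow(3)\Leftrightarrow(4)$.} Your claim that the restriction $f^*\colon \Gammaf\C \to \D$ ``induces an isomorphism on Picard groups'' is false, and the paper explicitly warns against it (see \cref{rem:Picfactorisation}): in the chromatic example the kernel of $\Pic(L_{K(n)}\Sp)\to\Pic(\Mod{}(K(n)_*))$ is an infinite pro-$p$-group. The side claim $f^*f_*\1_\D\simeq\1_\D$ is also wrong in general; one only has $\1_\D$ as a retract. The equivalence $(2)\Leftrightarrow(3)\Leftrightarrow(4)$ does hold, but it is \cref{f*invertible}, proved objectwise via the evaluation map $\ev_M$ and \cref{Wequivalences}, not via any global Picard isomorphism. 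You should simply cite that result rather than invent a stronger and incorrect mechanism.

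\textbf{On $(1)\Rightarrow(2)$.} Your plan to verify the $\rho_{M,N}$ criterion directly from $f_*f^!M\simeq\iHom(f_*\1_\D,M)$ does not have a visible path to success, and you correctly flag this as the delicate point without resolving it. The paper's argument is quite different and more concrete. Since $M$ is Matlis dualising, \cref{f*Mduality} gives that $\1_\C\to D_M^2\1_\C=\iHom(M,M)$ is an $f^*$-isomorphism, hence an $f^!$-isomorphism by \cref{Wequivalences}. Applying $f^!$ and \cref{f1ofhom} yields
\[
\omega_f \;\simeq\; f^!\iHom(M,M)\;\simeq\;\iHom(f^*M,f^!M)\;\simeq\; D(f^*M)\otimes f^!M,
\]
the last step using that $f^!M$ is invertible, hence rigid. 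Now $2$-out-of-$3$ forces $D(f^*M)$ invertible, in particular compact. This is where pure semisimplicity enters, and in a precise form you did not identify: in a pure semisimple tt-category, if $DX$ is compact then $X$ is compact (\cref{lem:dualcompact}). Hence $f^*M$ is rigid, Grothendieck duality \cref{GN} gives $f^!M\simeq\omega_f\otimes f^*M$, and $2$-out-of-$3$ finishes. The key missing idea in your proposal is this specific use of pure semisimplicity to pass from compactness of $D(f^*M)$ to compactness of $f^*M$.
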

\subsection*{Gorenstein duality and Matlis lifts}
Recall that in local algebra, the Gorenstein condition is equivalent to \emph{Gorenstein duality} for local cohomology: \[H^*_\m(R)\simeq\Sigma^{-\dim(R)}E(k).\]
For augmented algebra spectra, a generalisation of this property is explored by Dwyer--Greenlees--Iyengar~\cite{DGI}, and we generalise the augmented algebra setting to geometric functors which admit sections (\cref{defn:Gorduality}). 

An important subtlety that occurs away from local algebra is that although Gorenstein duality implies the Gorenstein condition, the converse need not hold in general. The failure of the converse is measured by an orientability condition described below, as demonstrated by the following result:
\begin{thmx}[\cref{dualityIffOrientable}]\label{thmx:duality}
    Let $f^*\colon\C\to\D$ be a Gorenstein proxy-small geometric functor with a geometric section $\eta^*$. Then $f^*$ has Gorenstein duality if and only if $\eta^!(\omega_f)$ is an orientable Matlis lift of $\omega_f$.
\end{thmx}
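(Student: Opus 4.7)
The plan is to translate Gorenstein duality (as formulated in \cref{defn:Gorduality}) into the invertibility of a canonical comparison map built from $\eta^!(\omega_f)$ and the adjunction data, and then to identify this condition with orientability of the Matlis lift.

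First I would exploit the Gorenstein hypothesis: since $\omega_f$ is invertible and hence rigid in $\D$, the unit $\1_\C$ is $f^*$-rigid because $f^*\1_\C \simeq \1_\D$. By \cref{thmx:Grothendieckduality} this yields a natural isomorphism $f^!X \simeq \omega_f \otimes f^*X$ for all $X \in \C$. Together with the change-of-base properties of proxy-small geometric functors (\cref{prop:changeofbase}) and the section identity $f^* \eta^* \simeq \mathrm{id}_\D$, this allows me to rewrite the Gorenstein duality equivalence entirely in terms of $f^*$, $\omega_f$, and the object $\eta^!(\omega_f) \in \C$. The output should be an explicit canonical comparison map whose invertibility is equivalent to Gorenstein duality.

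Next I would analyse the Matlis lift condition. The assertion that $\eta^!(\omega_f)$ is a Matlis lift of $\omega_f$ provides an isomorphism relating its image under $f^*$ back to $\omega_f$ in $\D$, and by \cref{thmx:dualising} Matlis dualising objects in the Gorenstein setting correspond precisely to objects whose $f^*$-image is invertible. An orientation is a choice of such an identification which is compatible with the natural structure maps, and a direct unwinding should show that it is exactly the data required to construct the comparison map from the previous step canonically. The forward direction of the theorem then reads off the orientation from the Gorenstein duality isomorphism (restricting along $\eta^*$ and invoking the section property), while the converse uses the orientation to assemble the isomorphism.

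The main obstacle will be the coherent identification of the map produced by an orientation with the adjunction-theoretic map appearing in Gorenstein duality. Both are isomorphisms between objects that agree up to a twist in $\Pic(\D)$, and the non-trivial content of orientability is precisely that the correct representative of the isomorphism class in $\Pic(\D)$ is selected. Verifying this compatibility requires careful bookkeeping of the units and counits of the adjoint triples $(f^*, f_*, f^!)$ and $(\eta^*, \eta_*, \eta^!)$, together with the natural transformation produced by \cref{thmx:Grothendieckduality}; once this check is completed, the equivalence of Gorenstein duality with orientability of the Matlis lift $\eta^!(\omega_f)$ follows formally.
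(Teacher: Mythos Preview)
Your proposal has two related gaps.

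First, you have misread the definition of orientability. An orientable Matlis lift of $\omega_f$ is an $M$ with $f^!M \simeq \omega_f$ (here automatic, since $f^!\eta^! \simeq \mathrm{id}_\D$) together with the \emph{existence} of an isomorphism $\Gammaf M \simeq \Gammaf\1_\C$. There is no coherence condition, no ``choice compatible with natural structure maps'', and no canonical comparison morphism to produce. Likewise, Gorenstein duality is merely the existence of some $X \in \Pic(\D)$ with $\Gammaf\omega_\eta \simeq \Gammaf\eta^*(X)$. So the problem is to equate two existence statements about objects in $\Gammaf\C$, not to check that two specified maps agree; the ``careful bookkeeping of units and counits'' you anticipate is not needed.

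Second, and more substantively, you are applying Grothendieck duality to the wrong functor. The identity $f^!X \simeq \omega_f \otimes f^*X$ lets you analyse objects of $\D$, but both sides of the equivalence you want to prove live in $\Gammaf\C$, and the object that must be related to $\omega_\eta$ is $\eta^!(\omega_f) \in \C$. The bridge is Grothendieck duality for $\eta^*$: since $\omega_f \in \D$ is invertible (Gorenstein hypothesis) it is genuinely rigid, so without any proxy-smallness assumption on $\eta^*$ one has
\[
\eta^!(\omega_f) \;\simeq\; \omega_\eta \otimes \eta^*(\omega_f).
\]
With this in hand the proof is a one-line tensor manipulation: since $\eta^*(\omega_f)$ is invertible in $\C$ (as $\eta^*$ is strong monoidal), the condition $\Gammaf\eta^!(\omega_f) \simeq \Gammaf\1_\C$ is equivalent to $\Gammaf\omega_\eta \simeq \Gammaf\eta^*(\omega_f^{-1})$, which is precisely Gorenstein duality (the $X$ in \cref{defn:Gorduality} is forced to be $\omega_f^{-1}$, as one sees by applying $f^!$ and using the section identity). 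Your route via $f^!$ and change-of-base would at best recover that $X = \omega_f^{-1}$, but does not by itself link $\eta^!(\omega_f)$ to $\omega_\eta$; for that you need the $\eta$-version of Grothendieck duality above.
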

A \emph{Matlis lift} of an object $Y\in\D$ is an object $X\in\C$ such that $f^!X\simeq Y$. If in particular $Y=\omega_f$, by definition $\1_\C$ is a Matlis lift, and we say another Matlis lift $X$ of $\omega_f$ is \emph{orientable} if $\Gammaf X\simeq\Gammaf \1_\C$. 

As a consequence of \cref{thmx:duality}, one is interested in the uniqueness of Matlis lifts in the torsion category. In particular, we want to know which geometric functors are \emph{automatically orientable} in that all Matlis lifts of $\omega_f$ are orientable. This occurs in local algebra (that is, extension of scalars along $R\to k$ is automatically orientable for $(R,\m,k)$ a Noetherian local ring) and explains why the Gorenstein condition and Gorenstein duality coincide in this setting.

We describe two characterisations of Matlis lifts. Firstly, we extend the Morita theory of \cite{DGI}, observing that the data of a Matlis lift is encoded in a module structure over the endomorphism algebra $\Ef\coloneqq \iHom(f_*\1_\D,f_*\1_\D)$. This leads to a full classification of so-called `weak' Matlis lifts in the torsion category. The notion of \emph{weak} Matlis lifts arises from the fact that the right adjoint $f_*$ of $f^*$ need not be conservative in general, unlike in the setting of \cite{DGI}.
\begin{thmx}[\cref{prop:classifylifts}]
    Let $f^*$ be a proxy-small enhanced geometric functor, and let $Y\in\D$. There is a bijection between torsion weak Matlis lifts of $Y$ and right $\Ef$-module structures on $f_*Y$ that are effective (\cref{defn:effective}).
\end{thmx}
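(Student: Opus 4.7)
The plan is to establish the bijection via a Morita-theoretic equivalence, adapting the framework of Dwyer--Greenlees--Iyengar to the present tt-setting where $f_*$ need not be conservative. The enhancement hypothesis is used throughout to make sense of the bar constructions and relative tensor products over $\Ef$ in a homotopy-coherent way.

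The forward map is essentially tautological. Given a torsion weak Matlis lift $X \in \Gammaf\C$ of $Y$, we use the identification
\[
f_*Y \simeq f_*f^!X \simeq \iHom(f_*\1_\D, X),
\]
where the second isomorphism follows by Yoneda from the chain $\Hom_\C(A, f_*f^!X) \simeq \Hom_\D(f^*A, f^!X) \simeq \Hom_\C(f_*f^*A, X) \simeq \Hom_\C(A \otimes f_*\1_\D, X)$, using the projection formula $f_*f^*A \simeq A \otimes f_*\1_\D$. The right-hand side $\iHom(f_*\1_\D, X)$ carries a canonical right $\Ef$-module structure by precomposition, transporting to a right $\Ef$-module structure on $f_*Y$.

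For the inverse map, given an effective right $\Ef$-module structure on $f_*Y$, I would form the enhanced relative tensor product
\[
X \coloneqq f_*Y \otimes_{\Ef} f_*\1_\D \in \C.
\]
Since $f_*\1_\D$ proxy-generates $\Gammaf\C$, the object $X$ lies in $\Gammaf\C$, and the module action together with the bar resolution of $f_*Y$ over $\Ef$ should yield an equivalence $f_*f^!X \simeq f_*Y$, exhibiting $X$ as a torsion weak Matlis lift.

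The main obstacle is verifying that these two assignments are mutually inverse, with the effectiveness condition isolating precisely when this holds. One composite reduces to showing that the counit
\[
\iHom(f_*\1_\D, X) \otimes_{\Ef} f_*\1_\D \to X
\]
is an equivalence for $X \in \Gammaf\C$; this should follow from a proxy-small descent argument, reducing along the localizing subcategory generated by $f_*\1_\D$ via the compact witnesses supplied by the proxy-small hypothesis. The other composite demands that the unit $f_*Y \to \iHom(f_*\1_\D, f_*Y \otimes_{\Ef} f_*\1_\D)$ be an equivalence, which is precisely what the effectiveness condition of \cref{defn:effective} is designed to encode. Because $f_*$ is not assumed conservative, not every right $\Ef$-module on $f_*Y$ comes from a torsion lift, and effectiveness cuts out exactly those that do.
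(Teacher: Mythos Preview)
Your proposal is correct and follows essentially the same approach as the paper: set up the Morita adjunction $-\otimes_{\Ef}f_*\1_\D \dashv \iHom(f_*\1_\D,-)$, identify the counit with the torsion approximation via a proxy-small localising $\otimes$-ideal argument (the paper's Proposition~\ref{identifycounit}), and invoke the effectiveness hypothesis for the unit. The paper packages the first two steps as a categorical equivalence $\EffLift_f \simeq \Gammaf\C$ (Corollary~\ref{cor:Moritaequivalence}) before specialising to lifts of $f_*Y$, but the substance is identical.
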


The other classification of Matlis lifts, specifically those of $\omega_f$, comes from \cref{thmx:dualising}. Provided $\D$ is pure semisimple and $f^*$ is Gorenstein, as a consequence of Grothendieck duality (\cref{thmx:Grothendieckduality}) we see that $X\in\C$ is a Matlis lift of $\omega_f$ if and only if $f^*X\simeq\1_\D$, and so orientability is controlled by the kernel of a group homomorphism:

\begin{thmx}[\cref{prop:AOandPic}] Let $f^*\colon\C\to\D$ be a proxy-small, Gorenstein, geometric functor, where $\D$ is pure semisimple. Then $f^*$ is automatically orientable if and only if the group homomorphism \[f^*\colon\Pic(\Gammaf\C)\to\Pic(\D)\]
is injective.
\end{thmx}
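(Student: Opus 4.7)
The plan is to derive this as a short formal consequence of \cref{thmx:dualising} combined with the characterisation (recalled just before the statement) that under our hypotheses the Matlis lifts of $\omega_f$ are precisely those $X \in \C$ with $f^*X \simeq \1_\D$. Since $f^*$ is proxy-small, it factors through the torsion category to give a strong monoidal functor $\bar{f}^*\colon \Gammaf\C \to \D$ with $f^* \simeq \bar{f}^*\circ \Gammaf$, and the tensor unit of $\Gammaf\C$ is $\Gammaf\1_\C$. For any $L \in \Gammaf\C$ viewed inside $\C$ we have $\Gammaf L \simeq L$, so $f^*L \simeq \bar{f}^*L$; the homomorphism in the statement is therefore precisely $\Pic(\bar{f}^*)$, and the task reduces to identifying its kernel with the obstruction to automatic orientability.

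For the direction injectivity $\Rightarrow$ automatic orientability, let $X \in \C$ be an arbitrary Matlis lift of $\omega_f$. The characterisation above gives $f^*X \simeq \1_\D$, which is invertible, so \cref{thmx:dualising} ensures that $\Gammaf X$ is invertible in $\Gammaf\C$. Moreover $\bar{f}^*(\Gammaf X) \simeq f^*X \simeq \1_\D \simeq \bar{f}^*(\Gammaf\1_\C)$, so the class $[\Gammaf X]$ lies in the kernel of $\Pic(\bar{f}^*)$. Injectivity then forces $\Gammaf X \simeq \Gammaf\1_\C$, showing that $X$ is orientable.

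For the converse, assume automatic orientability and let $L \in \Pic(\Gammaf\C)$ lie in the kernel of $\Pic(\bar{f}^*)$. Viewed inside $\C$, since $\Gammaf L \simeq L$ we still have $f^*L \simeq \bar{f}^* L \simeq \1_\D$, so $L$ is a Matlis lift of $\omega_f$. Automatic orientability then yields $L \simeq \Gammaf L \simeq \Gammaf\1_\C$, which is the trivial class in $\Pic(\Gammaf\C)$, so the kernel vanishes. The only non-trivial bookkeeping I anticipate is the factorisation $f^* \simeq \bar{f}^*\circ \Gammaf$ together with the resulting symmetric monoidal structure on $\Gammaf\C$; these are standard features of the colocalisation onto a localising $\otimes$-ideal and should already be in place by this point in the paper, after which the argument is a purely formal manipulation of \cref{thmx:dualising}.
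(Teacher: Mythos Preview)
Your proposal is correct and follows essentially the same approach as the paper: both arguments hinge on Grothendieck duality for $f^*$-rigid objects to pass between the conditions $f^!X \simeq \omega_f$ and $f^*X \simeq \1_\D$, use \cref{thmx:dualising} to guarantee that $\Gammaf X$ lands in $\Pic(\Gammaf\C)$, and invoke the factorisation of $f^*$ through $\Gammaf\C$. The only organisational difference is that you treat the equivalence ``$X$ is a Matlis lift of $\omega_f$ if and only if $f^*X \simeq \1_\D$'' as an external input (the remark in the introduction), whereas the paper establishes this inline within the proof itself; since that equivalence is not recorded as a separate lemma, you should either prove it on the spot or note explicitly that it follows from \cref{thmx:Grothendieckduality} together with \cref{thmx:dualising}.
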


\subsection*{Applications and outlook}
We end by giving various examples and applications of the theory developed in this paper. In order to give new examples, we prove various ascent and descent results for Matlis dualising objects, Gorenstein geometric functors, proxy-smallness, and orientability (\cref{sec:descent}). 
Throughout the paper we illustrate the general theory with two key examples: local algebra and chromatic homotopy theory, giving new perspectives on various statements in these areas. However, we also investigate some more detailed examples as we now describe.

Given a graded commutative ring $R$, we consider the totalisation functor $T^*$ from the derived category of chain complexes of graded $R$-modules to the derived category of DG-$R$-modules. This is a geometric functor, and as an application of our descent results, we prove that a formal DGA $A$ is Gorenstein in our sense (and the sense of Dwyer--Greenlees--Iyengar) if and only if $H_*A$ is a Gorenstein ring, see \cref{FormalDGAGor}. Our framework also applies to show that a formal DGA is homotopically regular if and only if its homology is regular. As such, these demonstrate that for formal DGAs, there is good interplay between classic algebraic notions and their homotopical enhancements.

Our second application is to polynomial invariant theory, and gives a new perspective on Watanabe's theorem, in particular realising it as the shadow of an equivariant Gorenstein duality statement. Recall that Watanabe's theorem states that for a finite group $G$, if the determinant representation associated to a $kG$-module $V$ is trivial, then the invariant ring $k[V]^G$ is Gorenstein. We show that without the hypothesis on the determinant, one can realise $k[V]$ as being equivariantly Gorenstein (\cref{prop:ourWatanabe}). We then explain how one can recover the original theorem of Watanabe from this by providing a general statement about when fixed points preserve Gorenstein duality (\cref{prop:fixedpoints}). This can be viewed as a blueprint for establishing similar statements regarding the behaviour of fixed points and Gorenstein duality, with a particular view towards equivariant homotopy theory, and we intend to return to this in future work. In particular, this application suggests that this framework would permit us to view various Gorenstein duality statements, such as those of \cite{GreenleesrelGor, PolWilliamson} as the (geometric) fixed points of equivariant Gorenstein duality phenomena.

\subsection*{Acknowledgements}
We are grateful to John Greenlees for his interest and for many helpful conversations and suggestions. We would also like to thank Marek P\'asek for pointing out a simpler argument in \cref{rem:automaticallyrelative}.

TP is supported by the Warwick Mathematics Institute CDT, and is funded by EPSRC grant EP/W523793/1.
JW is supported by the project PRIMUS/23/SCI/006 from Charles University and by the Charles University Research Centre
program No.~UNCE/24/SCI/022. The authors would like to thank the Isaac Newton Institute for Mathematical Sciences, Cambridge, for support and hospitality during the programme `Equivariant homotopy theory in context', where work on this paper was undertaken. This work was supported by EPSRC grant EP/Z000580/1.

\section{Tensor-triangular background}\label{sec:ttbackground}
In this section, we recall some fundamentals regarding tensor-triangulated categories.

\begin{chunk}
    A tensor-triangulated category $\C$ (henceforth tt-category) is a triangulated category together with a closed symmetric monoidal structure which is compatible with the triangulation (see \cite[\S A.2]{HPS}). We write $- \otimes -$ for the tensor product, $\1_\C$ for the tensor unit, and $\iHom(-,-)$ for the internal hom. We write $D = \iHom(-,\1_\C)$ for the functional duality functor. 
\end{chunk}

\begin{chunk}\label{rigiditychunk}
    Recall that an object $X \in \C$ is \emph{rigid} if the canonical map 
    \[DX \otimes Y \to \iHom(X,Y)\]
    is an isomorphism for all $Y \in \C$. (In fact, it suffices to just check for $Y = X$.) Note that in the literature, rigid objects are often called strongly dualisable instead. We will not use such terminology to avoid confusion with dualising objects in the sense of this paper. Rigid objects are \emph{reflexive} in the sense that the canonical map $X \to D^2X$ is an isomorphism. It follows that if $X$ is rigid, then the canonical map \[DY \otimes X \to \iHom(Y,X)\] is also an isomorphism for all $Y \in \C$. We say that $\C$ is \emph{rigidly-compactly generated} if $\C$ is compactly generated, and the subcategories of  rigid objects and compact objects coincide. Note that this implies that the tensor unit $\1_\C$ is compact. We write $\C^c$ for the full subcategory of compact objects. For the majority of the paper we work in the setting of rigidly-compactly generated tt-categories, but occasionally we will need to work with tt-categories which are not.
\end{chunk}

\begin{chunk}
    Let $\C$ be a compactly generated tt-category. A full subcategory $\mathscr{S}$ of $\C$ is \emph{thick} if it is closed under retracts and is triangulated. Moreover it is said to be \emph{localising} if it is thick and closed under coproducts. A \emph{thick $\otimes$-ideal} of $\C$ is a thick subcategory $\mathscr{S}$ which is closed under tensoring with rigid objects of $\C$. A \emph{localising $\otimes$-ideal} of $\C$ is a localising subcategory $\mathscr{S}$ which is closed under tensoring with arbitrary objects of $\C$. We emphasise that in our conventions, thick $\otimes$-ideals of $\C$ need only be closed under tensoring with rigid objects. 
\end{chunk}
\begin{chunk}
    Let $\C$ be a compactly generated tt-category. Given a collection of objects $\mathscr{X}$ of $\C$, we write $\thick(\mathscr{X})$, $\thicktensor{\mathscr{X}}$, $\Loc(\mathscr{X})$, and $\Loctensor{\mathscr{X}}$ for the smallest thick subcategory, thick $\otimes$-ideal, localising subcategory, and localising $\otimes$-ideal of $\C$ which contains $\mathscr{X}$ respectively. We note that if $\C$ is rigidly-compactly generated, then $\thicktensor{\scr{X}} = \thick(\scr{X} \otimes \C^c)$ where $\scr{X} \otimes \C^c = \{X \otimes C \mid X \in \scr{X},~C \in \C^c\}$, and similarly $\Loctensor{\scr{X}} = \Loc(\scr{X} \otimes \C^c)$.
\end{chunk}

\begin{chunk}\label{chunk:geometric}
    Recall that a functor $f^*\colon \C \to \D$ between rigidly-compacted generated tt-categories is \emph{geometric} if it is strong symmetric monoidal, triangulated, and preserves coproducts. Brown representability then implies that $f^*$ has a right adjoint $f_*$. Since $f^*$ is strong monoidal, it preserves rigid(=compact) objects, and hence $f_*$ preserves coproducts. As such, $f_*$ also admits a right adjoint $f^{!}$. Diagrammatically we have
    \[\begin{tikzcd}[column sep=2cm]
        \C \ar[r, "f^*", yshift=2.5mm] \ar[r, "f^{!}"', yshift=-2.5mm] & \D. \ar[l, "f_*" description]
    \end{tikzcd}\]
    We refer the reader to \cref{exs:geometric} for a discussion of several examples of geometric functors arising across various areas. 
\end{chunk}

We now recall some natural isomorphisms which play an important role and use this as an opportunity to set some notation. We refer the reader to \cite[Proposition 2.15]{BDS} for further details.
\begin{chunk}\label{constructionprojformula}
    The adjunction $(f^*,f_*)$ satisfies the projection formula: there is a natural isomorphism \begin{equation}\label{projformula}
        p_{X,Y}\colon X \otimes f_*Y \xrightarrow{\sim} f_*(f^*X \otimes Y)
    \end{equation}
    for all $X \in \C$ and $Y \in \D$. This is constructed as the adjunct to the natural map
    \[f^*(X \otimes f_*Y) \simeq f^*X \otimes f^*f_*Y \xrightarrow{f^*X \otimes \varepsilon} f^*X \otimes Y\] where $\varepsilon$ is the counit of the $(f^*,f_*)$ adjunction. Fixing the $X$ variable in the projection formula \cref{projformula} and taking right adjoints, it follows that there is a natural isomorphism
    \begin{equation}\label{f1ofhom}
        f^{!}\iHom(X,Y) \simeq \iHom(f^*X, f^{!}Y)
    \end{equation}
    for all $X,Y \in \C$.
    On the other hand, fixing the $Y$ variable in the projection formula and taking right adjoints yields the natural isomorphism
    \begin{equation}\label{internaladjunction2}
    \iHom(f_*Y, X) \simeq f_*\iHom(Y, f^{!}X)
    \end{equation}
    for all $X \in \C$ and $Y \in \D$.
\end{chunk}

\begin{chunk}\label{constructioninternaladjunction}
    The adjunction $(f^*,f_*)$ also has an internal form as we now recall. For $M,N \in \C$, there is a natural map $\alpha_{M,N}\colon f^*\iHom(M,N) \to \iHom(f^*M,f^*N)$ which is adjoint to the composite \[f^*M \otimes f^*\iHom(M,N) \simeq f^*(M \otimes \iHom(M,N)) \xrightarrow{f^*\mrm{ev}_{M,N}} f^*N.\] Using this, one constructs a natural isomorphism
    \begin{equation}\label{internaladjunction}
    a_{X,Y}\colon\iHom(X,f_*Y) \xrightarrow{\sim} f_*\iHom(f^*X,Y)
    \end{equation}
    for all $X \in \C$ and $Y \in \D$, as the adjunct to the natural map
    \[f^*\iHom(X,f_*Y) \xrightarrow{\alpha_{X,f_*Y}} \iHom(f^*X, f^*f_*Y) \xrightarrow{\iHom(f^*X,\varepsilon)} \iHom(f^*X,Y).\]
\end{chunk}

\begin{chunk}\label{chunk:enhanced}
    Although most of the results in this paper do not require enhancements, it will occasionally be convenient to use them. This will allow us to avoid separability assumptions when considering modules over commutative algebra objects; recall that the category of modules over a commutative algebra in a tt-category need not inherit a triangulated structure in general unless the algebra is separable, see~\cite{Balmerseparable}.
    An \emph{enhanced tt-category} is a presentably symmetric monoidal stable $\infty$-category whose tensor product commutes with colimits separately in each variable.
    The homotopy category of such an enhanced tt-category is a tt-category with set-indexed coproducts. 
    An enhanced tt-category $\C$ is said to be rigidly-compactly generated if its homotopy category $h\C$ is rigidly-compactly generated. 
    Finally, an enhanced geometric functor $f^*\colon \C \to \D$ is a colimit-preserving symmetric monoidal functor between rigidly-compactly generated enhanced tt-categories. Note that the induced functor on homotopy categories $hf^*\colon h\C \to h\D$ is then a geometric functor in the sense of \cref{chunk:geometric}.
\end{chunk}

\begin{exs}\label{exs:geometric}
We record several examples of geometric functors of interest arising from a variety of fields which we will utilise throughout the paper:
\begin{enumerate}[label=(\roman*)]
    \item Given a commutative ring $R$, the derived category $\msf{D}(R)$ is a rigidly-compactly generated tt-category, with rigid-compact objects the perfect complexes. Given a map of commutative rings $f\colon R \to S$, the (derived) extension of scalars functor $f^* = S \otimes_R -\colon \msf{D}(R) \to \msf{D}(S)$ is a geometric functor, with $f_*$ given by restriction of scalars and $f^! = \Hom_R(S,-)$ given by the (derived) coextension of scalars.
    \item The previous example admits vast generalisation: one could instead take $f$ to be a map of (highly structured) commutative ring spectra, or of commutative equivariant ring spectra, and so on. More precisely, given any rigidly-compactly generated enhanced tt-category $\C$ and any commutative algebra $R$ in $\C$, the category $\Mod{\C}(R)$ of $R$-modules in $\C$, is again a rigidly-compactly generated enhanced tt-category with the subcategory of rigid-compact objects given by $\thick(R \otimes \C^c)$. If $R \to S$ is a map of commutative algebra objects in $\C$, then the extension of scalars functor $f^* = S \otimes_R -\colon \Mod{\C}(R) \to \Mod{\C}(S)$ is an enhanced geometric functor.
    \item Given a quasi-compact quasi-separated scheme $X$, the derived category $\msf{D}_\mrm{qc}(X)$ of $\mathcal{O}_X$-modules with quasi-coherent cohomology is a rigidly-compactly generated tt-category with rigid-compact objects the perfect complexes. Given a map $f\colon X \to Y$ of quasi-compact quasi-separated schemes, the (derived) inverse image functor $f^*\colon \msf{D}_\mrm{qc}(Y) \to \msf{D}_\mrm{qc}(X)$ is a geometric functor, with $f_*$ the (derived) pushforward, and $f^!$ the twisted inverse image functor.
    \item Let $G$ be a compact Lie group. The homotopy category of genuine equivariant $G$-spectra, denoted $\Sp_G$, is a rigidly-compactly generated tt-category with rigid-compact objects the finite $G$-spectra, i.e., those in the thick subcategory generated by the orbits $G/H_+$ as $H$ runs over all closed subgroups of $G$. Given a closed subgroup $H$ of $G$, the restriction functor $f^* = \mrm{res}^G_H\colon \Sp_G \to \Sp_H$ is a geometric functor. The right adjoint $f_*$ is the coinduction functor, and the further right adjoint $f^!$ has the form $S^L \wedge \mrm{res}^G_H(-)$ by the Wirthm\"uller isomorphism, where $L$ denotes the $H$-representation given by the tangent space to $G/H$ at the identity coset $eH$. 
    \item Continuing the previous example, if $G$ is a compact Lie group and $N$ is a normal subgroup of $G$, then the inflation functor $f^* = \mrm{inf}^G_{G/N}\colon \Sp_{G/N} \to \Sp_G$ is a geometric functor. The right adjoint $f_* = (-)^N$ is the categorical $N$-fixed points functor, but the further right adjoint $f^!$ is harder to describe. We refer the reader to \cite[\S 2.1 and \S 2.5]{HKS} and \cite[Remark 8.6]{SandersCompactness} for further discussion about $f^!$. 
    \item Let $G$ be a finite group and $k$ be a field of characteristic dividing the order of $G$. The stable module category $\mrm{StMod}(kG)$ is a rigidly-compactly generated tt-category with compact-rigid objects given by the finite dimensional modules. Given a subgroup $H$ of $G$, the restriction functor $f^* = \mrm{res}^G_H\colon \mrm{StMod}(kG) \to \mrm{StMod}(kH)$ is a geometric functor, with $f_*$ given by the coinduction $\Hom_{kH}(kG,-)$ (which is equivalent to the induction $kG \otimes_{kH} -$), and therefore $f^!$ is again the restriction functor.
\end{enumerate}
\end{exs}

\section{Proxy-small geometric functors}
In this section, we introduce the setup in which the paper operates. We define proxy-smallness in tt-categories and establish its key properties, in particular, its compatibility with geometric functors.

\subsection{Relative proxy-smallness, torsion, and completion}
We first recall the framework of torsion and completion in tt-categories in the sense of Greenlees~\cite{Greenleestate} and Hovey--Palmieri--Strickland~\cite{HPS}. In loc. cit., one builds torsion and completion functors from a set of compact objects; instead, we state this in the setting of proxy-small objects in the sense of~\cite{DGI}, although the proofs remain unchanged.

\begin{defn}\label{defn:proxysmall}
    Let $\C$ be a rigidly-compactly generated tt-category. We say that a set of objects $\mc{K}$ of $\C$ is \emph{proxy-small} if there exists a set of compact objects $\mc{W}$ such that $\mc{W} \subseteq \thicktensor{\mc{K}}$ and  $\mc{K} \subseteq \Loctensor{\mc{W}}$. We call $\mc{W}$ the \emph{witness} for the proxy-smallness of $\mc{K}$, and note this implies $\Loctensor{\mc{K}} = \Loctensor{\mc{W}}$.
\end{defn}

\begin{rem}\label{rem:localringps}
   Proxy-small objects are a weakening of the notion of compact objects (recall that some authors call compact objects small). For instance, given a commutative Noetherian local ring $(R,\m,k)$, the Koszul complex on the generators of $\m$ is a witness for the residue field $k = R/\m$ being proxy-small. In contrast, recall that the residue field $k$ is compact if and only if $R$ is regular. 
\end{rem}

\begin{rem}\label{rem:universalwitness}
    A priori, the definition of proxy-smallness depends on the choice of \emph{some} witness, but one can give an equivalent definition in terms which are intrinsic to $\K$. There is a canonical choice of witness - namely the set $\W_\K=\thicktensor{\K}\cap\C^c$. This is by definition a set of compact objects contained in $\thicktensor{\K}$, and hence, $\K$ is proxy-small if and only if $\Loctensor{\K}=\Loctensor{\thicktensor{\K}\cap\C^c}$. Despite the definition being independent of the choice of witness, we note that choosing a witness is often convenient. 
\end{rem}

\begin{chunk}
    Fix a proxy-small set $\mc{K}$ of objects in $\C$ and define $\Gamma_\K\C = \Loctensor{\mc{K}}$. The inclusion of the localising $\otimes$-ideal $\Loctensor{\mc{K}}$ generated by $\mc{K}$ (equivalently, $\mc{W}$) into $\C$ admits a right adjoint $\Gamma_\mc{K}$:
    \[\begin{tikzcd}
        \Gamma_\K\C \ar[r, hookrightarrow, yshift=1mm] & \C. \ar[l, yshift=-1mm, "\Gamma_{\mc{K}}"]
    \end{tikzcd}\]
    We call $\Gamma_{\mc{K}}$ the $\mc{K}$-torsion functor and say that an object $X \in \C$ is $\mc{K}$-torsion if the counit map $\Gamma_\mc{K}X \to X$ is an equivalence. (Note that we view $\Gamma_{\mc{K}}$ as an endofunctor on $\C$ by postcomposing with the inclusion.) 
\end{chunk}

\begin{chunk}
    Given $\mc{S} \subseteq \C$, we write \[\mc{S}^\perp \coloneqq \{X \in \C \mid \Hom(S,X) \simeq 0 \text{ for all $S \in \mc{S}$}\}\] and define $L_\K\C \coloneqq  \left(\Gamma_\K\C\right)^\perp$ and $\Lambda_\K\C 
    \coloneqq \left(L_\K\C\right)^\perp$. We obtain adjunctions
    \[\begin{tikzcd}L_\K\C \ar[r, hookrightarrow, yshift=-1mm] & \C \ar[l, yshift=1mm, "L_\mc{K}"']\end{tikzcd} \quad \text{and} \quad \begin{tikzcd}\Lambda_\K\C \ar[r, hookrightarrow, yshift=-1mm] & \C \ar[l, yshift=1mm, "\Lambda_\mc{K}"']\end{tikzcd}\]
    and say that an object $X \in \C$ is $\mc{K}$-local (resp., $\mc{K}$-complete) if the unit map $X \to L_\mc{K}X$ (resp., $X \to \Lambda_\mc{K}X$) is an equivalence. Moreover, the inclusion $L_\K\C \hookrightarrow \C$ also has a right adjoint $V_\K$.
\end{chunk}

These functors satisfy the following properties:
\begin{thm}\label{thm:localduality}
    Let $\C$ be a rigidly-compactly generated tt-category and $\mc{K}$ be a proxy-small set of objects of $\C$. 
    \begin{enumerate}
        \item There are natural triangles \[\Gamma_\mc{K}X \to X \to L_\mc{K}X \quad \text{and} \quad V_\K X \to X \to \Lambda_\K X\] for all $X \in \C$.
        \item The functors $\Gamma_\mc{K}$ and $L_\mc{K}$ are smashing; i.e., there are natural isomorphisms $\Gamma_\mc{K}X \simeq \Gamma_\mc{K}\1 \otimes X$ and $L_\mc{K}X \simeq L_\mc{K}\1 \otimes X$ for all $X \in \C$.
        \item Viewed as endofunctors on $\C$, the $\mc{K}$-torsion functor is internally left adjoint to the $\mc{K}$-completion functor; i.e., there is a natural isomorphism \[\iHom(\Gamma_\mc{K}X, Y) \simeq \iHom(X, \Lambda_\mc{K}Y)\] for all $X,Y \in \C$. In particular, $\Lambda_\K \simeq \iHom(\Gamma_\mc{K}\1,-)$.
        \item (\emph{The MGM equivalence)} The natural maps $\Gamma_\mc{K} \to \Gamma_\mc{K}\Lambda_\mc{K}$ and $\Lambda_\mc{K}\Gamma_\mc{K} \to \Lambda_\mc{K}$ are isomorphisms. Therefore, \[\begin{tikzcd}\Gamma_\K\C \ar[r, yshift=-1mm, "\Lambda_\mc{K}"'] & \Lambda_\K\C \ar[l, yshift=1mm, "\Gamma_\mc{K}"'] \end{tikzcd}\] is an equivalence of categories.
    \end{enumerate}
\end{thm}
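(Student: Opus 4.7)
The plan is to reduce each assertion to the classical theory of Bousfield localisations generated by a set of compact objects, as developed in \cite{Greenleestate, HPS}. The crucial observation is that proxy-smallness supplies precisely the compact generation required. Indeed, if $\W$ is a witness for $\K$, then by \cref{rem:universalwitness} we have $\Loctensor{\K} = \Loctensor{\W}$; since $\W \subseteq \C^c$ and compact objects are closed under tensor product (being rigid in a rigidly-compactly generated tt-category), the set $\W \otimes \C^c$ consists of compact objects of $\C$, and $\Gamma_\K\C = \Loc(\W \otimes \C^c)$ is a localising subcategory of $\C$ generated by a set of compact objects. Once this identification is in hand, each part follows from the standard theory applied to $\W \otimes \C^c$ in place of $\K$.

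For (1), the pair $(\Gamma_\K\C, L_\K\C)$ is a stable semi-orthogonal decomposition of $\C$: compact generation of $\Gamma_\K\C$ together with Brown representability produces $L_\K$ as a left adjoint to the inclusion $L_\K\C \hookrightarrow \C$ and the localisation triangle $\Gamma_\K X \to X \to L_\K X$. The existence of $V_\K$ then yields a second semi-orthogonal decomposition $(L_\K\C, \Lambda_\K\C)$, and the dual triangle $V_\K X \to X \to \Lambda_\K X$ follows. For (2), the smashing property is the classical fact that compactly generated localisations of a compactly generated tt-category are smashing; the key point is that $L_\K\C$ is closed under coproducts taken in $\C$, as compactness of a generating set for $\Gamma_\K\C$ allows the functors $\Hom(W,-)$ to commute with coproducts.

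For (3), smashing reduces the internal adjunction to ordinary tensor-hom:
\[
\iHom(\Gamma_\K X, Y) \simeq \iHom(\Gamma_\K\1 \otimes X, Y) \simeq \iHom(X, \iHom(\Gamma_\K\1, Y)),
\]
so it suffices to identify $\Lambda_\K Y \simeq \iHom(\Gamma_\K\1, Y)$. Applying $\iHom(-,Y)$ to the triangle $\Gamma_\K\1 \to \1 \to L_\K\1$ produces a triangle $\iHom(L_\K\1, Y) \to Y \to \iHom(\Gamma_\K\1, Y)$. One verifies that $\iHom(L_\K\1, Y)$ is $\K$-local, since for any torsion $T$ one has $\Hom(T, \iHom(L_\K\1, Y)) \simeq \Hom(T \otimes L_\K\1, Y) = 0$ by smashing, and that $\iHom(\Gamma_\K\1, Y)$ lies in $(L_\K\C)^\perp = \Lambda_\K\C$. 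Uniqueness of the second semi-orthogonal decomposition then identifies this triangle with $V_\K Y \to Y \to \Lambda_\K Y$, giving the formula for $\Lambda_\K$.

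For (4), applying $\Gamma_\K$ to the triangle $V_\K Y \to Y \to \Lambda_\K Y$ produces a triangle $\Gamma_\K V_\K Y \to \Gamma_\K Y \to \Gamma_\K\Lambda_\K Y$; the outer term vanishes because $V_\K Y \in L_\K\C = (\Gamma_\K\C)^\perp$ forces $\Gamma_\K V_\K Y \in \Gamma_\K\C \cap (\Gamma_\K\C)^\perp = 0$, yielding $\Gamma_\K \simeq \Gamma_\K\Lambda_\K$. The dual identification $\Lambda_\K\Gamma_\K \simeq \Lambda_\K$ is symmetric, and the equivalence $\Gamma_\K\C \simeq \Lambda_\K\C$ then follows formally. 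The main obstacle throughout is establishing smashing in (2): without compact generation of $\Gamma_\K\C$, smashing can genuinely fail, and this is precisely the technical payoff of assuming proxy-smallness rather than arbitrary set-smallness.
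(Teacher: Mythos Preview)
Your proposal is correct and follows essentially the same approach as the paper: the paper's proof is a one-line citation to \cite[Theorem 3.3.5]{HPS}, observing that proxy-smallness provides a set of compacts $\W$ with $\Loctensor{\K} = \Loctensor{\W}$, so one may invoke the classical compact-witness theory. You have simply unpacked that citation, giving the standard arguments for each part in more detail than the paper does.
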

\begin{proof}
    This is the content of \cite[Theorem 3.3.5]{HPS}, noting that the proxy-small assumption implies that there is a set of compacts $\mc{W}$ such that $\Loctensor{\mc{K}} = \Loctensor{\mc{W}}$ so that we may work with the set of compacts $\mc{W}$ instead.
\end{proof}

\begin{chunk}\label{monoidalcompletetorsion}
    Being the local objects of a Bousfield localisation, the category $\Lambda_\K\C$ of $\mc{K}$-complete objects is closed symmetric monoidal, with tensor product $X \widehat{\otimes} Y = \Lambda_\K(X \otimes Y)$, internal hom the same as that in $\C$, and tensor unit $\Lambda_\K\1$, see \cite[Theorem 3.5.2]{HPS}. By the MGM-equivalence, the category of $\mc{K}$-torsion objects $\Gamma_\K\C$ inherits a closed symmetric monoidal structure from $\Lambda_\K\C$, with tensor product the same as in $\C$, internal hom $\Gamma_\mc{K}\iHom(-,-)$, and unit object $\Gamma_\mc{K}\1$. Therefore $\Gammaf\colon \C \to \Gammaf\C$ and $\Lambdaf\colon \C \to \Lambdaf\C$ are closed, strong symmetric monoidal functors. Moreover, note that these make $\Gamma_\K\colon \Lambda_\K\C \to \Gamma_\K\C$ and $\Lambda_\K\colon \Gamma_\K\C \to \Lambda_\K\C$ into strong monoidal equivalences.
\end{chunk}

\begin{rem}
    It is important to note that $\Gamma_\K\C$ and $\Lambda_\K\C$ are both compactly generated tt-categories, but not rigidly so. For example, $\Gamma_\K\1 \in \Gamma_\K\C$ is rigid (since it is the tensor unit), but is not compact in general. This is in contrast to $L_\K\C$ which is rigidly-compactly generated since $L\colon \C \to L_\K\C$ is a smashing localisation and hence preserves compact objects.
\end{rem}

In order to state the behaviour of torsion and completion under geometric functors and their adjoints, we introduce the following definition which is inspired by \cite[Lemma 2.22(2)]{BCHV}. Recall from \cref{chunk:geometric} that a geometric functor $f^*\colon \C \to \D$ fits into an adjoint triple $f^* \dashv f_* \dashv f^!$.
\begin{defn}
    Let $f^*\colon \C \to \D$ be a geometric functor, and $\K$ be a set of objects of $\D$. We say that $\K$ is \emph{proxy-small relative to $f^*$} if there exists a set of compact objects $\W$ of $\C$ such that: 
    \begin{enumerate}
        \item $f_*\K$ is proxy-small in $\C$ with witness $\W$, and,
        \item $\K$ is proxy-small in $\D$ with witness $f^*\W$.
    \end{enumerate}
\end{defn}

\begin{rem}\label{rem:psrelloc}
    If $\K$ is proxy-small relative to $f^*$, then $\Loctensor{\mc{K}} = \Loctensor{f^*f_*\mc{K}}$ by applying $f^*$ to $\Loctensor{\mc{W}} = \Loctensor{f_*\mc{K}}$.
\end{rem}

\begin{ex}\label{ex:normalisation} 
    Let $S \to R \to k$ be maps of commutative algebras in an enhanced rigidly-compactly generated tt-category. Suppose that $S \to R$ is a \emph{normalisation}, meaning that $k$ and $R$ are both compact $S$-modules. Write $f^*$ for extension of scalars along $S \to R$. It is straightforward to check from the definition that $k$ is proxy-small as an $R$-module with witness $R \otimes_S k$. In particular, $k$ is proxy-small relative to $f^*$, taking $\W=\{k\} \subseteq \Mod{}(S)^c$.
\end{ex}

In general, we do not know whether $\K$ being proxy-small relative to $f^*$ is independent of the choice of witness $\W$. However, if $\K \subseteq \D^c$, then it is independent of this choice:
\begin{lem}\label{lem:independentofwitness}
    Let $f^*\colon \C \to \D$ be a geometric functor, and $\K$ be a set of objects of $\D^c$ such that $f_*\K$ is proxy-small in $\C$. Then $\K$ is proxy-small relative to $f^*$ if and only if $\thicktensor{\K} = \Loctensor{f^*f_*\K} \cap \D^c$.
\end{lem}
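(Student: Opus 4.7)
The plan is to reduce both directions to the identity $\Loctensor{f^*\W} = \Loctensor{f^*f_*\K}$, valid for any witness $\W$ of $f_*\K$. This follows from $f^*$ being coproduct-preserving, exact, and strong monoidal: the inclusion $\W \subseteq \thicktensor{f_*\K}$ gives $f^*\W \subseteq \thicktensor{f^*f_*\K} \subseteq \Loctensor{f^*f_*\K}$, and dually $f_*\K \subseteq \Loctensor{\W}$ gives $f^*f_*\K \subseteq \Loctensor{f^*\W}$. I would record this as a preliminary step, since it is used in both implications.

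For the forward direction, fix a relative witness $\W \subseteq \C^c$ for $\K$. The containment $\thicktensor{\K}\subseteq\Loctensor{f^*f_*\K}\cap\D^c$ is immediate: $\K \subseteq \D^c$ implies $\thicktensor{\K}\subseteq\D^c$, while $\K \subseteq \Loctensor{f^*\W} = \Loctensor{f^*f_*\K}$ gives $\thicktensor{\K}\subseteq\Loctensor{f^*f_*\K}$. For the reverse containment I would invoke the standard Neeman-type fact: since $\Loctensor{f^*\W}$ is a localising $\otimes$-ideal generated by the set of compact objects $f^*\W \subseteq \D^c$, one has $\Loctensor{f^*\W}\cap\D^c=\thicktensor{f^*\W}$. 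Combining with the relative-witness condition $f^*\W \subseteq \thicktensor{\K}$ gives
\[
\Loctensor{f^*f_*\K}\cap\D^c=\Loctensor{f^*\W}\cap\D^c=\thicktensor{f^*\W}\subseteq\thicktensor{\K},
\]
as desired.

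For the converse, assume $\thicktensor{\K}=\Loctensor{f^*f_*\K}\cap\D^c$ and let $\W \subseteq \C^c$ be any witness for the (assumed) proxy-smallness of $f_*\K$ in $\C$. I would then verify that $\W$ also witnesses relative proxy-smallness. Firstly, $f^*\W \subseteq \D^c$ since $f^*$ preserves compacts, and $f^*\W \subseteq \Loctensor{f^*f_*\K}$ from the preliminary step, so the hypothesis gives $f^*\W \subseteq \thicktensor{\K}$. Secondly, $\K \subseteq \thicktensor{\K} \subseteq \Loctensor{f^*f_*\K} = \Loctensor{f^*\W}$, again by the preliminary step. These are exactly the two conditions of being a relative witness.

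The only non-formal ingredient is the Neeman-type identification $\Loctensor{f^*\W}\cap\D^c=\thicktensor{f^*\W}$, which is the main obstacle but is a well-established consequence of compact generation applied to the localising $\otimes$-ideal generated by a set of compacts; I would cite it from the standard references on compactly generated triangulated categories rather than reprove it here. Everything else is a straightforward bookkeeping exercise using that $f^*$ is a geometric functor, so I do not anticipate difficulty beyond carefully tracking which side of the adjunction each object lives on.
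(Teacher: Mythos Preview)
Your proposal is correct and follows essentially the same approach as the paper: both proofs hinge on the Thomason localisation theorem (your ``Neeman-type fact'') to identify $\Loctensor{\mc{S}} \cap \D^c = \thicktensor{\mc{S}}$ for a set $\mc{S}$ of compacts, together with the observation $\Loctensor{f^*\W} = \Loctensor{f^*f_*\K}$. The only cosmetic difference is that for the forward direction the paper applies Thomason directly to $\K$ (via $\Loctensor{\K} = \Loctensor{f^*f_*\K}$ from relative proxy-smallness), whereas you apply it to $f^*\W$ and then use $\thicktensor{f^*\W} \subseteq \thicktensor{\K}$; both routes are equally short.
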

\begin{proof}
    For the forward direction, if $\K$ is proxy-small relative to $f^*$, then we have $\Loctensor{f^*f_*\K} \cap \D^c = \Loctensor{\K} \cap \D^c$ by \cref{rem:psrelloc}. Since $\K \subseteq \D^c$, we have $\Loctensor{\K} \cap \D^c = \thicktensor{\K}$ by the Thomason localisation theorem (e.g., see \cite[Theorem 7.1]{Greenleesbalmer}), so the forward implication holds. For the reverse implication, write $\W \subseteq \C^c$ for a witness for the proxy-smallness of $f_*\K$. We have $\Loctensor{f^*f_*\K} \cap \D^c = \Loctensor{f^*\W} \cap \D^c = \thicktensor{f^*\W}$ again using the Thomason localisation theorem, so $\thicktensor{f^*\W} = \thicktensor{\K}$. From this, one easily checks that $\K$ is proxy-small relative to $f^*$.
\end{proof}

We now show that relative proxy-smallness ensures that the torsion, localisation, and completion functors behave well under geometric functors and their adjoints.
There are various related results in the literature, for example~\cite[Lemma 3.1]{Shamir}, but we have been unable to locate the form that we require as given below.
\begin{prop}\label{prop:changeofbase}
    Let $f^*\colon \C \to \D$ be a proxy-small geometric functor, and $\K$ be a set of objects of $\D$ which is proxy-small relative to $f^*$. Then there are natural isomorphisms:
    \begin{enumerate}
    \item $f_*\Gamma_{\K}Y \simeq \Gamma_{f_*\K}f_*Y$ for all $Y \in \D$;
    \item $\Gamma_{\K}f^*X \simeq f^*\Gamma_{f_*\K}X$ for all $X \in \C$;
    \item $\Lambda_{\K}f^{!}X \simeq f^{!}\Lambda_{f_*\K}X$ for all $X \in \C$;
    \item $f_*\Lambda_{\K}Y \simeq \Lambda_{f_*\K}f_*Y$ for all $Y \in \D$.
    \end{enumerate}
\end{prop}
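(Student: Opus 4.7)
The plan is to reduce all four isomorphisms to the single key identity $f^*\Gamma_{f_*\K}\1_\C \simeq \Gamma_\K\1_\D$, which is the special case of (2) at $X=\1_\C$. Given this identity, part (2) follows from the smashing property of $\Gamma$ and the strong monoidality of $f^*$:
\[
f^*\Gamma_{f_*\K}X \simeq f^*\Gamma_{f_*\K}\1_\C \otimes f^*X \simeq \Gamma_\K\1_\D \otimes f^*X \simeq \Gamma_\K f^*X.
\]
Part (1) follows by combining this with the projection formula \cref{projformula}:
\[
f_*\Gamma_\K Y \simeq f_*(f^*\Gamma_{f_*\K}\1_\C \otimes Y) \simeq \Gamma_{f_*\K}\1_\C \otimes f_*Y \simeq \Gamma_{f_*\K}f_*Y.
\]
For (3), use the description $\Lambda_\K \simeq \iHom(\Gamma_\K\1_\D,-)$ from \cref{thm:localduality} together with the key identity and \cref{f1ofhom}: $\Lambda_\K f^!X \simeq \iHom_\D(f^*\Gamma_{f_*\K}\1_\C, f^!X) \simeq f^!\iHom_\C(\Gamma_{f_*\K}\1_\C, X) \simeq f^!\Lambda_{f_*\K}X$. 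Part (4) is analogous but uses \cref{internaladjunction}: $f_*\Lambda_\K Y \simeq f_*\iHom_\D(f^*\Gamma_{f_*\K}\1_\C, Y) \simeq \iHom_\C(\Gamma_{f_*\K}\1_\C, f_*Y) \simeq \Lambda_{f_*\K}f_*Y$.

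To establish the key identity, fix a witness $\W \subseteq \C^c$ for the relative proxy-smallness so that $\Gamma_{f_*\K} = \Gamma_\W$ on $\C$ and $\Gamma_\K = \Gamma_{f^*\W}$ on $\D$. Apply $f^*$ to the torsion-localisation triangle $\Gamma_\W\1_\C \to \1_\C \to L_\W\1_\C$ in $\C$ to obtain
\[
f^*\Gamma_\W\1_\C \to \1_\D \to f^*L_\W\1_\C.
\]
By the uniqueness of the torsion-localisation triangle for $\1_\D$ with respect to $\K$, it suffices to check (i) $f^*\Gamma_\W\1_\C \in \Gamma_\K\D$ and (ii) $f^*L_\W\1_\C \in L_\K\D$. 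Part (i) is immediate: since $f^*$ is cocontinuous and strong monoidal, it sends $\Loctensor{\W}$ into $\Loctensor{f^*\W} = \Gamma_\K\D$.

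The main obstacle is (ii). The key observation is that for each $W \in \W$, the object $W \otimes L_\W\1_\C$ is simultaneously $\W$-torsion (since $W \in \Loctensor{\W}$ and $\Loctensor{\W}$ is a localising $\otimes$-ideal, hence closed under tensoring with arbitrary objects) and $\W$-local (since the subcategory of $\W$-local objects is closed under tensoring with arbitrary objects, a consequence of $L_\W$ being smashing). Any object that is both $\W$-torsion and $\W$-local is zero, so $W \otimes L_\W\1_\C \simeq 0$ in $\C$, and applying $f^*$ yields $f^*W \otimes f^*L_\W\1_\C \simeq 0$ in $\D$ for every $W \in \W$. The full subcategory $\{Z \in \D : Z \otimes f^*L_\W\1_\C \simeq 0\}$ of $\D$ is a localising $\otimes$-ideal containing $f^*\W$, hence contains $\Loctensor{f^*\W} = \Gamma_\K\D$. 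In particular $\Gamma_\K\1_\D \otimes f^*L_\W\1_\C \simeq 0$, which by smashing is precisely the assertion that $f^*L_\W\1_\C$ is $\K$-local, completing the argument.
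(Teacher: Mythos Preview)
Your proof is correct and takes a genuinely different route from the paper's. The paper establishes (1) and (2) directly by verifying that $f_*\Gamma_\K Y$ and $f^*\Gamma_{f_*\K}X$ satisfy the respective universal properties of $\Gamma_{f_*\K}f_*Y$ and $\Gamma_\K f^*X$ (checking membership in the correct localising $\otimes$-ideal and the required Hom-isomorphism), and then deduces (3) and (4) by passing to right adjoints. You instead isolate the entire content in the single identity $f^*\Gamma_{f_*\K}\1_\C \simeq \Gamma_\K\1_\D$, prove it by transporting the torsion--localisation triangle along $f^*$, and then derive all four statements mechanically from smashing, the projection formula, and the internal adjunction isomorphisms \cref{f1ofhom} and \cref{internaladjunction}. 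Your approach makes the smashing nature of $\Gamma$ do the heavy lifting and highlights that only one computation is really needed; the paper's approach is slightly more self-contained in that (1) and (2) do not invoke the auxiliary Hom identities, though these are implicitly present when passing to (3) and (4) anyway. One small stylistic remark: your justification that $W \otimes L_\W\1_\C$ is both torsion and local is fine, but it is quicker to note directly that $W \otimes L_\W\1_\C \simeq L_\W W \simeq 0$ since $W$ is already $\W$-torsion.
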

\begin{proof}
    For (1), we show that $f_*\Gamma_{\K}X$ has the universal property of the $f_*\K$-torsion of $f_*X$. This amounts to showing that:
    \begin{enumerate}
        \item[(i)] $f_*\Gamma_{\K}X \in \Loctensor{f_*\K}$;
        \item[(ii)] the canonical map $\Hom(Z,f_*\Gamma_{\K}X) \to \Hom(Z,f_*X)$ is an equivalence for all $Z \in \Loctensor{f_*\K}$.
    \end{enumerate}
    For (i), we have $\Gamma_{\K}X \in \Loctensor{\K} = \Loctensor{f^*f_*\K} = \Loc(\D \otimes f^*f_*\K)$ by relative proxy-smallness (\cref{rem:psrelloc}). Therefore \[f_*\Gamma_{\K}X \in \Loc(f_*(\D \otimes f^*f_*\K)) = \Loc(f_*(\D) \otimes f_*(\K)) \subseteq \Loctensor{f_*\K}\] where the equality holds by the projection formula \cref{projformula}.

    Now for (ii), let $Z \in \Loctensor{f_*\K}$. We may identify the map
    \[\Hom(Z, f_*\Gamma_\K X) \to \Hom(Z, f_*X)\] with \[\Hom(f^*Z, \Gamma_\K X) \to \Hom(f^*Z,X)\] by adjunction. Since $Z \in \Loctensor{f_*\K}$, we have $f^*Z \in \Loctensor{f^*f_*\K} = \Loctensor{\K}$ by relative proxy-smallness (\cref{rem:psrelloc}). Therefore the latter map, and hence the former is an isomorphism as required, which finishes the proof of (1).

     For (2), as above we need to show that $f^*\Gamma_{f_*\K}Y$ has the universal property of $\Gamma_\K f^*Y$; i.e., that $f^*\Gamma_{f_*\K}Y \in \Loctensor{\K}$ and that for the canonical map $\alpha\colon f^*\Gamma_{f_*\K}Y \to f^*Y$, we have $\Hom(Z,\alpha)$ an isomorphism for all $Z \in \Loctensor{\K}$. Firstly, we have $f^*\Gamma_{f_*\K}Y \in \Loctensor{f^*f_*\K} = \Loctensor{\K}$ by relative proxy-smallness. Since $\K$ is proxy-small relative to $f^*$, there is a witness set $\W \subseteq \C^c$ for the proxy-smallness of $f_*\K$ such that $f^*\W$ is a witness for the proxy-smallness of $\K$. In particular, $\Loctensor{f^*\W} = \Loctensor{\K}$. Now consider the map $\alpha\colon f^*\Gamma_{f_*\K}Y \to f^*Y$. By a localising subcategory argument, $\Hom(Z,\alpha)$ is an isomorphism for all $Z \in \Loctensor{\K}$ if and only if it is an isomorphism for $Z = C \otimes f^*W$ for all $W \in \W$ and $C \in \D^c$. This is in turn equivalent to $\iHom(f^*W,\alpha)$ being an isomorphism for all $W \in \W$. Since the elements of $f^*\W$ are rigid, this is equivalent to $f^*W \otimes \alpha$ being an isomorphism for all $W \in \W$. This then follows from the strong monoidality of $f^*$ together with the fact that elements of $\W$ are $f_*\K$-torsion.

    (3) and (4) are the right adjoints to (1) and (2) respectively so follow immediately.
\end{proof}

\begin{rem}\label{rem:changeofbaseL}
    One can also deduce the behaviour of $L_\K$ and $L_{f_*\K}$ with respect to $f^*$ and $f_*$ from (1) and (2) in the previous result by using the triangle $\Gamma_\K X \to X \to L_\K X$.
\end{rem}

\subsection{Proxy-small geometric functors}
In this subsection we consider the special case of relative proxy-smallness for a geometric functor $f^*\colon \C \to \D$ when $\K = \{\1_\D\}$.  This is the setting which most of the paper will operate under, so we introduce simplified terminology for this case. Recall from \cref{chunk:geometric} that a geometric functor $f^*\colon \C \to \D$ fits into an adjoint triple $f^* \dashv f_* \dashv f^!$.

\begin{defn}\label{defn:f*proxysmall}
    Let $f^*\colon \C \to \D$ be a geometric functor. We say that $f^*$ is \emph{proxy-small} if $f_*\1_\D$ is proxy-small. We note that this is in fact equivalent to $\1_\D$ being proxy-small relative to $f^*$, see \cref{rem:automaticallyrelative} below.
\end{defn}

\begin{rem}\label{rem:automaticallyrelative}
    For any geometric functor $f^*\colon \C \to \D$ we have $\Loctensor{f^*f_*\1_\D} = \D$. Indeed, write $\eta$ and $\varepsilon$ for the unit and counit of the adjunction $(f^*,f_*)$. Since $f^*\1_\C \simeq \1_\D$, the triangle identities show that the composite 
    \[\1_\D \xrightarrow{\alpha} f^*f_*\1_\D \xrightarrow{\varepsilon} \1_\D\] is the identity where $\alpha = f^*(\eta)$. Therefore $\1_\D$ is a retract of $f^*f_*\1_\D$ proving that $\Loctensor{f^*f_*\1_\D} = \D$. As a consequence of this and \cref{lem:independentofwitness} we see that $f_*\1_\D$ is proxy-small if and only if $\1_\D$ is proxy-small relative to $f^*$. This means that when $f^*$ is a proxy-small geometric functor, we may use \cref{prop:changeofbase} with $\K = \{\1_\D\}$.
\end{rem}

\begin{rem}\label{rem:calg}
    If $R \to S$ is a map of commutative algebras in some enhanced rigidly-compactly generated tt-category $\C$, then we abuse terminology and say that $R \to S$ is proxy-small if the geometric functor $f^* = S \otimes_R -\colon \Mod{\C}(R) \to \Mod{\C}(S)$ is proxy-small. In the special case when $\C$ is the category of spectra, this is consistent with the terminology in \cite[Definition 4.14]{DGI}.
\end{rem}

The following is the key feature of proxy-small geometric functors. In order to state it, we set some notation that we will use throughout the paper.
\begin{nota}\label{nota:Gammaf}
Let $f^*\colon \C \to \D$ be a proxy-small geometric functor. We write $\Gammaf$ and $\Lambdaf$ for the torsion and completion functors associated to $f_*\1_\D$ in the sense of \cref{thm:localduality}. By \cref{prop:changeofbase}, note that $f^*$ factors as $f^*\Gammaf$, and similarly, $f^! \simeq f^! \Lambdaf$.
\end{nota}

\begin{prop}\label{Wequivalences}
    Let $f^*\colon \C \to \D$ be a proxy-small geometric functor and write $\W \subseteq \C^c$ for a witness. Let $\alpha$ be a map in $\C$. Then the following are equivalent:
    \begin{enumerate}
        \item $W \otimes \alpha$ is an isomorphism for all $W \in \mc{W}$;
        \item $\Gammaf\alpha$ is an isomorphism;
        \item $\Lambdaf\alpha$ is an isomorphism;
        \item $f^*\alpha$ is an isomorphism;
        \item $f^{!}\alpha$ is an isomorphism.
    \end{enumerate}
\end{prop}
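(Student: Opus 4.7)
The plan is to pass to the cone $X := \mrm{cone}(\alpha)$, so that condition ($i$) on $\alpha$ becomes the vanishing of the corresponding functor applied to $X$; this reduces the proposition to showing the five vanishing conditions ($1'$) $W \otimes X = 0$ for all $W \in \W$, ($2'$) $\Gammaf X = 0$, ($3'$) $\Lambdaf X = 0$, ($4'$) $f^*X = 0$, ($5'$) $f^{!}X = 0$ are equivalent. This reduction is legitimate because all five functors are triangulated. The common pivot will be the property ``$X\in L_{\W}\C$'' (\emph{i.e.}\ $X$ is $\W$-local): the equivalence of ($2'$) and ($3'$) with locality follows from the MGM equivalence of \cref{thm:localduality}(4) and the localisation triangle $\Gammaf X \to X \to L_{\W}X$.

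I would next prove ($1'$) iff ($2'$) using that $\Gammaf$ is smashing. One direction is a standard localising $\otimes$-ideal closure argument: $\{Y : Y \otimes X = 0\}$ is a localising $\otimes$-ideal containing $\W$, hence contains $\Loctensor{\W} = \Loctensor{f_*\1_\D} \ni \Gammaf\1$. The reverse uses the first witness condition: each $W \in \W$ lies in $\thicktensor{f_*\1_\D} \subseteq \Gammaf\C$, so $W$ is itself torsion and $W \otimes X \simeq W \otimes \Gammaf\1 \otimes X \simeq W \otimes \Gammaf X$.

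For ($1'$) iff ($4'$), I would apply the projection formula $X \otimes f_*\1_\D \simeq f_*f^*X$ in both directions. Given ($4'$), this gives $f_*\1_\D \otimes X = 0$, and closure of $\{Y : Y \otimes X = 0\}$ under the localising $\otimes$-ideal operations extends the vanishing to $\thicktensor{f_*\1_\D} \supseteq \W$. Given ($1'$), $f^*W \otimes f^*X = f^*(W \otimes X) = 0$ for each $W$, so the localising $\otimes$-ideal $\{Z \in \D : Z \otimes f^*X = 0\}$ contains $f^*\W$, and the second witness condition $\Loctensor{f^*\W} = \D$ forces $f^*X = 0$.

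Finally, for ($5'$) iff locality, I would use that $f_*Y \in \Gammaf\C$ for every $Y \in \D$ (by \cref{prop:changeofbase}(1) with the tautological $\K = \{\1_\D\}$); hence $X$ local forces $\Hom(f_*Y,X)=0$ for all $Y$, i.e.\ $f^{!}X = 0$. Conversely, $f^{!}X = 0$ yields $\Hom(f_*f^*C,X) \simeq \Hom(C \otimes f_*\1_\D, X)=0$ for all $C \in \C^c$ (again by the projection formula), and a localising-subcategory closure gives $\Hom(Z,X) = 0$ for all $Z \in \Loc(\C^c \otimes f_*\1_\D) = \Loctensor{f_*\1_\D} = \Gammaf\C$, \emph{i.e.}\ $X \in L_{\W}\C$. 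The only real obstacle is the careful bookkeeping of the two witness conditions in \cref{defn:f*proxysmall}: the first ($\Loctensor{\W} = \Loctensor{f_*\1_\D}$, together with $\W\subseteq\Gammaf\C$) powers almost every implication on the $\C$-side, while the second ($\Loctensor{f^*\W} = \D$) is essential precisely once, to deduce ($4'$) from ($1'$).
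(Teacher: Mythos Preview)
Your proof is correct. The overall strategy aligns with the paper's---the equivalences $(1)\Leftrightarrow(2)$ via smashingness and $(2)\Leftrightarrow(3)$ via the MGM equivalence are handled identically---but several of the remaining implications are argued differently.

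For the forward directions $(2)\Rightarrow(4)$ and $(3)\Rightarrow(5)$, the paper simply invokes the natural factorisations $f^*\simeq f^*\Gammaf$ and $f^!\simeq f^!\Lambdaf$ coming from \cref{prop:changeofbase}(2),(3). You instead argue from first principles: for $(1')\Rightarrow(4')$ you push the vanishing forward along $f^*$ and close up under the localising $\otimes$-ideal $\{Z\in\D:Z\otimes f^*X=0\}$, using the second witness condition $\Loctensor{f^*\W}=\D$; for $(2')\Rightarrow(5')$ you use that $f_*\D\subseteq\Gammaf\C$ together with adjunction. Your approach has the merit of isolating precisely where the second half of the proxy-small hypothesis enters (only in $(1')\Rightarrow(4')$), whereas the paper's route hides this inside \cref{prop:changeofbase}.

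For $(5)\Rightarrow(1)$ the paper takes a different tack: it shows $\iHom(f_*\1_\D,\alpha)$ is an isomorphism via \cref{internaladjunction2}, deduces $\iHom(W,\alpha)$ is an isomorphism for each $W\in\W\subseteq\thicktensor{f_*\1_\D}$, and then uses rigidity of $W$ (specifically that $W$ is a retract of $W\otimes DW\otimes W$) to conclude $W\otimes\alpha$ is an isomorphism. Your argument via locality---that $f^!X=0$ forces $\Hom(C\otimes f_*\1_\D,X)=0$ for all $C\in\C^c$, hence $X\in(\Gammaf\C)^\perp$---is equally valid and sidesteps the rigidity retract trick entirely.
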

\begin{proof}
    That (1) and (2) are equivalent follows from the facts that $\Gammaf$ is smashing and elements of $\W$ are $f_*\1_\D$-torsion. The equivalence between (2) and (3) is then immediate from the MGM-equivalence (\cref{thm:localduality}). 
    For (2) $\Rightarrow$ (4), if $\Gammaf\alpha$ is an isomorphism, then $f^*\alpha \simeq f^*\Gammaf\alpha$ is an isomorphism by \cref{prop:changeofbase}. Similarly, (3) $\Rightarrow$ (5) holds since we have $f^{!}\alpha \simeq f^{!}\Lambdaf\alpha$ by \cref{prop:changeofbase}. 
    
    To see that (4) $\Rightarrow$ (1), we have $f_*\1 _\D \otimes \alpha \simeq f_*f^*\alpha$ by the projection formula \cref{projformula}. As $\mc{W} \subseteq \thicktensor{f_*\1_\D}$, by a thick subcategory argument it then follows that $W \otimes \alpha$ is an isomorphism for all $W \in \mc{W}$.
    Finally, for (5) $\Rightarrow$ (1), we have 
    \[\Hom(X, \iHom(f_*\1_\D, \alpha)) \simeq \Hom(X, f_*f^!\alpha) \simeq \Hom(f^*X, f^{!}\alpha)\] for any $X \in \C$, by \cref{internaladjunction2} and adjunction. As such, $\iHom(f_*\1_\D, \alpha)$ is an isomorphism. Since $\W \subseteq \thicktensor{f_*\1_\D}$ we see that $\iHom(W,\alpha)$ is an isomorphism for each $W \in \W$. Now since each $W \in \W$ is rigid, $W$ is a retract of $W \otimes DW \otimes W$. Therefore $W \otimes \alpha$ is a retract of $W \otimes \iHom(W,\alpha) \otimes W$, and hence is an isomorphism.
\end{proof}

\begin{rem}
    We say that a map $\alpha$ in $\C$ is an \emph{$f^*$-isomorphism} if $f^*\alpha$ is an isomorphism, and similarly for $f^!$, $\Gammaf$, and $\Lambdaf$.
\end{rem}

\subsection{Conservativity}
We end this section with some observations about conservativity which we will need later on. Let $f^*\colon \C \to \D$ be a geometric functor. Recall that $f_*\colon \D \to \C$ is conservative if for any $Y \in \D$, $f_*Y \simeq 0$ implies $Y \simeq 0$. In the following we make use of $\infty$-categorical enhancements in order to consider the category of modules over a commutative algebra object. Recall our conventions regarding enhancements from~\cref{chunk:enhanced}.
\begin{prop}\label{cor:conservative}
    Let $f^*\colon \C \to \D$ be a geometric functor. The following are equivalent:
    \begin{enumerate}
        \item $f_*$ is conservative;
        \item $\thick(f^*(\C^c)) = \D^c$.
    \end{enumerate}
    If moreover $f^*$ is an enhanced geometric functor, these are also equivalent to:
    \begin{enumerate}
        \item[(3)] there is a symmetric monoidal equivalence $\overline{f}^*\colon \Mod{\C}(f_*\1_\D) \xrightarrow{\sim} \D$ making the diagram 
        \[ 
            \begin{tikzcd}[row sep=1cm]
            \Mod{\C}(f_*\1_\D) \ar[r, "\overline{f}^*"] & \D \\
            \C \ar[u, "F"] \ar[ur, "f^*"'] &
            \end{tikzcd}    
        \]
        commute, where $F = f_*\1_\D \otimes -$ denotes the free functor.
    \end{enumerate}
\end{prop}
\begin{proof}
    (1) $\Leftrightarrow$ (2) follows from a simple compact generation argument. Now suppose that $f^*$ is an enhanced geometric functor. The implication (1) $\Rightarrow$ (3) follows from \cite[Proposition 5.29]{MNN}. Finally for (3) $\Rightarrow$ (1) we have $f_* = U \circ \overline{f}_*$ where $\overline{f}_*$ denotes the inverse to $\overline{f}^*$ and $U$ is the forgetful functor $\Mod{\C}(f_*\1_\D) \to \C$ which is right adjoint to $F$. Since $U$ is conservative and $\overline{f}_*$ is an equivalence the claim follows.
\end{proof}

This has the following key consequence:
\begin{lem}\label{flowerconservative}
    Let $f^*\colon\C\to\D$ be a geometric functor whose right adjoint $f_*$ is conservative. 
    For any $\mc{X}\subseteq\D$, 
    \[f_*(\thicktensor{\mc{X}})\subseteq\thicktensor{f_*\mc{X}} \quad \text{and} \quad f_*(\Loctensor{\mc{X}})\subseteq\Loctensor{f_*\mc{X}}.\]
\end{lem}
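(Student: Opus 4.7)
The plan is to reduce both statements to a single style of argument: form the preimage of the target subcategory under $f_*$, check it is a thick (resp.\ localising) $\otimes$-ideal of $\D$ containing $\mc{X}$, and conclude by the universal property of $\thicktensor{\mc{X}}$ (resp.\ $\Loctensor{\mc{X}}$). The crucial role of conservativity is to ensure the preimage is a $\otimes$-ideal, via the characterisation $\D^c = \thick(f^*\C^c)$ recorded in \cref{chunk:flowerconservative}.

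For the thick statement, I would define
\[\mc{S} \coloneqq \{\, Y \in \D \mid f_*Y \in \thicktensor{f_*\mc{X}} \,\}.\]
This contains $\mc{X}$ tautologically and is thick because $f_*$ is triangulated and $\thicktensor{f_*\mc{X}}$ is thick. The remaining (main) step is to verify that $\mc{S}$ is closed under tensoring with compact objects of $\D$. Fixing $Y \in \mc{S}$, I would consider
\[\mc{T} \coloneqq \{\, C \in \D^c \mid C \otimes Y \in \mc{S} \,\},\]
which is thick in $\D^c$. The projection formula \cref{projformula} gives $f_*(f^*A \otimes Y) \simeq A \otimes f_*Y$ for any $A \in \C^c$; since $\thicktensor{f_*\mc{X}}$ is a thick $\otimes$-ideal of $\C$ and $f_*Y \in \thicktensor{f_*\mc{X}}$, this lies in $\thicktensor{f_*\mc{X}}$. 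Hence $f^*\C^c \subseteq \mc{T}$, and conservativity of $f_*$ gives $\D^c = \thick(f^*\C^c) \subseteq \mc{T}$. So $\mc{S}$ is a thick $\otimes$-ideal of $\D$ containing $\mc{X}$, and therefore contains $\thicktensor{\mc{X}}$, which is exactly the claim $f_*(\thicktensor{\mc{X}}) \subseteq \thicktensor{f_*\mc{X}}$.

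The localising version runs along identical lines with ``thick'' replaced by ``localising''. Set $\mc{S}' \coloneqq \{\, Y \in \D \mid f_*Y \in \Loctensor{f_*\mc{X}} \,\}$; this is localising since $f_*$ preserves coproducts (\cref{chunk:geometric}). To see it is a $\otimes$-ideal, fix $Y \in \mc{S}'$ and let $\mc{T}' \coloneqq \{\, D \in \D \mid D \otimes Y \in \mc{S}' \,\}$, which is localising in $\D$. For any $A \in \C$ (not necessarily compact), the projection formula again gives $f_*(f^*A \otimes Y) \simeq A \otimes f_*Y \in \Loctensor{f_*\mc{X}}$, since localising $\otimes$-ideals are closed under tensoring with arbitrary objects. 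Hence $f^*\C \subseteq \mc{T}'$, so $\Loc(f^*\C^c) \subseteq \mc{T}'$; but by conservativity $\D = \Loc(\D^c) = \Loc(f^*\C^c)$, so $\mc{T}' = \D$. Thus $\mc{S}'$ is a localising $\otimes$-ideal containing $\mc{X}$ and therefore contains $\Loctensor{\mc{X}}$.

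The only genuine obstacle is the $\otimes$-ideal closure of $\mc{S}$ (resp.\ $\mc{S}'$), and this is precisely where conservativity enters to convert the projection formula (which a priori only moves $f^*$-images of $\C$-objects across $f_*$) into closure under tensoring with all of $\D^c$ (resp.\ $\D$). Everything else is a standard thick/localising subcategory argument.
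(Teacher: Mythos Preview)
Your proof is correct and uses the same ingredients as the paper's argument: the projection formula \cref{projformula} together with the characterisation $\D^c = \thick(f^*\C^c)$ from \cref{chunk:flowerconservative}. The paper organises these slightly differently, working forward from the generator description $\thicktensor{\mc{X}} = \thick(\mc{X} \otimes f^*(\C^c))$ and pushing forward directly, whereas you pull back the target and verify the preimage is a thick $\otimes$-ideal; both are standard packagings of the same content.
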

\begin{proof}
    Note that $\thicktensor{\mc{X}}=\thick(\mc{X}\otimes\D^c)=\thick(\mc{X}\otimes f^*(\C^c))$ where the latter equality holds by conservativity (see \cref{cor:conservative}(2)). As $f_*$ is an exact functor, we therefore have \[f_*\thick(\mc{X} \otimes f^*(\C^c)) \subseteq\thick(f_*(\mc{X}\otimes f^*(\C^c))) = \thick(f_*\mc{X} \otimes \C^c) = \thicktensor{f_*\mc{X}}\] by the projection formula (\ref{projformula}) as claimed. The statement with localising $\otimes$-ideals follows similarly since $f_*$ is coproduct-preserving.
\end{proof}

\section{Torsion rigidity and Grothendieck duality}
In this section, given a proxy-small geometric functor $f^*\colon \C \to \D$ we characterise the objects $M$ of $\C$ such that $\Gammaf M$ is rigid in the torsion category $\Gammaf\C$ as those for which $f^*M$ is rigid in $\D$, see \cref{f*rigid}. We call such objects \emph{$f^*$-rigid}. We show that such objects satisfy many structural properties, most notably, Grothendieck duality. We also explain how $f^*$-rigidity is closely related to local dualisability in the strata of tt-categories as investigated in~\cite{BIKP}. The substance of the main results in this section is contained in \cref{subsec:rigidandGrothendieck}. However, we first require some coherence results which are proved for the careful reader in \cref{subsec:coherences}.

\subsection{Coherences}\label{subsec:coherences}
In order to prove these coherence results, we begin by setting some terminology and notation. 
\begin{chunk}
    For clarity and brevity, we refer to an adjoint pair of morphisms \[(M\otimes X\to Y)\longleftrightarrow (X\to\iHom(M,Y))\] in a closed symmetric monoidal category as $M$-\emph{adjuncts} of each other. For example, the evaluation map \[\ev_{M,N}\colon M\otimes\iHom(M,N)\to N\] is the $M$-adjunct of $\mrm{id}_{\iHom(M,N)}$. 
\end{chunk}

We first let $f^*\colon\C\to\D$ be a strong symmetric monoidal functor between closed symmetric monoidal categories, and refer the reader to \cite[Section III]{LM(M)S} for further details of the discussion here. We choose this notation to suggest that $f^*$ is a geometric functor but do not actively assume so here - the following applies to other monoidal functors such as $\Gammaf\colon \C \to \Gammaf\C$. 

\begin{chunk}\label{chunk:alphaiso}
    Consider the natural map
    \[\alpha_{M,N}\colon f^*\iHom(M,N) \to \iHom(f^*M,f^*N)\]
    constructed as the $f^*M$-adjunct of \[f^*M \otimes f^*\iHom(M,N) \simeq f^*(M \otimes \iHom(M,N)) \xrightarrow{f^*(\ev_{M,N})} f^*N\]
    
    This is an isomorphism provided that $M$ is rigid. In particular, if $M$ is rigid, then
    \[\alpha_{M,\1}\colon f^*(DM) \xrightarrow{\sim} D(f^*M)\] is a natural isomorphism.
\end{chunk}

\begin{chunk}\label{chunk:rigidspreserved} An important observation is that rigid objects are preserved by $f^*$. Indeed, we note that the evaluations fit into the following commutative diagram:
\begin{equation}\label{diagram:evmonoidal}
\begin{tikzcd}[row sep=1cm, column sep=1cm]
	{f^*\iHom(M,N)\otimes f^*M} & {f^*(\iHom(M,N)\otimes M)} \\
	{\iHom(f^*M,f^*N)\otimes f^*M} & f^*N.
	\arrow["\simeq", from=1-1, to=1-2]
	\arrow["{\alpha_{M,N}\otimes f^*M}"', from=1-1, to=2-1]
	\arrow["{f^*(\ev_{M,N})}", from=1-2, to=2-2]
	\arrow["{\ev_{f^*M,f^*N}}"', from=2-1, to=2-2]
\end{tikzcd}
\end{equation}
    
    As a consequence, the natural transformation \[\rho_{M,N}\colon DM\otimes N\to \iHom(M,N)\] 
    constructed as the $M$-adjunct of
    \[M \otimes DM \otimes N \xrightarrow{\mrm{ev}_{M,\1} \otimes N} \1_\C \otimes N \simeq N\]
    fits into a commutative diagram
     \begin{equation}\label{diagram:rhomonoidal}
    \begin{tikzcd}[row sep=1cm, column sep=1cm]
        f^*(DM) \otimes f^*N \ar[r, "\simeq"] \ar[d, "\alpha_{M,\1} \otimes f^*N"'] & f^*(DM \otimes N) \ar[r, "f^*(\rho_{M,N})"] & f^*\iHom(M,N) \ar[d, "\alpha_{M,N}"] \\
        D(f^*M) \otimes f^*N \ar[rr, "\rho_{f^*M,f^*N}"'] & & \iHom(f^*M,f^*N).
    \end{tikzcd}
    \end{equation}
    These coherences are recorded in ~\cite{FHM} as diagrams (1.7) and (1.8).
    In particular, by taking $M = N$ in \cref{diagram:rhomonoidal}, it follows from \cref{chunk:alphaiso} that if $M$ is rigid, then so is $f^*M$. 
\end{chunk}

\begin{chunk}\label{coherenceHypothesis}
We now prove some further coherence results. We henceforth assume that the strong symmetric monoidal functor $f^*$ has a right adjoint $f_*$, noting that $f_*$ is then automatically lax symmetric monoidal (the pair $(f^*,f_*)$ is a \emph{monoidal adjunction} between closed symmetric monoidal categories). Once again we are evoking notation for geometric functors but do not make that assumption. For such a general monoidal adjunction, we note that the internal adjunction transformation \[a_{M,Y}\colon\iHom(M,f_*Y)\xrightarrow{\simeq}f_*\iHom(f^*M,Y)\]
of \cref{internaladjunction} remains an isomorphism, however the projection formula need not hold, in that the natural transformation \[p_{M,Y}\colon M\otimes f_*Y\to f_*(f^*M\otimes Y)\] of \cref{projformula} need not be an isomorphism as it is for geometric functors.
\end{chunk}

We wish to show the compatibility of this adjunction with the tensor-hom adjunctions, recorded as \cref{cor:tensorhominteraction}, but we shall first prove a special case.

\begin{lem}\label{lem:evAdjunction}
    Let $(f^*,f_*)$ be as in \cref{coherenceHypothesis}, $M\in\C$, and $Y\in\D$. The diagram
    \begin{equation}\label{evcoherence}
    \begin{tikzcd}[row sep=1cm, column sep=1cm]
        \iHom(M,f_*Y)\otimes M \ar[r, "a \otimes M", "\simeq"'] \ar[d, "{\ev_{M, f_*Y}}"'] & f_*\iHom(f^*M,Y) \otimes M \ar[d, "p"] \\
        f_*Y & f_*(\iHom(f^*M,Y) \otimes f^*M) \ar[l, "{f_*(\ev_{f^*M,Y})}"]
    \end{tikzcd}
    \end{equation}
    commutes.
\end{lem}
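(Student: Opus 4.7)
The plan is to prove that \eqref{evcoherence} commutes by passing to $f_*$-adjuncts. Both composites are morphisms $\iHom(M,f_*Y)\otimes M\to f_*Y$ in $\C$, and under the $f^*\dashv f_*$ adjunction in $\D$ these correspond to morphisms $f^*\iHom(M,f_*Y)\otimes f^*M\to Y$. I verify that these adjuncts agree.

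For the direct path, the $f_*$-adjunct of $\ev_{M,f_*Y}$ is immediately the composite
\[
f^*\iHom(M,f_*Y)\otimes f^*M \simeq f^*\bigl(\iHom(M,f_*Y)\otimes M\bigr) \xrightarrow{f^*\ev_{M,f_*Y}} f^*f_*Y \xrightarrow{\varepsilon_Y} Y.
\]
For the long path, I would unpack the definitions of $a$ and $p$ from \cref{constructionprojformula,constructioninternaladjunction}: namely, $a_{M,Y}$ is the $f_*$-adjunct of $b_{M,Y}\coloneqq\iHom(f^*M,\varepsilon_Y)\circ\alpha_{M,f_*Y}$, while the $f_*$-adjunct of $p_{M,\iHom(f^*M,Y)}$ is $\mrm{id}_{f^*M}\otimes\varepsilon$ (composed with the monoidal coherence isomorphism identifying $f^*(M\otimes f_*Z)$ with $f^*M\otimes f^*f_*Z$). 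A routine calculation then simplifies the $f_*$-adjunct of $f_*(\ev_{f^*M,Y})\circ p_{M,\iHom(f^*M,Y)}\circ(a_{M,Y}\otimes M)$ to
\[
\ev_{f^*M,Y}\circ\bigl(\iHom(f^*M,\varepsilon_Y)\otimes f^*M\bigr)\circ\bigl(\alpha_{M,f_*Y}\otimes f^*M\bigr).
\]

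The concluding step invokes two naturalities. First, by the defining property of $\alpha_{M,f_*Y}$ recalled in \cref{chunk:alphaiso} (it is the $f^*M$-adjunct of $f^*\ev_{M,f_*Y}$), one has $\ev_{f^*M,f^*f_*Y}\circ(\alpha_{M,f_*Y}\otimes f^*M)=f^*\ev_{M,f_*Y}$. Second, naturality of evaluation in the second variable applied to $\varepsilon_Y$ gives $\ev_{f^*M,Y}\circ(\iHom(f^*M,\varepsilon_Y)\otimes f^*M)=\varepsilon_Y\circ\ev_{f^*M,f^*f_*Y}$. Splicing these, the $f_*$-adjunct of the long path simplifies to $\varepsilon_Y\circ f^*\ev_{M,f_*Y}$, which is precisely the adjunct of the direct path.

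The main obstacle is purely coherence bookkeeping: one must consistently track the $f^*\dashv f_*$ adjuncts, the strong monoidal structure isomorphism of $f^*$, and the symmetry isomorphism needed to bring $f_*\iHom(f^*M,Y)\otimes M$ into the form $M\otimes f_*\iHom(f^*M,Y)$ to which the standard $p_{X,Z}$ applies. There is no deeper mathematical input beyond the definitions of $a$, $p$, and $\alpha$ already recorded in the excerpt.
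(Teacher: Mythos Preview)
Your proof is correct and follows essentially the same approach as the paper: both pass to $(f^*,f_*)$-adjuncts, identify the adjunct of the long path using the definitions of $a$ and $p$, and then reduce to the short path's adjunct via the coherence \eqref{diagram:evmonoidal} (your first ``naturality'') together with naturality of $\ev_{f^*M,-}$ (your second). The paper packages these two steps as the two commuting squares of a single diagram, whereas you write the same argument linearly.
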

\begin{proof}
    We shall verify that the ``right-down-left'' composite has the same adjunct as $\ev_{M,f_*Y}$ under $(f^*,f_*)$. To do so, we consider the following diagram:
    \begin{equation}\label{evAdjunctionPf}
        \begin{tikzcd}[row sep=1cm, column sep=1cm]
    	{f^*\iHom(M,f_*Y)\otimes f^*M} & {f^*(\iHom(M,f_*Y)\otimes M)} \\
    	{\iHom(f^*M,f^*f_*Y)\otimes f^*M} & {f^*f_*Y} \\
    	{\iHom(f^*M,Y)\otimes f^*M} & Y
    	\arrow["\simeq", from=1-1, to=1-2]
    	\arrow["{\alpha_{M,f_*Y}\otimes f^*M}"', from=1-1, to=2-1]
    	\arrow["{f^*(\ev_{M,f_*Y})}", from=1-2, to=2-2]
    	\arrow["{\ev_{f^*M,f^*f_*Y}}"', from=2-1, to=2-2]
    	\arrow["{\iHom(f^*M,\varepsilon_Y)\otimes f^*M}"', from=2-1, to=3-1]
    	\arrow["{\varepsilon_Y}", from=2-2, to=3-2]
    	\arrow["{\ev_{f^*M,Y}}"', from=3-1, to=3-2]
    \end{tikzcd}
    \end{equation}
    
    The upper square commutes by \cref{diagram:evmonoidal}, and the lower square by naturality of $\ev_{f^*M,-}$.

    Note that the right hand composite in \cref{evAdjunctionPf} is precisely the adjunct of $\ev_{M,f_*Y}$, and so we shall show the ``right-down-left'' composite of \cref{evcoherence} has adjunct given by the ``left-down-down-right'' composite $f^*(\iHom(M,f_*Y)\otimes Y)\to Y$ in \cref{evAdjunctionPf}.

    Firstly, to identify the adjunct of $p\circ (a\otimes M)$ we use a general fact that follows easily from definitions: if $\gamma\colon X\to f_*Y$ is a morphism with adjunct $\gamma^\sharp\colon f^*X \to Y$, then for $Z\in\C$, the adjunct of the composite \[X\otimes Z\xrightarrow{\gamma\otimes Z}f_*Y\otimes Z\xrightarrow{p} f_*(Y\otimes f^*Z)\] is the composite \[f^*(X\otimes Z)\simeq f^*X\otimes f^*Z\xrightarrow{\gamma^\sharp\otimes f^*Z} Y\otimes f^*Z\]

    We apply this to $\gamma = a\colon\iHom(M,f_*Y)\to f_*\iHom(f^*M,Y)$ which is defined as the adjunct of \[f^*\iHom(M,f_*Y)\xrightarrow{\alpha}\iHom(f^*M,f^*f_*Y)\xrightarrow{\iHom(f^*M,\varepsilon)}\iHom(f^*M,Y)\] so that $p\circ(a\otimes M)$ has adjunct given by the ``left-down-down'' composite in \cref{evAdjunctionPf}. 
    
    From the definition of adjuncts, given maps $\gamma\colon X \to f_*Y$ and $\delta\colon Y \to Z$, it follows that the adjunct of $f_*(\delta)\circ \gamma\colon X \to f_*Z$ is given by the composite
    \[f^*X \xrightarrow{\gamma^\sharp} Y \xrightarrow{\delta} Z.\] Applying this to $\gamma = p \circ (a \otimes M)$ and $\delta = \mrm{ev}_{f^*M,Y}$, we therefore deduce that the ``right-down-left'' composite of \cref{evcoherence} has adjunct given by the ``left-down-down-right'' composite $f^*(\iHom(M,f_*Y)\otimes Y)\to Y$ in \cref{evAdjunctionPf} as required.
\end{proof}

\begin{cor}\label{cor:tensorhominteraction}
    Let $(f^*,f_*)$ be as in \cref{coherenceHypothesis}. For a morphism $\gamma\colon Y\to\iHom(f^*M,X)$ in $\D$, let $\overline{\gamma}\colon f^*M\otimes Y\to X$ be its $f^*M$-adjunct.
    The $M$-adjunct of the composite \[f_*Y\xrightarrow{f_*(\gamma)} f_*\iHom(f^*M,X)\xrightarrow{a^{-1}}\iHom(M,f_*X)\] is the composite \[M\otimes f_*Y\xrightarrow{p} f_*(f^*M\otimes Y)\xrightarrow{f_*(\overline{\gamma})}f_*X.\]
\end{cor}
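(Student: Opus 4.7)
The plan is to unfold the definition of the $M$-adjunct on the left-hand side and then reassemble the right-hand side via a direct diagram chase which invokes only \cref{lem:evAdjunction} and naturality of the projection formula. First I would note that by definition the $M$-adjunct of $a^{-1}\circ f_*(\gamma)\colon f_*Y\to\iHom(M,f_*X)$ is obtained by tensoring on the left with $M$ and post-composing with the evaluation $\ev_{M,f_*X}$. The strategy is to commute this evaluation past the $a^{-1}$ factor using the coherence just established in \cref{lem:evAdjunction}, and then to commute the remaining $f_*(\gamma)$ past the resulting projection formula using its naturality in the module variable.

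More concretely, after unfolding the $M$-adjunct one may rewrite \cref{lem:evAdjunction} in the form $\ev_{M,f_*X}\circ(M\otimes a^{-1}) = f_*(\ev_{f^*M,X})\circ p$, where $p$ denotes the projection formula morphism $M\otimes f_*\iHom(f^*M,X)\to f_*(f^*M\otimes\iHom(f^*M,X))$. Applying this identity converts the inner portion of the composite into one that ends with $f_*(\ev_{f^*M,X})$ and begins with $p\circ(M\otimes f_*(\gamma))$. Naturality of the projection formula in its right-hand argument then replaces $p\circ(M\otimes f_*(\gamma))$ by $f_*(f^*M\otimes\gamma)\circ p$, and combining the two applications of $f_*$ with the definition $\overline{\gamma} = \ev_{f^*M,X}\circ(f^*M\otimes\gamma)$ produces precisely $f_*(\overline{\gamma})\circ p$.

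The proof is essentially formal once \cref{lem:evAdjunction} is available, so I do not anticipate any real obstacle. The only mildly delicate point is the bookkeeping of the symmetry isomorphisms when comparing the orderings in \cref{lem:evAdjunction} (where the $M$ factor appears on the right of the tensor product) with those in the statement of the corollary (where $M$ sits on the left); this is handled automatically by symmetric-monoidal coherence but should be spelled out in the written proof to keep the diagrams unambiguous.
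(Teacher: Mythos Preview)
Your proposal is correct and is essentially the same proof as the paper's: both reduce to the two ingredients of \cref{lem:evAdjunction} (rewritten as $\ev_{M,f_*X}\circ(M\otimes a^{-1})=f_*(\ev_{f^*M,X})\circ p$) and naturality of the projection formula, with the paper packaging these as two commuting squares in a single diagram while you run the same chase linearly. Your remark about the symmetry bookkeeping between the two orderings of the tensor factors is well observed and matches the paper's silent use of coherence.
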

\begin{proof}
    Consider the following diagram \[\begin{tikzcd}[row sep=1cm, column sep=1cm]
	{M\otimes f_*Y} & {f_*(f^*M\otimes Y)} \\
	{M\otimes f_*\iHom(f^*M,X)} & {f_*(f^*M\otimes\iHom(f^*M,X))} \\
	{M\otimes\iHom(M,f_*X)} & {f_*X}
	\arrow["p", from=1-1, to=1-2]
	\arrow["{M\otimes f_*(\gamma)}"', from=1-1, to=2-1]
	\arrow["{f_*(f^*M\otimes\gamma)}", from=1-2, to=2-2]
	\arrow["p", from=2-1, to=2-2]
	\arrow["{M\otimes a^{-1}}"', from=2-1, to=3-1]
	\arrow["{f_*(\ev)}", from=2-2, to=3-2]
	\arrow["\ev"', from=3-1, to=3-2]
\end{tikzcd}\] where the upper square commutes by naturality of $p$ and the lower by \cref{lem:evAdjunction}. The entire left-bottom composite is the $M$-adjunct of $a^{-1}\circ f_*(\gamma)$. The right-hand vertical composite is precisely $f_*$ applied to the $f^*M$-adjunct of $\gamma$, and thus we identify the entire composite with $f_*(\overline\gamma)\circ p$ as required.
\end{proof}

\begin{chunk}\label{chunk:nudefn}
    We can use this to prove the coherence of other monoidal constructions. Consider the natural transformation \[\nu_{M,X,N}\colon\iHom(M,X)\otimes N\to\iHom(M,X\otimes N)\] constructed as the $M$-adjunct of \[\ev_{M,X}\otimes N\colon M\otimes\iHom(M,X)\otimes N\to X\otimes N.\]
\end{chunk}
 The proof of our characterisation of $f^*$-rigid objects (see \cref{f*rigid}) will require the following coherence property of $\nu$:
\begin{lem}\label{lem:nudiagramcommutes}
    Let $(f^*,f_*)$ be as in \cref{coherenceHypothesis}. For all $M,N\in\C$ and $Y\in \D$, the following diagram commutes:
    \begin{equation}\label{nudiagram}
        \begin{tikzcd}[column sep=0.6cm]
	{\iHom(M,f_*Y)\otimes N} & & {\iHom(M,f_*Y\otimes N)} \\
	{f_*\iHom(f^*M,Y)\otimes N} & & \\
	{f_*(\iHom(f^*M,Y)\otimes f^*N)} &
    {f_*\iHom(f^*M,Y\otimes f^*N)} & {\iHom(M,f_*(Y\otimes f^*N))}.
	\arrow["{\nu}", from=1-1, to=1-3]
	\arrow["\simeq", "{a\otimes N}"', from=1-1, to=2-1]
	\arrow["{\iHom(M,p)}", from=1-3, to=3-3]
	\arrow["p"', from=2-1, to=3-1]
	\arrow["{f_*(\nu)}"', from=3-1, to=3-2]
	\arrow["\simeq", "{a^{-1}}"', from=3-2, to=3-3]
    \end{tikzcd}
    \end{equation}
\end{lem}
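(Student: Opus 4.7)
The plan is to take $M$-adjuncts of the two composites around \cref{nudiagram} and verify they agree, using the universal property that a map $A \to \iHom(M,B)$ is determined by its $M$-adjunct $M \otimes A \to B$.

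First I would compute the $M$-adjunct of the top-right composite $\iHom(M,p) \circ \nu_{M,f_*Y,N}$. By the definition of $\nu$ (\cref{chunk:nudefn}) together with naturality of $\ev_{M,-}$, this $M$-adjunct is the composite
\[M \otimes \iHom(M,f_*Y) \otimes N \xrightarrow{\ev_{M,f_*Y} \otimes N} f_*Y \otimes N \xrightarrow{p} f_*(Y \otimes f^*N).\]

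Next I would compute the $M$-adjunct of the left-bottom composite $a^{-1} \circ f_*(\nu) \circ p \circ (a \otimes N)$. Applying \cref{cor:tensorhominteraction} to the morphism $\gamma = \nu_{f^*M,Y,f^*N}$, whose $f^*M$-adjunct is $\ev_{f^*M,Y} \otimes f^*N$, identifies the $M$-adjunct of $a^{-1} \circ f_*(\nu)$ as $f_*(\ev_{f^*M,Y} \otimes f^*N) \circ p$. Precomposition with $M \otimes (p \circ (a \otimes N))$ then gives the $M$-adjunct of the full bottom composite as
\[f_*(\ev_{f^*M,Y} \otimes f^*N) \circ p \circ (M \otimes p) \circ (M \otimes a \otimes N).\]

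To match the two adjuncts, I would apply \cref{lem:evAdjunction} (after tensoring with $N$ on the right) to rewrite $\ev_{M,f_*Y} \otimes N$ in terms of $(f_*(\ev_{f^*M,Y}) \otimes N) \circ (p \otimes N) \circ (M \otimes a \otimes N)$. Naturality of $p$ then lets it be pulled past $f_*(\ev_{f^*M,Y}) \otimes N$, transforming the adjunct of the top composite into
\[f_*(\ev_{f^*M,Y} \otimes f^*N) \circ p \circ (p \otimes N) \circ (M \otimes a \otimes N).\]
The remaining discrepancy between $p \circ (p \otimes N)$ and $p \circ (M \otimes p)$ as parallel maps $M \otimes f_*\iHom(f^*M,Y) \otimes N \to f_*(f^*M \otimes \iHom(f^*M,Y) \otimes f^*N)$ is precisely the associativity coherence for the projection formula, expressing that $p$ is compatible with the monoidal associator; this holds for any monoidal adjunction. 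I expect the main obstacle to be careful bookkeeping of these coherences, and I would organize it via an explicit diagram chase in the spirit of the proof of \cref{lem:evAdjunction} to keep the argument mechanical and transparent.
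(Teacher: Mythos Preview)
Your proposal is correct and follows essentially the same approach as the paper: take $M$-adjuncts of both composites, identify the adjunct of the top-right as $p\circ(\ev_{M,f_*Y}\otimes N)$, use \cref{cor:tensorhominteraction} with $\gamma=\nu_{f^*M,Y,f^*N}$ to handle the left-bottom composite, and then finish with \cref{lem:evAdjunction}, naturality of $p$, and the associativity coherence for the projection formula. The paper packages the last part into one commutative diagram, but the content is identical.
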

\begin{proof}
    We shall show that the two diagonal composites \[\iHom(M,f_*Y)\otimes N\to\iHom(M,f_*(Y\otimes f^*N))\] coincide by showing that their $M$-adjuncts agree.
    We identify first the top-right composite. Since $\nu_{M,f_*Y,N}$ is the $M$-adjunct of $\ev_{M,f_*Y}\otimes N$, it follows by naturality that $p_{N,Y}\circ (\ev_{M,f_*Y}\otimes N)$ has $M$-adjunct $\iHom(M,p_{N,Y})\circ\nu_{M,f_*Y,N}$, which is precisely the top-right composite. It thus suffices to show that the adjunct of the left-bottom composite is $p_{N,Y}\circ (\ev_{M,f_*Y}\otimes N)$.

    To do so, we first use \cref{cor:tensorhominteraction} with $\gamma=\nu_{f^*M,Y,f^*N}$, which shows that the bottom composite has $M$-adjunct given by \[f_*(\ev_{f^*M,Y}\otimes f^*N)\circ p_{M,\iHom(f^*M,Y) \otimes f^*N}\]
    Thus, by naturality of the adjunction, the adjunct of the entire left-bottom composite of \cref{nudiagram} is the rightmost composite in the following diagram.
    \[\begin{tikzcd}
	& {M\otimes\iHom(M,f_*Y)\otimes N} \\
	& {M\otimes f_*\iHom(f^*M,Y)\otimes N} \\
	& {M\otimes f_*(\iHom(f^*M,Y)\otimes f^*N)} \\
	{f_*(f^*M\otimes \iHom(f^*M,Y))\otimes N} & {f_*(f^*M\otimes \iHom(f^*M,Y)\otimes f^*N)} \\
	{f_*(f^*M\otimes \iHom(f^*M,Y))\otimes N} & {f_*(Y\otimes f^*N)}
	\arrow["{M\otimes a\otimes N}", from=1-2, to=2-2]
	\arrow["{M\otimes p}", from=2-2, to=3-2]
	\arrow["{p\otimes N}"', from=2-2, to=4-1, to path={[pos=0.25] -| (\tikztotarget) \tikztonodes}]
	\arrow["p", from=3-2, to=4-2]
	\arrow["p", from=4-1, to=4-2]
	\arrow["{f_*(\ev)\otimes N}"', from=4-1, to=5-1]
	\arrow["{f_*(\ev \otimes f^*N)}", from=4-2, to=5-2]
	\arrow["p"', from=5-1, to=5-2]
\end{tikzcd}\]
where the middle square is an associativity property of the projection formula, and the lower square is by its naturality. We thus see that the adjunct of the left-bottom composite of \cref{nudiagram} is \[p_{N,Y} \circ (f_*(\ev_{f^*M,Y})\circ p_{M,\iHom(f^*M,Y)} \circ (M\otimes a_{M,Y}))\otimes N.\] By \cref{evcoherence} we identify the bracketed term as $\ev_{M,f_*Y}$, and so the adjuncts of both composites are indeed $p_{N,Y}\circ (\ev_{M,f_*Y}\otimes N)$.
\end{proof}

\subsection{Characterising $f^*$-rigid objects and Grothendieck duality}\label{subsec:rigidandGrothendieck}
We now return to our standard setting of geometric functors between rigidly-compactly generated tt-categories, see \cref{chunk:geometric}. The following is the key result of this section, from which we will derive several important consequences, including Grothendieck duality for $f^*$-rigid objects (\cref{f*compactisos}) which we will build on in the following sections.
\begin{thm}\label{f*rigid}
    Let $f^*\colon\C\to\D$ be a proxy-small geometric functor. Let $M\in\C$.
    The following are equivalent:
        \begin{enumerate}[label=(\arabic*)]
        \item the map $\rho_{M,N}\colon DM\otimes N\to\iHom(M,N)$ is an $f^*$-isomorphism for all $N \in \C$, i.e., $f^*(\rho_{M,N})$ is an isomorphism for all $N \in \C$;
        \item $\Gammaf M\in\Gammaf\C$ is rigid;
        \item $\Lambdaf M\in\Lambdaf\C$ is rigid;
        \item $f^*M\in\D$ is rigid.
        \end{enumerate}
        Moreover, these are implied by $\Gammaf M \in \thick(\Gammaf(\C^c))$.
\end{thm}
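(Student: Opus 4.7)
The plan is to prove the cycle $(1) \Leftrightarrow (2) \Leftrightarrow (3)$ and $(2) \Leftrightarrow (4)$, splitting the latter into the easy direction $(2) \Rightarrow (4)$ and the hard direction $(4) \Rightarrow (1)$. As a preliminary reformulation, by \cref{Wequivalences} condition (1) is equivalent to $\Gammaf\rho_{M,N}$ being an isomorphism for all $N \in \C$, which is what I would work with throughout. The equivalence $(2) \Leftrightarrow (3)$ follows immediately from the MGM equivalence $\Gammaf\colon \Lambdaf\C \rightleftarrows \Gammaf\C\colon \Lambdaf$ being a strong symmetric monoidal equivalence (\cref{monoidalcompletetorsion}), since rigidity is preserved by such equivalences.

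For $(1) \Leftrightarrow (2)$, I would first observe that the functor $\Gammaf\colon \C \to \Gammaf\C$ is itself strong symmetric monoidal, using the smashing property from \cref{thm:localduality}(2) together with the description of the unit of $\Gammaf\C$ in \cref{monoidalcompletetorsion}. Combining the monoidal structure on $\Gammaf\C$ with local duality (\cref{thm:localduality}(3)), one identifies $D^{\Gammaf\C}(\Gammaf M) \simeq \Gammaf(DM)$ and $\iHom^{\Gammaf\C}(\Gammaf M, \Gammaf N) \simeq \Gammaf\iHom(M, N)$, and under these identifications the rigidity map $\rho^{\Gammaf\C}_{\Gammaf M, \Gammaf N}$ is identified with $\Gammaf\rho_{M, N}$. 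Since every object of $\Gammaf\C$ has the form $\Gammaf N$ for some $N \in \C$, $\Gammaf M$ is rigid in $\Gammaf\C$ if and only if $\Gammaf\rho_{M, N}$ is an isomorphism for all $N$, yielding the equivalence.

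For $(2) \Rightarrow (4)$, the functor $f^*$ restricts to a strong symmetric monoidal functor $\overline{f}^*\colon \Gammaf\C \to \D$: it preserves the unit because $f^*\Gammaf\1 \simeq \Gammaf_{\1_\D} f^*\1 \simeq \1_\D$ using \cref{prop:changeofbase} combined with \cref{automaticrelproxysmall}, and the tensor product is inherited from $\C$. Strong monoidal functors preserve rigid objects, so (2) implies (4). The main obstacle is the converse $(4) \Rightarrow (1)$: the coherence maps $\alpha_{M,\1}$ and $\alpha_{M, N}$ appearing in diagram \cref{diagram:rhomonoidal} are not known to be isomorphisms a priori when only $f^*M$ (and not $M$) is rigid in $\C$, so one cannot directly read off the conclusion from the diagram. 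My strategy would be to invoke the coherence of $\nu$ under the adjunction $(f^*, f_*)$ established in \cref{lem:nudiagramcommutes}, specialising that lemma to $Y = \1_\D$ and using the identification $\nu_{X, \1, Y} = \rho_{X, Y}$ to obtain a commutative diagram relating $\nu_{M, f_*\1_\D, N}$ to $f_*\rho_{f^*M, f^*N}$. Rigidity of $f^*M$ makes $\rho_{f^*M, f^*N}$ an isomorphism, which by the projection formula and \cref{internaladjunction} transfers to an isomorphism in $\C$ that can be detected against the witnessing set $\W \subseteq \thicktensor{f_*\1_\D} \cap \C^c$; another application of \cref{Wequivalences} then yields that $\Gammaf\rho_{M, N}$ is an isomorphism, which is condition (1).

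For the final claim, $\Gammaf\colon \C \to \Gammaf\C$ is strong symmetric monoidal and so preserves rigid objects; since compacts in $\C$ are rigid by \cref{rigiditychunk}, $\Gammaf C$ is rigid in $\Gammaf\C$ for each $C \in \C^c$. The class of rigid objects in a tt-category is closed under the thick subcategory operations, so the hypothesis $\Gammaf M \in \thick(\Gammaf(\C^c))$ gives that $\Gammaf M$ is rigid in $\Gammaf\C$, establishing (2).
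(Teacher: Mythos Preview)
Your overall strategy matches the paper's: use closed monoidality of the torsion/complete localisation for $(1)\Leftrightarrow(2)\Leftrightarrow(3)$, the factorisation $f^*\simeq f^*\Gammaf$ for $(2)\Rightarrow(4)$, and the coherence of $\nu$ from \cref{lem:nudiagramcommutes} for $(4)\Rightarrow(1)$. Your choice of $\Gammaf$ in place of the paper's $\Lambdaf$ for $(1)\Leftrightarrow(2)$ is harmless, since both are closed strong monoidal by \cref{monoidalcompletetorsion}, so the $\alpha$-maps in \eqref{diagram:rhomonoidal} are isomorphisms either way.

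Two points need tightening. First, in $(2)\Rightarrow(4)$ you invoke \cref{automaticrelproxysmall}, but that result assumes an \emph{enhanced} geometric functor, which \cref{f*rigid} does not. You do not need it: proxy-smallness of $f^*$ (\cref{defn:f*proxysmall}) already gives $\Loctensor{f^*\W}=\D$, so the $\1_\D$-torsion on $\D$ is the identity and \cref{prop:changeofbase} alone yields $f^*\Gammaf\simeq f^*$; equivalently, just quote \cref{nota:Gammaf}.

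Second, your sketch of $(4)\Rightarrow(1)$ stops short of the two bridging steps the paper makes explicit. From \cref{lem:nudiagramcommutes} with $Y=\1_\D$ you correctly obtain that $\nu_{M,f_*\1_\D,N}$ is an isomorphism. But ``can be detected against $\W$'' hides two separate ingredients: (i) the class of $X\in\C$ for which $\nu_{M,X,N}$ is an isomorphism for all $N$ is a thick $\otimes$-ideal (because $\nu_{M,X\otimes C,N}$ identifies with $C\otimes\nu_{M,X,N}$ for $C$ rigid), so $\W\subseteq\thicktensor{f_*\1_\D}$ lies in it; and (ii) for each rigid $W\in\W$ one has the identification $\nu_{M,W,N}\simeq W\otimes\nu_{M,\1_\C,N}=W\otimes\rho_{M,N}$. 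Only after both of these does \cref{Wequivalences} apply. Make these explicit and your argument is complete.
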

\begin{proof}
    We first prove (1) $\Leftrightarrow$ (3). Write $D_{\Lambdaf}$ for the functional dual in $\Lambdaf\C$. Applied to the strong monoidal functor $\Lambdaf\colon \C \to \Lambdaf\C$, \eqref{diagram:rhomonoidal} yields a commutative diagram
    \[
    \begin{tikzcd}[row sep=1cm, column sep=1.2cm]
        \Lambdaf(DM) \widehat{\otimes} \Lambdaf N \ar[r, "\simeq"] \ar[d, "\alpha_{M,\1} \widehat{\otimes} \Lambdaf N"'] & \Lambdaf (DM \otimes N) \ar[r, "\Lambdaf(\rho_{M,N})"] & \Lambdaf\iHom(M,N) \ar[d, "\alpha_{M,N}"] \\
        D_{\Lambdaf}(\Lambdaf M) \widehat{\otimes} \Lambdaf N \ar[rr, "\rho_{\Lambdaf M, \Lambdaf N}"'] & & \iHom(\Lambdaf M, \Lambdaf N)
    \end{tikzcd}
    \]
    for any $N \in \C$.
    Since $\Lambdaf\colon \C \to \Lambdaf\C$ is moreover \emph{closed} monoidal, the map $\alpha_{M,N}$ is an isomorphism for all $M,N \in \C$. Therefore $\Lambdaf(\rho_{M,N})$ is an isomorphism if and only if $\rho_{\Lambdaf M, \Lambdaf N}$ is an isomorphism (i.e., $\Lambdaf M \in \Lambdaf\C$ is rigid). Now by \cref{Wequivalences}, $\Lambdaf(\rho_{M,N})$ is an isomorphism if and only if $f^*(\rho_{M,N})$ is an isomorphism. This proves that (1) is equivalent to (3). Conditions (2) and (3) are equivalent by the MGM equivalence (see \cref{thm:localduality} and \cref{monoidalcompletetorsion}).

    For (2) $\Rightarrow$ (4), we have that $f^*\colon \C \to \D$ factorises as \[\C \xrightarrow{\Gammaf} \Gammaf\C \xrightarrow{f^*} \D\] by \cref{prop:changeofbase}. Since $f^*\colon \Gammaf\C \to \D$ is strong monoidal, and $\Gammaf M$ is rigid by assumption, it follows that $f^*M$ is rigid, as strong monoidal functors preserve rigid objects, see \cref{chunk:rigidspreserved}.

    To conclude the proof, we prove that (4) $\Rightarrow$ (1). Recall the natural transformation \[\nu_{M,X,N}\colon\iHom(M,X)\otimes N\to\iHom(M,X\otimes N)\] as defined in \cref{chunk:nudefn}, and let $\mc{X}$ be the subcategory of $X \in \C$ such that $\nu_{M,X,N}$ is an equivalence for all $N \in \C$. Note that for $C\in\C$ rigid, by naturality $\nu_{M,X\otimes C,N}$ is identified under the obvious isomorphisms with $C \otimes \nu_{M,X,N}$. Thus we see that $\mc{X}$ is a thick $\otimes$-ideal of $\C$.
    
    We show that $f_*Y\in\mc{X}$ for all $Y\in\D$. To do so, we note \cref{lem:nudiagramcommutes} gives chains of natural isomorphisms (since in this setting of geometric functors, $p$ is an isomorphism) such that the diagram \[\begin{tikzcd}[column sep = 2.5cm]
	{\iHom(M,f_*Y)\otimes N} & {\iHom(M,f_*Y\otimes N)} \\
	{f_*(\iHom(f^*M,Y)\otimes f^*N)} & {f_*(\iHom(f^*M,Y\otimes f^*N)}
	\arrow["{\nu_{M,f_*Y,N}}", from=1-1, to=1-2]
	\arrow["\simeq"', from=1-1, to=2-1]
	\arrow["\simeq", from=1-2, to=2-2]
	\arrow["{f_*(\nu_{f^*M,Y,f^*N})}"', from=2-1, to=2-2]
\end{tikzcd}\] commutes. Since $f^*M$ is rigid, $\nu_{f^*M,Y,f^*N}$ is an isomorphism for all $N$, and therefore by the above diagram we see that $\nu_{M,f_*Y,N}$ is also an isomorphism so $f_*Y\in\mc{X}$ as claimed.
    
    Since $\mc{X}$ is a thick $\otimes$-ideal, it follows that $\W\subseteq\mc{X}$ where $\W$ is a witness for proxy-smallness as $\W \subseteq \thicktensor{f_*\1_\D}$. Since each $W \in \W$ is rigid, we have that $\nu_{M,W,Y}$ corresponds under natural isomorphisms to $W \otimes \nu_{M,\1_\C,Y}$. As $\nu_{M,\1_\C,Y}=\rho_{M,Y}$, this proves that $W \otimes \rho_{M,Y}$ is an isomorphism for all $W \in \W$ and $Y \in \C$, which by \cref{Wequivalences} gives (1).

    For the final statement, since $\Gammaf\colon \C \to \Gammaf\C$ is strong monoidal, any object in $\Gammaf(\C^c)$ is rigid in $\Gammaf\C$, so the claim follows by a thick subcategory argument.
\end{proof}

\begin{rem}\label{rem:notallequivalent}
We note that the condition that $\Gammaf M \in \thick(\Gammaf(\C^c))$ is not equivalent to the other conditions in the previous theorem in general, see \cref{ex:Knrigid}.
\end{rem}

We now give a couple of examples explaining the relation of \cref{f*rigid} to the problem of identifying the rigid objects in the ``strata'' of tt-categories. Recall that, in general, the category $\Gammaf\C$ is not rigidly-compactly generated, and as such it is of interest to characterise its rigid objects.

\begin{ex}\label{ex:BIKPRigid} Benson--Iyengar--Krause--Pevtsova~\cite{BIKP} undertook this in commutative algebra, where the category of interest $\Gammaf\C$ is the category of $\p$-local $\p$-torsion objects $\Gamma_\p \mathsf{D}(R_\p)$ in $\mathsf{D}(R)$, where $R$ is a commutative Noetherian ring, and $\p$ is a prime ideal of $R$. More precisely, we take $f^*$ to be the extension of scalars along the ring map $R_\p \to \kappa(\p)$ where $\kappa(\p) = R_\p/\p R_\p$ is the residue field at $\p$. This is proxy-small by \cref{rem:localringps} so \cref{f*rigid} applies. In this setting, using the numbering of \cref{f*rigid}, Benson--Iyengar--Krause--Pevtsova prove that (2)-(4) are equivalent, and that in this case, they are moreover equivalent to $\Gammaf M \in \thick(\Gammaf(\C^c))$. As such, \cref{f*rigid} gives a uniform tt-perspective on the results of \cite{BIKP}.
\end{ex}

\begin{ex}\label{ex:Knrigid}
We now explain how to apply the previous result to characterise the rigid objects in the category of $K(n)$-local spectra. Morally, one would like to apply the result to the functor $K(n) \otimes_{L_nS^0} -\colon L_n\Sp \to \Mod{}(K(n))$ where $L_nS^0$ denotes the $E(n)$-local sphere.
However, there are well known issues here: $K(n)$ does not admit an $E_2$-structure (or even a homotopy commutative ring structure at $p=2$). Nonetheless we explain how to treat this as a geometric functor for odd primes, purely at the level of homotopy categories. 

Since $K(n)$ is a field, there is a triangulated equivalence between $\mathsf{D}(K(n))$ and $\mathsf{D}(K(n)_*)$ given by taking homotopy. However, note that this cannot be promoted to an equivalence of enhancements, see \cite[\S 5.2]{IRW} or \cite[Remark 2.6]{SchwedeShipley}. Since $K(n)_* = \mathbb{F}_p[v_n^{\pm 1}]$ where $|v_n| = 2(p^n-1)$ is a graded field, note that $\mathsf{D}(K(n)_*)$ is moreover equivalent to the category of graded $K(n)_*$-modules, and hence the latter admits a tensor-triangulated structure. At odd primes, the functor $K(n)_*(-)\colon h(L_n\Sp) \to \mathrm{grMod}(K(n)_*)$ is then a geometric functor: it is clearly exact and coproduct-preserving, and is strong symmetric monoidal since $K(n)$ is homotopy commutative. (At $p=2$ this functor is not \emph{symmetric} monoidal; one can still use similar ideas to the proof of \cref{f*rigid} in this case but we will not go into details here.
Instead, we direct the reader to \cite[Theorem 8.6]{HS666}.) 

The geometric functor $K(n)_*(-)\colon h(L_n\Sp) \to \mathrm{grMod}(K(n)_*)$ is proxy-small: by \cite[Proposition 2.12]{DGIGrossHopkins}, $K(n)_*$ is proxy-small in $L_n\Sp$ with witness $L_nF(n)$ for a finite type $n$ spectrum $F(n)$, and since any module in the derived category of a graded field is proxy-small with witness given by any finite dimensional module, the claim follows. Note that the associated completion functor $\Lambdaf$ is the $K(n)$-localisation so that $\Lambdaf(h(L_n\Sp)) = h(L_{K(n)}\Sp)$. As such applying \cref{f*rigid} shows that a $K(n)$-local spectrum $M$ is rigid if and only if $K(n)_*M$ is finite dimensional, thus recovering part of \cite[Theorem 8.6]{HS666}. Moreover, there exists a rigid $K(1)$-local spectrum $Z$ such that $Z$ is not in $\thick(L_{K(1)}S^0)$, see \cite[\S 15.1]{HS666}, thus justifying the claim in \cref{rem:notallequivalent}.
\end{ex}

We now move onto the consequences of \cref{f*rigid}, and show that $f^*$-rigidity of $M\in\C$ is usually sufficient for $M$ to enjoy properties in relation to $f^*$ that one would typically only expect if $M$ itself were rigid. These properties will allow us to prove Grothendieck duality for $f^*$-rigid objects below.

\begin{cor}\label{f*reflexive}
    Suppose $f^*\colon \C \to \D$ is a proxy-small geometric functor and let $M \in \C$. If $f^*M$ is rigid, then the canonical map \[f^*(d_M)\colon f^*M \to f^*(D^2M)\] is an isomorphism.
\end{cor}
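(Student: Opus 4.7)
The plan is to reduce to the fact that the double-dual map on a rigid object is an isomorphism. To do so, I would first construct a canonical isomorphism $\psi_M \colon f^*(D^2M) \xrightarrow{\sim} D^2(f^*M)$, and then check that $\psi_M \circ f^*(d_M) = d_{f^*M}$. Since $f^*M$ is assumed rigid, $d_{f^*M}$ is an isomorphism (see \cref{rigiditychunk}), which would conclude.

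For the first step, recall from \cref{chunk:alphaiso} that the natural map $\alpha_{N,\1} \colon f^*(DN) \to D(f^*N)$ is an isomorphism whenever $N$ or $\1$ is rigid, and in particular for \emph{every} $N$, since $\1$ is rigid. Applying this to $N = M$ and $N = DM$, I would define
\[
\psi_M \colon f^*(D^2M) \xrightarrow{\alpha_{DM,\1}} D(f^*DM) \xrightarrow{D(\alpha_{M,\1})^{-1}} D^2(f^*M),
\]
which is a composite of isomorphisms.

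The key step is verifying $\psi_M \circ f^*(d_M) = d_{f^*M}$. By definition, $d_{f^*M}$ is the $D(f^*M)$-adjunct of the (swapped) evaluation $f^*M \otimes D(f^*M) \to \1_\D$, while $f^*(d_M)$ is $f^*$ applied to the analogous adjunct in $\C$. I would verify the equality by passing to adjuncts: using the compatibility diagram \cref{diagram:evmonoidal} between $\alpha$ and $\ev$, together with the fact that $\alpha_{M,\1}$ and $\alpha_{DM,\1}$ are themselves defined as adjuncts of composites involving evaluations, a direct diagram chase should identify both composites with the same adjunct map $f^*M \otimes D(f^*M) \to \1_\D$. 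The main obstacle is the bookkeeping here: one has to carefully distinguish the roles of $\alpha$ in the $M$ and $DM$ slots and track the symmetry isomorphism needed to swap the two factors of $f^*M$ and $D(f^*M)$, but no new ingredients beyond the coherences already established in \cref{subsec:coherences} are required.

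An alternative route would be to invoke \cref{f*rigid}(1), which says that $\rho_{M,N}$ is an $f^*$-isomorphism for every $N \in \C$, and extract the conclusion from the diagram \cref{diagram:rhomonoidal}. However, this approach still ultimately reduces to the same coherence comparison between $\alpha$ and evaluation, so the adjunct-based strategy above is the cleanest.
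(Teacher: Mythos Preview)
Your argument hinges on the claim that $\alpha_{N,\1}\colon f^*(DN) \to D(f^*N)$ is an isomorphism for \emph{every} $N$ because $\1$ is rigid. This is false in general: take $f^* = \Q \otimes_\Z -\colon \msf{D}(\Z) \to \msf{D}(\Q)$ and $N = \Q/\Z$. Then $f^*N \simeq 0$, so $D(f^*N) \simeq 0$, whereas $DN \simeq \Sigma^{-1}\hat{\Z}$ and hence $f^*(DN) \simeq \Sigma^{-1}(\Q \otimes_\Z \hat{\Z}) \not\simeq 0$. (The assertion in \cref{chunk:alphaiso} that ``$N$ rigid'' suffices appears to be an imprecision in the paper; only the case ``$M$ rigid'' is generally valid for a strong monoidal $f^*$.) Since neither $M$ nor $DM$ is assumed rigid in $\C$, you cannot build your isomorphism $\psi_M$ this way, and the proof collapses. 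Your alternative route via \cref{f*rigid}(1) and diagram~\eqref{diagram:rhomonoidal} runs into the same obstruction, as that diagram also has $\alpha_{M,\1}$ on its left edge.

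The paper's proof avoids the issue by passing to the complete category: $\Lambdaf\colon \C \to \Lambdaf\C$ is a \emph{closed} monoidal functor (\cref{monoidalcompletetorsion}), so its version of $\alpha$ is always an isomorphism. Using \cref{f*rigid}, $\Lambdaf M$ is rigid in $\Lambdaf\C$, hence reflexive there, and \cref{Wequivalences} transfers the conclusion back to $f^*$. Your direct approach is salvageable: the isomorphism you need for $\alpha_{M,\1}$ is exactly \eqref{f*duals}, whose proof in \cref{f*compactclosed} relies only on \cref{f*rigid} and not on \cref{f*reflexive}, so there is no circularity---but you must invoke that result (and then bootstrap to $\alpha_{DM,\1}$ via $f^*(DM) \simeq D(f^*M)$ being rigid) rather than the incorrect general claim.
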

\begin{proof}
    By \cref{Wequivalences} it suffices to prove that $\Lambdaf (d_M)$ is an isomorphism. Since $f^*M$ is rigid in $\D$, by \cref{f*rigid} we have that $\Lambdaf M$ is rigid in $\Lambdaf\C$ and hence the natural map $d'_{\Lambdaf M}\colon \Lambdaf M \to D_{\Lambdaf}^2(\Lambdaf M)$ is an isomorphism, where $D_{\Lambdaf}$ denotes the functional dual in $\Lambdaf\C$. Recall from \cref{monoidalcompletetorsion} that the internal hom in complete modules is computed in the ambient category and that the tensor unit is $\Lambdaf \1$. Since $\Lambdaf\colon \C \to \Lambdaf\C$ is closed monoidal, there is a natural isomorphism $D^2_{\Lambdaf}(\Lambdaf(-)) \simeq \Lambdaf D^2(-)$ making the diagram
    \[\begin{tikzcd}[row sep=1cm, column sep=1cm]
        \Lambdaf M \ar[r, "\Lambdaf(d_M)"] \ar[dr, "d'_{\Lambdaf M}"', "\simeq"] & \Lambdaf D^2 M \ar[d, "\simeq"] \\
        & D_{\Lambdaf}^2(\Lambdaf M) 
    \end{tikzcd}\]
    commute. We see from this diagram that $\Lambdaf(d_M)$ is an isomorphism as claimed.
\end{proof}

\begin{cor}\label{f*compactclosed}
    Suppose $f^*\colon \C \to \D$ is a proxy-small geometric functor and let $M \in \C$. Recall the natural map $\alpha_{M,N}\colon f^*\iHom(M,N) \to \iHom(f^*M,f^*N)$ from \cref{chunk:alphaiso}. If $f^*M$ is rigid, then for all $N\in\C$ there are natural isomorphisms:
    \begin{equation}\label{f*closed1}
        \alpha_{M,N}\colon f^*\iHom(M,N) \xrightarrow{\sim} \iHom(f^*M,f^*N)
    \end{equation}
    \begin{equation}\label{f*duals}
        \alpha_{M,\1}\colon f^*(DM) \xrightarrow{\sim} D(f^*M).
    \end{equation}
\end{cor}
\begin{proof} 
Recall from \cref{prop:changeofbase} that $f^*\colon \C \to \D$ admits a factorisation \[\C\xrightarrow{\Gammaf}\Gammaf\C\xrightarrow{f^*_t}\D\] where $f^*_t$ is the restriction of $f^*$ to $\Gammaf\C$. We note that $f^*_t$ is also strong symmetric monoidal. Thus, the construction of \cref{chunk:alphaiso} composes, i.e., $\alpha_{M,N}$ is the composition \[f^*_t(\Gammaf\iHom_\C(M,N))\xrightarrow{f^*_t(\gamma_{M,N})}f^*_t\iHom_{\Gammaf\C}(\Gammaf M,\Gammaf N)\xrightarrow{\beta_{\Gammaf M,\Gammaf N}}\iHom_\D(f^*_t\Gammaf M, f^*_t\Gammaf N)\] where $\beta$ and $\gamma$ are the natural transformations analogous to $\alpha$ associated to $f^*_t$ and $\Gammaf$. 
Since $\Gammaf\colon \C \to \Gammaf\C$ is closed monoidal, $\gamma$ is a natural isomorphism, and hence $f^*_t(\gamma_{M,N})$ is an isomorphism. 
Therefore, to prove \cref{f*closed1} it suffices to prove that $\beta_{\Gammaf M, \Gammaf N}$ is an isomorphism. Since $f^*M$ being rigid implies $\Gammaf M\in\Gammaf\C$ is rigid by \cref{f*rigid}, this is immediate from \cref{chunk:alphaiso}. Finally, \cref{f*duals} is the case $N=\1_\C$ in \cref{f*closed1}.  
\end{proof}

In addition to the convenient features of $f^*$-rigid objects proved above, we now prove that such objects satisfy Grothendieck duality. We set $\omega_f \coloneqq  f^{!}(\1_\C)$; in \cite{BDS} this is called the \emph{relative dualising object}.
\begin{thm}\label{f*compactisos}
    Let $f^*\colon \C \to \D$ be a proxy-small geometric functor, and $M \in \C$. If $f^*M$ is rigid, then there is a natural isomorphism
    \begin{equation}\label{strongGN}
    f^{!}X \otimes f^*M \simeq f^{!}(X \otimes M)
    \end{equation}
    for all $X \in \C$. In particular, if $f^*M$ is rigid there is an isomorphism
    \begin{equation}\label{GN}
    \omega_f \otimes f^*M \simeq f^{!}M.
    \end{equation}
\end{thm}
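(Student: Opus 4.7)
The plan is to construct the natural isomorphism of \eqref{strongGN} as a composite chain of natural isomorphisms, each of which follows from the $f^*$-rigidity of $M$, bypassing a direct analysis of the canonical Grothendieck comparison map. The statement \eqref{GN} then follows by taking $X = \1_\C$, using that $f^!\1_\C = \omega_f$.

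First, for any $M, X \in \C$, \eqref{f1ofhom} yields a natural isomorphism $f^!\iHom(M, X) \simeq \iHom(f^*M, f^!X)$. Since $f^*M$ is rigid in $\D$, the right-hand side is naturally isomorphic to $D(f^*M) \otimes f^!X$, and by \eqref{f*duals} of \cref{f*compactclosed} this identifies with $f^*(DM) \otimes f^!X$. On the other hand, \cref{f*rigid}(1) asserts that $\rho_{M, X}\colon DM \otimes X \to \iHom(M, X)$ is an $f^*$-isomorphism, hence also an $f^!$-isomorphism by \cref{Wequivalences}. Composing these natural isomorphisms produces
\[f^!(DM \otimes X) \simeq f^*(DM) \otimes f^!X.\]

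Because $f^*(DM) \simeq D(f^*M)$ is rigid in $\D$, the object $DM$ is itself $f^*$-rigid, so the argument just given applies with $M$ replaced by $DM$, yielding
\[f^!(D^2M \otimes X) \simeq f^*(D^2M) \otimes f^!X.\]
By \cref{f*reflexive}, the canonical map $f^*(d_M)\colon f^*M \to f^*(D^2M)$ is an isomorphism. Tensoring with $f^*X$ shows that $f^*(d_M \otimes X)$ is an isomorphism, so by \cref{Wequivalences} the map $f^!(d_M \otimes X)\colon f^!(M \otimes X) \to f^!(D^2M \otimes X)$ is also an isomorphism in $\D$. Tensoring the isomorphism $f^*(d_M)$ on the right with $f^!X$ again gives an isomorphism. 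Composing all of these produces
\[f^!(M \otimes X) \simeq f^!(D^2M \otimes X) \simeq f^*(D^2M) \otimes f^!X \simeq f^*M \otimes f^!X,\]
and each step is manifestly natural in $X$, so this establishes \eqref{strongGN}.

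The main obstacle to overcome is that the map $DM \otimes X \to \iHom(M, X)$ need not itself be an isomorphism in $\C$ (which would require $M$ to be rigid, not merely $f^*$-rigid); the $f^*$-rigidity of $M$ only guarantees that this map becomes an isomorphism after applying $f^!$. The whole argument therefore hinges on \cref{Wequivalences} to promote $f^*$-isomorphisms to $f^!$-isomorphisms, combined with the facts — supplied by \cref{f*compactclosed} and \cref{f*reflexive} — that $f^*$-rigidity is preserved under $D$ and that the canonical map into the double dual becomes an isomorphism after applying $f^*$.
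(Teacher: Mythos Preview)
Your proof is correct and uses the same ingredients as the paper's proof (\cref{f*rigid}, \cref{f*reflexive}, \cref{f*compactclosed}, \cref{Wequivalences}, and \eqref{f1ofhom}). The organization differs slightly: the paper first establishes $f^!X \otimes f^*M \simeq f^!\iHom(DM,X)$ and then checks that the single composite $M \otimes X \xrightarrow{d_M \otimes X} D^2M \otimes X \xrightarrow{\rho_{DM,X}} \iHom(DM,X)$ is an $f^*$-isomorphism via a commutative diagram, whereas you iterate the argument (first for $DM$, then for $D^2M$) and use $f^*$-reflexivity to pass back to $M$. Your iteration avoids the explicit coherence diagram at the cost of running the argument twice; both routes amount to the same thing.
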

\begin{proof}
    We have natural isomorphisms
     \[f^{!}X \otimes f^*M \simeq \iHom(D(f^*M), f^{!}X) \simeq \iHom(f^*(DM), f^{!}X) \simeq f^{!}\iHom(DM,X)\]
    using the rigidity of $f^*M$, \cref{f*duals}, and then \cref{f1ofhom}. To conclude it therefore suffices to show that the canonical map
    \[M \otimes X \xrightarrow{d_M \otimes X} D^2M \otimes X \xrightarrow{\rho_{DM,N}} \iHom(DM,X)\]
    is an $f^{!}$-isomorphism, where $\rho_{DM,N}$ is as in \cref{chunk:rigidspreserved}. By  \cref{Wequivalences}, this is equivalent to checking that it is an $f^*$-isomorphism. 
    
    Note that since $f^*M$ is rigid, it follows that $f^*(DM)$ is also rigid (since it is isomorphic to $D(f^*M)$ by \cref{f*duals}). Now consider the following diagram in which the left hand vertical is the map we want to show is an isomorphism. We have annotated some maps as isomorphisms together with references justifying this.
    \[
    \begin{tikzcd}[row sep=1.5cm, column sep=1.7cm]
        f^*(M \otimes X) \ar[d, "f^*(d_M \otimes X)"'] \ar[r, "\simeq"] & f^*M \otimes f^*X \ar[d, "f^*(d_M) \otimes f^*X", "\substack{\simeq \\ \\(\ref{f*reflexive})}"'] \\
        f^*(D^2M \otimes X) \ar[d, "f^*(\rho_{DM,N})"'] \ar[r, "\simeq"] & f^*(D^2M) \otimes f^*X \ar[r,"\alpha_{DM,\1} \otimes f^*X", "\simeq~~(\ref{f*duals})"'] & D(f^*DM) \otimes f^*X \ar[d, "\rho_{f^*DM,f^*X}", "\simeq"'] \\
        f^*\iHom(DM,X) \ar[rr, "\alpha_{DM,X}"', "\simeq~~(\ref{f*closed1})"] & & \iHom(f^*DM,f^*X)
    \end{tikzcd}
    \]
    Note that the diagram commutes: the top square by the strong monoidality of $f^*$, and the bottom rectangle by \eqref{diagram:rhomonoidal}. Therefore, we see that the claimed map is an isomorphism as required.
    This completes the proof of \cref{strongGN}. Taking $X = \1_\C$ in \cref{strongGN} gives \cref{GN}.
\end{proof}

\begin{rem}
    The isomorphism \cref{GN} establishes Grothendieck duality for all $M \in \C$ such that $f^*M$ is rigid in $\D$. This complements results of Balmer--Dell'Ambrogio--Sanders~\cite[Theorem 1.7]{BDS} and Fausk--Hu--May~\cite[Proposition 5.4]{FHM}. Balmer--Dell'Ambrogio--Sanders prove that Grothendieck duality holds for all $M$ if and only if $f_*$ preserves compacts. The previous theorem shows that by weakening the assumption that $f_*$ preserves compacts to the hypothesis that $f^*$ is proxy-small, one still obtains Grothendieck duality on the subcategory of objects $M$ for which $f^*M$ is rigid. Fausk--Hu--May prove that Grothendieck duality holds for $M$ provided that $M$ is rigid, and under the proxy-smallness assumption, we extend this to the $M$ for which $f^*M$ is rigid. As such, the previous theorem may be seen as bridging the gap between the results of \cite{BDS} and \cite{FHM}.
\end{rem}

\begin{rem}
    We note that Grothendieck duality \cref{GN} for $f^*$-rigid objects can fail without the proxy-smallness assumption. Indeed, take $f^*\colon \mathsf{D}(\Z)\to \mathsf{D}(\Q)$ to be the extension of scalars $\Q \otimes_\Z -$, and $M=\Q/\Z$. Note that $f_*$ is the restriction of scalars. We see that $\Q$ is not proxy-small in $\mathsf{D}(\Z)$: $\thicktensor{\Q}$ can only contain elements of $f_*(\mathsf{D}(\Q))$, and the only element of this which is compact in $\mathsf{D}(\Z)$ is the zero object. We see that $f^*M = \Q \otimes_\Z (\Q/\Z) \simeq 0$ (and so in particular is rigid). However, $\omega_f \otimes f^!M \simeq \Hom_\Z(\Q,\Z) \otimes \Hom_\Z(\Q, \Q/\Z)$ is non-zero. More generally, fix a rigidly-compactly generated tt-category $\C$ and a proxy-small set $\K$ in $\C$, and take $f^*$ to be the localisation functor $L\colon\C \to L_\K\C$ and $M = \Gamma\1$. One can run the same counterexample in this setting too. (The explicit example given above is then the case $\C = \mathsf{D}(\Z)$ and $\K = \{\Z/p \mid \text{$p$ prime}\}$).
\end{rem}

\section{Torsion invertibility}
In this section, given a proxy-small geometric functor $f^*\colon \C \to \D$ we characterise the objects $M$ of $\C$ such that $f^*M$ is invertible in $\D$, see \cref{f*invertible}. This result plays a key role in the next section in our study and classification of Matlis dualising objects.
\subsection{The Picard group}
Firstly we recall some preliminaries.
\begin{defn}
    Let $(\C,\otimes,\1)$ be a symmetric monoidal category. An object $M \in \C$ is \emph{invertible} if the functor $M\otimes-$ is an autoequivalence of $\C$ (or, equivalently, there exists $N \in \C$ such that $M\otimes N\simeq\1$). We write $\Pic(\C)$ for the collection of isomorphism classes of invertible objects. This forms an abelian group under $\otimes$ called the \emph{Picard group}.
\end{defn}

\begin{chunk}\label{chunk:picard} We note the following properties of invertible objects:
\begin{enumerate}[label=(\roman*)]
    \item Invertible objects satisfy 2-out-of-3: if $X \otimes Y \simeq Z$, then if any two of $\{X,Y,Z\}$ are invertible then so is the third.
    \item If $\C$ is closed symmetric monoidal, then all invertible objects are rigid. Hence, for any invertible $M\in\C$, $DM\otimes M\simeq\iHom(M,M)$, which by invertibility is $\iHom(\1,\1)\simeq\1$. That is, the inverse $N$ of $M$ is isomorphic to $DM$. Under this isomorphism, the map $M \otimes N \to \1$ corresponds to the evaluation map, and therefore an object $M \in \C$ is invertible if and only if the evaluation map $\ev_{M,\1}\colon DM \otimes M \to \1$ is an isomorphism. Throughout this section we shall write $\ev_M=\ev_{M,\1}$.
    \item If $f^*\colon \C \to \D$ is a strong symmetric monoidal functor between symmetric monoidal categories, then $f^*$ preserves invertible objects and hence induces a group homomorphism $\Pic(\C)\to\Pic(\D)$.
\end{enumerate}
\end{chunk}

We now focus our attention on the Picard groups of tt-categories.

\begin{chunk}\label{picardoftentrivial}
    In tt-categories, any suspension $\Sigma^n\1$ of the unit is invertible. Hence, it is rare for $\Pic(\C)$ to be the trivial group, requiring an equivalence $\Sigma\1\simeq\1$ (i.e. the graded endomorphism ring $\mrm{End}_*(\1)=\Hom(\Sigma^*\1,\1)$ being $1$-periodic). This is typically obstructed by the graded-commutativity of $\mrm{End}_*(\1)$ but is not impossible in characteristic 2. For example, take $\C$ to the stable category of $\mbb{F}_2C_2$-modules, so that $\mrm{End}_*(\1)$ is the Tate cohomology ring $\hat{H}^{-*}(C_2;\mbb{F}_2)\simeq\mbb{F}_2[t,t^{-1}]$ where $t$ has degree 1. The short exact sequence \[0 \to \mbb{F}_2 \to \mbb{F}_2C_2 \to \mbb{F}_2 \to 0\] shows that $\Sigma \1 \simeq \1$, so the Picard group is the trivial group in this case.
    Nonetheless, in general we do not expect $\Pic(\C)$ to be trivial, and we reserve the term for ``trivially non-trivial'' Picard groups:
\end{chunk}

\begin{defn}\label{defn:trivialpicard}
    Let $\C$ be a tt-category. We say $\C$ \emph{has trivial Picard group} if the homomorphism \[\Z\to\Pic(\C)\] given by $n\mapsto\Sigma^n\1$ is surjective, that is, if all invertible objects are suspensions of the unit.
\end{defn}

\begin{exs}\label{ex:trivialPic}
    Many tt-categories of interest have trivial Picard group:
    \begin{enumerate}[label=(\roman*)]
        \item Let $R$ be a graded commutative Noetherian local ring. Then the Picard group of $\msf{D}(R)$ is trivial. (Moreover, the Picard group of the abelian category $\mrm{grMod}(R)$ of graded $R$-modules is also trivial.)
        \item Let $R$ be a commutative ring spectrum. Baker--Richter~\cite{BR} construct a monomorphism
        \[\Pic(\mrm{grMod}(\pi_*R)) \to \Pic(\msf{D}(R))\] and prove that it is an isomorphism if $R$ is connective or even-periodic. In conjunction with the previous example, this shows that the derived categories of many commutative ring spectra have trivial Picard group.
        \item The Picard group of spectra is trivial, and is isomorphic to $\mbb{Z}$, see \cite{HMS}.
    \end{enumerate}
\end{exs}

Despite the previous example, there are also many instances where the Picard group is non-trivial: this is one key motivation for developing a theory of Gorenstein geometric functors and duality beyond the setting covered by \cite{DGI}.
\begin{exs}\label{exs:nontrivialPic}\leavevmode
    \begin{enumerate}[label=(\roman*)]
        \item For a compact Lie group $G$, all representation spheres are invertible in the category $\Sp_G$ of genuine $G$-spectra, giving a group homomorphism $RO(G)\to\Pic(\Sp_G)$. In contrast to the non-equivariant situation, this map is not an isomorphism in general. For simplicity, let us now assume that $G$ is finite; one can modify the following discussion appropriately by taking the topology on the space of conjugacy classes of closed subgroups into account, see \cite{FLMpicard}. By taking geometric fixed-points at each conjugacy class $(H)$ of subgroups, one produces a strong symmetric monoidal functor $\Sp_G\to\prod_{(H)}\Sp$. In \cite{tDP}, tom Dieck and Petrie identify the kernel of the corresponding map on Picard groups \[\Pic(\Sp_G)\xrightarrow{\mrm{deg}}\prod_{(H)}\Z\] as $\Pic(A(G))$, the Picard group of the Burnside ring. In general, the cokernel of the map is non-trivial. We direct the reader to work of Krause~\cite{KrausePicard}, which identifies the image in several cases.
        \item Let $k$ be a field of characteristic $p$ and $G$ be a finite $p$-group. The Picard group of the stable module category $\mrm{StMod}(kG)$ is the group of endotrivial modules, and is not trivial (in the sense of \cref{defn:trivialpicard}) in general. For instance, if $p=2$ and $k$ contains cube roots of unity then for the quaternion group $Q_8$, the Picard group is isomorphic to $\Z/4 \oplus \Z/2$ with the $\Z/4$ component generated by $\Sigma\1$, see \cite[Theorem 6.3]{CarlsonThevenaz}. The same holds for generalised quaternion groups $Q_{2^{n+1}}$ for $n \geq 3$ also, see \cite[Theorem 6.5]{CarlsonThevenaz}.
    \end{enumerate}
\end{exs}

\begin{rem}
    It is important for our purposes to note that given a geometric functor $f^*\colon \C \to \D$, even if $\C$ and $\D$ both have trivial Picard group, the category $\Gammaf\C$ need not have trivial Picard group, see \cref{rem:Picfactorisation}.
\end{rem}

\subsection{Invertible torsion and complete objects}
We now consider the Picard groups for the torsion and complete categories associated to a proxy-small geometric functor $f^*\colon\C\to\D$. Recall that the MGM equivalence \cref{monoidalcompletetorsion} is monoidal, so it induces an isomorphism $\Pic(\Gammaf\C)\simeq\Pic(\Lambdaf\C)$ of groups. In contrast to the examples given in the previous section, $\Gammaf\C$ is not generally rigidly-compactly generated. In particular, the unit is not typically compact and thus the Picard group does not consist of compact objects.

Even if $\C$ has trivial Picard group (in the sense of \cref{defn:trivialpicard}), it need not be the case that $\Gammaf\C$ has trivial Picard group, see \cref{rem:Picfactorisation}. Indeed, under the proxy-smallness assumption we have an analogous result to \cref{f*rigid}, that invertible objects in the torsion/complete categories are classified by their images under $f^*$.

\begin{thm}\label{f*invertible}
    Let $f^*\colon \C \to \D$ be a proxy-small geometric functor and let $M \in \C$.
    The following are equivalent:
        \begin{enumerate}[label=(\arabic*)]
        \item the evaluation map $\ev_M\colon M\otimes DM\to\1_\C$ is an $f^*$-isomorphism, i.e., $f^*(\ev_M)$ is an isomorphism;
        \item $\Gammaf M\in\Gammaf\C$ is invertible;
        \item $\Lambdaf M\in\Lambdaf\C$ is invertible;
        \item $f^*M\in\D$ is invertible.
    \end{enumerate}
\end{thm}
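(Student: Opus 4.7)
The plan is to mirror the template of the proof of \cref{f*rigid}, using as the bridge the observation from \cref{chunk:picard}(ii) that an object $X$ in a closed symmetric monoidal category is invertible if and only if its evaluation map $\ev_X\colon DX\otimes X\to\1$ is an isomorphism. This is precisely what lets us translate between condition (1) and the three invertibility conditions.

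First, (2) $\iff$ (3) is immediate: the MGM equivalence is strong monoidal by \cref{monoidalcompletetorsion}, so it induces an isomorphism of Picard groups. For (1) $\iff$ (3), the key input is that $\Lambdaf\colon\C\to\Lambdaf\C$ is \emph{closed} symmetric monoidal, so the comparison map $\alpha_{M,\1}\colon\Lambdaf(DM)\to D_\Lambdaf(\Lambdaf M)$ is automatically an isomorphism. Applying the coherence diagram \cref{diagram:evmonoidal} to the functor $\Lambdaf$ (with $N=\1$) then identifies $\Lambdaf(\ev_M)$ with $\ev_{\Lambdaf M}$ along this isomorphism, so by \cref{chunk:picard}(ii) the map $\Lambdaf(\ev_M)$ is an isomorphism if and only if $\Lambdaf M$ is invertible in $\Lambdaf\C$. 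Finally \cref{Wequivalences} converts between $\Lambdaf$-isomorphisms and $f^*$-isomorphisms.

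The implication (2) $\Rightarrow$ (4) is quick: by \cref{prop:changeofbase}, $f^*$ factorises as $\C\xrightarrow{\Gammaf}\Gammaf\C\xrightarrow{f^*}\D$ with the second arrow strong symmetric monoidal, and strong monoidal functors preserve invertibles (\cref{chunk:picard}(iii)). For the substantive direction (4) $\Rightarrow$ (1): since $f^*M$ is invertible in $\D$ it is in particular rigid by \cref{chunk:picard}(ii), so \cref{f*rigid} gives that $M$ is $f^*$-rigid. In particular \cref{f*duals} shows that $\alpha_{M,\1}\colon f^*(DM)\to D(f^*M)$ is an isomorphism, and then \cref{diagram:evmonoidal} applied to $f^*$ with $N=\1$ identifies $f^*(\ev_M)$ with $\ev_{f^*M}$ (up to the isomorphism $\alpha_{M,\1}\otimes f^*M$). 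Invertibility of $f^*M$ forces $\ev_{f^*M}$ to be an isomorphism, hence so is $f^*(\ev_M)$.

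The main obstacle is exactly this last step: a priori, invertibility of $f^*M$ in $\D$ gives no mechanism for pulling the evaluation map back across $f^*$, because there is no reason for $f^*$ to commute with dualisation on arbitrary $M$. The trick is to observe that invertibility is strictly stronger than rigidity, so the heavy lifting can be outsourced to \cref{f*rigid}, which supplies the isomorphism $f^*(DM)\simeq D(f^*M)$ required for the comparison diagram to close.
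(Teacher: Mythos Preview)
Your proof is correct and follows essentially the same route as the paper: MGM for (2)$\Leftrightarrow$(3), closed monoidality of $\Lambdaf$ together with \cref{diagram:evmonoidal} and \cref{Wequivalences} for (1)$\Leftrightarrow$(3), the factorisation through $\Gammaf\C$ for (2)$\Rightarrow$(4), and \cref{f*duals} plus \cref{diagram:evmonoidal} for (4)$\Rightarrow$(1). The only cosmetic difference is that you route through \cref{f*rigid} to justify applying \cref{f*duals}, whereas the paper invokes \cref{f*duals} directly (its hypothesis is already ``$f^*M$ rigid''); both are fine.
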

\begin{proof}
    Recall from \cref{chunk:picard} that an object $M$ is invertible if and only if $\mrm{ev}_M\colon DM \otimes M \to \1$ is an isomorphism. The proof is then analogous to \cref{f*rigid} by using the case $N = \1_\C$ of \eqref{diagram:evmonoidal} in place of \eqref{diagram:rhomonoidal}. In particular the proofs of (1) $\Leftrightarrow$ (3), (2) $\Leftrightarrow$ (3), and (2) $\Rightarrow$ (4) follow the same strategy, so we only give brief details. For (1) $\Leftrightarrow$ (3), applying \cref{diagram:evmonoidal} to $\Lambdaf\colon \C \to \Lambdaf\C$ and taking $N = \1_\C$, we have a commutative diagram
    \[\begin{tikzcd}[row sep=1cm, column sep=1.3cm]\Lambdaf(DM) \widehat{\otimes} \Lambdaf M \ar[r, "\simeq"] \ar[d, "\alpha_{M,\1} \widehat{\otimes} \Lambdaf M"'] & \Lambdaf(DM \otimes M) \ar[d, "\Lambdaf\mrm{ev}_M"] \\ 
    D_{\Lambdaf}(\Lambdaf M) \widehat{\otimes} \Lambdaf M \ar[r, "\mrm{ev}_{\Lambdaf M}"'] & \Lambdaf\1_\C.
    \end{tikzcd}\]
    
    We note that $\Lambdaf$ is closed monoidal, and so the left vertical arrow in the diagram is an isomorphism.
    Therefore, $\mrm{ev}_{\Lambdaf M}$ is an isomorphism if and only if $\mrm{ev}_M$ is a $\Lambdaf$-isomorphism, which is the same as an $f^*$-isomorphism by \cref{Wequivalences}, proving (1) $\Leftrightarrow$ (3).
    The equivalence of (2) and (3) is once again an immediate consequence of the MGM equivalence \cref{monoidalcompletetorsion}, and as in \cref{f*rigid} we prove (2) $\Rightarrow$ (4) by noting that the restriction of $f^*$ to $\Gammaf\C$ is strong monoidal, and thus preserves invertible objects. 

    To complete the proof, we show (4) $\Rightarrow$ (1). By \cref{f*duals} the natural map \[f^*(DM)\otimes f^*M\xrightarrow{\alpha_{M,\1}\otimes f^*M} D(f^*M)\otimes f^*M\] is an isomorphism, and so by the commutative diagram \eqref{diagram:evmonoidal} with $N = \1_\C$, we see that $\mathrm{ev}_{f^*M}$ being an isomorphism implies $f^*(\mrm{ev}_M)$ is one too.
\end{proof}

After \cref{f*rigid}, we discussed its implications in commutative algebra and chromatic homotopy theory. Likewise, we now discuss \cref{f*invertible} in the context of commutative algebra and chromatic homotopy theory.

\begin{ex}
    As in \cref{ex:BIKPRigid}, we consider a commutative Noetherian ring $R$ and prime $\p\in\mrm{Spec}(R)$, taking $f^*$ to be extension of scalars along $R_\p\to\kappa(\p)$. The statement of \cite[Proposition 4.4(3)]{BIKP} shows that if $\kappa(\p)\otimes_{R_\p}X\simeq\Sigma^a\kappa(\p)$ for some $X\in\msf{D}(R_\p)$ and $a\in\Z$, then $\Lambda_{\kappa(\p)} X\simeq\Sigma^a\Lambda_{\kappa(\p)}R_\p$. Since $\msf{D}(\kappa(\p))$ has trivial Picard group, this corresponds to the implication (4) $\Rightarrow$ (3) above. In fact the Picard group of complete objects in this case is trivial itself: this is an instance of \emph{automatic orientability} which we study below in \cref{section:orientations}.
\end{ex}

\begin{ex}\label{ex:GrossHopkins}
    We continue the example of $K(n)$-local spectra as in \cref{ex:Knrigid}. Hopkins--Mahowald--Sadofsky~\cite[Theorem 1.3]{HMS} show that a $K(n)$-local spectrum $X$ is invertible if and only if its $K(n)$-homology $K(n)_*X$ is one-dimensional over $K(n)_*$, also see the alternative proof of Hovey--Strickland~\cite[Theorem 14.2]{HS666}. The equivalence of (3) and (4) in \cref{f*invertible} applied to the geometric functor $K(n)_*(-)\colon h(L_n\Sp) \to \mathrm{grMod}(K(n)_*)$ recovers this characterisation (at least for odd primes, see \cref{ex:Knrigid}).
\end{ex}

\begin{rem}\label{rem:Picfactorisation}
    By the strong symmetric monoidality of the MGM equivalence \cref{monoidalcompletetorsion} it is immediate that $\Pic(\Gammaf\C)\simeq\Pic(\Lambdaf\C)$. However, \cref{f*invertible} does not imply that these groups are isomorphic to $\Pic(\D)$. By \cref{prop:changeofbase}, $f^*\colon \C \to \D$ factors as 
    \[\C \xrightarrow{\Gammaf} \Gammaf\C \xrightarrow{f^*_t} \D\] where $f^*_t$ is the restriction of $f^*$ to $\Gammaf\C$. Note that $f^*_t$ is again strong symmetric monoidal, so we obtain a corresponding factorisation of the induced map on Picard groups
\[\begin{tikzcd}[row sep=1cm, column sep=1cm]
	{\Pic(\C)} \\
	{\Pic(\Gammaf\C)} & {\Pic(\D).}
	\arrow["\Gammaf"', from=1-1, to=2-1]
	\arrow["f^*", from=1-1, to=2-2]
	\arrow["f^*_t"', from=2-1, to=2-2]
\end{tikzcd}\]
The vertical map may be far from surjective, and the horizontal map is not injective in general. The latter is discussed in more detail in \cref{section:orientations} from the perspective of Morita theory, in particular see \cref{rem:kernel}. 

As an illustrative example, we consider the case of \cref{ex:Knrigid,ex:GrossHopkins} for $n \geq 1$, where the above diagram corresponds to
\[\begin{tikzcd}[row sep=1cm, column sep=1cm]
	{\Pic(L_n\Sp)} \\
	{\Pic(L_{K(n)}\Sp)} & {\Pic(\Mod{}(K(n)_*)).}
	\arrow[from=1-1, to=2-1]
	\arrow[from=1-1, to=2-2]
	\arrow[from=2-1, to=2-2]
\end{tikzcd}\]
It is clear that $\Pic(\Mod{}(K(n)_*)) = \Z/2(p^n-1)$ as $K(n)_* = \mathbb{F}_p[v_n^{\pm 1}]$ with $|v_n|=2(p^n-1)$.
The horizontal map has kernel an infinite pro-$p$-group, see \cite[Proposition 14.3]{HS666} and \cite[\S 9]{HMS}, and hence is far from injective. Moreover, if $2p-2>n^2+n$ then $\Pic(L_n\Sp) = \Z$ by \cite{HoveySadofsky}, and we see that the vertical map is not surjective from the description of $\Pic(L_{K(n)}\Sp)$ given in \cite[Theorem B]{BSSW}. In particular, we see that it is possible for both $\C$ and $\D$ have trivial Picard group in the sense of \cref{defn:trivialpicard}, but $\Gammaf\C$ to not have trivial Picard group.
\end{rem}

\begin{rem}\label{rem:reflexivefalse}
    Both \cref{f*rigid} and \cref{f*invertible} are results of the form ``$\Gammaf X$ has property $P$ in $\Gammaf\C$ if and only if $f^*X$ has property $P$ in $\D$'', where $P$ is some property of objects in tt-categories. It is therefore natural to ask for which other properties this statement holds. In particular, one can ask about reflexivity, but here the answer is negative: 
    \begin{enumerate}[label=(\roman*)]
        \item We first show that there exists a proxy-small geometric functor $f^*\colon \C \to \D$ and an object $X \in \C$ for which $f^*X\in\D$ is reflexive but $\Gammaf X \in \Gammaf\C$ is not. Indeed, take any commutative Noetherian local ring $(R,\m,k)$ which is not Gorenstein, and let $f^*\colon \msf{D}(R) \to \msf{D}(k)$ be extension of scalars along $R\to k$. For simplicity, we shall assume $R$ is Artinian, so that $\Gammaf\msf{D}(R)=\msf{D}(R)$. Concretely one could take $R=k[x,y]/(x^2,y^2,xy)$.
        Although $\mrm{Tor}^R_*(k,k)$ is infinite dimensional, it is finite in each degree, and so $k\otimes_R k=f^*k$ is reflexive in $\msf{D}(k)$. So it remains to show that $k$ is not reflexive in $\msf{D}(R)$. Indeed since $R$ is not Gorenstein, $Dk = \Hom_R(k,R)$ has infinite homology over $k$, and so as an $R$-module is an infinite direct sum of suspensions of $k$, say $Dk\simeq\bigoplus_i\Sigma^{d_i}k$. Dualising again we see that $D^2k\simeq\prod_i\bigoplus_j\Sigma^{d_j-d_i}k$, which is not isomorphic to $k$ itself.
        We note that the same argument applies without the Artinian assumption (since $Dk$ and $D^2k$ are automatically torsion) and in fact the reflexivity of $k$ characterises Gorenstein local rings. An alternative, more general argument for this can be found below in \cref{cor-Goriffreflexive}.
        \item We show that there exists a proxy-small geometric functor $f^*\colon \C \to \D$ and an object $X \in \C$ for which $\Gammaf X\in\Gammaf \C$ is reflexive but $f^*X\in\D$ is not in \cref{ex:reflexivetot}. 
    \end{enumerate}
\end{rem}

\section{Matlis dualising objects and the Gorenstein property}
In this section, we propose a notion of a Gorenstein geometric functor generalising the Gorenstein ring spectra of Dwyer--Greenlees--Iyengar~\cite{DGI}. Using this we introduce the notion of Matlis dualising objects, and show that they satisfy an abstract form of Matlis duality. We characterise these Matlis dualising objects in terms of the Picard group of torsion objects, and we also relate Matlis dualising objects to external dualising objects as studied in Balmer--Dell'Ambrogio--Sanders~\cite{BDS}.

\subsection{Gorenstein geometric functors}
Recall from \cref{chunk:geometric} that a geometric functor $f^*\colon \C \to \D$ fits into an adjoint triple $f^* \dashv f_* \dashv f^!$. First we make one of the central definitions of this section. We motivate this via the following remark:
\begin{rem}\label{rem:Gorenstein}
    We recall the notion of Gorenstein ring spectra as introduced by Dwyer--Greenlees--Iyengar~\cite{DGI}. A proxy-small map $R\to k$ of commutative ring spectra is called \emph{Gorenstein} if there is an equivalence $\Hom_R(k,R)\simeq\Sigma^{a}k$ for some integer $a$ (the \emph{shift}). For the principal examples of $k$ in loc. cit., namely fields and the sphere spectrum, the Picard group of $k$ is trivial (see \cref{ex:trivialPic}), and so in this case we can fully describe this property in our setting. Namely, let $f^*\colon\msf{D}(R)\to\msf{D}(k)$ be the extension of scalars functor. Provided $\Pic(\msf{D}(k))$ is trivial, the Gorenstein property is equivalent to $\omega_f = f^!R$ being in $\Pic(\msf{D}(k))$. This naturally leads to a generalisation of this Gorenstein property in tt-geometry as given in the next definition. For instance, in the realm of equivariant ring spectra, one does not expect integer shifts, for example see \cref{ex:KR} below. Despite the generalisation, we will demonstrate that this definition retains the distinctive features of Gorenstein ring spectra whilst being flexible enough to cover a wide range of settings.
\end{rem}

\begin{defn}\label{defn:Gorenstein}
    A geometric functor $f^*\colon\C\to\D$ is \emph{Gorenstein} if $\omega_f\in\Pic(\D)$.
\end{defn}

\begin{exs}\label{exs:Gorenstein}
    As a sanity check, we see that the definition above is compatible with the standard definitions in commutative algebra and algebraic geometry:
    \begin{enumerate}[label=(\roman*)]
        \item Let $(R,\m,k)$ be a commutative Noetherian local ring and consider the extension of scalars functor $f^*\colon \msf{D}(R) \to \msf{D}(k)$ along $R \to k$. Then $R$ is Gorenstein if and only if the geometric functor $f^*$ is Gorenstein. Indeed, the Picard group of $\msf{D}(k)$ is trivial, so $f^*$ is Gorenstein if and only if $\omega_f = \Hom_R(k,R)$ is a shift of $k$ which is equivalent to $\mrm{Ext}^*_R(k,R)$ being $k$ in a single degree, that is, to $R$ being a Gorenstein ring.
        \item Let $f\colon X \to \mrm{Spec}(k)$ be a projective variety over a field $k$. We show that $X$ is Gorenstein if and only if the pullback functor $f^*\colon \msf{D}(k) \to \msf{D}_\mrm{qc}(X)$ is a Gorenstein geometric functor. By~\cite[Example 6.14]{BDS}, $\omega_f$ is a dualising complex for $X$ (in the sense that $\iHom(-,\omega_f)$ gives an antiequivalence on $\msf{D}^b(\mrm{coh}(X))$), and $X$ is Gorenstein if and only if $X$ has an invertible dualising complex. The reverse direction is then immediate from this. For the converse, if $X$ is Gorenstein then the structure sheaf is a dualising complex (\cite[p299]{Hartshorne}) so by the uniqueness of dualising complexes (\cite[Lemma 3.9]{Neeman}) we have that $\omega_f \simeq \mc{O}_X \otimes l$ for some invertible $l$, so that $\omega_f$ is invertible as required. 
    \end{enumerate}
\end{exs}

\begin{rem}
    Similarly to \cref{rem:calg}, we say that a map of commutative algebras $f\colon R \to S$ is Gorenstein if the associated geometric functor given by extension of scalars is Gorenstein.
\end{rem}

This definition also allows for more exotic notions of Gorenstein, for example as a condition for ring $G$-spectra.
\begin{ex}\label{ex:KR}
    Let $k\mbb{R}$ be the connective version of Atiyah's Real $K$-theory. This is a commutative algebra in $\Sp_Q$, where $Q$ is of order 2. Dugger~\cite[Theorem 1.5]{DuggerkR} shows that there is a cofibre sequence \[\Sigma^{1+\sigma} k\mbb{R}\xrightarrow{\bar{v}}k\mbb{R}\xrightarrow{f} H\underline{\Z}\] where $\sigma\in RO(Q)$ is the non-trivial one-dimensional representation and $H\underline{\Z}$ is the constant Mackey functor. In the language of this paper, the above cofibre sequence shows that $\omega_f\coloneqq f^{!}(k\mbb{R}) \simeq \Sigma^{-2-\sigma}H\underline{\Z}$, which is an invertible $H\underline{\Z}$-module, and hence $f\colon k\mbb{R} \to H\underline{\Z}$ is Gorenstein. Similar examples for topological modular forms with level structures may be found in~\cite{Dimitar}.
\end{ex}

\begin{rem}
    Let $f\colon R \to k$ be a map of commutative algebras in $G$-spectra. In analogy with the definition of Dwyer--Greenlees--Iyengar for ring spectra as discussed in \cref{rem:Gorenstein}, another definition of $f$ being Gorenstein that one might take is that $\omega_f = \Hom_R(k,R) \simeq \Sigma^\alpha k$ for some $\alpha \in RO(G)$. Note that this is a more restrictive definition then $f$ being Gorenstein in our sense (\cref{defn:Gorenstein}). Whilst this would cover the case of the previous example, there may be more invertible objects in $\Mod{\Sp_G}(k)$ than just the shifts of $k$ by the representation spheres, as discussed in \cref{exs:nontrivialPic}. The framework developed in this paper suggests that allowing all invertible objects rather than just the representation spheres still allows for powerful duality results under a less restrictive definition of Gorenstein. 
\end{rem}

\subsection{Matlis dualising objects}
In algebra, Gorenstein rings are precisely those for which the ring itself is a dualising module, but it is also fruitful to study dualising complexes over more general rings. Motivated by this, we abstract the notion of Gorenstein geometric functors to that of Matlis dualising objects.

\begin{defn}\label{defn:dualising}
    Let $f^*\colon \C \to \D$ be a geometric functor. An object $M \in \C$ is \emph{Matlis dualising} if $f^{!}M \in \Pic(\D)$.
\end{defn}

\begin{ex}\label{ex:injhull}
    Let $(R,\m,k)$ be a commutative Noetherian local ring, and let $f^*\colon \msf{D}(R) \to \msf{D}(k)$ be extension of scalars along $R \to k$. Then the injective hull $E(k)$ of the residue field is a Matlis dualising object as $f^!(E(k)) = \Hom_R(k,E(k)) \simeq k$. If $R$ is a $k$-algebra, recall that $E(k) \simeq \Hom_k(R,k)$, so that $\Hom_k(R,k)$ is a Matlis dualising object. More generally, the existence of a section to a geometric functor $f^*$ always yields a Matlis dualising object, and this will play a key role in the theory of Gorenstein duality as demonstrated in \cref{sec:Gorduality}.
\end{ex}

\begin{chunk}
    In order to prove the main result of this section, we first need some auxiliary results. Let $f^*\colon \C \to \D$ be a geometric functor and $M \in \C$. We write $D_M \coloneqq \iHom(-,M)$ for the $M$-dual, and note that there is a natural transformation \[\phi^M_X\colon X \to D^2_M(X)\] for all $X \in \C$, defined as the adjunct of the evaluation map $\mrm{ev}_{X,M}\colon X \otimes \iHom(X,M) \to M$. Likewise, we define $\psi^M_Y\colon Y \to D^2_{f^{!}M}Y$ for all $Y \in \D$. These fit into the following commutative diagram by \cite[Theorem 5.15]{BDS}:
    \begin{equation}\label{diagram:flowerdual}
    \begin{tikzcd}[column sep=1cm, row sep=1cm]
        f_*Y \ar[r,"f_*\psi^M_Y"] \ar[d, "\phi^M_{f_*Y}"'] & f_*D_{f^{!}M}^2(Y) \ar[d, "\simeq"] \\
        D_M^2(f_*Y) \ar[r, "\simeq"'] & D_M f_* D_{f^{!}M}(Y)
    \end{tikzcd}
    \end{equation}
    where the indicated natural isomorphisms come from \cref{internaladjunction2}.
\end{chunk}
\begin{prop}\label{Mduality}
    Let $f^*\colon \C \to \D$ be a geometric functor, and let $M \in \C$ be Matlis dualising. Then the natural map $\phi_X^M\colon X \to D^2_M(X)$ is an isomorphism for all $X \in \thicktensor{f_*(\D^c)}$. Hence, there is an equivalence of categories
    \[D_M\colon \thicktensor{f_*(\D^c)}^\op \xrightarrow{\simeq} \thicktensor{f_*(\D^c)}.\]
\end{prop}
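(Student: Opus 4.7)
The plan is to show that both $\phi^M_X$ is an isomorphism and $D_M(X) \in \thicktensor{f_*(\D^c)}$ hold for all $X \in \thicktensor{f_*(\D^c)}$, each via a thick $\otimes$-ideal argument that reduces to the generating objects $f_*Y$ with $Y \in \D^c$.

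I first handle the generators. For $Y \in \D^c$ (hence rigid), I invoke the commutative diagram \eqref{diagram:flowerdual} from~\cite{BDS}, whose two labelled arrows are the isomorphisms of \cref{internaladjunction2}. Thus it suffices to show that $\psi^M_Y\colon Y \to D^2_{f^!M}Y$ is an isomorphism. Since $f^!M$ is invertible (hence rigid), the functor $D_{f^!M}$ is isomorphic to $D(-) \otimes f^!M$. Iterating, $D^2_{f^!M}Y \simeq D^2Y \otimes D(f^!M) \otimes f^!M \simeq D^2Y$, and as $Y$ is rigid $D^2Y \simeq Y$, with $\psi^M_Y$ recovering this reflexivity identification. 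Hence $\phi^M_{f_*Y}$ is an isomorphism. On the dual side, \cref{internaladjunction2} gives
\[D_M(f_*Y) \simeq f_*\iHom(Y, f^!M) \simeq f_*(DY \otimes f^!M),\]
where the second isomorphism uses rigidity of $f^!M$. Since invertibles are compact in a rigidly-compactly generated tt-category, $DY \otimes f^!M \in \D^c$, so $D_M(f_*Y) \in f_*(\D^c)$.

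Next, I upgrade both facts to $\thicktensor{f_*(\D^c)}$. For any rigid $C \in \C^c$, a standard manipulation gives
\[D_M(X \otimes C) \simeq \iHom(C, D_MX) \simeq DC \otimes D_MX,\]
and iterating, $D^2_M(X \otimes C) \simeq C \otimes D^2_MX$, under which $\phi^M_{X \otimes C}$ corresponds to $\mathrm{id}_C \otimes \phi^M_X$. Consequently the subcategories $\mc{S}_1 \coloneqq \{X \mid \phi^M_X \text{ is iso}\}$ and $\mc{S}_2 \coloneqq \{X \mid D_MX \in \thicktensor{f_*(\D^c)}\}$ are both thick $\otimes$-ideals of $\C$; since they contain $f_*(\D^c)$ by the first step, they each contain $\thicktensor{f_*(\D^c)}$.

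Combining these, $D_M$ restricts to a contravariant endofunctor of $\thicktensor{f_*(\D^c)}$ whose square is naturally isomorphic to the identity, yielding the stated equivalence. The main obstacle I anticipate is the coherence calculation in step one: the abstract object-level isomorphism $D^2_{f^!M}Y \simeq Y$ (for $Y$ rigid and $f^!M$ invertible) must be shown to be realised precisely by the natural map $\psi^M_Y$, for which one traces through the definition of $\psi^M_Y$ as the adjunct of the evaluation $Y \otimes D_{f^!M}Y \to f^!M$ and compares with the usual reflexivity map via the invertible twist by $f^!M$.
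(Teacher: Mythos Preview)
Your proposal is correct and follows essentially the same approach as the paper: reduce $\phi^M_X$ to $\psi^M_Y$ via the commutative square~\eqref{diagram:flowerdual}, use rigidity of $Y$ and invertibility of $f^!M$ to see $\psi^M_Y$ is an isomorphism, then propagate both the reflexivity and the stability of $\thicktensor{f_*(\D^c)}$ under $D_M$ via the identification $\phi^M_{C\otimes X}\simeq C\otimes\phi^M_X$ for $C\in\C^c$. The coherence worry you flag at the end is not addressed explicitly in the paper either; it is taken as standard that the canonical map $\psi^M_Y$ realises the abstract isomorphism $D^2_{f^!M}Y\simeq Y$ when $Y$ is rigid and $f^!M$ is invertible.
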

\begin{proof}
    Let $Y \in \D^c$. By considering the commutative diagram \eqref{diagram:flowerdual}, we see that $\phi^M_{f_*Y}$ is a natural isomorphism if and only if $f_*\psi_Y^{M}$ is an isomorphism. Since $Y$ is rigid and $f^!M$ is invertible, the latter is an isomorphism, and hence $\phi^M_{f_*Y}$ is an isomorphism.
    If $C \in \C^c$, then for all $X \in \C$, the map $\varphi^M_{C \otimes X}$ may be identified (up to natural isomorphism) with $C \otimes \varphi^M_X$. As such, the collection of $X$ for which $\varphi^M_X$ is an isomorphism is a thick $\otimes$-ideal, and hence $\phi^M_X$ is an isomorphism for all $X \in \thicktensor{f_*(\D^c)}$. 

    To conclude the equivalence of categories it suffices to show that if $X \in \thicktensor{f_*(\D^c)}$ then $D_M(X) \in \thicktensor{f_*(\D^c)}$. By a thick subcategory argument we reduce to showing that if $Y \in \D^c$ then $D_M(f_*Y) \in \thicktensor{f_*(\D^c)}$. By \cref{internaladjunction2} we have
    \[\iHom(f_*Y, M) \simeq f_*\iHom(Y, f^!M) \simeq f_*(DY \otimes f^!M).\] Since $M$ is Matlis dualising, $f^!M$ is rigid, so we conclude.
\end{proof}

\begin{ex}\label{ex:Matlisdualitycomplexes}
    Let $(R,\m,k)$ be a commutative Noetherian local ring, and let $f^*\colon \msf{D}(R) \to \msf{D}(k)$ be the extension of scalars along $R \to k$. Then the injective hull $E(k)$ is a Matlis dualising object by \cref{ex:injhull} and the previous result recovers Matlis duality for complexes, see \cref{ex:injectivehull} for more details.
\end{ex}

\begin{rem}
    The previous result provides one justification for the terminology of Matlis \emph{dualising}: such objects indeed provide an antiequivalence on a suitable full subcategory. Using the previous result, we will compare our notion of Matlis dualising objects with the external dualising objects of \cite{BDS} in \cref{sec:external}, see \cref{prop-Matlisdualvsextdual} and \cref{rem:compareBDS} for more details.
\end{rem}

\begin{cor}\label{f*Mduality}
    Let $f^*\colon \C \to \D$ be a proxy-small geometric functor, and let $M \in \C$ be Matlis dualising. Then the natural map $\phi^M_X\colon X \to D^2_M(X)$ is an $f^*$-isomorphism for all $X \in \C^c$.
\end{cor}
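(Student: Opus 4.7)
The plan is to reduce the claim to \cref{Mduality} via the proxy-smallness witness characterisation of $f^*$-isomorphisms in \cref{Wequivalences}. Fix a witness $\W \subseteq \C^c$ for the proxy-smallness of $f_*\1_\D$. By \cref{Wequivalences}, to show that $\phi^M_X$ is an $f^*$-isomorphism for $X \in \C^c$, it suffices to show that $W \otimes \phi^M_X$ is an isomorphism for every $W \in \W$.

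The key step is to identify $W \otimes \phi^M_X$ with $\phi^M_{W \otimes X}$ up to natural isomorphism. Since each $W \in \W$ is rigid, the canonical maps
\[
\iHom(W \otimes X, M) \xrightarrow{\sim} DW \otimes \iHom(X,M) \quad \text{and} \quad \iHom(DW \otimes \iHom(X,M), M) \xrightarrow{\sim} W \otimes D^2_M X
\]
assemble into a natural isomorphism $D_M^2(W \otimes X) \simeq W \otimes D_M^2 X$ which, by naturality of $\phi^M$ and a standard coherence for $\phi^M$ with respect to tensoring with rigid objects, identifies $W \otimes \phi^M_X$ with $\phi^M_{W \otimes X}$.

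It therefore suffices to show that $\phi^M_{W \otimes X}$ is an isomorphism, and for this we invoke \cref{Mduality}. Since $\W \subseteq \thicktensor{f_*\1_\D} \subseteq \thicktensor{f_*(\D^c)}$ (as $\1_\D \in \D^c$), and $\thicktensor{f_*(\D^c)}$ is a thick $\otimes$-ideal, tensoring with the compact object $X \in \C^c$ keeps us inside: $W \otimes X \in \thicktensor{f_*(\D^c)}$. By \cref{Mduality}, $\phi^M_{W \otimes X}$ is then an isomorphism, completing the proof.

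The only mildly delicate point is the coherence identification $W \otimes \phi^M_X \simeq \phi^M_{W \otimes X}$ for $W$ rigid; this is the usual compatibility of biduality with tensoring by a rigid object and follows formally from adjunction and the rigidity of $W$ (in the same spirit as the coherences established in \cref{subsec:coherences}). Apart from this, the argument is a direct application of the two previously-proved results.
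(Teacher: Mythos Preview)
Your proof is correct and follows essentially the same approach as the paper: reduce via \cref{Wequivalences} to showing $W \otimes \phi^M_X$ is an isomorphism for $W \in \W$, identify this with $\phi^M_{W \otimes X}$ using rigidity, and conclude by \cref{Mduality}. The only cosmetic difference is that the paper further identifies $\phi^M_{W \otimes X}$ with $X \otimes \phi^M_W$ (using rigidity of $X$ as well) and applies \cref{Mduality} directly to $W$, whereas you observe $W \otimes X \in \thicktensor{f_*(\D^c)}$ and apply \cref{Mduality} to $W \otimes X$; both are equally valid.
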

\begin{proof}
    Write $\mc{W}$ for a witness for the proxy-smallness of $f_*\1_\D$. By \cref{Mduality}, $\varphi^M_W$ is an isomorphism for all $W\in\mc{W} \subseteq \thicktensor{f_*\1_\D}$. For $X \in \C^c$, we have that $W \otimes \varphi^M_{X} = \varphi^M_{W \otimes X} = X \otimes \varphi^M_{W}$. Since $\varphi^M_{W}$ is an isomorphism, we see that $W \otimes \varphi^M_{X}$ is an isomorphism for all $W \in \W$, and hence $\varphi^M_X$ is an $f^*$-isomorphism by \cref{Wequivalences}.
\end{proof}

\subsection{Characterising Matlis dualising objects} 
In this subsection we prove the main result of this section, characterising Matlis dualising objects for Gorenstein geometric functors in terms of the Picard group of complete objects. In order to state this, we need to recall some background on pure semisimple triangulated categories.
\begin{chunk}\label{puresemisimple}
    A compactly generated triangulated category $\D$ is said to be \emph{pure semisimple} if every object of $\D$ can be written as a coproduct of compact objects, equivalently, every object in $\D$ is pure injective. In this case, the full subcategory $\D^c$ is a Krull-Schmidt category so that any compact object of $\D$ admits a decomposition into a finite coproduct of indecomposables which is unique up to permutation. For the purposes of this paper, it is convenient to think of pure semisimple triangulated categories as being like residue fields; indeed any tt-field is pure semisimple by definition. This fits into the general philosophy of thinking of proxy-small geometric functors $f^*\colon \C \to \D$ as being passage to residue fields. However, the definition is a little more flexible than this: for example, the derived categories of finite products of fields and the stable module categories of group algebras of finite representation type are pure semisimple~\cite[Corollaries 12.5 and 12.26]{Beil}. The following property is the key feature of pure semisimplicity which we require:
\end{chunk}

\begin{lem}\label{lem:dualcompact}
    Let $\D$ be a pure semisimple, compactly generated tt-category and let $X \in \D$. If $DX$ is compact, then $X$ is compact. 
\end{lem}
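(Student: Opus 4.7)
The proof strategy exploits pure semisimplicity to reduce the claim to a Krull--Schmidt counting argument on idempotents in the endomorphism ring of $DX$. The plan is to decompose $X$ as a coproduct of indecomposable compacts, transfer this to a product decomposition of $DX$, and then bound the indexing set using the compactness of $DX$.

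First, I would use pure semisimplicity to write $X \simeq \bigoplus_{i \in I} X_i$ with each $X_i$ an indecomposable compact (and hence rigid, as $\D$ is rigidly-compactly generated). Since $\iHom(-,\1_\D)$ sends coproducts in the first variable to products, this yields $DX \simeq \prod_{i \in I} DX_i$. Each $DX_i$ is again a non-zero indecomposable compact via the anti-equivalence $D\colon \D^c \to \D^c$, so without loss of generality we may assume all $DX_i$ are non-zero (empty summands can be discarded).

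Next, from the product decomposition of $DX$ I would extract the canonical projections $p_i\colon DX \to DX_i$ and inclusions $s_i\colon DX_i \to DX$, which satisfy $p_j \circ s_i = \delta_{ij}\cdot\mathrm{id}$. Setting $e_i \coloneqq s_i \circ p_i$ produces a family, indexed by $I$, of non-zero pairwise orthogonal idempotents in $\mathrm{End}(DX)$. The key point is then to show $I$ must be finite. Because $DX$ is compact and $\D$ is pure semisimple, $DX$ admits a decomposition $DX \simeq \bigoplus_{k=1}^{n} Z_k$ as a \emph{finite} coproduct of indecomposable compacts. The Krull--Schmidt property of $\D^c$ bounds the cardinality of any set of pairwise orthogonal non-zero idempotents in $\mathrm{End}(DX)$ by $n$: each such idempotent $e$ splits off a non-zero compact summand $eDX$ of $DX$, summands of compacts are compact, and summands in a Krull--Schmidt category inherit indecomposable decompositions subordinate to that of the ambient object, so a family of orthogonal non-zero idempotents produces disjoint non-empty pieces of the fixed $n$-element multiset of indecomposable summands of $DX$. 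Combined with the previous step this forces $|I| \leq n$, so $X$ is a finite coproduct of compacts and hence compact.

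The step I expect to be the main obstacle is the Krull--Schmidt counting argument for idempotents: making precise the fact that the endomorphism ring of a finite direct sum of indecomposable compacts in a pure semisimple tt-category is semiperfect, and so admits only finitely many pairwise orthogonal non-zero idempotents. This is a standard result about Krull--Schmidt categories and semiperfect rings, but it deserves a careful citation, for instance to Krause's book on triangulated categories or a ring-theoretic source.
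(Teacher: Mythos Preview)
Your proposal is correct and follows essentially the same approach as the paper: decompose $X$ as a coproduct of (indecomposable) compacts, dualise to get a product, and use the Krull--Schmidt property of $\D^c$ to force the index set to be finite. The paper dispatches the final step in a single clause (``$I$ must be finite as $\D^c$ is a Krull--Schmidt category''), whereas you unpack it via orthogonal idempotents and semiperfect endomorphism rings; both rely implicitly on rigidity of compacts to ensure each $DX_i$ is non-zero.
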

\begin{proof}
    Since $\D$ is pure semisimple we may write $X \simeq \oplus_I C_i$ as a coproduct of compact objects. Since $DX \simeq \prod_I D(C_i)$ is a product of non-zero compact objects, we see that $I$ must be finite as $\D^c$ is a Krull-Schmidt category.
\end{proof}

\begin{rem}
    The previous result fails for general tt-categories: for example, in the stable homotopy category, Lin's theorem shows that $D(\mbb{F}_p) \simeq 0$ hence compact, whereas $\mbb{F}_p$ is not compact.
\end{rem}

We can now state the main result of this section. Recall from \cref{defn:Gorenstein} that $f^*\colon \C \to \D$ is Gorenstein if $\omega_f$ is invertible in $\D$.
    \begin{thm}\label{thm:dualising}
    Let $f^*\colon \C \to \D$ be a proxy-small geometric functor and assume that $\D$ is pure semisimple. If $f^*$ is Gorenstein, the following are equivalent for $M \in \C$:
    \begin{enumerate}
        \item $M$ is Matlis dualising (i.e., $f^{!}M \in \Pic(\D)$);
        \item $f^*M \in \Pic(\D)$;
        \item $\Gammaf M \in \Pic(\Gammaf\C)$;
        \item $\Lambdaf M \in \Pic(\Lambdaf\C)$.
    \end{enumerate}
\end{thm}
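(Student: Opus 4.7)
The plan is to reduce the theorem to proving $(1) \Leftrightarrow (2)$, since Theorem \cref{f*invertible} already gives $(2) \Leftrightarrow (3) \Leftrightarrow (4)$ without any Gorenstein or pure semisimplicity hypothesis. The implication $(2) \Rightarrow (1)$ is essentially immediate: if $f^*M$ is invertible then it is rigid, so Grothendieck duality on $f^*$-rigid objects (Theorem \cref{f*compactisos}) gives $f^!M \simeq \omega_f \otimes f^*M$, a tensor product of invertibles (using the Gorenstein assumption on $\omega_f$), hence invertible.

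The substantial direction is $(1) \Rightarrow (2)$. The strategy is to first show that $D(f^*M)$ is invertible, and then appeal to pure semisimplicity to transfer invertibility to $f^*M$. To produce $D(f^*M)$, I would start from the canonical map $\phi^M_{\1_\C}\colon \1_\C \to \iHom(M,M)$ defined just before Proposition \cref{Mduality}. Since $\1_\C \in \C^c$, Corollary \cref{f*Mduality} says that $\phi^M_{\1_\C}$ is an $f^*$-isomorphism, and hence by Proposition \cref{Wequivalences} also an $f^!$-isomorphism. Applying $f^!$ therefore yields an isomorphism $\omega_f = f^!\1_\C \xrightarrow{\sim} f^!\iHom(M,M)$, which by the identity \eqref{f1ofhom} can be rewritten as
\[
\omega_f \simeq \iHom(f^*M, f^!M).
\]
Using that $f^!M$ is invertible one has the standard identification $\iHom(f^*M, f^!M) \simeq D(f^*M) \otimes f^!M$, so tensoring with $(f^!M)^{-1}$ gives $D(f^*M) \simeq \omega_f \otimes (f^!M)^{-1}$, a tensor of two invertibles, hence invertible.

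This is the point at which pure semisimplicity enters the argument. Since $D(f^*M)$ is invertible it is in particular compact in $\D$, so by \cref{lem:dualcompact} the object $f^*M$ is compact, and therefore rigid as $\D$ is rigidly-compactly generated. Grothendieck duality (\cref{f*compactisos}) applied to the now-rigid $f^*M$ gives $f^!M \simeq \omega_f \otimes f^*M$, which rearranges to $f^*M \simeq \omega_f^{-1} \otimes f^!M \in \Pic(\D)$. The main obstacle I expect is isolating the manipulation $\omega_f \simeq \iHom(f^*M, f^!M)$ via $\phi^M_{\1_\C}$ and \eqref{f1ofhom}; once this is in hand, invertibility of $D(f^*M)$ together with \cref{lem:dualcompact} closes the argument essentially formally.
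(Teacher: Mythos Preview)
Your proposal is correct and follows essentially the same route as the paper: the equivalence of (2)--(4) via \cref{f*invertible}, the easy $(2)\Rightarrow(1)$ via Grothendieck duality, and for $(1)\Rightarrow(2)$ the chain $\omega_f \simeq f^!\iHom(M,M) \simeq \iHom(f^*M,f^!M) \simeq D(f^*M)\otimes f^!M$ obtained from \cref{f*Mduality}, \cref{Wequivalences}, and \eqref{f1ofhom}, followed by \cref{lem:dualcompact} and a second application of Grothendieck duality. The only cosmetic difference is that the paper phrases the invertibility deductions via the 2-out-of-3 property rather than explicitly tensoring with inverses.
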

\begin{proof}
Conditions (2)-(4) are equivalent by \cref{f*invertible}. For (2) implies (1), since $f^*M$ is invertible and hence rigid, we have that $\omega_f \otimes f^*M \simeq f^{!}M$ by Grothendieck duality \cref{GN}. Therefore, since both $\omega_f$ and $f^*M$ are invertible, we deduce that $f^{!}M$ is also invertible.

Conversely, suppose that (1) holds, i.e., $f^{!}M$ is invertible. By \cref{f*Mduality} together with \cref{Wequivalences}, we have that $\omega_f = f^{!}\1_\C \to f^{!}D^2_M(\1_\C) = f^{!}\iHom(M,M)$ is an isomorphism. We therefore have \[\omega_f \simeq f^{!}\iHom(M,M) \simeq \iHom(f^*M, f^{!}M) \simeq D(f^*M) \otimes f^{!}M\] where the second isomorphism uses \cref{f1ofhom}, and the third uses that $f^{!}M$ is invertible and hence rigid. By the 2-out-of-3 property for invertible objects, we have that $D(f^*M) \in \Pic(\D)$. In particular $D(f^*M)$ is rigid, and so $f^*M$ is rigid by \cref{lem:dualcompact}. Thus Grothendieck duality $f^!M\simeq \omega_f \otimes f^*M$ holds \cref{GN} and $f^*M$ is invertible by the 2-out-of-3 property of invertible objects.
\end{proof}

\begin{rem}
    The proof of the previous theorem shows that the implications
    \[(2) \Rightarrow (1) \Rightarrow D(f^*M) \in \Pic(\D)\]
    hold without the hypothesis that $\D$ is pure semisimple.
\end{rem}

\subsection{Relation to external dualising objects}\label{sec:external}
In this section we relate Matlis dualising objects to the external dualising objects of \cite{BDS}.
\begin{chunk}
    In \cite[Definition 5.1 and \S 7]{BDS}, an object $M \in \C$ is said to be an \emph{external dualising object} for $\C_0 \subseteq \C$ if and only if $D_M \coloneqq \iHom(-,M)\colon \C_0^\op \xrightarrow{\sim} \C_0$ is an equivalence of categories. Note that the word ``external'' refers to the fact that $M$ can lie outside of the subcategory $\C_0$ which it dualises. \cref{Mduality} shows that if $M \in \C$ is a Matlis dualising object, then it is an external dualising object for $\thicktensor{f_*(\D^c)}$. We now investigate when the converse implication holds. The following results show that Matlis dualising objects are a tightening of the notion of external dualising objects. Moreover, the two notions agree in the typical motivating cases; that is, when the geometric functor $f^*\colon \C \to \D$ is given by passage to some sort of residue field, i.e., when $\D$ is pure semisimple, see \cref{cor-Matlisdualvsextdual}.
\end{chunk}

\begin{prop}\label{prop-Matlisdualvsextdual}
    Let $f^*\colon \C \to \D$ be a geometric functor and $M \in \C$. Suppose that $f_*$ is conservative. Then the following are equivalent:
    \begin{enumerate}
        \item $M$ is Matlis dualising;
        \item $f^!M$ is rigid and $M$ is an external dualising object for $\thicktensor{f_*(\D^c)}$;
        \item $f^{!}M$ is rigid and $\psi_{\1_\D}^{M}\colon \1_\D \to \iHom(f^{!}M, f^{!}M)$ is an isomorphism.
    \end{enumerate}
\end{prop}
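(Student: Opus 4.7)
The plan is to establish the cycle of implications $(1)\Rightarrow(2)\Rightarrow(3)\Rightarrow(1)$. For $(1)\Rightarrow(2)$, since any invertible object is rigid (\cref{chunk:picard}), $f^{!}M$ is rigid; the external duality statement is then exactly the content of \cref{Mduality}.

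For $(2)\Rightarrow(3)$, I would specialise the external dualising hypothesis to $X = f_*\1_\D$, which lies in $\thicktensor{f_*(\D^c)}$ as $\1_\D \in \D^c$. This gives that $\phi^M_{f_*\1_\D}$ is an isomorphism. Reading the commutative diagram \eqref{diagram:flowerdual} with $Y = \1_\D$ then forces $f_*\psi^M_{\1_\D}$ to be an isomorphism, and the conservativity of $f_*$ yields that $\psi^M_{\1_\D}$ itself is an isomorphism.

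For $(3)\Rightarrow(1)$, the key observation is that $\psi^M_{\1_\D}$ is by construction the adjunct of the canonical unitor $\1_\D\otimes \iHom(\1_\D,f^{!}M)\simeq f^{!}M$, and so it is precisely the unit map $\1_\D \to \iHom(f^{!}M, f^{!}M)$. Since $f^{!}M$ is assumed rigid, there is a natural isomorphism $\iHom(f^{!}M, f^{!}M) \simeq D(f^{!}M) \otimes f^{!}M$; composing with $\psi^M_{\1_\D}$ yields an isomorphism $\1_\D \simeq D(f^{!}M) \otimes f^{!}M$, exhibiting $D(f^{!}M)$ as an inverse to $f^{!}M$. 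Hence $f^{!}M \in \Pic(\D)$, as required. The only genuinely delicate point in the whole proof is this identification of $\psi^M_{\1_\D}$ with the unit map, but this is immediate from the construction of $\psi$; otherwise the argument is a formal unpacking of the definitions combined with the use of diagram \eqref{diagram:flowerdual} and the conservativity of $f_*$.
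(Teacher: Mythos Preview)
Your proof is correct and follows essentially the same route as the paper: the cycle $(1)\Rightarrow(2)\Rightarrow(3)\Rightarrow(1)$ via \cref{Mduality}, diagram~\eqref{diagram:flowerdual} specialised at $Y=\1_\D$ together with conservativity of $f_*$, and then rigidity of $f^!M$ to conclude invertibility. Your treatment of $(3)\Rightarrow(1)$ is in fact slightly more explicit than the paper's, spelling out that the isomorphism $D(f^!M)\otimes f^!M\simeq \1_\D$ comes from composing $\psi^M_{\1_\D}$ with the rigidity isomorphism.
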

\begin{proof}
    The implication (1) $\Rightarrow$ (2) holds by \cref{Mduality}. For (2) $\Rightarrow$ (3), by assumption $\phi^M_X\colon X \to D^2_M(X)$ is an isomorphism for all $X \in \thicktensor{f_*(\D^c)}$, in particular, for $X = f_*\1_\D$. By the commutative diagram \eqref{diagram:flowerdual}, we therefore deduce that \[f_*(\psi_{\1_\D}^{M})\colon f_*\1_\D \to f_*\iHom(f^{!}M, f^{!}M)\] is an isomorphism. We then conclude by conservativity of $f_*$. For (3) $\Rightarrow$ (1),
    as $f^{!}M$ is rigid, we have that $D(f^{!}M) \otimes f^{!}M \simeq \1_\D$, so $M$ is Matlis dualising.
\end{proof}

We can use the above result to provide a generalisation of \cite[Theorem 1.9, Proposition 4.1]{BDS} which does not require Grothendieck--Neeman duality.
\begin{cor}\label{cor-Goriff}
    Let $f^*\colon \C \to \D$ be a geometric functor and suppose that $f_*$ is conservative. Then $f^*$ is Gorenstein (i.e., $\omega_f$ is invertible) if and only if $\omega_f$ is compact and $f_*\1_\D$ is reflexive. 
\end{cor}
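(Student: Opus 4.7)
The plan is to apply the equivalence $(1)\Leftrightarrow(3)$ of Proposition~\ref{prop-Matlisdualvsextdual} in the special case $M=\1_\C$. In that case $f^!M=\omega_f$ and $D_M=D$ is the functional dual, so ``$M$ Matlis dualising'' means exactly ``$f^*$ Gorenstein'', while the auxiliary condition ``$\psi^{\1_\C}_{\1_\D}\colon \1_\D\to\iHom(\omega_f,\omega_f)$ is an isomorphism'' should, via diagram~\eqref{diagram:flowerdual} and conservativity of $f_*$, be equivalent to the reflexivity of $f_*\1_\D$. The role of compactness of $\omega_f$ is to pin down its rigidity, since $\C$ is rigidly-compactly generated.

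For the forward direction, suppose $\omega_f\in\Pic(\D)$. Then $\omega_f$ is rigid, hence compact, by \cref{rigiditychunk}. Moreover $\1_\C$ is Matlis dualising, so Proposition~\ref{Mduality} applies with $M=\1_\C$: the canonical map $\phi^{\1_\C}_X\colon X\to D^2X$ is an isomorphism for every $X\in\thicktensor{f_*(\D^c)}$. Taking $X=f_*\1_\D$ yields precisely that $f_*\1_\D$ is reflexive.

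For the backward direction, assume $\omega_f$ is compact and $f_*\1_\D$ is reflexive. Compactness gives rigidity of $\omega_f=f^!\1_\C$, so condition (3) of Proposition~\ref{prop-Matlisdualvsextdual} reduces to showing that $\psi^{\1_\C}_{\1_\D}$ is an isomorphism. Specialising diagram~\eqref{diagram:flowerdual} to $M=\1_\C$ and $Y=\1_\D$ identifies $f_*(\psi^{\1_\C}_{\1_\D})$ with $\phi^{\1_\C}_{f_*\1_\D}\colon f_*\1_\D\to D^2(f_*\1_\D)$, which is an isomorphism by the reflexivity hypothesis. Conservativity of $f_*$ then upgrades this to $\psi^{\1_\C}_{\1_\D}$ itself being an isomorphism, so Proposition~\ref{prop-Matlisdualvsextdual} yields that $\1_\C$ is Matlis dualising, i.e. $\omega_f\in\Pic(\D)$.

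I do not expect a substantial obstacle: the argument is essentially an unpacking of the $M=\1_\C$ case of the previously established framework, with the main subtlety being simply the correct identification of $\phi^{\1_\C}_{f_*\1_\D}$ with $f_*(\psi^{\1_\C}_{\1_\D})$ via diagram~\eqref{diagram:flowerdual}. Note that the hypothesis that $\omega_f$ is compact is genuinely used (rather than absorbed into the reflexivity assumption) because we are not assuming proxy-smallness and hence cannot invoke \cref{lem:dualcompact} as in the proof of Theorem~\ref{thm:dualising}.
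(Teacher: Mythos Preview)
Your proof is correct and takes essentially the same approach as the paper: the paper's proof is the single line ``Take $M=\1_\C$ in \cref{prop-Matlisdualvsextdual}'', and you have accurately unpacked what that entails, using the $(1)\Leftrightarrow(3)$ equivalence together with diagram~\eqref{diagram:flowerdual} and conservativity of $f_*$ to identify $\psi^{\1_\C}_{\1_\D}$ being an isomorphism with reflexivity of $f_*\1_\D$. One minor remark: in your closing comment, the relevant hypothesis for \cref{lem:dualcompact} is pure semisimplicity of $\D$ rather than proxy-smallness, but this does not affect the proof itself.
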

\begin{proof}
    Take $M = \1_\C$ in  \cref{prop-Matlisdualvsextdual}.
\end{proof}

\begin{ex}\label{ex:notequivalent}
     We now show that \cref{prop-Matlisdualvsextdual} is sharp, in that there are external dualising objects which are not Matlis dualising. Take $\C = \msf{D}(R)$ to be the derived category of a complete local Noetherian ring $(R,\m,k)$ which is not Artinian, and let $f^*$ be the identity functor. By a thick subcategory argument, one sees that $M \in \msf{D}(R)$ is an external dualising object for $\thicktensor{\msf{D}(R)^c} = \msf{D}(R)^c$ if and only if $\Hom_R(M,M) \simeq R$. For example, the injective hull $E(k)$ of the residue field is an external dualising object, but it is not compact (if it were it would be finitely generated, but there are no finitely generated injectives over non-Artinian rings) and hence not invertible, so it is not Matlis dualising. 
\end{ex}

Despite the previous example, in our motivating setting, i.e., when $\D$ is pure semisimple, we see that Matlis dualising and external dualising objects do agree.
\begin{cor}\label{cor-Matlisdualvsextdual}
    Let $f^*\colon \C \to \D$ be a geometric functor. Suppose that $\D$ is pure semisimple and $f_*$ is conservative. Then $M$ is Matlis dualising if and only if it is an external dualising object for $\thicktensor{f_*(\D^c)}$.
\end{cor}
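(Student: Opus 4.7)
The forward direction is immediate from \cref{Mduality}. For the reverse, suppose $M$ is external dualising for $\thicktensor{f_*(\D^c)}$. The plan is to apply the implication $(3)\Rightarrow(1)$ of \cref{prop-Matlisdualvsextdual}, so I must verify that $\psi^M_{\1_\D}$ is an isomorphism and that $f^!M$ is rigid. The first point is routine: $f_*\1_\D\in\thicktensor{f_*(\D^c)}$, so the external dualising hypothesis yields $\phi^M_{f_*\1_\D}$ an isomorphism, and specialising diagram \eqref{diagram:flowerdual} to $Y=\1_\D$ shows that $f_*\psi^M_{\1_\D}$ is an isomorphism. Conservativity of $f_*$ then upgrades this to $\psi^M_{\1_\D}$ being an isomorphism, yielding $\iHom(f^!M, f^!M)\simeq \1_\D$.

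The substance of the argument is therefore to establish rigidity of $f^!M$; since $\D$ is rigidly-compactly generated, this reduces to showing $f^!M\in\D^c$. Using pure semisimplicity, decompose $f^!M\simeq\bigoplus_{\alpha\in A} C_\alpha$ as a coproduct of non-zero indecomposable compact objects. Each $C_\alpha$ is rigid, so $\iHom(C_\alpha,-)\simeq DC_\alpha\otimes -$, and since $\iHom$ sends coproducts in the first argument to products,
\[
\iHom(f^!M, f^!M) \simeq \prod_{\alpha\in A} DC_\alpha\otimes f^!M.
\]
Each factor $DC_\alpha\otimes f^!M$ is non-zero, as $\mathrm{id}_{C_\alpha}$ gives a non-zero class in $\Hom(C_\alpha, f^!M)\simeq \Hom(\1_\D, DC_\alpha\otimes f^!M)$, and, being a split retract of the product, is compact.

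The main obstacle is ruling out the case where $A$ is infinite. The splittings of the product projections supply a family of pairwise orthogonal, non-zero idempotents $\{e_\alpha\}_{\alpha\in A}$ in $\mathrm{End}_\D(\iHom(f^!M, f^!M))\simeq \mathrm{End}_\D(\1_\D)$. By pure semisimplicity (\cref{puresemisimple}), $\D^c$ is Krull-Schmidt, so $\1_\D$ decomposes as a finite coproduct of indecomposable compacts; consequently $\mathrm{End}_\D(\1_\D)$ admits only finitely many pairwise orthogonal non-zero idempotents. Hence $A$ is finite, $f^!M$ is compact and so rigid, and \cref{prop-Matlisdualvsextdual} delivers the conclusion.
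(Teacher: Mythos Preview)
Your proof is correct and follows essentially the same approach as the paper's: reduce via \cref{prop-Matlisdualvsextdual} to showing $f^!M$ is rigid, decompose $f^!M$ as a coproduct of indecomposable compacts using pure semisimplicity, and invoke the Krull--Schmidt property of $\D^c$ applied to $\1_\D \simeq \iHom(f^!M,f^!M)$ to force the index set to be finite. Your treatment of the last step (via pairwise orthogonal idempotents in $\mathrm{End}(\1_\D)$) is a more explicit unpacking of the paper's terse ``since $\D^c$ is Krull--Schmidt, the indexing set must be finite''; note that the splittings you invoke come from applying the contravariant functor $\iHom(-,f^!M)$ to the coproduct retractions $\pi_\alpha\colon f^!M\to C_\alpha$, not from the product structure alone.
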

\begin{proof}
    By \cref{prop-Matlisdualvsextdual} it suffices to show that if $M$ is an external dualising object for $\thicktensor{f_*(\D^c)}$ then $f^{!}M$ is rigid. 
    Since $\D$ is pure-semisimple, $f^{!}M$ can be written as a coproduct of compact objects $\oplus_{i \in I}A_i$. Since $M$ is an external dualising object, $\1_\D \simeq \iHom(f^!M, f^!M)$ and so we have \[\1_\D \simeq \prod_{i \in I} \bigoplus_{j \in I} \iHom(A_i, A_j).\] 
    Since $\D^c$ is Krull-Schmidt, the indexing set $I$ must be finite, and hence $f^{!}M$ must be compact as required. 
\end{proof}

We also obtain the following refinement of \cref{cor-Goriff}. 
\begin{cor}\label{cor-Goriffreflexive}
    Let $f^*\colon \C \to \D$ a geometric functor. Suppose that $\D$ is pure semisimple and $f_*$ is conservative. Then $f^*$ is Gorenstein if and only if $f_*\1_\D$ is reflexive.
\end{cor}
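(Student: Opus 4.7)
The strategy is to bootstrap \cref{cor-Goriff}, which already characterises $f^*$ being Gorenstein as the conjunction of two conditions: $\omega_f$ is compact and $f_*\1_\D$ is reflexive. The task therefore reduces to showing that, under the additional hypothesis of pure semisimplicity, reflexivity of $f_*\1_\D$ alone forces $\omega_f$ to be compact, rendering the compactness condition redundant. In the forward direction there is nothing new to do: $f^*$ Gorenstein implies $f_*\1_\D$ reflexive directly from \cref{cor-Goriff} (equivalently, by specialising \cref{Mduality} to $M = \1_\C$ and $X = f_*\1_\D$).

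For the reverse direction, assume $f_*\1_\D$ is reflexive. Specialise the commutative square \cref{diagram:flowerdual} at $M = \1_\C$ and $Y = \1_\D$: the left vertical is the reflexivity map $\phi^{\1_\C}_{f_*\1_\D}$, which is an isomorphism by hypothesis, while the right vertical is always an isomorphism. Hence the top horizontal $f_*\psi^{\1_\C}_{\1_\D}$ is an isomorphism, and conservativity of $f_*$ upgrades this to an isomorphism $\psi^{\1_\C}_{\1_\D}\colon \1_\D \xrightarrow{\sim} \iHom(\omega_f, \omega_f)$.

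It remains to show that $\omega_f$ is compact, after which \cref{cor-Goriff} concludes the proof. This is where pure semisimplicity of $\D$ is used, and the argument is essentially verbatim that given in the proof of \cref{cor-Matlisdualvsextdual}: by pure semisimplicity, write $\omega_f \simeq \bigoplus_{i \in I} A_i$ with each $A_i \in \D^c$, so that $\1_\D \simeq \iHom(\omega_f, \omega_f) \simeq \prod_{i}\bigoplus_{j}\iHom(A_i, A_j)$; since $\1_\D$ is compact and $\D^c$ is Krull--Schmidt, the indexing set $I$ must be finite, whence $\omega_f$ is a finite coproduct of compacts and so itself compact. No serious obstacle arises: the real content is already contained in \cref{cor-Goriff} and the coherence diagram \cref{diagram:flowerdual}, with the role of this corollary being to package them, together with the Krull--Schmidt argument, into a clean statement where the compactness of $\omega_f$ need not be assumed separately.
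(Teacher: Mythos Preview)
Your argument is correct and is essentially the paper's proof unwound: the paper's one-line proof invokes \cref{cor-Matlisdualvsextdual} at $M=\1_\C$, whose proof in turn reduces to \cref{prop-Matlisdualvsextdual} (whose $M=\1_\C$ case is exactly \cref{cor-Goriff}) together with the same Krull--Schmidt argument you reproduce. The diagram chase using \eqref{diagram:flowerdual} to obtain $\1_\D\simeq\iHom(\omega_f,\omega_f)$ and the pure-semisimple decomposition to force $\omega_f$ compact are precisely the ingredients used there.
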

\begin{proof}
    This is the case $M = \1_\C$ of \cref{cor-Matlisdualvsextdual}.
\end{proof}

\begin{rem}
    The trichotomy theorem of Balmer--Dell'Ambrogio--Sanders \cite[Corollary 1.13]{BDS} states that a geometric functor $f^*\colon \C \to \D$ fits into an adjoint triple, an adjoint quintuple, or an infinite chain of adjoints. The previous result shows that if $\D$ is pure semisimple and $f_*$ is conservative, this collapses to a \emph{di}chotomy theorem: either $f^*$ fits into an adjoint triple, or an infinite chain of adjoints. Indeed, the existence of the adjoint quintuple is equivalent to $f_*$ preserving compacts~\cite[Theorem 1.7]{BDS}, and if this holds, then $f_*\1_\D$ is rigid, hence reflexive. Therefore by the previous corollary, $f^*$ is Gorenstein (i.e., $\omega_f$ is invertible), and hence there are in fact infinitely many adjoints by \cite[Theorem 1.9]{BDS}.
    More concretely, the previous corollary can also be viewed as a general form of the fact that regular rings are Gorenstein. Specialising to the case of ring spectra, this shows that the hypotheses on connectivity and dc-completeness in \cite[Proposition 8.13]{DGI} can be removed.
\end{rem}

We now connect our results with \cite[Theorem 7.1]{BDS}.
\begin{rem}\label{rem:compareBDS}
In loc. cit., given a geometric functor $f^*\colon \C \to \D$ they pick an object $Y$ in $\D$ which dualises the compacts (i.e., such that $X \to D^2_YX$ is an isomorphism for all $X \in \D^c$), and prove that any \emph{Matlis lift} $M$ of $Y$ (that is, any $M \in \C$ with $f^!M \simeq Y$), is an external dualising object for $\thicktensor{f_*(\D^c)}$. By considering \cref{prop-Matlisdualvsextdual} in the case when $f^*$ is the identity functor on $\D$, we deduce that such a $Y$ is precisely an element of the Picard group of $\D$. We therefore see that our notion of Matlis dualising objects is compatible with the Matlis lifts of \cite{BDS}.
\end{rem}

\section{Orientations and Morita theory}\label{section:orientations}
In this section, given a proxy-small geometric functor $f^*\colon \C \to \D$ we classify the Matlis lifts of objects of $\D$ via Morita theory, building on results of Dwyer--Greenlees--Iyengar in the setting of ring spectra~\cite{DGI}. We single out a particular class of Matlis lifts of $\omega_f$ called the orientable Matlis lifts which satisfy a local duality theorem and then provide conditions for all Matlis lifts to be orientable.
\subsection{Orientability and local duality}
In this subsection we introduce the key terminology of orientable Matlis lifts and show that they satisfy a local duality theorem by construction. Recall from \cref{chunk:geometric} that a geometric functor $f^*\colon \C \to \D$ fits into an adjoint triple $f^* \dashv f_* \dashv f^!$.
\begin{defn}\label{defn:Matlislift}
    Let $f^*\colon \C \to \D$ be a geometric functor, and let $Y \in \D$. We say that an object $X \in \C$ is a \emph{Matlis lift} of $Y$ if $f^!X \simeq Y$.
\end{defn}

The object $\omega_f \coloneqq f^!(\1_\C)$ plays a distinguished role (e.g., through Grothendieck duality), so we are particularly interested in Matlis lifts of $\omega_f$. Moreover, if $f^*$ is Gorenstein then any Matlis lift of $\omega_f$ is a Matlis dualising object. The following definition singles out a particular class of these Matlis lifts of interest.
\begin{defn}\label{defn:orientableMatlislift}
    Let $f^*\colon \C \to \D$ be a proxy-small geometric functor, and let $M$ be a Matlis lift of $\omega_f$. We say that $M$ is an \emph{orientable Matlis lift of $\omega_f$} if $\Gammaf M \simeq \Gammaf\1_\C$. 
\end{defn}

\begin{rem}
We warn the reader that \cref{Wequivalences} does \emph{not} imply that any Matlis lift of $\omega_f$ is orientable. \cref{Wequivalences} ensures that a morphism $X\to Y$ is an $f^!$-isomorphism if and only if it is a $\Gammaf$-isomorphism, however here we only have an abstract isomorphism $f^!M \simeq \omega_f$ which need not be the image under $f^!$ of a morphism in $\C$.
\end{rem}

By construction, orientable Matlis lifts satisfy an abstract version of Grothendieck local duality:
\begin{prop}\label{Matlisliftofomega}
    Let $f^*\colon \C \to \D$ be a proxy-small geometric functor. If $M$ is an orientable Matlis lift of $\omega_f$, then for all $X \in \C$, there is an isomorphism
    \[\iHom(\Gammaf X,M) \simeq \iHom(X,\Lambdaf\1_\C).\]
\end{prop}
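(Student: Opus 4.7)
The statement reduces to showing $\Lambdaf M \simeq \Lambdaf \1_\C$ and then invoking the internal adjunction $\iHom(\Gammaf X, -) \simeq \iHom(X, \Lambdaf -)$ from \cref{thm:localduality}(3). Notice that the Matlis lift condition $f^!M \simeq \omega_f$ is actually not used in the statement beyond the fact that it lets us talk about orientability; the entire argument only needs the orientation hypothesis $\Gammaf M \simeq \Gammaf \1_\C$.

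The plan is first to apply the closed structure of the MGM equivalence: by \cref{thm:localduality}(3), there is a natural isomorphism
\[\iHom(\Gammaf X, M) \simeq \iHom(X, \Lambdaf M)\]
for all $X \in \C$. Then I would apply $\Lambdaf$ to the orientability isomorphism $\Gammaf M \simeq \Gammaf \1_\C$ and use the MGM equivalence (\cref{thm:localduality}(4)), which gives the natural isomorphism $\Lambdaf \Gammaf \simeq \Lambdaf$ and so yields
\[\Lambdaf M \simeq \Lambdaf \Gammaf M \simeq \Lambdaf \Gammaf \1_\C \simeq \Lambdaf \1_\C.\]
Substituting into the first display completes the proof.

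There is no genuine obstacle: the whole argument is a two-line chain of natural isomorphisms already packaged in \cref{thm:localduality}. The one small point to be careful about is that the isomorphism $\Gammaf M \simeq \Gammaf \1_\C$ given by the orientability hypothesis is only an abstract isomorphism in $\Gammaf\C$, not necessarily the image of a morphism in $\C$ under $\Gammaf$; but this is irrelevant, since applying the functor $\Lambdaf$ to any isomorphism produces an isomorphism. This gives the advertised local-duality formula and clarifies why the paper phrases orientability as a \emph{torsion-level} condition: it is precisely the condition ensuring that $\Lambdaf \1_\C$ computes $\iHom(\Gammaf(-), M)$.
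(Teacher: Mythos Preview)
Your proof is correct and essentially identical to the paper's own argument: the paper also applies the MGM equivalence to the orientability isomorphism to obtain $\Lambdaf M \simeq \Lambdaf \1_\C$, and then uses the internal adjunction $\iHom(\Gammaf X, M) \simeq \iHom(X, \Lambdaf M)$ from \cref{thm:localduality}(3). Your additional remark that the Matlis lift condition $f^!M \simeq \omega_f$ is not actually used is a valid observation.
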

\begin{proof}
    Since $M$ is an orientable Matlis lift of $\omega_f$, we have that $\Lambdaf M \simeq \Lambdaf \1_\C$ by the MGM equivalence. Therefore \[\iHom(\Gammaf X, M) \simeq \iHom(X, \Lambdaf M) \simeq \iHom(X, \Lambdaf \1_\C)\] for all $X \in \C$ as claimed.
\end{proof}

\begin{ex}
    In the context of local algebra, the previous result yields the usual statement of Grothendieck local duality for Gorenstein local rings, that for any $R$-module $X$, \[\Hom_R(H_\m^{\mrm{dim}(R)-i}(X), E(k)) \simeq \mrm{Ext}^i_R(X, R_\m^\wedge),\] see \cref{ex:injectivehull} for more details.
\end{ex}

In the following subsection we investigate when Matlis lifts of $\omega_f$ are automatically orientable in terms of Morita theory, see \cref{cor:AOandMorita}.

\subsection{Matlis lifts via Morita theory}
A Matlis lift $X$ of an object $Y\in\D$, should be considered as additional structure on $Y$. By \cref{internaladjunction2}, there is an equivalence $f_*f^!X\simeq\iHom(f_*\1_\D,X)$, giving $f_*Y \simeq f_*f^!X$ a right action by the endomorphisms of $f_*\1_\D$ in $\C$. The rest of this section concerns how these right actions detect and classify Matlis lifts. This is inspired by \cite[\S 6]{DGI}; however, one subtlety in this tt-context as opposed to \cite[\S6]{DGI}, is that $f_*$ is not necessarily conservative. Therefore, we should not expect endomorphisms of $f_*\1_\D$ to detect anything that can't be detected by $f_*$ itself, and so we require a weaker notion of Matlis lifting. For good behaviour of module categories, we will often assume that $f^*$ is enhanced throughout this section. Note that since $f^*$ is strong symmetric monoidal, its right adjoint $f_*$ is lax symmetric monoidal, and hence any object in the image of $f_*$ admits a canonical $f_*\1_\D$-module structure.

\begin{defn}
    Let $f^*\colon \C \to \D$ be an enhanced geometric functor and $Y \in \D$. A \emph{weak Matlis lift} of $Y$ is an object $X\in\C$ such that there is an equivalence $f_*f^!X \simeq f_*Y$ of $f_*\1_\D$-modules.
\end{defn}

\begin{rem}\label{rem:strongvsweaklifts}
    If $f_*$ is conservative, then it induces an equivalence $\Mod{\C}(f_*\1_\D)\simeq\D$, see \cref{cor:conservative}. Therefore, in this case being a weak Matlis lift is identical to being a ``strong'' Matlis lift as in \cref{defn:Matlislift}.
\end{rem}

\begin{chunk}\label{endoaction}
    Consider the endomorphism algebra $\Ef\coloneqq \iHom(f_*\1_\D, f_*\1_\D)$. This is an algebra internal to $\C$, and the natural action of $f_*\1_\D$ on itself gives a morphism $f_*\1_\D\to \Ef$ of algebras in $\C$. Furthermore, the endomorphism action gives $f_*\1_\D$ a natural left $\Ef$-module structure.
\end{chunk}

\begin{ex}\label{ex:EM}
    Let $R$ be a commutative ring spectrum and $X$ a based topological space. Let $C^*(X;R)$ be the function spectrum $F(\Sigma^\infty X, R)$, which is a commutative ring spectrum via the diagonal on $X$. Let $f^*$ be extension of scalars along $f\colon C^*(X;R)\to R$. Provided the $R$-based Eilenberg--Moore spectral sequence for $X$ converges, $\Ef=\Hom_{C^*(X;R)}(R,R)\simeq C_*(\Omega X;R)$, with multiplication given by loop concatenation.
\end{ex}

\begin{rem}
    Despite the existence of an algebra map $f_*\1_\D\to \Ef$, the endomorphism algebra $\Ef$ does not necessarily come from an algebra internal to $\D$, even if $f_*$ is conservative. For example, let $f^*$ be extension of scalars along the map of commutative ring spectra $ku\to H\Z$: by ~\cite[Theorem 5.14]{IRW} the endomorphism algebra $\Ef = \Hom_{ku}(H\Z,H\Z)$ does not admit an $H\Z$-algebra structure.
\end{rem}

\begin{chunk}
    In general $\Ef$ is not commutative and we write $\Mod{\C}(\Ef)$ for the category of \emph{right} $\Ef$-modules. We note this therefore does not admit a tensor-triangulated structure. As a triangulated category, however, it is compactly generated by the set $\C^c\otimes \Ef=\{X\otimes \Ef:X\in\C^c \}$.
\end{chunk}

\begin{lem}\label{lem:EModTorsion}
    Considered as underlying objects of $\C$, right $\Ef$-modules are $f_*\1_\D$-torsion.
\end{lem}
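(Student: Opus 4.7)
The plan is to reduce the assertion to the single statement that the endomorphism algebra $\Ef$ itself lies in $\Loctensor{f_*\1_\D}$. For any right $\Ef$-module $(M,\mu\colon M\otimes\Ef\to M)$, the unit axiom says that the composite
\[
M \simeq M\otimes\1_\C \xrightarrow{M\otimes \eta_\Ef} M\otimes\Ef \xrightarrow{\mu} M
\]
is the identity, where $\eta_\Ef\colon\1_\C\to\Ef$ is the adjunct of $\mathrm{id}_{f_*\1_\D}$. Thus $M$ is a retract in $\C$ of $M\otimes\Ef$. Since $\Loctensor{f_*\1_\D}$ is a localising $\otimes$-ideal (hence closed under retracts and under tensoring with arbitrary objects), if $\Ef$ is torsion then so is $M\otimes\Ef$, and hence so is $M$.

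To show $\Ef$ is torsion, first identify it as an object in the image of $f_*$. Applying the natural isomorphism \cref{internaladjunction2} with $X=f_*\1_\D$ and $Y=\1_\D$ gives
\[
\Ef = \iHom(f_*\1_\D,f_*\1_\D) \simeq f_*\iHom(\1_\D,f^!f_*\1_\D) \simeq f_*f^!f_*\1_\D.
\]
It therefore suffices to prove the stronger, and cleanly stated, general fact that $f_*Z$ is $f_*\1_\D$-torsion for every $Z\in\D$. For this, observe that by the triangle identity for the $(f^*,f_*)$-adjunction, the composite
\[
\1_\D \xrightarrow{f^*\eta_{\1_\C}} f^*f_*\1_\D \xrightarrow{\varepsilon_{\1_\D}} \1_\D
\]
is the identity, so $\1_\D$ is a retract of $f^*f_*\1_\D$. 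Tensoring with $Z$ and then applying $f_*$, we obtain $f_*Z$ as a retract of $f_*(f^*f_*\1_\D\otimes Z)$, which by the projection formula \cref{projformula} is isomorphic to $f_*\1_\D\otimes f_*Z$. The latter lies in $\Loctensor{f_*\1_\D}$ by the $\otimes$-ideal property, and so $f_*Z$ does too, concluding the proof.

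There is no real obstacle: the argument consists entirely of bookkeeping of adjunction data and natural transformations already established earlier in the paper. The one point to be explicit about is that the unit of the algebra structure on $\Ef$, used for the initial retract, is indeed the adjunct of $\mathrm{id}_{f_*\1_\D}$, but this is purely formal from the definition of $\Ef$ as an internal endomorphism object in $\C$.
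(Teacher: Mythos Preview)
Your proof is correct. Both you and the paper reduce to showing that $\Ef$ itself is $f_*\1_\D$-torsion, but the two routes differ slightly in execution. For the first reduction, the paper uses that $\Mod{\C}(\Ef)$ is compactly generated by $\C^c\otimes\Ef$, so that every $\Ef$-module lies in $\Loctensor{\Ef}$; your retract-via-unit argument achieves the same thing more directly. For the second step, the paper observes that $\Ef$ is an $f_*\1_\D$-module and invokes \cref{flowerconservative} (applied to the free/forgetful adjunction for $f_*\1_\D$-modules) to conclude it lies in $\Loctensor{f_*\1_\D}$. You instead identify $\Ef\simeq f_*f^!f_*\1_\D$ via \cref{internaladjunction2} and then prove the clean general fact that $f_*\D\subseteq\Gammaf\C$ by a retract argument using the triangle identity and the projection formula. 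Your approach is more self-contained and yields this useful by-product about the image of $f_*$; the paper's is terser but leans on the earlier lemma. Either way the content is the same: modules over an algebra $A$ land in $\Loctensor{A}$, applied once with $A=\Ef$ and once (implicitly or explicitly) with $A=f_*\1_\D$.
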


\begin{proof}
    Since $\C^c\otimes \Ef$ generates $\Mod{\C}(\Ef)$, the underlying object in $\C$ of any $\Ef$-module is in $\Loc(\C^c\otimes \Ef)=\Loctensor{\Ef}$. Thus, it suffices to show that $\Ef$ is $f_*\1_\D$-torsion. Since $\Ef$ is an $f_*\1_\D$-module, we conclude by an application of \cref{flowerconservative}.
\end{proof}

\begin{chunk}\label{chunk:Morita}
We now consider the Morita theory relating $\C$ and $\Mod{\C}(\Ef)$. Given any $X \in \C$, $\iHom(f_*\1_\D,X)$ admits a canonical right $\Ef$-module structure coming from the left $\Ef$-action on $f_*\1_\D$. As such we obtain an adjunction
\[
\begin{tikzcd}[column sep=2.5cm]
	{\Mod{\C}(\Ef)} & \C.
	\arrow["{-\otimes_{\Ef}f_*\1_\D}", shift left, from=1-1, to=1-2]
	\arrow["{\iHom(f_*\1_\D,-)}", shift left, from=1-2, to=1-1]
\end{tikzcd}
\]
By standard category theory, this adjunction restricts to an equivalence between the full subcategories of objects for which the unit and counit are equivalences. We now characterise such objects.
\end{chunk}

We firstly deal with the counit.
\begin{defn}
    An object $X\in\C$ is \emph{effectively constructible} if the counit map of the adjunction in \cref{chunk:Morita} \[\varepsilon_X\colon\iHom(f_*\1_\D,X)\otimes_{\Ef} f_*\1_\D\to X\]
    is an isomorphism.
\end{defn}

\begin{prop}\label{identifycounit}
    Let $f^*\colon \C \to \D$ be a proxy-small, enhanced geometric functor. For all $X \in \C$, the counit map $\varepsilon_X$ is isomorphic to the canonical map $\Gammaf X \to X$. In particular, $X$ is effectively constructible if and only if it is $f_*\1_\D$-torsion.
\end{prop}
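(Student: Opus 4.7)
The plan is to show that the counit $\varepsilon_X$ naturally factors as $F(X):=\iHom(f_*\1_\D, X) \otimes_\Ef f_*\1_\D \xrightarrow{\alpha_X} \Gammaf X \to X$ and that $\alpha_X$ is an isomorphism. Granted these, the ``in particular'' clause is immediate: $\varepsilon_X$ is an isomorphism if and only if $\Gammaf X \to X$ is, which happens precisely when $X$ is $f_*\1_\D$-torsion.

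To establish the factorisation, the key point is that $F(X)$ is always torsion. In the enhanced setting the relative tensor product realises as a geometric realisation of the two-sided bar construction whose $n$-th term is $\iHom(f_*\1_\D, X)\otimes\Ef^{\otimes n}\otimes f_*\1_\D$; each such term contains $f_*\1_\D$ as a tensor factor, so lies in $\Loctensor{f_*\1_\D} = \Gammaf\C$. Since $\Gammaf\C$ is localising, so is the realisation, and by the universal property of the torsion functor $\varepsilon_X$ factors uniquely through $\Gammaf X \to X$, giving the natural transformation $\alpha_X$.

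To see that $\alpha_X$ is an isomorphism, I will promote the adjunction $(F, G) = (-\otimes_\Ef f_*\1_\D,\, \iHom(f_*\1_\D,-))$ to a Morita equivalence $F \colon \Mod{\C}(\Ef) \simeq \Gammaf\C \colon G$. The restricted $F$ is essentially surjective since $F(\Ef\otimes C)\simeq f_*\1_\D\otimes C$ for every $C\in\C^c$, and these generate $\Gammaf\C = \Loc(f_*\1_\D\otimes\C^c)$ as a localising subcategory. For fully faithfulness of $F$, equivalently that the unit $\eta_M \colon M \to \iHom(f_*\1_\D, F(M))$ is an isomorphism, one checks on the compact generators $M = \Ef\otimes C$: rigidity of $C$ gives $\iHom(f_*\1_\D, f_*\1_\D\otimes C)\simeq C\otimes\iHom(f_*\1_\D,f_*\1_\D) = C\otimes\Ef$, so $\eta_{\Ef\otimes C}$ is the identity. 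One then extends to all $M$ using that $F$ preserves colimits as a left adjoint and that $\iHom(f_*\1_\D,-)$ preserves the relevant coproducts on the essential image of $F$ in $\Gammaf\C$. Once the equivalence is established, $\alpha_X$ is automatically an isomorphism, being the counit of an equivalence.

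The main obstacle is the last extension step: showing $\iHom(f_*\1_\D,-)$ commutes with coproducts of objects in the essential image of $F$, which all lie in $\Gammaf\C$. This is where proxy-smallness enters essentially, since $f_*\1_\D$ need not be compact in $\C$: the witness set $\W\subseteq\thicktensor{f_*\1_\D}$ consists of compact objects for which $\iHom(W, -)$ automatically preserves coproducts, and combined with $f_*\1_\D \in \Loctensor{\W}$ one transfers this coproduct-preservation from the compact witnesses to $f_*\1_\D$ on torsion objects via a localising subcategory argument. Formalising this transfer carefully in the enhanced setting is the technical heart of the proof.
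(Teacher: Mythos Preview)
Your strategy diverges from the paper's. After showing (correctly, via the bar resolution) that the source of $\varepsilon_X$ is torsion and hence $\varepsilon_X$ factors through $\Gammaf X \to X$, you try to prove that $\alpha_X$ is an isomorphism by first establishing the \emph{full} Morita equivalence $\Mod{\C}(\Ef) \simeq \Gammaf\C$. The paper instead argues directly: it introduces the auxiliary map
\[\mu_Y\colon \iHom(f_*\1_\D, X) \otimes_{\Ef} \iHom(Y, f_*\1_\D) \to \iHom(Y, X)\]
and runs a localising $\otimes$-ideal argument in the \emph{covariant} variable $Y$. Since $\mu_{f_*\1_\D}$ is tautologically an isomorphism, the class of good $Y$ contains the witness set $\W$; and for rigid $W$ one identifies $\mu_W$ with $\mu_{\1_\C} \otimes DW = \varepsilon_X \otimes DW$, whence $\Gammaf\varepsilon_X$ is an isomorphism.

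Your route cannot be completed as stated: the full Morita equivalence is \emph{false} under proxy-smallness alone. The obstruction is exactly the step you flag as the technical heart. You need $G = \iHom(f_*\1_\D, -)$ to preserve coproducts on $\Gammaf\C$ in order to extend the unit isomorphism from the generators $\Ef\otimes C$ to all of $\Mod{\C}(\Ef)$, and you suggest transferring this from the witnesses via $f_*\1_\D \in \Loctensor{\W}$. But $\iHom(-,Y)$ is contravariant: expressing $f_*\1_\D$ as a colimit built from $\W$ exhibits $\iHom(f_*\1_\D,-)$ as a \emph{limit} of the coproduct-preserving functors $\iHom(W,-)$, and such limits need not preserve coproducts. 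Concretely, take $\C = \msf{D}(R)$ for $R = k[x]/(x^2)$ with $f_*\1_\D = k$. Using the periodic free resolution of $k$ one finds $H_0\,\iHom\bigl(k, \bigoplus_{j\geq 0}\Sigma^j k\bigr) \cong \prod_{j\geq 0} k$, whereas $H_0\bigl(\bigoplus_{j\geq 0}\iHom(k,\Sigma^j k)\bigr) \cong \bigoplus_{j\geq 0} k$; hence the unit fails on $M = \bigoplus_{j\geq 0}\Sigma^j\Ef$ and $F$ is not fully faithful. This is precisely why the paper, as a corollary of the proposition, only obtains $\EffLift_f \simeq \Gammaf\C$ for the proper subcategory $\EffLift_f \subseteq \Mod{\C}(\Ef)$ on which the unit is an isomorphism.
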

\begin{proof}
    It suffices to show that the domain of $\varepsilon_X$ is $f_*\1_\D$-torsion and that $\Gammaf(\varepsilon_X)$ is an isomorphism. For the former, since $\iHom(f_*\1_\D, X) \in \Mod{\C}(\Ef)$, we have $\iHom(f_*\1_\D,X) \in \Loc(\C^c \otimes \Ef)$, so by applying $- \otimes_{\Ef} f_*\1_\D$ we deduce that the domain of $\varepsilon_X$ is in $\Loctensor{f_*\1_\D}$, and hence is $f_*\1_\D$-torsion.

    For the latter, let $\mc{Y}$ be the full subcategory of all $Y\in\C$ for which the natural map \[\mu_Y\colon\iHom(f_*\1_\D,X)\otimes_{\Ef}\iHom(Y,f_*\1_\D)\to\iHom(Y,X)\] defined as the adjunct of the composition $\varepsilon_X\circ\big(\iHom(f_*\1_\D,X)\otimes_{\Ef}\mrm{ev}_{Y,f_*\1_\D}\big)$ is an isomorphism.

    The subcategory $\mc{Y}$ is localising, and for $C\in\C^c$ there are natural isomorphisms identifying $\mu_{C\otimes Y}$ with $\mu_Y \otimes DC$, so $\mc{Y}$ is moreover a localising $\otimes$-ideal. By definition $f_*\1_\D\in\mc{Y}$ and so $\Gammaf\C\subseteq\mc{Y}$. In particular, writing $\W$ for a witness for the proxy-smallness of $f_*\1_\D$, we have $\W\subseteq\mc{Y}$, and hence $W\otimes\mu_{\1_\C} = \mu_{W}$ is an isomorphism for each $W \in \W$. Since $\mu_{\1_\C}=\varepsilon_X$, this shows that $\Gammaf\varepsilon_X$ is an isomorphism by \cref{Wequivalences}.
\end{proof}

Having understood the objects for which the counit map is an isomorphism, we now set terminology for the unit map. Recall from \cref{endoaction} that there is a map of algebras $f_*\1_\D \to \Ef$ in $\C$, and hence a restriction functor $\res^{\Ef}_{f_*\1_\D}\colon \Mod{\C}(\Ef) \to \Mod{\C}(f_*\1_\D)$.
\begin{defn}\label{defn:effective}
    Let $Z\in\Mod{\C}(f_*\1_\D)$. An \emph{$\Ef$-lift} of $Z$ is a right $\Ef$-module $\wt{Z}$ such that $\res^{\Ef}_{f_*\1_\D}(\wt{Z})=Z$. An $\Ef$-lift $\wt{Z}$ of $Z$ is \emph{effective} if the unit \[\eta_{\wt{Z}}\colon\wt{Z}\to\iHom(f_*\1_\D,\wt{Z}\otimes_{\Ef}f_*\1_\D)\]
    of the adjunction of \cref{chunk:Morita} is an isomorphism. More generally, we write $\EffLift_f$ for the full subcategory of $\Mod{\C}(\Ef)$ consisting of the objects $Y$ for which the unit map $\eta_{Y}$ is an isomorphism -  that is, the objects $Y$ which are effective lifts for their restrictions.
\end{defn}

\begin{rem}
    In \cite{DGI} effective lifts are called lifts of \emph{Matlis type}. We change this terminology to avoid confusion with Matlis lifts, and to compare to the notion of effective constructibility.
\end{rem}

By combining the above definitions and \cref{identifycounit} with the adjunction of \cref{chunk:Morita} we obtain the following corollary:
\begin{cor}\label{cor:Moritaequivalence}
    Let $f^*\colon \C \to \D$ be a proxy-small, enhanced geometric functor. There is an equivalence of categories
\[
\begin{tikzcd}[column sep=2.5cm]
	{\EffLift_f} \ar[r, phantom, "\simeq"] & \Gammaf\C.
	\arrow["{-\otimes_{\Ef}f_*\1_\D}", yshift=2mm, from=1-1, to=1-2] 
	\arrow["{\iHom(f_*\1_\D,-)}", yshift=-2mm, from=1-2, to=1-1]
\end{tikzcd}
\] 
\end{cor}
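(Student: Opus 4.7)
The plan is to deduce this corollary directly from \cref{identifycounit} together with the standard categorical fact that any adjunction $F \dashv G\colon \mathcal{B} \to \mathcal{A}$ restricts to an equivalence on the full subcategories of objects where the counit and unit, respectively, are isomorphisms. Under the proxy-smallness hypothesis the substantive work has already been carried out in \cref{identifycounit}, and this corollary is essentially just the formal packaging of that result.

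First I would briefly recall (or even invoke without proof) the relevant formal fact: given such an adjunction with unit $\eta$ and counit $\varepsilon$, and $X \in \mathcal{A}$ with $\varepsilon_X$ an isomorphism, the triangle identity $G(\varepsilon_X) \circ \eta_{GX} = \mrm{id}_{GX}$ combined with the fact that $G(\varepsilon_X)$ is an isomorphism forces $\eta_{GX}$ to be an isomorphism. The symmetric statement for the other direction holds by applying $F$ to an isomorphism $\eta_Y$ and using the other triangle identity. Consequently $F$ and $G$ restrict to mutually inverse equivalences between the two ``fixed'' full subcategories.

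Second, I would apply this to the adjunction $(-\otimes_{\Ef} f_*\1_\D,\, \iHom(f_*\1_\D,-))$ recalled in \cref{chunk:Morita}. By \cref{defn:effective}, $\EffLift_f \subseteq \Mod{\C}(\Ef)$ is by definition the full subcategory on which the unit $\eta$ is an isomorphism. By \cref{identifycounit}, the counit $\varepsilon_X$ is naturally identified with the canonical map $\Gammaf X \to X$, so the full subcategory of $\C$ on which $\varepsilon$ is an isomorphism is precisely $\Gammaf\C$. Combining these two identifications with the formal fact above yields the claimed equivalence.

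There is no real obstacle in this argument: the technical content — computing $\Gammaf(\varepsilon_X)$ using a localising $\otimes$-ideal argument and a proxy-small witness $\W$ — has been discharged already in the proof of \cref{identifycounit}, so what remains here is pure adjunction yoga and can be written in a few lines.
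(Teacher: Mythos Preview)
Your proposal is correct and matches the paper's approach exactly: the paper states this corollary without proof, remarking only that it follows ``by combining the above definitions and \cref{identifycounit} with the adjunction of \cref{chunk:Morita}'', and the general categorical fact you spell out is already recorded in \cref{chunk:Morita}. There is nothing to add.
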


From this we can formally deduce the following classification of weak Matlis lifts in terms of the action of $\Ef$.

\begin{prop}\label{prop:classifylifts}
    Let $f^*\colon \C \to \D$ be a proxy-small, enhanced geometric functor and let $Y \in \D$. There is a bijection:
    \[
    \begin{tikzcd}[ampersand replacement=\&, column sep=4cm]
                {\begin{Bmatrix}
                    \text{effective $\Ef$-lifts}\\
                    \text{of } f_*Y
                \end{Bmatrix}} \ar[r, yshift=1.5mm, "{\wt{f_*Y} \mapsto \wt{f_*Y} \otimes_{\Ef} f_*\1_\D}"] \ar[r, phantom, "\simeq"]
                \&
                {\begin{Bmatrix}
                    \text{torsion weak} \\
                    \text{Matlis lifts of } Y
                \end{Bmatrix}} \ar[l, yshift=-1.5mm, "{ \iHom(f_*\1_\D,X) \mapsfrom X}"]
                \end{tikzcd}
        \]
\end{prop}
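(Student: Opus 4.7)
The plan is to obtain the claimed bijection as an immediate consequence of the Morita equivalence $\EffLift_f \simeq \Gammaf\C$ established in \cref{cor:Moritaequivalence}, by restricting the equivalence to the relevant subcategories determined by the underlying $f_*\1_\D$-module structure. More precisely, first I would observe that for any $X\in\C$, the isomorphism $f_*f^!X \simeq \iHom(f_*\1_\D,X)$ of \cref{internaladjunction2} is an isomorphism of $f_*\1_\D$-modules, where the left hand side carries its canonical $f_*\1_\D$-module structure from the lax monoidality of $f_*$, and the right hand side is equipped with the $f_*\1_\D$-module structure obtained by restricting the $\Ef$-action along the algebra map $f_*\1_\D\to\Ef$ of \cref{endoaction}. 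This compatibility is the main technical point and follows by chasing definitions: both structures are induced by precomposition with the multiplication of $f_*\1_\D$ on itself.

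With this identification in hand, the bijection follows formally. Given a torsion weak Matlis lift $X$ of $Y$, the object $\iHom(f_*\1_\D,X)$ lies in $\EffLift_f$ by \cref{cor:Moritaequivalence}, and using the identification above it restricts to $f_*f^!X \simeq f_*Y$ as $f_*\1_\D$-modules, so it is an effective $\Ef$-lift of $f_*Y$. Conversely, given an effective $\Ef$-lift $\wt{f_*Y}$ of $f_*Y$, the corresponding object $X \coloneqq \wt{f_*Y}\otimes_{\Ef} f_*\1_\D$ lies in $\Gammaf\C$ by \cref{cor:Moritaequivalence} (in particular \cref{identifycounit}), and effectiveness yields an $\Ef$-module isomorphism $\iHom(f_*\1_\D,X)\simeq \wt{f_*Y}$. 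Restricting along $f_*\1_\D \to \Ef$ and using the identification above again, we obtain an $f_*\1_\D$-module isomorphism $f_*f^!X \simeq f_*Y$, so $X$ is a torsion weak Matlis lift of $Y$.

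Finally, that these assignments are mutually inverse is precisely the content of the unit and counit of the Morita equivalence \cref{cor:Moritaequivalence} being isomorphisms on the respective subcategories, so no further work is required. The only genuine obstacle is the compatibility of module structures described in the first paragraph; the rest is just bookkeeping on top of \cref{cor:Moritaequivalence}.
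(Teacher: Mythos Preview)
Your proposal is correct and follows essentially the same approach as the paper: both derive the bijection directly from the Morita equivalence of \cref{cor:Moritaequivalence} together with the identification $f_*f^!X \simeq \iHom(f_*\1_\D,X)$ from \cref{internaladjunction2}. The paper's proof is terser, only verifying explicitly that effective $\Ef$-lifts yield torsion weak Matlis lifts and invoking \cref{cor:Moritaequivalence} for the rest, whereas you spell out both directions and are more explicit about the $f_*\1_\D$-module structure compatibility; this extra care is well-placed, since that compatibility is indeed the only nontrivial point.
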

\begin{proof}
    By \cref{cor:Moritaequivalence}, it suffices to show that given an effective $\Ef$-lift $\wt{f_*Y}$ of $f_*Y$, we have that $\wt{f_*Y} \otimes_{\Ef} f_*\1_\D$ is a weak Matlis lift of $Y$. Since $\wt{f_*Y}$ is an effective $\Ef$-lift, we have an isomorphism
    \[\wt{f_*Y} \simeq \iHom(f_*\1_\D, \wt{f_*Y} \otimes_{\Ef} f_*\1_\D)\]
    of right $\Ef$-modules. Since the restriction of $\wt{f_*Y}$ to an $f_*\1_\D$-module is precisely $f_*Y$, the above isomorphism together with \cref{internaladjunction2} shows that $\wt{f_*Y} \otimes_{\Ef} f_*\1_\D$ is a weak Matlis lift of $Y$.
\end{proof}

\begin{rem}
    Note that the previous result only classifies \emph{torsion} Matlis lifts. This is indeed all we should expect: since $f^! \simeq f^!\Gammaf$ by \cref{prop:changeofbase,Wequivalences}, given any $X \in \C$, we have that $X$ is a Matlis lift for $Y \in \D$ if and only if $\Gammaf X$ is a Matlis lift for $Y$. 
\end{rem}

The object $\omega_f \coloneqq f^!\1_\C$ plays a distinguished role in the theory of duality in tt-categories. As such, we now specialise to this case and investigate the Matlis lifts of $\omega_f$. Recall the definition of orientable Matlis lifts of $\omega_f$ from \cref{defn:orientableMatlislift}. 
\begin{defn}\label{defn:autoorient}
    Let $f^*\colon \C \to \D$ be a proxy-small, Gorenstein, geometric functor. We say that $f^*$ is \emph{automatically orientable} if every Matlis lift of $\omega_f$ is orientable.
\end{defn}

We can characterise automatically orientable geometric functors in two ways: in terms of Morita theory, and in terms of the Picard group.
\begin{cor}\label{cor:AOandMorita}
    Let $f^*\colon \C \to \D$ be a proxy-small, Gorenstein, enhanced geometric functor with $f_*$ conservative. Then $f^*$ is automatically orientable if and only if there is a unique effective $\Ef$-lift of $f_*(\omega_f)$.
\end{cor}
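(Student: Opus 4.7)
The plan is to derive this as a direct translation of the Morita-theoretic classification of torsion Matlis lifts (\cref{prop:classifylifts}) under the additional hypothesis that $f_*$ is conservative. The main observation is that ``automatic orientability'' is exactly a uniqueness statement for torsion Matlis lifts of $\omega_f$: we will check that the distinguished torsion Matlis lift $\Gammaf\1_\C$ corresponds under the Morita bijection to some canonical effective $\Ef$-lift of $f_*\omega_f$, and that any other Matlis lift becomes orientable precisely when it is isomorphic to $\Gammaf\1_\C$ in the torsion category.

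First, I would record two preliminary facts that set up the bijection. Since $f_*$ is conservative, \cref{rem:strongvsweaklifts} (using \cref{cor:conservative}) gives that weak Matlis lifts of any $Y\in\D$ coincide with strong Matlis lifts in the sense of \cref{defn:Matlislift}, so \cref{prop:classifylifts} produces a bijection between effective $\Ef$-lifts of $f_*\omega_f$ and torsion Matlis lifts of $\omega_f$. Second, $\Gammaf\1_\C$ is itself a torsion Matlis lift of $\omega_f$: by \cref{prop:changeofbase} we have $f^!\simeq f^!\Lambdaf$, and hence using the MGM equivalence (\cref{thm:localduality}),
\[
f^!(\Gammaf\1_\C)\simeq f^!\Lambdaf\Gammaf\1_\C\simeq f^!\Lambdaf\1_\C\simeq f^!\1_\C=\omega_f.
\]
Moreover, the remark following \cref{prop:classifylifts} tells us that any Matlis lift $M$ of $\omega_f$ gives a torsion Matlis lift $\Gammaf M$ of $\omega_f$, and $M$ is orientable (\cref{defn:orientableMatlislift}) precisely when $\Gammaf M\simeq\Gammaf\1_\C$.

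For the forward direction, assume $f^*$ is automatically orientable. If $M$ is any torsion Matlis lift of $\omega_f$, then in particular $M$ is a Matlis lift of $\omega_f$ and so by hypothesis $M\simeq\Gammaf M\simeq\Gammaf\1_\C$. Thus $\Gammaf\1_\C$ is, up to isomorphism, the only torsion Matlis lift of $\omega_f$; transporting across the bijection of \cref{prop:classifylifts} yields that $f_*\omega_f$ has a unique effective $\Ef$-lift (namely $\iHom(f_*\1_\D,\Gammaf\1_\C)$). Conversely, if $f_*\omega_f$ has a unique effective $\Ef$-lift, then by the bijection $\omega_f$ has a unique torsion Matlis lift up to isomorphism; since $\Gammaf\1_\C$ is one such lift, every torsion Matlis lift of $\omega_f$ is isomorphic to $\Gammaf\1_\C$. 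For an arbitrary Matlis lift $M$ of $\omega_f$, the object $\Gammaf M$ is a torsion Matlis lift, so $\Gammaf M\simeq\Gammaf\1_\C$, i.e.\ $M$ is orientable.

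This argument is entirely formal given the infrastructure already established; the only mildly non-routine step is verifying that $\Gammaf\1_\C$ really is a torsion Matlis lift of $\omega_f$ (which uses the factorisation $f^!\simeq f^!\Lambdaf$ plus MGM), and recognising that the passage from a general Matlis lift to its torsion part is exactly what bridges \cref{defn:autoorient} with the uniqueness statement on the Morita side. No genuine obstacle is anticipated.
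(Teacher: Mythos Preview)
Your proof is correct and follows essentially the same approach as the paper: both use \cref{rem:strongvsweaklifts} to identify weak and strong Matlis lifts, observe that $\Gammaf\1_\C$ is a torsion Matlis lift of $\omega_f$, and then appeal to the bijection of \cref{prop:classifylifts}. Your version simply fills in the details (the verification that $f^!(\Gammaf\1_\C)\simeq\omega_f$ via $f^!\simeq f^!\Lambdaf$ and MGM, and the explicit unwinding of the two directions) that the paper leaves implicit.
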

\begin{proof}
    Firstly, note that since $f_*$ is conservative, Matlis lifts and weak Matlis lifts are the same by \cref{rem:strongvsweaklifts}. Since $\Gammaf\1_\C$ is a torsion Matlis lift of $\omega_f$, we conclude by applying \cref{prop:classifylifts}.
\end{proof}

We now give the characterisation in terms of the Picard group.
\begin{prop}\label{prop:AOandPic}
    Let $f^*\colon \C \to \D$ be a proxy-small, Gorenstein, geometric functor. If $f^*$ is automatically orientable then the group homomorphism \[f^*\colon \Pic(\Gammaf\C) \to \Pic(\D)\] is injective. If $\D$ is moreover pure semisimple then the converse implication also holds.
\end{prop}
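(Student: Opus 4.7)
The plan is to translate both implications into the identification: under the Gorenstein hypothesis, the (torsion) Matlis lifts of $\omega_f$ correspond to elements $N \in \Pic(\Gammaf\C)$ with $f^*N \simeq \1_\D$. The forward direction needs only one direction of this correspondence and works without pure semisimplicity; the converse relies on \cref{thm:dualising} and so requires the pure semisimple hypothesis.

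For the forward direction, I would take $N \in \Pic(\Gammaf\C)$ with $f^*N \simeq \1_\D$ and observe that $f^*N$ is rigid, so Grothendieck duality \cref{f*compactisos} yields $f^!N \simeq \omega_f \otimes f^*N \simeq \omega_f$. Thus $N$ is a Matlis lift of $\omega_f$, and automatic orientability forces $\Gammaf N \simeq \Gammaf \1_\C$. Since $N \in \Gammaf\C$ is torsion, $N \simeq \Gammaf N \simeq \Gammaf \1_\C$, so the kernel of $f^*$ on Picard groups is trivial.

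For the converse, with $\D$ pure semisimple and $f^*$ injective on Picard groups, I would start from a Matlis lift $M$ of $\omega_f$ and aim to deduce $\Gammaf M \simeq \Gammaf \1_\C$. Since $f^!M \simeq \omega_f$ is invertible, $M$ is Matlis dualising, and \cref{thm:dualising} supplies both $f^*M \in \Pic(\D)$ and $\Gammaf M \in \Pic(\Gammaf\C)$. The invertibility, and hence rigidity, of $f^*M$ lets Grothendieck duality apply again: $\omega_f \simeq f^!M \simeq \omega_f \otimes f^*M$, and tensoring with the inverse of $\omega_f$ yields $f^*M \simeq \1_\D$. Using the factorisation $f^* \simeq f^* \Gammaf$ from \cref{nota:Gammaf}, this shows $\Gammaf M$ lies in $\ker(f^* \colon \Pic(\Gammaf\C) \to \Pic(\D))$, and injectivity finishes the argument.

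The main subtlety is the role of pure semisimplicity, which enters only in the converse direction: it is what activates \cref{thm:dualising} and ensures $f^*M$ is invertible whenever $f^!M$ is. Without this, there is no a priori reason for $\Gammaf M$ to sit in $\Pic(\Gammaf\C)$, and the bridge between Matlis lifts and the Picard kernel breaks down.
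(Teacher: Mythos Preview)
Your proof is correct and follows essentially the same approach as the paper: both directions hinge on translating between Matlis lifts of $\omega_f$ and elements of $\ker(f^*\colon \Pic(\Gammaf\C)\to\Pic(\D))$ via Grothendieck duality, with \cref{thm:dualising} supplying the missing invertibility of $f^*M$ in the converse under the pure semisimplicity assumption. Your write-up is in fact slightly more explicit than the paper's about why a torsion $N$ satisfies $N\simeq\Gammaf N$, but the logical structure is identical.
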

\begin{proof}
    If $f^*M \simeq \1_\D$ then by Grothendieck duality \cref{GN}, it follows that $M$ is a Matlis lift of $\omega_f$ so we conclude that the group homomorphism $f^*\colon \Pic(\Gammaf\C) \to \Pic(\D)$ is injective by the definition of orientability. For the converse, we now assume that $\D$ is pure semisimple. Suppose that $M$ is a Matlis lift of $\omega_f$. Since $\omega_f$ is invertible as $f^*$ is Gorenstein, $M$ is therefore a Matlis dualising object, so by \cref{thm:dualising}, we have that $f^*M$ is invertible and hence rigid. Therefore by Grothendieck duality \cref{GN} and the invertibility of $\omega_f$, we have that $f^*M \simeq \1_\D$. Since $f^*\colon \C \to \D$ factors over $\Gammaf\C$ by \cref{prop:changeofbase} the claim follows by injectivity. 
\end{proof}

\begin{rem}\label{rem:kernel}
    One can also connect the two approaches to characterising automatic orientability as follows. Suppose that $f^*\colon \C \to \D$ is a proxy-small, Gorenstein, enhanced geometric functor, where $\D$ is pure semisimple and $f_*$ is conservative. Then the kernel of the group homomorphism $f^*\colon \Pic(\Gammaf\C) \to \Pic(\D)$ is in bijection with the set of effective $\Ef$-lifts of $f_*(\omega_f)$. This follows by combining \cref{thm:dualising} with \cref{prop:classifylifts}.
\end{rem}

\section{Gorenstein duality}\label{sec:Gorduality}
In this section we develop a theory of Gorenstein duality for proxy-small geometric functors. We do this in the setting when the geometric functor $f^*$ admits a geometric section, meaning that it generalises the case when $f^*$ is extension of scalars along a ring map $R \to k$ where $R$ is moreover a $k$-algebra. 

\begin{chunk}\label{chunk:geometricsection}
    More precisely, we say that $f^*\colon \C \to \D$ \emph{admits a geometric section} if there is a geometric functor $\eta^*\colon \D \to \C$ such that $f^*\eta^* \simeq \mrm{id}_\D$. Therefore diagrammatically we have
    \[\begin{tikzcd}[column sep=2cm]
        \D \ar[rr, bend left, "\mrm{id}_\D", yshift=2.5mm]  \ar[r, "\eta^*", yshift=2.5mm] \ar[r, "\eta^{!}"', yshift=-2.5mm] & \C \ar[r, "f^*", yshift=2.5mm] \ar[r, "f^{!}"', yshift=-2.5mm] \ar[l, "\eta_*" description] & \D \ar[l, "f_*" description]
    \end{tikzcd}\]
    and we note that it follows that $\eta_*f_* \simeq \mrm{id}_\D$ and $f^!\eta^! \simeq \mrm{id}_\D$.
    In analogy with $\omega_f \coloneqq f^{!}(\1_\C)$, we set $\omega_{\eta} \coloneqq \eta^!(\1_\D)$. Notice that if $f^*$ has a geometric section $\eta^*$, then both $f_*$ and $\eta^*$ are conservative: $f_*X \simeq 0$ implies $X \simeq \eta_*f_*X \simeq 0$ and similarly for $\eta^*$.
\end{chunk}

\begin{exs}\label{exs:geometricsection}\leavevmode
    \begin{enumerate}[label=(\roman*)]
        \item \label{item:algebra} Let $\C$ be an enhanced rigidly-compactly generated tt-category, and let $f\colon R \to S$ be a map of commutative algebras in $\C$. If $R$ is an $S$-algebra with unit map $\eta\colon S \to R$, then extension of scalars along $\eta$ is a geometric section for extension of scalars along the augmentation $f$.
        \item \label{item:resinf} Let $G$ be a compact Lie group, and let $f^* = \mrm{res}_1^G\colon \Sp_G \to \Sp$ be the restriction functor to the trivial group. This has a geometric section given by the inflation functor $\eta^* = \mrm{inf}_1^G$.
    \end{enumerate}
\end{exs}

\begin{rem}
    Suppose that $f^*\colon \C \to \D$ is a geometric functor with a geometric section $\eta^*$. Since $f_*$ is then conservative as noted in \cref{chunk:geometricsection}, the functor $f^*$ is naturally equivalent to extension of scalars along $\1_\C \to f_*\1_\D$ by \cref{cor:conservative}. Despite this, not every geometric functor $f^*$ with geometric section $\eta^*$ can be expressed (up to equivalence) in the setting of \cref{exs:geometricsection}\ref{item:algebra}. Indeed, if this were true, then $\eta_*$ would be conservative but this is false in \cref{exs:geometricsection}\ref{item:resinf} where $\eta_* = (-)^G$ is the categorical $G$-fixed points functor which is not conservative. 
\end{rem}

The geometric section $\eta^*$ yields a canonical Matlis dualising object for $f^*$, namely $\omega_\eta$. Informally, Gorenstein duality is the assertion that this Matlis dualising object $\omega_\eta$ is suitably compatible with the Picard group of $\D$. This is made precise as follows:

\begin{defn}\label{defn:Gorduality}
    A geometric functor $f^*\colon\C\to\D$ with a geometric section $\eta^*\colon \D \to \C$ is said to have \emph{Gorenstein duality} if there is an $X\in\Pic(\D)$ such that $\Gammaf\omega_{\eta}\simeq\Gammaf\eta^*(X)$.
\end{defn}

\begin{ex}
    Let $(R,\m,k)$ be a commutative Noetherian local ring of Krull dimension $d$, and suppose that it is an algebra over its residue field $k$. We let $f^*\colon \msf{D}(R) \to \msf{D}(k)$ be extension of scalars along $R\to k$, and $\eta^*$ be extension along $k \to R$. Since $\omega_
    \eta$ is the $k$-dual $\Hom_k(R,k)$ which is the injective hull $E(k)$ of $k$, and noting that $E(k)$ is already torsion, Gorenstein duality amounts to the statement that the local cohomology satisfies \[H_\m^*(R)\simeq\Sigma^{-d}E(k).\] In this setting, such an isomorphism in fact characterises Gorenstein rings, see \cref{ex:injectivehull} for more details. Indeed, this example motivates the terminology of Gorenstein duality in \cref{defn:Gorduality}.
\end{ex}

\begin{ex}\label{ex:Poincare}
    For $Y$ a finite, connected CW-complex, and $R$ a commutative local self-injective ring, consider the commutative ring spectrum of cochains $C^*(Y;R)$. The geometric functor associated to the augmentation $C^*(Y;R) \to R$ as in \cref{exs:geometricsection}\ref{item:algebra} has Gorenstein duality if and only if $Y$ has Poincar\'e duality with respect to $R$, see \cite[Remark 2.11]{BCHV}.
    The assumption that $R$ is local means that $\Pic(\msf{D}(R))$ is trivial, and so Gorenstein duality concerns an integer shift. If $R$ is not local, this need not occur. For example, if $Y=\mbb{RP}^2$, then $Y$ has Poincar\'e duality of dimension $0$ over $\Q$, and of dimension $2$ over $\mbb{F}_2$. Taking $R=\Q\times\mbb{F}_2$, we see that $C^*(Y;R)$ has Gorenstein duality, with the $X$ as in \cref{defn:Gorduality} being  $\Q\times\Sigma^{-2}\mbb{F}_2$.
\end{ex}

We observe that Gorenstein duality implies the Gorenstein property itself.

\begin{prop}\label{GordualityimpliesGor}
    Suppose $f^*\colon\C\to\D$ is a proxy-small geometric functor with geometric section $\eta^*$. If $f^*$ has Gorenstein duality, then $f^*$ is Gorenstein.
\end{prop}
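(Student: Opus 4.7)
The plan is to apply $f^!$ to the Gorenstein duality isomorphism and identify both sides using the section identities together with Grothendieck duality for $f^*$-rigid objects (\cref{f*compactisos}).

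First, I observe two preliminary identities. Since $f^*\eta^* \simeq \mathrm{id}_\D$, taking right adjoints twice yields $f^!\eta^! \simeq \mathrm{id}_\D$, so in particular
\[f^!\omega_\eta = f^!\eta^!(\1_\D) \simeq \1_\D.\]
Moreover, by \cref{nota:Gammaf} we have $f^! \simeq f^!\Lambdaf$, and by the MGM equivalence (\cref{thm:localduality}(4)) $\Lambdaf \simeq \Lambdaf\Gammaf$, so composing gives $f^!\Gammaf \simeq f^!\Lambdaf\Gammaf \simeq f^!\Lambdaf \simeq f^!$ as endofunctors (restricted to $\C$).

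Now by hypothesis there exists $X \in \Pic(\D)$ with $\Gammaf\omega_\eta \simeq \Gammaf\eta^*(X)$. Applying $f^!$ and using the identities above,
\[\1_\D \simeq f^!\omega_\eta \simeq f^!\Gammaf\omega_\eta \simeq f^!\Gammaf\eta^*(X) \simeq f^!\eta^*(X).\]

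It remains to relate $f^!\eta^*(X)$ to $\omega_f$. Since $X \in \Pic(\D)$, the object $X$ is rigid, so $f^*\eta^*(X) \simeq X$ is rigid; that is, $\eta^*(X)$ is $f^*$-rigid in the sense of \cref{f*rigid}. Since $f^*$ is proxy-small, the Grothendieck duality isomorphism \cref{strongGN} applies with $M = \eta^*(X)$ and $X$ (in the notation of \cref{f*compactisos}) taken to be $\1_\C$, giving
\[f^!\eta^*(X) \simeq f^!(\1_\C) \otimes f^*\eta^*(X) \simeq \omega_f \otimes X.\]
Combining this with the previous display, $\omega_f \otimes X \simeq \1_\D$, and hence $\omega_f \simeq X^{-1} \in \Pic(\D)$, showing $f^*$ is Gorenstein.

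There is no real obstacle here: all the substantive work is already carried out in \cref{f*compactisos} (Grothendieck duality on $f^*$-rigid objects) and \cref{prop:changeofbase} (compatibility of $f^!$ with completion). The only thing to notice is that invertibility of $X$ in $\D$ makes $\eta^*(X)$ automatically $f^*$-rigid, which lets the proxy-small Grothendieck duality machinery engage without any separate hypothesis on $\eta^*$.
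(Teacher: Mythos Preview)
Your proof is correct and follows essentially the same route as the paper: apply $f^!$ to the Gorenstein duality isomorphism (using MGM and $f^!\Lambdaf \simeq f^!$ to strip off the torsion/completion), identify $f^!\omega_\eta \simeq \1_\D$ via the section, and then invoke Grothendieck duality \cref{GN} for the $f^*$-rigid object $\eta^*(X)$ to conclude $\omega_f \otimes X \simeq \1_\D$. The only cosmetic difference is that the paper first passes from $\Gammaf$ to $\Lambdaf$ before applying $f^!$, whereas you establish $f^!\Gammaf \simeq f^!$ directly; these are equivalent.
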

\begin{proof}
    Applying the MGM-equivalence to the Gorenstein duality statement we have that $\Lambdaf\omega_\eta \simeq \Lambdaf\eta^*(X)$ for some $X \in \Pic(\D)$. Applying $f^!$ to this together with \cref{prop:changeofbase}, we get \[\1_\D\simeq f^{!}(\omega_{\eta}) \simeq f^{!}\eta^*(X).\] Since $f^*\eta^*(X)\simeq X$ is rigid, by Grothendieck duality \cref{GN} it follows that $\1_\D\simeq\omega_f\otimes X$. Therefore $\omega_f$ is invertible with inverse $X$.
\end{proof}

\begin{rem}\label{rem:GordualityX}
The proof of the previous result shows that Gorenstein duality is in fact equivalent to $\Gammaf\omega_\eta \simeq \Gammaf\eta^*(\omega_f^{-1})$, that is, one may always take $X = \omega_f^{-1}$ in \cref{defn:Gorduality}.
\end{rem}

We note in general that the converse to \cref{GordualityimpliesGor} does not hold, and we relate the equivalence of the two conditions to the orientability of the Matlis lift $\eta^!(\omega_f)$ of $\omega_f$ in the sense of \cref{defn:orientableMatlislift}.
\begin{prop}\label{dualityIffOrientable}
     Let $f^*\colon \C \to \D$ be a proxy-small, Gorenstein geometric functor with a geometric section $\eta^*$. Then $f^*$ has Gorenstein duality if and only if $\eta^!(\omega_f)$ is an orientable Matlis lift of $\omega_f$.
\end{prop}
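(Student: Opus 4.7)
The plan is to reduce both conditions to a single identity in $\Gammaf\C$ via the explicit computation of $\eta^!(\omega_f)$. Since $f^*$ is Gorenstein, $\omega_f \in \Pic(\D)$ is invertible, hence rigid, and so $\omega_f \simeq D(\omega_f^{-1}) = \iHom(\omega_f^{-1}, \1_\D)$. Applying $\eta^!$ and using \cref{f1ofhom} for the geometric functor $\eta^*$, I obtain
\[
\eta^!(\omega_f) \simeq \eta^!\iHom(\omega_f^{-1}, \1_\D) \simeq \iHom(\eta^*(\omega_f^{-1}), \eta^!\1_\D) = \iHom(\eta^*(\omega_f^{-1}), \omega_\eta).
\]
Since $\eta^*$ is strong symmetric monoidal, $\eta^*(\omega_f^{-1})$ is invertible in $\C$ (\cref{chunk:picard}) and hence rigid, so the canonical map collapses this internal hom and I deduce
\[
\eta^!(\omega_f) \simeq D(\eta^*(\omega_f^{-1})) \otimes \omega_\eta \simeq \eta^*(\omega_f) \otimes \omega_\eta.
\]

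Next, I would note that $f^!\eta^!(\omega_f) \simeq \omega_f$ since $f^!\eta^! \simeq \mrm{id}_\D$, so $\eta^!(\omega_f)$ is automatically a Matlis lift of $\omega_f$. Using that $\Gammaf$ is smashing by \cref{thm:localduality}, the computation above then gives
\[
\Gammaf\eta^!(\omega_f) \simeq \eta^*(\omega_f) \otimes \Gammaf\omega_\eta.
\]
Consequently, $\eta^!(\omega_f)$ is orientable precisely when $\eta^*(\omega_f) \otimes \Gammaf\omega_\eta \simeq \Gammaf\1_\C$, and tensoring with the invertible object $\eta^*(\omega_f^{-1})$ shows this is equivalent to $\Gammaf\omega_\eta \simeq \Gammaf\eta^*(\omega_f^{-1})$.

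Finally, by \cref{rem:GordualityX}, $f^*$ has Gorenstein duality if and only if $\Gammaf\omega_\eta \simeq \Gammaf\eta^*(\omega_f^{-1})$, so the two conditions coincide. The only real content is the identity $\eta^!(\omega_f) \simeq \eta^*(\omega_f) \otimes \omega_\eta$; everything else is a formal manipulation using the smashing property and invertibility, so I do not anticipate any significant obstacle.
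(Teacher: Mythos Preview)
Your proof is correct and follows essentially the same route as the paper's: both hinge on the identity $\eta^!(\omega_f)\simeq\eta^*(\omega_f)\otimes\omega_\eta$ and then reduce orientability to $\Gammaf\omega_\eta\simeq\Gammaf\eta^*(\omega_f^{-1})$, which is Gorenstein duality by \cref{rem:GordualityX}. The only cosmetic difference is that you derive the key identity directly from \cref{f1ofhom} and rigidity, whereas the paper invokes it as the Fausk--Hu--May form of Grothendieck duality for rigid objects.
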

\begin{proof}
    We note that since $\omega_f\in \D$ is invertible, it is rigid, and so Grothendieck duality shows that $\omega_\eta\otimes\eta^*(\omega_f)\simeq\eta^!(\omega_f)$, see~\cite[Proposition 5.4]{FHM}. (Note that Grothendieck duality holds here without the proxy-smallness assumption on $\eta^*$ since $\omega_f$ is rigid and not just $\eta^*$-rigid.) Therefore the statement of orientability of $\eta^!(\omega_f)$ is precisely \[\Gammaf(\eta^*(\omega_f)\otimes\omega_\eta)\simeq\Gammaf\1_\C\] which by invertibility of $\omega_f$ is equivalent to the Gorenstein duality statement \[\Gammaf\omega_\eta\simeq\Gammaf\eta^*(\omega_f^{-1}).\]
    Since the $X$ in the statement of Gorenstein duality must always be the inverse to $\omega_f$ (\cref{rem:GordualityX}), this shows the two conditions are always equivalent.
\end{proof}

This is the primary usage of automatic orientability: it ensures that the Gorenstein condition and Gorenstein duality are always equivalent.
\begin{cor}\label{thm:Gordualityalgebras}
    Let $f^*\colon \C \to \D$ be a proxy-small geometric functor with a geometric section $\eta^*$, and suppose that $f^*$ is automatically orientable. Then $f^*$ is Gorenstein if and only if it has Gorenstein duality.
\end{cor}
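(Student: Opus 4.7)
The corollary is essentially a formal consequence of the two preceding results, so the plan is short.

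The reverse direction, that Gorenstein duality implies the Gorenstein property, is independent of automatic orientability and is exactly the statement of \cref{GordualityimpliesGor}; no further work is required there.

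For the forward direction, I assume that $f^*$ is Gorenstein, i.e., $\omega_f \in \Pic(\D)$. The plan is to exhibit $\eta^!(\omega_f)$ as a Matlis lift of $\omega_f$, invoke automatic orientability to upgrade it to an orientable Matlis lift, and then appeal to \cref{dualityIffOrientable} to conclude Gorenstein duality. The key small observation is that the geometric section satisfies $f^!\eta^! \simeq \mrm{id}_\D$: this is obtained by taking right adjoints of $\eta_* f_* \simeq (f\eta)_* \simeq \mrm{id}_\D$, as recorded in \cref{chunk:geometricsection}. In particular, $f^!(\eta^!\omega_f) \simeq \omega_f$, so $\eta^!(\omega_f)$ is indeed a Matlis lift of $\omega_f$ in the sense of \cref{defn:Matlislift}.

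Since $f^*$ is assumed to be automatically orientable (\cref{defn:autoorient}), every Matlis lift of $\omega_f$ is orientable, so in particular $\eta^!(\omega_f)$ is an orientable Matlis lift of $\omega_f$. Because $f^*$ is Gorenstein, proxy-small and admits a geometric section, the hypotheses of \cref{dualityIffOrientable} are met, so the orientability of $\eta^!(\omega_f)$ is equivalent to $f^*$ having Gorenstein duality. This gives the forward direction and completes the proof.

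There is no real obstacle here: the work has already been done in \cref{GordualityimpliesGor} and \cref{dualityIffOrientable}, and the corollary is simply the observation that automatic orientability is precisely the condition needed to pass from the existence of the canonical Matlis lift $\eta^!(\omega_f)$ to its orientability.
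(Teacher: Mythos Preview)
The proposal is correct and follows the same approach as the paper: use \cref{GordualityimpliesGor} for one direction, and for the other note that $\eta^!(\omega_f)$ is a Matlis lift of $\omega_f$, apply automatic orientability to it, and conclude via \cref{dualityIffOrientable}. Your version simply makes explicit the verification that $f^!\eta^!(\omega_f)\simeq\omega_f$, which the paper leaves implicit.
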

\begin{proof}
    If $f^*$ has Gorenstein duality, then it is Gorenstein by \cref{GordualityimpliesGor}. Conversely, automatic orientability applies to $\eta^!(\omega_f)$ and so the Gorenstein property implies Gorenstein duality by \cref{dualityIffOrientable}.
\end{proof}

\begin{rem}\label{rem:orientabilityneeded?}
    We summarise the implications between the Gorenstein property, Gorenstein duality, and automatic orientability in the following diagram:
     \[\begin{tikzcd}[column sep=2cm]
        \begin{matrix}
            \text{Gorenstein and} \\
            \text{automatically orientable}
        \end{matrix} \arrow[r, Rightarrow, yshift=0.2cm, "(\ref{thm:Gordualityalgebras})"] & 
            \begin{matrix} \text{Gorenstein} \\ \text{duality} \end{matrix} \arrow[negated, Rightarrow, yshift=-0.2cm, "(\ref{ex:resinf})" {yshift=-5pt}]{l} \arrow[r, Rightarrow, yshift=0.2cm, "(\ref{GordualityimpliesGor})"] 
            & 
                \text{Gorenstein} \arrow[negated, Rightarrow, yshift=-0.2cm, "({\ref{ex:resinf},\,\ref{Gorbutnoduality}})" {yshift=-5pt}]{l}
    \end{tikzcd}\]
\end{rem}

We end this section by giving some illustrative examples of Gorenstein duality, which in particular justify the failure of the indicated implications above.

\begin{ex}\label{ex:resinf}
    Let $G$ be a compact Lie group and $f^*=\res^G_1\colon\Sp_G\to\Sp$ be the restriction functor, with geometric section $\eta^*=\inf^G_1\colon\Sp\to\Sp_G$ as in \cref{exs:geometricsection}\ref{item:resinf}. We know that the Wirthm\"uller isomorphism means $f^*$ is Gorenstein with $\omega_f=S^{-d}$ where $d=\dim G$, and we explore Gorenstein duality in this situation. The associated torsion category is the category of free $G$-spectra, and $\Gammaf=EG_+\wedge-$.

    The first observation to make is that $f^*$ is not automatically orientable in general: for instance if $G$ is finite of order $2$ and $\sigma$ denotes the non-trivial one dimensional representation, then the invertible free $G$-spectra $EG_+\wedge S^1$ and $EG_+\wedge S^\sigma$ restrict to the same non-equivariant spectrum $S^1$, but are distinct as free $G$-spectra (distinguished for example by their $0$th integral Bredon cohomology). Therefore, $f^*$ is not automatically orientable by \cref{prop:AOandPic}. 
    
    Gorenstein duality as in \cref{defn:Gorduality}, would take the form of an isomorphism between $EG_+\wedge\omega_\eta$ and $\Sigma^{-d}EG_+$, but does not immediately follow from the fact that $f^*$ is Gorenstein due to the failure of automatic orientability.
    In general, the Matlis dualising object $\omega_\eta$ is difficult to describe fully. Sanders~\cite[(3.17)]{SandersCompactness} has studied this in greater generality, and in particular shows there in an equivalence \begin{equation}\label{SandersIso} EG_+\wedge\omega_\eta\simeq EG_+\wedge S^{-\mrm{ad}(G)},
    \end{equation}
    taking $N=G$ in loc. cit., where $\mrm{ad}(G)$ is the adjoint representation of $G$. Therefore Gorenstein duality is dependent on the adjoint representation:
    \begin{enumerate}
        \item If $\mrm{ad}(G)$ is trivial (for example, when $G$ is finite or abelian), then $EG_+\wedge\omega_\eta\simeq\Sigma^{-d}EG_+$ and so $f^*$ indeed has Gorenstein duality.
        \item If $\mrm{ad}(G)$ is non-trivial, $S^{-\mrm{ad}(G)}$ does not lie in the image of $\eta^*$ and so Gorenstein duality need not occur. For example, if $G=O(2)$ then $\mrm{ad}(G)$ is the non-trivial one-dimensional representation $\sigma$, and one can check that $EG_+\wedge S^1\not\simeq EG_+ \wedge S^\sigma$. Therefore the restriction functor $\Sp_G\to\Sp$ does not have Gorenstein duality in this case, although it is Gorenstein.
    \end{enumerate}
    These cases highlight two important phenomena:
    \begin{enumerate}[label=(\roman*)]
        \item Combining (1) with the fact that $f^*$ is not automatically orientable, we see that Gorenstein duality can hold without automatic orientability. In other words, $\eta^!(\omega_f)$ may happen to be an orientable Matlis lift without all Matlis lifts being orientable.
        \item From (2), we see that Gorenstein geometric functors need not have Gorenstein duality. 
    \end{enumerate}
\end{ex}

The following example gives an alternative case where Gorenstein duality can fail despite the geometric functor being Gorenstein.
\begin{ex}\label{Gorbutnoduality}
Consider the map $f\colon C^*(\mathbb{RP}^2; \mathbb{Z}/4) 
\to \mathbb{Z}/4$ of commutative ring spectra. This is Gorenstein, but does not have Gorenstein duality since $\mathbb{RP}^2$ does not have Poincar\'e duality with respect to $\mathbb{Z}/4$, see \cref{ex:Poincare} and \cite[Example 11.4(ii)]{BraveNewAlgebra}.
\end{ex}

\section{Ascent and descent}\label{sec:descent}
In this section, we prove various ascent and descent results which explain the behaviour of Matlis dualising objects along composites of geometric functors. We will use these in the following section to construct various examples of Gorenstein geometric functors. Throughout this section, we work with geometric functors $g^*\colon \B \to \C$ and $f^*\colon \C \to \D$. For orientation, this means that we have adjunctions
\[\begin{tikzcd}[column sep=2cm]
        \B \ar[r, "g^*", yshift=2.5mm] \ar[r, "g^{!}"', yshift=-2.5mm] & \C \ar[r, "f^*", yshift=2.5mm] \ar[r, "f^{!}"', yshift=-2.5mm] \ar[l, "g_*" description] & \D \ar[l, "f_*" description]
    \end{tikzcd}\]
with left adjoints denoted on top.

\begin{prop}\label{ascentdescentdualising}
    Let $g^*\colon \B \to \C$ and $f^*\colon \C \to \D$ be geometric functors and suppose that $f^*$ is proxy-small. Let $M \in \B$ be a Matlis dualising object for $g^*$. Then $f^*$ is Gorenstein if and only if $M$ is Matlis dualising for $f^*g^*$.
\end{prop}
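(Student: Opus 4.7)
The plan is to recognise the statement as a direct consequence of Grothendieck duality for $f^*$-rigid objects (\cref{f*compactisos}) combined with the 2-out-of-3 property for invertible objects (\cref{chunk:picard}(i)). The central identification is that the right adjoint to the composite $f^*g^*\colon\B\to\D$ is $f^!g^!$, so $M$ being Matlis dualising for $f^*g^*$ is by definition the statement that $f^!g^!M\in\Pic(\D)$, while $M$ being Matlis dualising for $g^*$ says that $N\coloneqq g^!M\in\Pic(\C)$.

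First I would observe that since $N\in\Pic(\C)$ is in particular rigid, and rigid objects are preserved by strong symmetric monoidal functors (\cref{chunk:rigidspreserved}), we have $f^*N\in\Pic(\D)\subseteq\D^{\mathrm{rig}}$. In particular $f^*N$ is rigid, so \cref{f*compactisos} applies with this $N$ in place of $M$, yielding a natural isomorphism
\[
f^!N \;\simeq\; \omega_f\otimes f^*N.
\]
Substituting back, this reads $f^!g^!M\simeq\omega_f\otimes f^*(g^!M)$ in $\D$, where $f^*(g^!M)$ is already known to be invertible.

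The conclusion is then immediate from the 2-out-of-3 property for invertible objects recalled in \cref{chunk:picard}(i): in the isomorphism $f^!g^!M\simeq\omega_f\otimes f^*(g^!M)$, the factor $f^*(g^!M)$ lies in $\Pic(\D)$, so $\omega_f\in\Pic(\D)$ if and only if $f^!g^!M\in\Pic(\D)$. The former condition is precisely $f^*$ being Gorenstein (\cref{defn:Gorenstein}), and the latter is precisely $M$ being Matlis dualising for $f^*g^*$ (\cref{defn:dualising}), which completes the proof.

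There is no serious obstacle here: the only subtlety to flag is that we are invoking \cref{f*compactisos} legitimately, which requires proxy-smallness of $f^*$ (assumed) and the rigidity of $f^*N$ (automatic from $N$ invertible). Note that we do not need $g^*$ to be proxy-small, nor do we need any conservativity hypothesis; the argument is entirely formal once the hypothesis produces a rigid object in $\C$ to which Grothendieck duality can be applied.
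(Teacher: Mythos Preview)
Your proof is correct and follows essentially the same route as the paper: both arguments observe that $g^!M$ is invertible, hence $f^*g^!M$ is invertible (and in particular rigid), apply Grothendieck duality \eqref{GN} to obtain $f^!g^!M\simeq\omega_f\otimes f^*g^!M$, and conclude by 2-out-of-3 for invertible objects. The only cosmetic point is that the preservation of \emph{invertible} objects under $f^*$ is recorded in \cref{chunk:picard}(iii) rather than \cref{chunk:rigidspreserved}.
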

\begin{proof}
    Since $M$ is Matlis dualising for $g^*$, we have that $g^!M$ is invertible and hence $f^*g^!M$ is invertible (thus rigid). Therefore by Grothendieck duality ~\cite[Proposition 5.4]{FHM}, we have an isomorphism
    \begin{equation}\label{eq:arithmeticshifts}
        f^!g^!M \simeq \omega_f \otimes f^*g^!M
    \end{equation}
    showing that $f^!g^!M$ is invertible if and only if $\omega_f$ is invertible. (Note that Grothendieck duality holds here without a proxy-smallness assumption on $f^*$ since $g^!M$ is rigid and not just $f^*$-rigid.)
\end{proof}

We write $\omega_{gf} \coloneqq f^!g^!(\1_\B)$ in line with our geometric notation.
\begin{cor}\label{ascentdescentGorenstein}
    Let $g^*\colon \B \to \C$ and $f^*\colon \C \to \D$ be geometric functors and suppose that $f^*$ is proxy-small. Suppose that $g^*$ is Gorenstein. Then there is an isomorphism \[\omega_{gf} \simeq \omega_f \otimes f^*\omega_g\] and thus $f^*$ is Gorenstein if and only if $f^*g^*$ is Gorenstein.
\end{cor}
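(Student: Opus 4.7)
The plan is to derive this as a direct specialisation of \cref{ascentdescentdualising} with $M = \1_\B$. I first observe that $\1_\B$ is a Matlis dualising object for $g^*$ precisely when $g^!\1_\B = \omega_g$ is invertible, i.e., when $g^*$ is Gorenstein. Under the hypothesis that $g^*$ is Gorenstein, $\1_\B$ is therefore Matlis dualising for $g^*$, so \cref{ascentdescentdualising} applies.

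For the first claim, the isomorphism $\omega_{gf}\simeq \omega_f \otimes f^*\omega_g$ is already produced by the proof of \cref{ascentdescentdualising}: since $g^*$ is Gorenstein, $f^*\omega_g = f^*g^!\1_\B$ is invertible (hence rigid in $\D$), and Grothendieck duality in the form of \cref{thmx:Grothendieckduality} applied to $X = \1_\C$ and $M = \omega_g$ yields
\[
\omega_{gf} = f^!g^!\1_\B = f^!(\omega_g) \simeq \omega_f \otimes f^*\omega_g.
\]
This is precisely (\ref{eq:arithmeticshifts}) evaluated at $M = \1_\B$.

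For the second claim, by \cref{ascentdescentdualising} (with $M=\1_\B$), $f^*$ is Gorenstein if and only if $\1_\B$ is a Matlis dualising object for $f^*g^*$. The latter condition unpacks to $(f^*g^*)^!\1_\B = f^!g^!\1_\B = \omega_{gf}$ being invertible in $\D$, which is exactly the statement that $f^*g^*$ is Gorenstein. Alternatively, one reads this off the displayed isomorphism together with the two-out-of-three property for invertible objects (\cref{chunk:picard}(i)): as $f^*\omega_g \in \Pic(\D)$ by hypothesis, the formula $\omega_{gf} \simeq \omega_f \otimes f^*\omega_g$ shows that $\omega_{gf}$ is invertible if and only if $\omega_f$ is. There is no serious obstacle here: the entire argument is a bookkeeping exercise on the proof of the preceding proposition, with the main step being the verification that Grothendieck duality applies, which follows from $f^*\omega_g$ being rigid.
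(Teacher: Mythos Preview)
Your proof is correct and follows exactly the same approach as the paper: specialise \cref{ascentdescentdualising} to $M=\1_\B$, noting that $\1_\B$ is Matlis dualising for $g^*$ precisely when $g^*$ is Gorenstein, and read off the displayed isomorphism from \cref{eq:arithmeticshifts}. The paper's proof is a one-line reference to this specialisation, and your write-up simply unpacks it.
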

\begin{proof}
    This is the case $M = \1_\B$ of \cref{ascentdescentdualising} together with \cref{eq:arithmeticshifts}.
\end{proof}

\begin{rem}
    In general the converse to \cref{ascentdescentGorenstein} fails: namely, if $f^*$ and $f^*g^*$ are Gorenstein, then $g^*$ need not be Gorenstein. Indeed, suppose that $(R,\m,k)$ is a Gorenstein local ring which is also a $k$-algebra, and set $g^*$ to be extension of scalars along $k \to R$ and $f^*$ to be extension of scalars along $R \to k$. Then $f^*$ is Gorenstein since $R$ is Gorenstein, and $f^*g^* = \mrm{id}$ is Gorenstein trivially. However, $g^*$ is not Gorenstein, since $g^!k = \Hom_k(R,k)$ is not invertible in general. 
\end{rem}

Despite this, a partial converse does exist as a consequence of \cref{thm:dualising}. In order to state it we require the following auxiliary lemma.
\begin{lem}\label{fupperconservative}
    Let $f^*\colon \C \to \D$ be a proxy-small geometric functor. The following conditions are equivalent:
    \begin{enumerate}
        \item $f^*$ is conservative;
        \item $\Loctensor{f_*\1_\D} = \C$;
        \item $\Gammaf$ is the identity functor;
        \item $f^!$ is conservative.
    \end{enumerate}
\end{lem}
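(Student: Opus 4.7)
The plan is to reduce everything to \cref{Wequivalences}, which provides the master equivalence between $\Gammaf$-isomorphisms, $\Lambdaf$-isomorphisms, $f^*$-isomorphisms and $f^!$-isomorphisms for a proxy-small geometric functor. Almost every step of the argument is obtained by applying that proposition to either the zero morphism $X \to 0$ or to the canonical counit $\Gammaf X \to X$.

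First I would dispense with (2) $\Leftrightarrow$ (3): by the definition in \cref{nota:Gammaf}, $\Gammaf\C = \Loctensor{f_*\1_\D}$, so $\Gammaf$ is naturally isomorphic to the identity on $\C$ if and only if every object of $\C$ lies in $\Loctensor{f_*\1_\D}$, i.e., if and only if $\Loctensor{f_*\1_\D} = \C$.

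Next I would obtain (1) $\Leftrightarrow$ (4) essentially for free by applying \cref{Wequivalences} to the map $\alpha\colon X \to 0$: the equivalence of conditions (4) and (5) of that proposition says precisely that $f^*X \simeq 0$ if and only if $f^!X \simeq 0$. Thus $f^*$ is conservative if and only if $f^!$ is conservative.

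For (3) $\Rightarrow$ (1): if $\Gammaf \simeq \mrm{id}$, apply \cref{Wequivalences} to $X \to 0$ again to get $f^*X \simeq 0 \Leftrightarrow \Gammaf X \simeq 0 \Leftrightarrow X \simeq 0$. Conversely, for (1) $\Rightarrow$ (3) I would apply \cref{Wequivalences} to the canonical counit map $\varepsilon_X\colon \Gammaf X \to X$. Because $\Gammaf$ is idempotent, $\Gammaf(\varepsilon_X)$ is an isomorphism, so by \cref{Wequivalences} $f^*(\varepsilon_X)$ is an isomorphism; conservativity of $f^*$ then promotes this to $\varepsilon_X$ itself being an isomorphism, hence $\Gammaf \simeq \mrm{id}$. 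This closes the circle of equivalences.

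There is no real obstacle here: the proof is organisational rather than substantive, leveraging the fact that \cref{Wequivalences} was precisely designed to compare these various notions of isomorphism. The only point requiring a moment's care is ensuring that one uses \cref{Wequivalences} both for the zero morphism (to treat conservativity as a statement about vanishing) and for the torsion counit (to promote $\Gammaf$-isomorphisms to genuine isomorphisms via $f^*$ or $f^!$).
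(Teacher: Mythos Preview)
Your proposal is correct and takes essentially the same approach as the paper: the paper's proof simply says that (2) $\Leftrightarrow$ (3) is immediate from the definition of $\Gammaf$ and that the remaining implications follow from \cref{Wequivalences}. You have spelled out exactly how those applications of \cref{Wequivalences} go (to the zero map for conservativity, to the counit $\Gammaf X \to X$ for (1) $\Rightarrow$ (3)), which is precisely the intended unpacking.
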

\begin{proof}
    That (2) and (3) are equivalent is immediate from the definition of $\Gammaf$. The remaining implications follow from \cref{Wequivalences}.
\end{proof}

\begin{rem}
     The previous result extends \cite[Proposition 13.33]{BCHS} from the case where $f_*\1_\D$ is compact, to the proxy-small setting. This further narrows down the possible counterexamples to \cite[Question 21.6]{BCHS}.
\end{rem}

\begin{prop}\label{prop:ascentdescentconverse}
    Let $g^*\colon \B \to \C$ and $f^*\colon \C \to \D$ be geometric functors and suppose that $f^*$ is proxy-small and conservative, and that $\D$ is pure semisimple. If $f^*$ and $f^*g^*$ are Gorenstein then $g^*$ is Gorenstein.
\end{prop}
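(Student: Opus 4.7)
The plan is to apply \cref{thm:dualising} to $f^*$ with the distinguished object $M = \omega_g \in \C$, and then use conservativity of $f^*$ to transport invertibility of $\Gammaf \omega_g$ back to invertibility of $\omega_g$ itself.

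First, I observe that $\omega_g$ is automatically a Matlis dualising object for $f^*$ under the given hypotheses. Indeed, $f^! \omega_g = f^! g^! \1_\B = \omega_{gf}$, and this is invertible in $\D$ since $f^* g^*$ is assumed Gorenstein. By the definition of Matlis dualising objects (\cref{defn:dualising}), this is precisely the statement that $\omega_g$ is Matlis dualising for the proxy-small geometric functor $f^*$.

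Second, since $f^*$ is proxy-small and Gorenstein with $\D$ pure semisimple, I can apply \cref{thm:dualising} to $M = \omega_g$. This yields the equivalence between $\omega_g$ being Matlis dualising for $f^*$ and $\Gammaf \omega_g$ being invertible in $\Gammaf\C$. Combined with the first step, $\Gammaf \omega_g \in \Pic(\Gammaf \C)$.

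Finally, because $f^*$ is conservative, \cref{fupperconservative} tells me that $\Gammaf \simeq \mrm{id}_\C$, so $\Gammaf \omega_g = \omega_g$ and $\Gammaf\C = \C$. Therefore $\omega_g$ is invertible in $\C$, meaning $g^*$ is Gorenstein.

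I don't anticipate any genuine obstacles here: the proof is essentially a formal composition of \cref{thm:dualising} and \cref{fupperconservative}, with the role of conservativity being precisely to collapse the torsion subcategory onto $\C$ so that invertibility in $\Gammaf\C$ transfers to invertibility in $\C$. The only subtlety worth flagging is that this is the converse direction to \cref{ascentdescentGorenstein} and genuinely requires the extra hypotheses on $f^*$ (conservativity and pure semisimplicity of $\D$), as the counterexample preceding this proposition shows.
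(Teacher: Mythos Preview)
Your proof is correct and follows essentially the same approach as the paper: observe $f^!\omega_g = \omega_{gf}$ is invertible, apply \cref{thm:dualising} to deduce $\Gammaf\omega_g \in \Pic(\Gammaf\C)$, then use \cref{fupperconservative} to collapse $\Gammaf$ to the identity.
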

\begin{proof}
    Since $f^*g^*$ is Gorenstein, $\omega_{gf} \simeq f^!(\omega_g)$ is invertible, and hence $\Gammaf(\omega_g) \in \Pic(\Gammaf\C)$ by \cref{thm:dualising}. 
    As $f^*$ is conservative, the functor $\Gammaf$ is the identity by \cref{fupperconservative}. We therefore have that $\omega_g \in \Pic(\C)$ as required.
\end{proof}

\begin{rem}
    The previous result shows that one can often detect the Gorenstein property by applying a conservative Gorenstein functor, see \cref{FormalDGAGor} and \cref{rem:noneqGor} for examples.
\end{rem}

We end this section by explaining how automatic orientability composes.

\begin{prop}\label{prop:AOimpliesPicmonic}
    Let $g^*\colon \B \to \C$ and $f^*\colon \C \to \D$ be proxy-small geometric functors, and moreover that $f_*\1_\D$ is proxy-small relative to $g^*$. If $g^*$ is Gorenstein and has automatic orientability, then there is an injective group homomorphism \[\Pic(\Gamma_{\!gf}\B) \to \Pic(\Gammaf\C)\]
    given by $X \mapsto g^*X$.
\end{prop}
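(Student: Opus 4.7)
The plan has two steps: establishing well-definedness of the map, and then proving injectivity.

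For well-definedness, I would first observe that $f_*\1_\D$ is proxy-small relative to $g^*$ by \cref{lem:proxysmallcomposes} (using that $g_*$ is conservative and preserves compacts). \Cref{prop:changeofbase} then provides a natural isomorphism $g^*\Gamma_{\!gf}\simeq\Gammaf g^*$, so $g^*$ restricts to a functor $\Gamma_{\!gf}\B\to\Gammaf\C$. The tensor products on both torsion categories are inherited from the ambient tt-categories (\cref{monoidalcompletetorsion}), and the units correspond via $g^*\Gamma_{\!gf}\1_\B\simeq\Gammaf\1_\C$, so this restriction is strong symmetric monoidal. Strong monoidal functors preserve invertible objects (\cref{chunk:picard}), yielding the desired group homomorphism on Picard groups.

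For injectivity, given $X\in\Pic(\Gamma_{\!gf}\B)$ with $g^*X\simeq\Gammaf\1_\C$, the plan is to realise $X$ as a Matlis lift of $\omega_g$ for $g^*$ so as to invoke automatic orientability. Applying $f^*$ gives $(fg)^*X\simeq f^*\Gammaf\1_\C\simeq\1_\D$, which is invertible and hence rigid. The composite $(fg)^*$ is proxy-small by \cref{lem:proxysmallcomposes}, so Grothendieck duality \cref{f*compactisos} applied to $(fg)^*$ gives $f^!g^!X\simeq\omega_{fg}\otimes(fg)^*X\simeq\omega_{fg}$. Since $g^*$ is Gorenstein, \cref{ascentdescentGorenstein} identifies $\omega_{fg}\simeq f^!\omega_g$, so $f^!g^!X\simeq f^!\omega_g$. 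The critical step is then to cancel $f^!$ to obtain $g^!X\simeq\omega_g$; this uses the factorisation $f^!\simeq f^!\Lambdaf$ from \cref{prop:changeofbase} together with the MGM equivalence (\cref{thm:localduality}) and the invertibility of $\omega_g$ in $\C$ to transfer the isomorphism through the complete category.

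Once $X$ has been identified as a Matlis lift of $\omega_g$, automatic orientability of $g^*$ forces $\Gamma_g X\simeq\Gamma_g\1_\B$. Since $X\in\Gamma_{\!gf}\B\subseteq\Gamma_g\B$ gives $\Gamma_g X = X$, and since $\Gamma_{\!gf}\Gamma_g\simeq\Gamma_{\!gf}$ because $\Gamma_{\!gf}\B\subseteq\Gamma_g\B$, applying $\Gamma_{\!gf}$ yields $X\simeq\Gamma_{\!gf}\Gamma_g\1_\B\simeq\Gamma_{\!gf}\1_\B$, as required. The hard part will be the cancellation step $f^!g^!X\simeq f^!\omega_g\Rightarrow g^!X\simeq\omega_g$, which requires delicate use of the MGM equivalence and the invertibility of $\omega_g$ to pass between $\C$ and $\Lambdaf\C$ while preserving the relevant isomorphism data.
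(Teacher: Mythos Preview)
Your well-definedness argument is correct and matches the paper's: both use \cref{lem:proxysmallcomposes} to get $f_*\1_\D$ proxy-small relative to $g^*$, then \cref{prop:changeofbase} to obtain $\Gammaf g^*\simeq g^*\Gamma_{\!gf}$, from which the strong monoidality of $g^*\colon\Gamma_{\!gf}\B\to\Gammaf\C$ follows.

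Your injectivity strategy, however, has a genuine gap that is not merely ``hard'' but actually unfixable as stated. You aim to show that $X$ is a Matlis lift of $\omega_g$ for $g^*$, i.e.\ that $g^!X\simeq\omega_g$. But this is false in general. Since $g_*$ preserves compacts, Grothendieck--Neeman duality holds for $g^*$ (cf.\ the theorem of \cite{BDS} recalled in the introduction), so $g^!X\simeq\omega_g\otimes g^*X\simeq\omega_g\otimes\Gammaf\1_\C\simeq\Gammaf\omega_g$. Unless $\omega_g$ happens to be $f$-torsion (equivalently, by \cref{fupperconservative}, unless $f^*$ is conservative), this is not $\omega_g$. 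So $X$ is a Matlis lift of $\Gammaf\omega_g$, not of $\omega_g$, and automatic orientability of $g^*$ as defined concerns only the latter. Your proposed cancellation $f^!g^!X\simeq f^!\omega_g\Rightarrow g^!X\simeq\omega_g$ therefore cannot succeed: the right-hand conclusion is simply not true.

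The paper proceeds differently and never passes through $(fg)^!$ or $g^!X$. It rewrites the hypothesis as $g^*\Gamma_{\!gf}X\simeq\Gammaf g^*\1_\B\simeq g^*\Gamma_{\!gf}\1_\B$ and then invokes automatic orientability of $g^*$ directly via \cref{prop:AOandPic} to deduce $\Gamma_g\Gamma_{\!gf}X\simeq\Gamma_g\Gamma_{\!gf}\1_\B$. The containment $\Gamma_{\!gf}\B\subseteq\Gamma_g\B$ (which you also identify, and which follows from \cref{flowerconservative}) then gives $\Gamma_g\Gamma_{\!gf}\simeq\Gamma_{\!gf}$, finishing the argument. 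Your endgame paragraph is essentially this reduction, so the fix is to replace the detour through $(fg)^!X$ and the cancellation step with the paper's direct appeal to \cref{prop:AOandPic}.
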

\begin{proof}
    Since $f_*\1_\D$ is proxy-small relative to $g^*$, by \cref{prop:changeofbase} we have a natural isomorphism $\Gammaf g^* \simeq g^* \Gamma_{\!gf}$. This shows that the assignment $X \mapsto g^*X$ does define a group homomorphism as in the statement. 
    
    We now prove injectivity, so suppose that $g^* \Gamma_{\!gf}X \simeq \Gammaf \1_\C$. By the isomorphism $\Gammaf g^* \simeq g^* \Gamma_{\!gf}$ we obtain that
    \[g^* \Gamma_{\!gf} X \simeq \Gammaf g^*\1_\B \simeq g^* \Gamma_{\!gf} \1_\B\] and since $g^*$ has automatic orientability (see \cref{prop:AOandPic}) we deduce that $\Gamma_{\!g} \Gamma_{\!gf} X \simeq \Gamma_{\!g} \Gamma_{\!gf} \1_\B$. Since $g^*$ is proxy-small, we have $\Gamma_{\!g} g_* \simeq g_*$ by \cref{prop:changeofbase}. Therefore
    \[\Gamma_{\!gf}\B = \Loctensor{g_*f_*\1_\D} \subseteq \Gamma_{\!g}\B = \Loctensor{g_*\1_\C}.\]
    Thus $\Gamma_{\!g} \Gamma_{\!gf} \simeq \Gamma_{\!gf}$, and we conclude that $\Gamma_{\!gf} X \simeq \Gamma_{\!gf}\1_\B$, proving injectivity.
\end{proof}

\begin{cor}\label{descentAO}
    Let $g^*\colon \B \to \C$ and $f^*\colon \C \to \D$ be proxy-small geometric functors, and moreover that $f_*\1_\D$ is proxy-small relative to $g^*$ and $\D$ is pure semisimple. If $f^*$ and $g^*$ are Gorenstein and have automatic orientability, then $f^*g^*$ has automatic orientability.
\end{cor}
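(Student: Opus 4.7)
The plan is as follows. Let $X \in \B$ be an arbitrary Matlis lift of $\omega_{gf}$ under $f^*g^*$, so that $f^!g^!X \simeq \omega_{gf}$; I must prove that $\Gamma_{gf}X \simeq \Gamma_{gf}\1_\B$. The key idea is to use the strong form of Grothendieck duality for $g^*$ to replace the intractable $g^!X$ with $g^*X$, which then allows automatic orientability of $f^*$ to be invoked directly.

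First I would exploit that $g_*$ preserves compacts, which by the Balmer--Dell'Ambrogio--Sanders theorem recalled in the introduction forces Grothendieck--Neeman duality for $g^*$: the natural map $\omega_g \otimes g^*Y \to g^!Y$ is an isomorphism for \emph{all} $Y \in \B$, with no rigidity hypothesis. In particular $g^!X \simeq \omega_g \otimes g^*X$. Combining this with Grothendieck duality for $f^*$ on the rigid object $f^*\omega_g^{-1}$ (\cref{strongGN}) and the formula $\omega_{gf} \simeq \omega_f \otimes f^*\omega_g$ of \cref{ascentdescentGorenstein}, I compute
\[
f^!(g^*X) \simeq f^!(g^!X \otimes \omega_g^{-1}) \simeq f^!g^!X \otimes f^*\omega_g^{-1} \simeq \omega_{gf} \otimes f^*\omega_g^{-1} \simeq \omega_f.
\]
Therefore $g^*X \in \C$ is a Matlis lift of $\omega_f$ under $f^*$, and automatic orientability of $f^*$ yields $\Gamma_f g^*X \simeq \Gamma_f\1_\C$.

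Next, by \cref{prop:changeofbase}(2), which applies since $f_*\1_\D$ is proxy-small relative to $g^*$ via \cref{lem:proxysmallcomposes}, this isomorphism rewrites as $g^*\Gamma_{gf}X \simeq g^*\Gamma_{gf}\1_\B$ in $\Gamma_f\C$. Applying $f^*$ (and using that $f^*g^*$ factors through $\Gamma_{gf}$ by proxy-smallness of the composite, again via \cref{lem:proxysmallcomposes}) gives $f^*g^*X \simeq \1_\D$, which is invertible, so \cref{f*invertible} applied to the composite $f^*g^*$ establishes that $\Gamma_{gf}X$ lies in $\Pic(\Gamma_{gf}\B)$.

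Finally, \cref{prop:AOimpliesPicmonic}---whose hypotheses are exactly those we have on $g^*$---provides an injective group homomorphism $g^*\colon \Pic(\Gamma_{gf}\B) \to \Pic(\Gamma_f\C)$, and since $\Gamma_{gf}X$ and $\Gamma_{gf}\1_\B$ both lie in $\Pic(\Gamma_{gf}\B)$ and map to the same element of $\Pic(\Gamma_f\C)$, we conclude $\Gamma_{gf}X \simeq \Gamma_{gf}\1_\B$, completing the proof. The main obstacle is the opening step: without the compact-preservation of $g_*$ one only has Grothendieck duality for $g^*$-rigid objects, and there is no reason to expect our Matlis lift $X$ to satisfy this. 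It is precisely the universal form of Grothendieck--Neeman duality that allows $g^*X$ (rather than the less accessible $g^!X$) to be seen as a Matlis lift of $\omega_f$, after which the Picard injection of \cref{prop:AOimpliesPicmonic} cleanly transfers automatic orientability from $f^*$ to $f^*g^*$.
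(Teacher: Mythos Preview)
Your proof is correct, and it differs in approach from the paper's own argument. The paper's proof is three lines: it composes the Picard injections $\Pic(\Gamma_{gf}\B) \xrightarrow{g^*} \Pic(\Gammaf\C) \xrightarrow{f^*} \Pic(\D)$, the first injective by \cref{prop:AOimpliesPicmonic} and the second by the forward direction of \cref{prop:AOandPic}, and then identifies injectivity of the composite with automatic orientability of $f^*g^*$. You instead start from an arbitrary Matlis lift $X$ of $\omega_{gf}$, use full Grothendieck--Neeman duality for $g^*$ (available because $g_*$ preserves compacts) to exhibit $g^*X$ as a Matlis lift of $\omega_f$, apply automatic orientability of $f^*$ in its definitional form, and only then bring in \cref{f*invertible} and the Picard injection of \cref{prop:AOimpliesPicmonic} to conclude.

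There is a genuine payoff to your route. The paper's final step---passing from ``the composite Picard map is injective'' to ``$f^*g^*$ is automatically orientable''---is the \emph{converse} direction of \cref{prop:AOandPic}, which as stated requires $\D$ to be pure semisimple, a hypothesis not present in the corollary. Your argument avoids this: you establish directly that $\Gamma_{gf}X$ lies in $\Pic(\Gamma_{gf}\B)$ by computing $f^*g^*X \simeq \1_\D$ and invoking \cref{f*invertible}, so that the Picard injection can be applied without any appeal to pure semisimplicity. In this sense your proof is not just a different route but a more complete one; the price is that you must invoke the Balmer--Dell'Ambrogio--Sanders form of Grothendieck--Neeman duality for $g^*$ explicitly, whereas the paper keeps everything at the level of Picard groups.
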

\begin{proof}
    Consider the composite 
    \[\Pic(\Gamma_{\!gf}\B) \xrightarrow{g^*} \Pic(\Gammaf\C) \xrightarrow{f^*} \Pic(\D).\] The first map is injective by \cref{prop:AOimpliesPicmonic} and the latter map is injective as $f^*$ has automatic orientability (see \cref{prop:AOandPic}). Therefore the composite is injective, and hence $f^*g^*$ has automatic orientability by applying \cref{prop:AOandPic} again.
\end{proof}

\section{Examples}
In this section we explore some examples and applications of the theory developed in this paper.

\subsection{Commutative algebra}\label{ex:injectivehull}
    Let $(R,\m,k)$ be a commutative Noetherian local ring, and consider the geometric functor $f^*\colon \msf{D}(R) \to \msf{D}(k)$ given by extension of scalars along the ring map $R \to k$. This is proxy-small by \cref{rem:localringps}. The associated torsion and completion functors compute the local (co)homology at the maximal ideal $\m$: for any $R$-module $X$ we have 
    \[H_n(\Gammaf X) \simeq H_\m^{-n}(X) \quad \text{and} \quad H_n(\Lambdaf X) \simeq H^\m_n(X),\] see \cite[\S 6]{DwyerGreenlees}. In particular, $\Lambdaf R \simeq R_\m^\wedge$ is the $\m$-adic completion of $R$. 
    
    The injective hull of the residue field, $E(k)$, is a Matlis dualising object since $f^{!}(E(k)) = \Hom_R(k,E(k)) \simeq k$. \cref{Mduality} then asserts that $X \to D^2_{E(k)}(X)$ is an isomorphism for all $X \in \thicktensor{k}$, that is, for all $X \in \msf{D}(R)$ which consist of finite length modules and have bounded homology. As such, this recovers the usual form of Matlis duality for complexes. 
    
    Now suppose that $R$ is moreover Gorenstein (and note that this is equivalent to $f^*$ being Gorenstein, see \cref{exs:Gorenstein}). We firstly note that $f^*$ is automatically orientable. This can be deduced in several ways. One can use the viewpoint of Morita theory as in \cref{section:orientations}; see \cite[Proposition 3.9]{DGI} or \cite[Proposition 18.1]{GreenleesCRM}. Alternatively, by \cite[Proposition 4.4(3)]{BIKP}, $k \otimes_R M$ is a shift of $k$ if and only if $\Lambdaf M$ is isomorphic to some shift of $R_\m^\wedge$. By the MGM equivalence, we see that $f^*\colon \Pic(\Gammaf\msf{D}(R)) \to \Pic(\msf{D}(k))$ is injective, and hence $f^*$ is automatically orientable by \cref{prop:AOandPic}.
    
    The duality statement of \cref{Matlisliftofomega} implies that \[\Hom_R(\Gammaf X, E(k)) \simeq \Sigma^{-\mrm{dim}(R)}\Hom_R(X, R_\m^\wedge).\]
    In particular, if $X$ is an $R$-module, applying $H_{-i}$ to the above gives an isomorphism
    \[\Hom_R(H_\m^{\mrm{dim}(R)-i}(X), E(k)) \simeq \mrm{Ext}^i_R(X, R_\m^\wedge)\]
    so that we recover the usual Grothendieck local duality theorem for Gorenstein rings.

    Now suppose that $(R,\m,k)$ is moreover a $k$-algebra. The functor $f^*$ has a geometric section $\eta^*$ given by extension of scalars along $k \to R$. Then the Gorenstein duality statement of \cref{thm:Gordualityalgebras} yields the calculation that \[H_{\m}^*(R) \simeq \Sigma^{-\mrm{dim}(R)}E(k)
    \] since $\omega_\eta = \Hom_k(R,k) = E(k)$ is always $\m$-power torsion.

\subsection{Totalisation and formal DG algebras}\label{sec:totalisation}
    In this section we consider a particular geometric functor between two reasonable notions of derived categories over a graded ring. In particular we use the results of this paper to show algebraic and homotopical versions of the Gorenstein property coincide for formal DG algebras; the framework developed in this paper provides a mechanisation of techniques suggested in \cite[Remark A.7]{Nucleus}.
    
    \begin{chunk}
        Let $R$ be a graded commutative ring. There are two rigidly-compactly generated tt-categories associated to $R$:
    \begin{itemize}
        \item $\DCh(R)$: the derived category of chain complexes of graded $R$-modules;
        \item $\DDG(R)$: the derived category of differential graded (DG) $R$-modules, where $R$ is viewed as a DG algebra with zero differential.
    \end{itemize}
    Note that the objects in the former are \emph{bi}graded, whereas objects in the latter are singly graded. The tensor unit in both categories is $R$ viewed as an object in the appropriate way. To avoid confusion, we write $\1_\mrm{Ch}$ and $\1_\mrm{DG}$ to indicate this.
    \end{chunk}

    \begin{chunk}
        The \emph{totalisation functor} $T^*\colon\DCh(R)\to\DDG(R)$ sends a chain complex $C=(C_{ij})$ to the DG module with underlying graded module $(T^*(C))_n=\bigoplus_{i+j=n}C_{ij}$ with componentwise differentials. Note this is indeed well-defined on derived categories, since the homology of $T^*(C)$ for $C\in\DCh(R)$ is the totalisation of $H_{**}(C)$. The functor $T^*$ preserves coproducts, and is strong symmetric monoidal, so that $T^*$ is a geometric functor.
    \end{chunk}

    \begin{chunk}\label{chunk:totps}
        The right adjoint $T_*\colon \DDG(R) \to \DCh(R)$ of $T^*$ can be described as follows: given a DG module $(M,d)$, $T_*(M)$ is the chain complex \[\dots\to\Sigma^1M\xrightarrow{d}M\xrightarrow{d}\Sigma^{-1}M\to\dots\]
        In particular, $T_*(\1_\mrm{DG})$ is the chain complex \[(\dots\to\Sigma R\xrightarrow{0}R\xrightarrow{0}\Sigma^{-1}R\to\dots)=\bigoplus_{n\in\Z}\Sigma^{n,-n}R\] which contains $R = \1_\mrm{Ch}$ as a summand. Therefore $T^*$ is proxy-small with witness $\1_\mrm{Ch}$ and $\Loc(T_*(\1_\mrm{DG})) = \DCh(R)$, so by \cref{fupperconservative} the totalisation functor $T^*$ is conservative and the associated torsion functor $\Gamma_T$ is the identity.
    \end{chunk}
    
    \begin{chunk}\label{chunk:totomega}
    The further right adjoint $T^!\colon \DCh(R) \to \DDG(R)$ is the direct product totalisation \[(T^!(C))_n=\prod_{i+j=n}C_{ij}\] with componentwise differentials. Therefore, $\omega_T=T^!(R)$ is simply a copy of $R$ as a DG module, i.e., $\omega_T \simeq \1_\mrm{DG}$. Hence $T^*$ is  a Gorenstein geometric functor.
    \end{chunk}

\begin{lem}\label{totalCompact}
    Let $C \in \DCh(R)$. Then $C\in\DCh(R)$ is rigid if and only if $T^*(C) \in \DDG(R)$ is rigid.
\end{lem}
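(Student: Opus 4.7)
The plan is to apply \cref{f*rigid} directly to the geometric functor $T^*$. By \cref{chunk:totps}, the totalisation functor $T^*\colon\DCh(R)\to\DDG(R)$ is proxy-small with witness $\1_\mrm{Ch}$, and moreover it is conservative with the associated torsion functor $\Gamma_T$ being the identity functor on $\DCh(R)$. In particular, $\Gamma_T \DCh(R) = \DCh(R)$ as tt-categories (the monoidal structures also agree since $\Gamma_T=\mrm{id}$).

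With this setup, \cref{f*rigid} provides the equivalence between conditions (2) and (4): namely, $\Gamma_T C \in \Gamma_T \DCh(R)$ is rigid if and only if $T^*(C) \in \DDG(R)$ is rigid. Since $\Gamma_T$ is the identity, the condition that $\Gamma_T C$ is rigid in $\Gamma_T \DCh(R)$ reduces to the condition that $C$ itself is rigid in $\DCh(R)$. Therefore $C$ is rigid if and only if $T^*(C)$ is rigid, which is the desired statement.

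No real obstacle arises here as the lemma is essentially a direct specialisation of the general characterisation of $f^*$-rigidity to a conservative proxy-small functor; the substantive content is already encoded in \cref{f*rigid} and the conservativity observation of \cref{chunk:totps}. The only thing worth being careful about is ensuring that the identification $\Gamma_T \DCh(R) = \DCh(R)$ is as tt-categories (so that the notion of rigidity in both matches), which is immediate since $\Gamma_T$ is literally the identity symmetric monoidal functor.
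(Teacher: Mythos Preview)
The proposal is correct and takes essentially the same approach as the paper: apply \cref{f*rigid} to the proxy-small geometric functor $T^*$ and use that $\Gamma_T$ is the identity (\cref{chunk:totps}) to identify rigidity in $\Gamma_T\DCh(R)$ with rigidity in $\DCh(R)$.
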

\begin{proof}
    By \cref{f*rigid}, we have that $T^*(C)$ is rigid if and only if $\Gamma_T C$ is rigid in $\Gamma_T\DCh(R)$, but $\Gamma_T$ is the identity functor in this case, see \cref{chunk:totps}.
\end{proof}

\begin{chunk}
    Let $A$ be a commutative DG algebra with $H_*A$ a local Noetherian graded ring and write $k$ for the residue field of $H_*A$. We say that $A$ is \emph{homotopically regular} if $k$ is a compact DG $A$-module, and \emph{algebraically regular} if $H_*A$ is regular as a local graded ring (i.e., $k$ has finite projective dimension over $H_*A$). Say that $A$ is \emph{formal as an augmented DGA}, if there is a zig-zag of quasi-isomorphisms $A \simeq H_*A$ of DG-algebras, which are compatible with the augmentations $A \to k$ and $H_*A \to k$. Note that that the triangulated equivalence $\msf{D}(A) \simeq \DDG(H_*A)$ induced by extension and restriction of scalars then sends $k$ to $k$. As such, \cref{totalCompact} shows that if $A$ is formal as an augmented DGA, then it is homotopically regular if and only if it is algebraically regular.
\end{chunk}

\begin{chunk}
    We say that $A$ is \emph{homotopically Gorenstein} if the induced extension of scalars functor $\DDG(A)\to\DDG(k)$ is a Gorenstein geometric functor. Since $\Pic(\DDG(k))$ is trivial, this is equivalent to $A$ being Gorenstein as a ring spectrum in the sense of \cite{DGI}, namely that $\Hom_A(k,A) \simeq \Sigma^i k$ for some $i \in \Z$. We say that $A$ is \emph{algebraically Gorenstein} if $H_*A$ is Gorenstein as a local graded ring. Using the ascent and descent properties of Gorenstein geometric functors proved above, we can now prove that these notions coincide for suitably formal DGAs:
\end{chunk}

\begin{prop}\label{FormalDGAGor}
    Let $A$ be a commutative DG algebra with $H_*A$ a local Noetherian graded ring. If $A$ is formal as an augmented DGA, then $A$ is homotopically Gorenstein if and only if it is algebraically Gorenstein.
\end{prop}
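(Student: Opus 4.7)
The plan is to reduce via formality to a purely algebraic question about graded rings, and then to use the totalisation functor of \cref{sec:totalisation} together with the ascent/descent results of \cref{sec:descent} to compare the algebraic and homotopical Gorenstein conditions.

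First, since $A$ is formal as an augmented DGA, a zig-zag of quasi-isomorphisms induces a triangulated equivalence $\DDG(A)\simeq\DDG(R)$ (where $R=H_*A$) compatible with the augmentations to $k$, and hence identifies the geometric functors $\DDG(A)\to\DDG(k)$ and $f^*\colon\DDG(R)\to\DDG(k)$ given by extension of scalars along $A\to k$ and $R\to k$ respectively. Thus homotopic Gorensteinness of $A$ is equivalent to $f^*$ being Gorenstein. To analyse this via the totalisation, introduce the chain-level extension of scalars $g^*\colon\DCh(R)\to\DCh(k)$. Since both $T^*_R\colon\DCh(R)\to\DDG(R)$ composed with $f^*$, and $g^*$ composed with $T^*_k\colon\DCh(k)\to\DDG(k)$, are strong symmetric monoidal colimit-preserving functors $\DCh(R)\to\DDG(k)$ sending $\1_{\mrm{Ch}}$ to $k$, the universal property of extension of scalars yields a natural isomorphism
\[
f^*\circ T^*_R\;\simeq\;T^*_k\circ g^*.
\]
Moreover, a mild graded variant of \cref{exs:Gorenstein}(i) shows that $g^*$ is Gorenstein if and only if $R$ is Gorenstein as a graded local ring: the Picard group of $\DCh(k)$ consists of bigraded suspensions of the unit (since $k$ is a graded field), so $\omega_g=\iHom_R(k,R)$ lies in $\Pic(\DCh(k))$ precisely when $\mrm{Ext}^*_R(k,R)$ is concentrated in a single bidegree.

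It therefore suffices to show that $f^*$ is Gorenstein if and only if $g^*$ is. For this I would assemble the hypotheses: by \cref{chunk:totps,chunk:totomega} each totalisation $T^*_R$, $T^*_k$ is a proxy-small, conservative, Gorenstein geometric functor; $f^*$ is proxy-small (with witness the Koszul complex on generators of the maximal ideal of $R$); and $\DDG(k)$ is pure semisimple since $k$ is a graded field, so every DG $k$-module splits as a coproduct of bigraded suspensions of $k$. With these in hand, \cref{ascentdescentGorenstein} applied to the composite $f^*\circ T^*_R$ (using that $T^*_R$ is Gorenstein and $f^*$ is proxy-small) gives the biconditional
\[
f^*\text{ is Gorenstein}\;\Longleftrightarrow\;f^*\circ T^*_R\text{ is Gorenstein}.
\]
Applied in the other order to $T^*_k\circ g^*$ (with $T^*_k$ Gorenstein and proxy-small), \cref{ascentdescentGorenstein} gives one direction, that $g^*$ Gorenstein implies $T^*_k\circ g^*$ Gorenstein. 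For the converse, I would invoke \cref{prop:ascentdescentconverse}, whose hypotheses are met here since $T^*_k$ is proxy-small, conservative, and Gorenstein, and $\DDG(k)$ is pure semisimple: it then yields that $T^*_k\circ g^*$ Gorenstein forces $g^*$ Gorenstein.

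Chaining these equivalences via the commuting square above produces the chain $f^*\text{ Gor.}\Leftrightarrow f^*T^*_R\text{ Gor.}\Leftrightarrow T^*_k g^*\text{ Gor.}\Leftrightarrow g^*\text{ Gor.}\Leftrightarrow R\text{ algebraically Gor.}$, which completes the proof. The main obstacle, as in the analogous argument for regularity, is making sure the compatibility square $f^*\circ T^*_R\simeq T^*_k\circ g^*$ is genuinely natural and monoidal so that the ascent/descent machinery applies on the nose; once this formal compatibility is pinned down, the rest is a bookkeeping exercise verifying the conservativity, proxy-smallness and pure semisimplicity hypotheses of the relevant lemmas.
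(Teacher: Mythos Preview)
Your proposal is correct and follows essentially the same route as the paper: reduce by formality to $R=H_*A$, set up the commuting square of geometric functors involving the totalisations $T^*_R$, $T^*_k$ and the chain-level and DG-level extensions of scalars along $R\to k$, and then use \cref{ascentdescentGorenstein} (with $T^*_R$, $T^*_k$ Gorenstein and proxy-small) together with \cref{prop:ascentdescentconverse} (using conservativity of $T^*_k$ and pure semisimplicity of $\DDG(k)$) to transfer the Gorenstein property across the square in both directions. The only cosmetic differences are notational, and that you are slightly more explicit than the paper in recording the proxy-smallness of the DG extension of scalars and the pure semisimplicity hypothesis; one tiny slip is that objects of $\DDG(k)$ are singly graded, so ``bigraded suspensions'' should read ``suspensions''.
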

\begin{proof}
    Write $R=H_*A$ so that there is a triangulated equivalence $\msf{D}(A)\simeq\DDG(R)$ by formality. There are two functors associated to the map $f\colon R \to k$ given by extension of scalars: $f^*_\mrm{Ch}\colon \DCh(R)\to\DCh(k)$ and $f^*_\mrm{DG}\colon \DDG(R)\to\DDG(k)$. 
    Notice that $f^*_\mrm{Ch}$ is a Gorenstein geometric functor if and only if $A$ is algebraically Gorenstein, and that $f^*_\mrm{DG}$ is a Gorenstein geometric functor if and only if $A$ is homotopically Gorenstein. So it suffices to show that $f^*_\mrm{Ch}$ is Gorenstein if and only if $f^*_\mrm{DG}$ is Gorenstein. 
    
    The construction of the geometric functor $T^*$ is natural in maps of graded rings so that we have a commutative square \[\begin{tikzcd}
	{\DCh(R)} & {\DCh(k)} \\
	{\DDG(R)} & {\DDG(k)}
	\arrow["{f^*_\mrm{Ch}}", from=1-1, to=1-2]
	\arrow["{T^*_R}"', from=1-1, to=2-1]
	\arrow["{T^*_k}", from=1-2, to=2-2]
	\arrow["{f^*_\mrm{DG}}"', from=2-1, to=2-2]
\end{tikzcd}\] of geometric functors. Recall from \cref{chunk:totps} and \cref{chunk:totomega} that the functors $T_R^*$ and $T_k^*$ are both proxy-small Gorenstein geometric functors.
If $f^*_\mrm{Ch}$ is Gorenstein, then by applying \cref{ascentdescentGorenstein}, the composite $T^*_k\circ f^*_\mrm{Ch} \simeq f^*_\mrm{DG}\circ T^*_R$ and hence also $f^*_\mrm{DG}$ is Gorenstein. Conversely, if $f^*_\mrm{DG}$ is Gorenstein then $T^*_k\circ f^*_\mrm{Ch}$ is also Gorenstein. Since $T^*_k$ is conservative, by applying \cref{prop:ascentdescentconverse} we conclude that $f^*_\mrm{Ch}$ is also Gorenstein. 
\end{proof}
\begin{rem}
    In general, if a DG algebra $A$ is algebraically Gorenstein, the spectral sequence 
    \begin{equation}\label{UCSS}
            E_2^{**}=\mrm{Ext}^{**}_{H_*A}(k, H_*A)\Longrightarrow H_*(\Hom_A(k,A))
    \end{equation}
    must collapse, so that $A$ is necessarily homotopically Gorenstein, and \cref{FormalDGAGor} gives a converse for formal DG algebras.
It should be noted that for ungraded rings, the $E_2$-page of \cref{UCSS} is concentrated in a single line and so must collapse. However for graded rings one cannot rely on the geometry of the spectral sequence in the same way, hence the necessity of the argument in \cref{FormalDGAGor}.
\end{rem}

We end this section by providing the promised example of a proxy-small geometric functor $f^*$ and an object $X$ for which $\Gammaf X\in\Gammaf \C$ is reflexive but $f^*X\in\D$ is not, see \cref{rem:reflexivefalse}. 
\begin{ex}\label{ex:reflexivetot}
    Consider a field $k$ and the totalisation functor $T^*\colon\DCh(k)\to\DDG(k)$. In this case $\Gamma_T\DCh(k)=\DCh(k)$, see \cref{chunk:totps}. Take $X=\bigoplus_{n\in\Z}\Sigma^{n,-n}k\in\DCh(k)$. This is reflexive in $\Gamma_T\DCh(k) = \DCh(k)$ since it is finite in every bidegree. However, $T^*X$ is concentrated in degree zero where it is $\bigoplus_{n\in\Z}k$. Thus $T^*X$ is not finite in each degree and therefore is not reflexive in $\DDG(k)$.
\end{ex} 

\subsection{Invariant rings and Watanabe's theorem}\label{WatanabeSection}
In this section we give an example of a Gorenstein geometric functor coming from polynomial invariant theory, thus illustrating the broadness of the definition in this paper. In particular, we give a new perspective on Watanabe's theorem, showing that it can be deduced as a consequence of an equivariant Gorenstein duality statement which holds more generally.

\begin{chunk}\label{invariantsSetup}
    Let $k$ be a field, $G$ a finite group with $|G|$ invertible in $k$, and $V$ a finite $kG$-module of dimension $d$. We consider the symmetric algebra $k[V]$ associated to $V$, which is a polynomial algebra acted on by $G$, and we consider the invariants $k[V]^G$. We shall use the book \cite{BensonInvariants} as a standard reference, noting in loc. cit. that $k[V]$ is instead defined to be the symmetric algebra on the dual of $V$.
    We consider the ring map $k[V]^G\to k$ induced by $V\to 0$, and in particular note that the geometric functor given by extension along this map is Gorenstein if and only if $k[V]^G$ is Gorenstein when localised at the augmentation ideal.
\end{chunk}

We recall the following theorem of Watanabe~\cite{Watanabe} describing when the invariant ring is Gorenstein, which we will give a new viewpoint on in the remainder of the section.
\begin{thm}[Watanabe]\label{Watanabe}
    Let $(k,G,V)$ be as in \cref{invariantsSetup}. If $G$ acts on $V$ with determinant $1$, then $k[V]^G\to k$ is Gorenstein.
\end{thm}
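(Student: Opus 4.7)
The plan is to deduce Watanabe's theorem as the shadow of a more general equivariant Gorenstein duality statement, recovered by descending via $G$-fixed points. Since $|G|$ is invertible in $k$, the group algebra $kG$ is semisimple, and one may work with $k[V]$ as a commutative algebra in the derived category of $kG$-modules; the augmentation $k[V] \to k$ is then a $G$-equivariant map of such algebras, inducing an equivariant geometric functor between the associated derived module categories.

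The first step is to establish the equivariant Gorenstein statement without any hypothesis on the determinant — this is the content of the cited \cref{prop:ourWatanabe}. The relative dualising object can be computed via the (equivariant) Koszul complex on a basis of $V\subseteq k[V]$, viewed as a $G$-equivariant resolution of $k$. Its top term is the exterior power $\Lambda^d V$, so the equivariant relative dualising object is $\omega^{\mrm{eq}} \simeq \Sigma^{-d}\det(V)$, which is always invertible as a $kG$-module — making the equivariant augmentation $k[V] \to k$ Gorenstein in the sense of \cref{defn:Gorenstein}.

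The second step is descent via fixed points. Because $|G|$ is invertible in $k$, the functor $(-)^G$ is exact, strong symmetric monoidal, and coproduct-preserving between the respective derived categories, and hence defines a geometric functor. Combined with the equivariant augmentation, taking fixed points produces (up to equivalence) the non-equivariant geometric functor associated to $k[V]^G \to k$. Applying the descent result \cref{prop:fixedpoints} for Gorenstein duality under fixed points, together with the ascent/descent toolkit of \cref{sec:descent} (in particular \cref{ascentdescentGorenstein}), the Gorenstein property transfers to the non-equivariant functor $k[V]^G \to k$ precisely when the invertible twist $\det(V)$ descends to an invertible object along $(-)^G$. Since $(\det V)^G$ is either $k$ or $0$ depending on whether the representation is trivial, the descent succeeds exactly when $\det(V)$ is trivial, which is the hypothesis of Watanabe's theorem. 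Under this hypothesis, $(\omega^{\mrm{eq}})^G \simeq \Sigma^{-d}k$ is invertible in $\mathsf{D}(k)$, so $k[V]^G \to k$ is Gorenstein as claimed.

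The main obstacle is ensuring that fixed points genuinely transport the equivariant Gorenstein data to the non-equivariant setting, and specifically that the orientability question — whether the twist $\det(V)$ survives as an invertible object after invariants — is correctly identified with Watanabe's determinant condition. This is precisely the role of the orientability framework of \cref{section:orientations}: the equivariant Gorenstein property always holds, but the existence of a clean downstairs Gorenstein statement is governed by whether the equivariant dualising object admits a trivial orientation, which $\det(V) = 1$ ensures.
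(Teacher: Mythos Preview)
Your overall strategy matches the paper's --- establish the equivariant Gorenstein statement (\cref{prop:ourWatanabe}) and then descend via $G$-fixed points --- but the descent step contains genuine errors.

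First, the functor $(-)^G\colon \msf{D}(kG)\to\msf{D}(k)$ is \emph{not} strong symmetric monoidal: for a nontrivial one-dimensional character $\chi$ one has $(\chi\otimes\chi^{-1})^G=k$ while $\chi^G\otimes(\chi^{-1})^G=0$. So $(-)^G$ is not a geometric functor in the sense of this paper, and the sentence ``the Gorenstein property transfers \ldots\ precisely when $\det(V)$ descends to an invertible object along $(-)^G$'' is not a valid mechanism. In particular, $(\omega_f^{\mrm{eq}})^G$ is not the relative dualising object for $k[V]^G\to k$.

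Second, you invoke \cref{prop:fixedpoints}, but its hypothesis is Gorenstein \emph{duality} with $\omega_f\simeq\Sigma^a k$, not merely the Gorenstein property. You only establish the latter. Bridging this gap requires an orientability argument (the paper supplies one via \cref{cor:AOandMorita}, checking that $\Ef$-lifts of $k$ are unique); your final paragraph gestures at this but does not carry it out.

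The paper's direct argument sidesteps both issues by exploiting the Noether normalisation $B\hookrightarrow k[V]^G$ appearing in the proof of \cref{prop:ourWatanabe}. Since $B$ carries the trivial $G$-action, one has $\Hom_B(k[V],B)^G\simeq\Hom_B(k[V]^G,B)$ honestly, and when $\det=1$ this reads $\Hom_B(k[V]^G,B)\simeq k[V]^G$. Thus $B\to k[V]^G$ is Gorenstein, and since $B\to k$ is Gorenstein (polynomial ring), \cref{ascentdescentGorenstein} gives that $k[V]^G\to k$ is Gorenstein. No monoidality of fixed points and no orientability argument are needed on this route.

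A minor point: the Koszul computation gives $\omega_f\simeq\Sigma^{-d}\det^{-1}$, not $\Sigma^{-d}\det$, since the top exterior power appears contravariantly when dualising the resolution. This does not affect the conclusion under $\det=1$, but it is worth getting right.
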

The framework of this paper allows us to extend this to a Gorenstein phenomenon for \emph{all} representations $V$, by allowing non-integer shifts by the determinant, and we then explain how to recover \cref{Watanabe} from such a description. We now give the necessary setup:

\begin{chunk}\label{DkGSetting}
    Let $k$ be a field and $G$ a finite group with $\left|G\right|$ invertible in $k$. The derived category $\msf{D}(kG)$ of the group algebra is a rigidly-compactly generated tt-category with tensor product $\otimes_k$. Note by Maschke's theorem that $\msf{D}(kG)$ is pure semisimple, and we identify the invertible objects as suspensions of one-dimensional representations of $G$, so that $\Pic(\msf{D}(kG))\simeq\Z\oplus\Hom(G,k^\times)$.
    Algebras in (an enhancement of) $\msf{D}(kG)$ are understood as DG algebras with a $G$-action, and their modules as \emph{twisted modules}, i.e., DG modules of the underlying DG algebra along with a compatible $G$-action.
\end{chunk}

We note that given an $R\in\CAlg{\msf{D}(kG)}$ the restriction functor $\mrm{res}^G_1\colon\msf{D}(kG)\to\msf{D}(k)$, induces a geometric functor $\res^G_1\colon\Mod{\msf{D}(kG)}(R)\to\Mod{\msf{D}(k)}(\res^G_1 R)$.
\begin{lem}\label{underlyingDetectsRigid}
    Let $R\in\CAlg{\msf{D}(kG)}$ and $M$ be an $R$-module. Then $M$ is rigid as an $R$-module if and only if $\res^G_1M$ is rigid as a $\res^G_1R$-module.
\end{lem}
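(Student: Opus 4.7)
The forward direction is immediate: the restriction functor $\res^G_1 \colon \Mod{\msf{D}(kG)}(R) \to \Mod{\msf{D}(k)}(\res^G_1 R)$ is a geometric functor, and in particular strong symmetric monoidal, so it preserves rigid objects by \cref{chunk:rigidspreserved}.

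For the converse, the plan is to apply \cref{f*rigid} to the geometric functor $f^* = \res^G_1$, for which I need to verify that $f^*$ is proxy-small, and moreover that the associated torsion functor $\Gammaf$ is the identity (equivalently, that $f^*$ is conservative, see \cref{fupperconservative}). Once these are established, the equivalence (2) $\Leftrightarrow$ (4) of \cref{f*rigid} yields precisely the desired equivalence, since in this case $\Gammaf M \simeq M$ in $\Gammaf\C = \C$.

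The main input is Maschke's theorem, applied as follows: since $\left|G\right|$ is invertible in $k$, the augmentation $kG \to k$ splits as a map of $kG$-modules (via $1 \mapsto \frac{1}{\left|G\right|} \sum_{g \in G} g$), exhibiting $k$ as a direct summand of $kG$ in $\msf{D}(kG)$. Combining with the standard $kG$-module isomorphism $kG \simeq kG^* = \Hom_k(kG, k)$ coming from the symmetric-algebra structure on $kG$, we see that $k$ is a direct summand of $kG^*$ in $\msf{D}(kG)$. A direct Beck--Chevalley computation (verified by the universal property) identifies the underlying $kG$-module of $f_*\1_\D$ as $f_*\1_\D \simeq kG^* \otimes_k R$, with $R$-module structure on the second factor and diagonal $G$-action. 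Tensoring the Maschke splitting $k \hookrightarrow kG^*$ with $R$ therefore exhibits $R = \1_\C$ as a direct summand of $f_*\1_\D$ inside $\Mod{\msf{D}(kG)}(R)$, so $\1_\C \in \thick(f_*\1_\D)$.

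This immediately gives that $f^*$ is proxy-small with witness $\{\1_\C\}$, and that $\Loctensor{f_*\1_\D} \supseteq \Loctensor{\1_\C} = \C$, so by \cref{fupperconservative} the torsion functor $\Gammaf$ is the identity. The only real obstacle is the clean identification of $f_*\1_\D$ as an object of $\Mod{\msf{D}(kG)}(R)$; once this is in hand, \cref{f*rigid} closes the argument.
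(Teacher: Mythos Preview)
Your proof is correct and follows essentially the same route as the paper's: both verify that $\res^G_1$ is a proxy-small geometric functor with $\Gammaf$ the identity, and then invoke the equivalence (2)$\Leftrightarrow$(4) of \cref{f*rigid} together with \cref{fupperconservative}. The paper is terser---it asserts conservativity of $\res^G_1$ directly and deduces proxy-smallness from ``$k$ compact in $\msf{D}(kG)$'' via \cref{thm:proxysmallgeometric}---whereas you unpack this by explicitly identifying $f_*\1_\D \simeq R \otimes_k kG^*$ (the tensor identity/projection formula) and using the Maschke splitting to exhibit $\1_\C$ as a retract; this is the same content made explicit.
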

\begin{proof}
    The restriction functor $\res^G_1\colon\Mod{\msf{D}(kG)}(R)\to\Mod{\msf{D}(k)}(\res^G_1 R)$ is conservative and has right adjoint given by $-\otimes_kkG$. Since $k$ is compact in $\msf{D}(kG)$, $\res^G_1$ is a proxy-small geometric functor. Therefore the result follows from an application of \cref{f*rigid} together with \cref{fupperconservative}.
\end{proof}

Our particular case of interest is the map $f\colon k[V]\to k$ as in \cref{invariantsSetup}, which can be viewed as a map in the category $\CAlg{\msf{D}(kG)}$, and we consider the corresponding extension of scalars functor \[f^*\colon \Mod{\msf{D}(kG)}(k[V])\to\Mod{\msf{D}(kG)}(k)= \msf{D}(kG)\]
noting that the equality holds as $k$ is the tensor unit in $\msf{D}(kG)$. This is a geometric functor, and has right adjoint $f_*$ given by restriction of scalars along $f$. Since $\res^G_1k[V]$ is a polynomial ring, \cref{underlyingDetectsRigid} immediately implies the following.

\begin{cor}
    As an object of $\Mod{\msf{D}(kG)}(k[V])$, $k$ is rigid.
\end{cor}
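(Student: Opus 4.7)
The plan is a direct application of \cref{underlyingDetectsRigid} to reduce the question about rigidity in the twisted module category $\Mod{\msf{D}(kG)}(k[V])$ to an elementary statement in commutative algebra. By \cref{underlyingDetectsRigid} applied to $R = k[V]$ and $M = k$, the module $k$ is rigid as an object of $\Mod{\msf{D}(kG)}(k[V])$ if and only if $\res^G_1 k$ is rigid as an object of $\Mod{\msf{D}(k)}(\res^G_1 k[V])$.

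Next I would identify the relevant objects explicitly: $\res^G_1 k[V]$ is the polynomial ring $k[x_1,\ldots,x_d]$ where $d = \dim_k V$, and the $\res^G_1 k[V]$-module structure on $\res^G_1 k = k$ induced by the restriction of $f$ is the augmentation quotient sending each $x_i \mapsto 0$. Since $\msf{D}(k[x_1,\ldots,x_d])$ is rigidly-compactly generated (\cref{exs:geometric}(i)), rigidity coincides with compactness, which is the same as being a perfect complex.

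Finally, I would invoke the classical fact that the Koszul complex on the regular sequence $x_1,\ldots,x_d$ is a finite free resolution of $k$ over $k[x_1,\ldots,x_d]$; hence $k$ is perfect, and therefore rigid, in $\msf{D}(k[x_1,\ldots,x_d])$. Combining this with the reduction above yields the claim.

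There is no real obstacle here: all of the substantive work has already been done by \cref{underlyingDetectsRigid}, which translates the equivariant question into its non-equivariant shadow. The remaining ingredient is simply the standard observation that the residue field of a polynomial ring has finite projective dimension, witnessed concretely by the Koszul complex.
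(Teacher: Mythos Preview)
Your proposal is correct and follows exactly the paper's approach: the paper simply notes that since $\res^G_1 k[V]$ is a polynomial ring, \cref{underlyingDetectsRigid} immediately gives the result. Your version spells out the implicit commutative-algebra step (perfection of $k$ via the Koszul complex), but the logic is identical.
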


Our reinterpretation of Watanabe's theorem is then as follows.
\begin{prop}\label{prop:ourWatanabe}
    Let $(k,G,V)$ be as in \cref{invariantsSetup}. The extension of scalars functor $f^*$ associated to $k[V]\to k$ is Gorenstein, with $\omega_f=\Sigma^{-d}\det^{-1}$, where $\det$ is the determinant character of $V$ and $d = \dim(V)$.
\end{prop}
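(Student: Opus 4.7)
The plan is to compute $\omega_f = f^!(\1_\C) = \iHom_{k[V]}(k, k[V])$ directly via a $G$-equivariant Koszul resolution, and then verify both its invertibility and its explicit form. The heart of the argument is keeping track of the $G$-action on the top exterior power, which is where the determinant character will appear.

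First, construct the $G$-equivariant Koszul resolution $K_\bullet \to k$ of $k$ as an object of $\Mod{\msf{D}(kG)}(k[V])$, where $K_i = k[V] \otimes_k \Lambda^i V$ carries the natural diagonal $G$-action and the standard Koszul differential is manifestly $G$-equivariant. Because $|G|$ is invertible in $k$ (so $kG$ is semisimple), exactness can be checked after applying $\res^G_1$, where the resolution reduces to the classical Koszul resolution of $k$ over the polynomial ring $\res^G_1 k[V]$. Applying $\iHom_{k[V]}(-, k[V])$ then yields the complex $C^\bullet = k[V] \otimes_k \Lambda^\bullet V^*$ concentrated in cohomological degrees $0, \dots, d$, with the natural $G$-action and the dual Koszul differential.

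To identify the cohomology of $C^\bullet$, note that non-equivariantly $k[V]$ is a regular ring of Krull dimension $d$, so the cohomology is concentrated in cohomological degree $d$ and is one-dimensional there. Equivariantly, $C^d = k[V] \otimes_k \Lambda^d V^*$, and the image of $C^{d-1} \to C^d$ is $\m \cdot C^d$, where $\m \subseteq k[V]$ is the augmentation ideal. Hence the top cohomology is $(k[V]/\m) \otimes_k \Lambda^d V^* \cong \Lambda^d V^* \cong (\det V)^{-1} = \det^{-1}$ as a $kG$-module, using the standard identification of $\Lambda^d V^*$ with $(\det V)^{-1}$.

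Combining these computations gives $\omega_f \simeq \Sigma^{-d}\det^{-1}$ in $\msf{D}(kG)$. Since $\det^{-1}$ is a one-dimensional character and suspensions are invertible, $\omega_f$ lies in $\Pic(\msf{D}(kG)) \cong \Z \oplus \Hom(G, k^\times)$ as recalled in \cref{DkGSetting}, so $f^*$ is Gorenstein. The main potential obstacle is ensuring that the Koszul resolution and its dual can be manipulated at the level of the enhanced equivariant category $\Mod{\msf{D}(kG)}(k[V])$, rather than merely the underlying triangulated homotopy category, but this follows from semisimplicity of $kG$ combined with standard enhancement arguments for module categories over DG algebras with group action.
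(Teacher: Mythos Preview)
Your proof is correct but proceeds quite differently from the paper's. You compute $\omega_f = \iHom_{k[V]}(k,k[V])$ directly by resolving $k$ via the $G$-equivariant Koszul complex $K_i = k[V]\otimes_k\Lambda^iV$, dualising to obtain $k[V]\otimes_k\Lambda^\bullet V^*$, and reading off the top cohomology as $\Lambda^d V^* \cong \det^{-1}$. This is self-contained and elementary, leaning only on semisimplicity of $kG$ to transport exactness from the underlying non-equivariant Koszul resolution.

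The paper instead chooses a Noether normalisation $B$ of $k[V]^G$ (a polynomial ring with trivial $G$-action), cites Benson's result that $\Hom_B(k[V],B)\simeq k[V]\otimes_k\det$ as twisted $k[V]$-modules, so that $g\colon B\to k[V]$ is Gorenstein with $\omega_g=\det$; since $B\to k$ is Gorenstein with $\omega_{gf}=\Sigma^{-d}k$, the ascent/descent formula $\omega_{gf}\simeq\omega_f\otimes f^*\omega_g$ from \cref{ascentdescentGorenstein} then gives $\omega_f\simeq\Sigma^{-d}\det^{-1}$. Your route avoids the external citation and the normalisation argument, and is arguably the more transparent computation; the paper's route, by contrast, illustrates the descent machinery developed earlier and ties the result directly to the classical invariant-theoretic identity underlying Watanabe's theorem.
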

\begin{proof}
    We use \cite[Proposition 4.6.1]{BensonInvariants}, which in our context is the statement that \[\Hom_B(k[V],B)\simeq k[V]\otimes_k\det\] as $k[V]$-modules for some Noether normalisation $B$ of $k[V]^G$. Write $g$ for the composite $B \to k[V]^G \hookrightarrow k[V]$. Considering $g\colon B\to k[V]$ as a map of algebras in $\msf{D}(kG)$, the above isomorphism shows that $g^*$ is Gorenstein with $\omega_g=k[V]\otimes_k\det$.

    Since $B$ is a polynomial ring with trivial action, we have that $f^*g^*\colon B\to k$ is Gorenstein with $\omega_{gf}=\Sigma^{-d}k$, and so by \cref{ascentdescentGorenstein} it follows that $f^*$ is Gorenstein, and the formula\[\omega_{gf}\simeq\omega_f\otimes f^*(\omega_g)\] shows that $\omega_f\simeq\Sigma^{-d}\det^{-1}$ as required.
\end{proof}

\begin{rem}\label{rem:noneqGor}
    The content of the previous result is the identification of the group action on $\omega_f$. The fact that $f\colon k[V]\to k$ is Gorenstein follows from the fact it is true non-equivariantly, in an argument analogous to \cref{FormalDGAGor}. Indeed, we consider the commutative diagram \[\begin{tikzcd}[column sep=1.5cm]
	{\Mod{\msf{D}(kG)}(k[V])} & {\msf{D}(kG)} \\
	{\Mod{\msf{D}(k)}(\res^G_1k[V])} & {\msf{D}(k)}
	\arrow["{f^*}", from=1-1, to=1-2]
	\arrow["{\res^G_1}"', from=1-1, to=2-1]
	\arrow["{\res^G_1}", from=1-2, to=2-2]
	\arrow["{(\res^G_1f)^*}"', from=2-1, to=2-2]
\end{tikzcd}\] where both instances of $\res^G_1$ are Gorenstein and conservative. Since, in ordinary commutative algebra, polynomial rings are Gorenstein, we know $(\res^G_1f)^*$ must be also Gorenstein, and so the down-right composite is Gorenstein by \cref{ascentdescentGorenstein}. Therefore the other composite $\res^G_1\circ f^*$ must also be Gorenstein, and so $f^*$ is Gorenstein by \cref{prop:ascentdescentconverse}.
\end{rem}

\begin{rem}
    As stated, \cref{prop:ourWatanabe} does not immediately imply \cref{Watanabe}: this notion of algebras in $\msf{D}(kG)$ being Gorenstein does not in general imply any statement about their fixed points. However, if $G$ acts on $V$ with determinant $1$, the proof provides an equivalence \[\Hom_B(k[V],B)\simeq k[V]\]
    which we can use to deduce \cref{Watanabe}. Indeed, taking fixed points tells us \[\Hom_{B^G}(k[V]^G,B^G)\simeq\Hom_B(k[V],B)^G\simeq k[V]^G\] where the first equivalence comes from the fact $B$ has trivial $G$-action. This means $B^G\to k[V]^G$ is Gorenstein, and we deduce that $k[V]^G\to k$ is also Gorenstein since $B^G$ is a polynomial ring, using \cref{ascentdescentGorenstein}. 
\end{rem}

However in a general setting, we now show that Gorenstein \emph{duality} behaves well with respect to fixed points, providing an alternative way to recover \cref{Watanabe} from \cref{prop:ourWatanabe}. This in particular acts as proof of concept for using equivariant Gorenstein duality to produce non-equivariant duality statements by applying fixed points, an idea which we intend to return to in future work.

Given a commutative DG algebra $R$, we write $R_* = H_*R$ for its homology. If moreover $R$ is a commutative algebra in $\msf{D}(kG)$, as fixed points commute with homology, we write $R_*^1$ and $R_*^G$ for the homology of the fixed points.

\begin{prop}\label{prop:fixedpoints}
    Let $f\colon R\to k$ be an augmented commutative algebra in $\msf{D}(kG)$, such that $R^1_*$ is a Noetherian local graded ring with residue field $k$. 
    Suppose that $f\colon R\to k$ has Gorenstein duality with $\omega_f\simeq\Sigma^a k$ for some $a\in\Z$. Then the corresponding $G$-fixed point map $f^G\colon R^G\to k$ in $\CAlg{\msf{D}(k)}$ also has Gorenstein duality, with $\omega_{f^G}\simeq\Sigma^ak$.
\end{prop}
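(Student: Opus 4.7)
The plan is to take $G$-fixed points of the Gorenstein duality equivalence for $f$ and identify the resulting objects as providing Gorenstein duality for $f^G$. First, the augmentation $f\colon R\to k$ is equipped with the canonical unit $\eta\colon k\to R$ as a geometric section, and applying $(-)^G$ gives a geometric section $\eta^G\colon k\to R^G$ of $f^G$ in $\CAlg{\msf{D}(k)}$. Note that $\omega_\eta\simeq\Hom_k(R,k)$ and $\omega_{\eta^G}\simeq\Hom_k(R^G,k)$. Since $|G|$ is invertible in $k$, standard invariant theory ensures that $R^G_*$ is Noetherian local with residue field $k$, and hence $f^G$ is a proxy-small geometric functor.

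By Gorenstein duality for $f$ together with \cref{rem:GordualityX}, there is an equivalence $\Gamma_f\omega_\eta\simeq\Sigma^{-a}\Gamma_f R$ in $\Mod{\msf{D}(kG)}(R)$. By Maschke's theorem, the $G$-fixed points functor $(-)^G\colon\msf{D}(kG)\to\msf{D}(k)$ is exact, coproduct-preserving, and a retract of the identity via the central idempotent $e_G=\tfrac{1}{|G|}\sum_{g\in G}g$. Applying $(-)^G$ to the above equivalence, and using the natural $R^G$-module structure on fixed points of $R$-modules, yields an equivalence $(\Gamma_f\omega_\eta)^G\simeq\Sigma^{-a}(\Gamma_f R)^G$ in $\Mod{\msf{D}(k)}(R^G)$.

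Two identifications remain. First, $(\omega_\eta)^G\simeq\omega_{\eta^G}$ follows from the norm isomorphism (valid since $|G|$ is invertible): $\Hom_k(R,k)^G\simeq\Hom_{kG}(R,k)\simeq\Hom_k(R_G,k)\simeq\Hom_k(R^G,k)$. Second, and the main technical step where I expect the principal obstacle to lie, is the interchange $(\Gamma_f M)^G\simeq\Gamma_{f^G}(M^G)$. This should follow from the facts that $(-)^G$ is exact and coproduct-preserving, sends the generator $f_*(k)=k$ of $\Gamma_f\Mod{\msf{D}(kG)}(R)$ to the generator $(f^G)_*(k)=k$ of $\Gamma_{f^G}\Mod{\msf{D}(k)}(R^G)$, and intertwines the local-torsion Bousfield triangle. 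Concretely, one can compute $\Gamma_f R$ as the stable Koszul complex on generators of the augmentation ideal of $R^1_*$, where such generators may be chosen $G$-equivariantly (specifically, as a lift of a $G$-stable basis of the isotypic decomposition of $\mathfrak{m}/\mathfrak{m}^2$ afforded by $|G|$ being invertible), and then observe that $(-)^G$ commutes with the resulting colimit presentation and the tensor product with rigid witnesses.

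Combining these identifications gives $\Gamma_{f^G}\omega_{\eta^G}\simeq\Sigma^{-a}\Gamma_{f^G}R^G\simeq\Sigma^{-a}\Gamma_{f^G}(\eta^G)^*(k)$, which is Gorenstein duality for $f^G$ in the sense of \cref{defn:Gorduality} with the invertible object $\Sigma^{-a}k$. By \cref{GordualityimpliesGor} and \cref{rem:GordualityX}, it follows that $f^G$ is Gorenstein with $\omega_{f^G}\simeq\Sigma^a k$, completing the proof.
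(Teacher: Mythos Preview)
Your overall strategy matches the paper's: take $G$-fixed points of the Gorenstein duality equivalence for $f$, identify $(\omega_\eta)^G\simeq\omega_{\eta^G}$, and reduce to showing that fixed points commute with torsion, i.e.\ $(\Gamma_f M)^G\simeq\Gamma_{f^G}(M^G)$. You correctly flag this interchange as the principal obstacle.

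The gap is in your proposed mechanism for the interchange. Choosing generators of $\mathfrak m\subset R^1_*$ as eigenvectors for $G$ does \emph{not} yield a Koszul or stable Koszul complex internal to $\Mod{\msf D(kG)}(R)$: if $g\cdot x=\chi(g)x$ with $\chi$ nontrivial, then multiplication by $x$ is not a map $R\to R$ of $R$-modules in $\msf D(kG)$ but rather a map $R\otimes_k\chi\to R$, so the usual cofibre and telescope constructions produce objects twisted by characters. Taking $(-)^G$ of such a twisted presentation does not recover the stable Koszul complex on generators of the maximal ideal of $R^G_*$ (which are typically of higher degree and unrelated to the eigenvector generators of $\mathfrak m$). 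Your fallback abstract argument (``$(-)^G$ sends the generator $k$ to the generator $k$'') is suggestive but not a proof: it handles torsion objects but does not show that $(-)^G$ sends $L_f$-local objects to $L_{f^G}$-local ones, which is what is needed to match the two Bousfield triangles.

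The paper's resolution is to choose a Noether normalisation $S$ of $R^G_*$. Since $S$ has \emph{trivial} $G$-action by construction, $\mrm{inf}\,S\to R$ is a normalisation in $\CAlg{\msf D(kG)}$, and the witness for proxy-smallness of $k$ lives over $S$ where the action is trivial. One then verifies relative proxy-smallness for each of the three legs $\mrm{ext}^R_{\mrm{inf}S}$, $\mrm{ext}^{R^G}_S$, and $\mrm{inf}\colon\Mod{\msf D(k)}(S)\to\Mod{\msf D(kG)}(\mrm{inf}S)$, and applies the base-change formula of \cref{prop:changeofbase} three times, together with the commuting square of restriction-of-scalars and $(-)^G$, to obtain $\mrm{res}^{R^G}_S(\Gamma_k^R M)^G\simeq\mrm{res}^{R^G}_S\Gamma_k^{R^G}(M^G)$; conservativity of $\mrm{res}^{R^G}_S$ finishes. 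The moral is that one must pass to generators which are genuinely $G$-invariant (not merely eigenvectors), and the Noether normalisation of the \emph{fixed} ring is the natural source of such generators.
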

\begin{proof}
    Since we consider torsion functors in different categories in this proof, we indicate this with a superscript to avoid confusion. For example, $\Gamma_k^R$ denotes the $k$-torsion functor in $\Mod{\msf{D}(kG)}(R)$. The Gorenstein duality statement for $R\to k$ reads \[\Gamma^R_k R\simeq \Sigma^a\Gamma^R_k\Hom_k(R,k)\]
    and we wish to take $G$-fixed points of this statement. Indeed, we note $\Hom_k(R,k)^G\simeq\Hom_k(R^G,k)$ by Schur's lemma (or a localising subcategory argument), and so it suffices to show that fixed points commute with torsion, in that $\Gamma^{R^G}_k(M^G)\simeq(\Gamma^R_kM)^G$ for all $M \in \Mod{\msf{D}(kG)}(R)$.

    To prove this, choose a Noether normalisation of the graded algebra $R^G_*$, i.e., a polynomial subalgebra over which $R^G_*$ is finitely generated. A choice of generators gives a morphism $V\coloneqq\bigoplus_i\Sigma^{d_i}k\to R$ in $\msf{D}(kG)$, yielding a map $S'\coloneqq k[V]\to R$ of commutative algebras in $\msf{D}(kG)$. Note that $S'$ has trivial action, so we may view it as $S' = \mrm{inf}S$ for a commutative algebra $S$ in $\msf{D}(k)$, where $\mrm{inf}\colon \msf{D}(k) \to \msf{D}(kG)$ denotes the inflation functor. 
    Since $R^G_*\hookrightarrow R^1_*$ is a finite extension, it follows that $R^1_*$ is finitely generated over $S_*$, and as $S_*$ is regular it follows from \cref{underlyingDetectsRigid} that $R$ and $k$ are both compact as $\mrm{inf}S$-modules. Therefore $\mrm{inf}S \to R \to k$ is a normalisation, and so is proxy-small relative to extension of scalars along $\mrm{inf}S \to R$ with witness $Q=R\otimes_Sk$ (\cref{ex:normalisation}). 
    By adjunction, we obtain a map of commutative algebras $S \to R^G$ in $\msf{D}(kG)$, and $S \to R^G \to k$ is again a normalisation, so $k$ is proxy-small relative to $S \to R^G$ (\cref{ex:normalisation}). 
    
    Since $k = k^G$, one also checks that $k$ is proxy-small relative to $\mrm{inf}\colon \Mod{\msf{D}(k)}(S) \to \Mod{\msf{D}(kG)}(\mrm{inf}S)$, and is its own witness in both categories. Writing $\mrm{ext}_S^{R^G}$ and $\mrm{res}_S^{R^G}$ for the extension and restriction of scalars functors along $S \to R^G$, and similarly for $\mrm{inf}S \to R$, we
    therefore have the following:
    \begin{enumerate}[label=(\roman*)]
        \item $k$ is relatively proxy-small with respect to $\mrm{ext}_{\mrm{inf}S}^R\colon \Mod{\msf{D}(kG)}(\mrm{inf}S) \to \Mod{\msf{D}(kG)}(R)$;
        \item $k$ is relatively proxy-small with respect to $\mrm{ext}_{S}^{R^G}\colon \Mod{\msf{D}(k)}(S) \to \Mod{\msf{D}(k)}(R^G)$;
        \item $k$ is relatively proxy-small with respect to $\mrm{inf}\colon \Mod{\msf{D}(k)}(S) \to \Mod{\msf{D}(kG)}(\mrm{inf}S)$.
    \end{enumerate} 
    
    Also note that by construction, the following diagram commutes:
    \begin{equation}\label{eq:fixedptscommute}
        \begin{tikzcd}[column sep=2cm, row sep=1.5cm]
            \Mod{\msf{D}(kG)}(R) \ar[r, "(-)^G"] \ar[d, "\mrm{res}^{R}_{\mrm{inf}S}"'] & \Mod{\msf{D}(k)}(R^G) \ar[d, "\mrm{res}^{R^G}_S"] \\
            \Mod{\msf{D}(kG)}(\mrm{inf}S) \ar[r, "(-)^G"'] & \Mod{\msf{D}(k)}(S)
        \end{tikzcd}
    \end{equation}

    Using the above we have the following chain of isomorphisms for all $M \in \Mod{\msf{D}(kG)}(R)$:
    \begin{align*}
        \mrm{res}^{R^G}_S((\Gamma_k^R M)^G) &\simeq (\mrm{res}^{R}_{\mrm{inf}S} \Gamma_k^R M)^G &\text{by \eqref{eq:fixedptscommute}} \\
        &\simeq (\Gamma_k^{\mrm{inf}S} \mrm{res}_{\mrm{inf}S}^{R}M)^G &\text{by (i) with \cref{prop:changeofbase}} \\
        &\simeq \Gamma_k^S (\mrm{res}_{\mrm{inf}S}^R M)^G &\text{by (iii) with \cref{prop:changeofbase}} \\
        &\simeq \Gamma_k^S \mrm{res}_S^{R^G} M^G &\text{by \eqref{eq:fixedptscommute}} \\
        &\simeq \mrm{res}_S^{R^G} \Gamma_k^{R^G} M^G &\text{by (ii) with \cref{prop:changeofbase}.}
    \end{align*}
    One checks that the above isomorphism is the image of the canonical map $\Gamma_k^{R^G}(M^G) \to (\Gamma_k^R M)^G$ under $\mrm{res}_S^{R^G}$. Since $\mrm{res}_S^{R^G}$ is conservative, we conclude.
\end{proof}

\begin{rem}
    In order to use \cref{prop:fixedpoints} to recover \cref{Watanabe}, it remains to show that $f^*\colon \Mod{\msf{D}(kG)}(k[V]) \to \msf{D}(kG)$ has Gorenstein duality, rather than just being Gorenstein. We do this by showing that $f^*$ is automatically orientable, see \cref{thm:Gordualityalgebras}. To prove automatic orientability, one uses \cref{cor:AOandMorita} and shows that there is a unique $\Ef=\Hom_{k[V]}(k,k)$-module structure on any module with homology concentrated in a single degree.

    This is true non-equivariantly since $\mrm{res}^G_1\Ef$ is connective and we reduce to this case, noting $H_0\Ef\simeq k$. Indeed, take two $\Ef$-modules $M$ and $N$ whose homologies are both concentrated in degree $0$. Then, \cite[Remark 3.10]{DGI} shows the natural map \[H_0\Hom_{\res^G_1\Ef}(\res^G_1M,\res^G_1N)\to\Hom_k(H_0M,H_0N)\] is an isomorphism. The crucial point is that this map is just a restriction of the \emph{equivariant} map $H_0\Hom_{\Ef}(M,N)\to\Hom_k(H_0M,H_0N)$ and so there is an isomorphism on fixed points. If $M$ and $N$ are both $\Ef$-lifts of $k$, then the isomorphism $H_0M\simeq H_0N$ in $\Hom_k(k,k)^G$ can therefore be realised as a map $f\in H_0\Hom_{\Ef}(M,N)^G$, i.e., an $\Ef$-module equivalence $M\xrightarrow{\sim} N$.
\end{rem}

We end this section with a non-polynomial example given by Watanabe \cite[II, Example 3]{Watanabe}. A natural question one can ask is whether the fixed points of a non-polynomial Gorenstein ring are Gorenstein, and \cref{Watanabe} suggests this may occur when $G$ acts with determinant $1$ on the cotangent space $\m/\m^2$ at the augmentation ideal $\m$. The following demonstrates that this is \emph{not} the case.

\begin{ex}\label{ex:Watanabenonpoly}
    Let $k$ be a field of characteristic $\neq3$, and $\xi\in k$ be a primitive cube root of unity. We let $R=k[x,y,z]/(xy-z^2)$ with a $G=C_3$ action via $(x,y,z)\mapsto(\xi x,\xi y,\xi z)$. As noted in \cite[II, Example 3]{Watanabe}, $G$ acts on the cotangent space with determinant $1$ but $R^G$ is not Gorenstein at the origin. 

    In our context, we find that $f\colon R\to k$ is Gorenstein (see \cref{rem:noneqGor}), however $\omega_f$ is not a suspension of $k$.
    Indeed, consider the projection $g\colon S=k[x,y,z]\to R$. This is equivariant, however the cofibre sequence \[\mrm{res}^G_1 S\xrightarrow{xy-z^2} \mrm{res}^G_1 S\to \mrm{res}^G_1 R\] of $\mrm{res}^G_1 S$-modules is not, since $G$ acts on $xy-z^2$ via multiplication by $\xi^2$.
    Instead, writing $\chi$ for the one-dimensional character $1\mapsto\xi$, there is a cofibre sequence \[S\otimes_k\chi\xrightarrow{xy-z^2}S\to R.\]
    Therefore one computes $\omega_g=\Hom_S(R,S)\simeq\Sigma^{-1}\chi^2$. By \cref{prop:ourWatanabe} we know that $\omega_{gf}=\Sigma^{-3}k$ and so an application of \cref{ascentdescentGorenstein} tells us $f^*\colon\Mod{\msf{D}(kG)}(R)\to\msf{D}(kG)$ is Gorenstein with $\omega_f=\Sigma^{-2}\chi$.
\end{ex}

\begin{rem}
    The results of this section, particularly \cref{prop:fixedpoints}, highlight the exact nature of this counterexample. Namely, the fact that $k[x,y,z]/(xy-z^2)$ is not a polynomial ring means the action on the cotangent space $\m/\m^2$ does not dictate the action on $\omega_f$, in constrast to the case for \cref{prop:ourWatanabe}.
\end{rem}

\bibliographystyle{abbrv}
\bibliography{references.bib}

@article {DGI,
    AUTHOR = {Dwyer, W. G. and Greenlees, J. P. C. and Iyengar, S. B.},
     TITLE = {Duality in algebra and topology},
   JOURNAL = {Adv. Math.},
  FJOURNAL = {Advances in Mathematics},
    VOLUME = {200},
      YEAR = {2006},
    NUMBER = {2},
     PAGES = {357--402},
      ISSN = {0001-8708,1090-2082},
   MRCLASS = {55P43 (13H10 16D90 18E30 55R40 57N65)},
  MRNUMBER = {2200850},
MRREVIEWER = {Andrey\ Yu.\ Lazarev},
       DOI = {10.1016/j.aim.2005.11.004},
       URL = {https://doi.org/10.1016/j.aim.2005.11.004},
}

@incollection {GreenleesCRM,
    AUTHOR = {Greenlees, J. P. C.},
     TITLE = {Homotopy invariant commutative algebra over fields},
 BOOKTITLE = {Building bridges between algebra and topology},
    SERIES = {Adv. Courses Math. CRM Barcelona},
     PAGES = {103--169},
 PUBLISHER = {Birkh\"auser/Springer, Cham},
      YEAR = {2018},
      ISBN = {978-3-319-70156-1; 978-3-319-70157-8},
   MRCLASS = {55P43 (13D02 18G55 55U35)},
  MRNUMBER = {3793859},
MRREVIEWER = {Karol\ Szumi\l o},
}

@article {DwyerGreenlees,
    AUTHOR = {Dwyer, W. G. and Greenlees, J. P. C.},
     TITLE = {Complete modules and torsion modules},
   JOURNAL = {Amer. J. Math.},
  FJOURNAL = {American Journal of Mathematics},
    VOLUME = {124},
      YEAR = {2002},
    NUMBER = {1},
     PAGES = {199--220},
      ISSN = {0002-9327,1080-6377},
   MRCLASS = {16E30 (16D90 18E30)},
  MRNUMBER = {1879003},
MRREVIEWER = {Henning\ Krause},
       URL =
              {http://muse.jhu.edu/journals/american_journal_of_mathematics/v124/124.1dwyer.pdf},
}

@article {BCHV,
    AUTHOR = {Barthel, Tobias and Castellana, Nat\`alia and Heard, Drew and
              Valenzuela, Gabriel},
     TITLE = {Local {G}orenstein duality for cochains on spaces},
   JOURNAL = {J. Pure Appl. Algebra},
  FJOURNAL = {Journal of Pure and Applied Algebra},
    VOLUME = {225},
      YEAR = {2021},
    NUMBER = {2},
     PAGES = {Paper No. 106495, 24},
      ISSN = {0022-4049,1873-1376},
   MRCLASS = {55U30 (13D45 13H10 55R35)},
  MRNUMBER = {4125921},
MRREVIEWER = {Petar\ Pave\v si\'c},
       DOI = {10.1016/j.jpaa.2020.106495},
       URL = {https://doi.org/10.1016/j.jpaa.2020.106495},
}

@article {PolWilliamson,
    AUTHOR = {Pol, Luca and Williamson, Jordan},
     TITLE = {Local {G}orenstein duality in chromatic group cohomology},
   JOURNAL = {J. Pure Appl. Algebra},
  FJOURNAL = {Journal of Pure and Applied Algebra},
    VOLUME = {227},
      YEAR = {2023},
    NUMBER = {11},
     PAGES = {Paper No. 107422, 29},
      ISSN = {0022-4049,1873-1376},
   MRCLASS = {55P43 (13D45 55P92)},
  MRNUMBER = {4589214},
MRREVIEWER = {J.\ P. C. Greenlees},
       DOI = {10.1016/j.jpaa.2023.107422},
       URL = {https://doi.org/10.1016/j.jpaa.2023.107422},
}

@incollection {GreenleesTate,
    AUTHOR = {Greenlees, J. P. C.},
     TITLE = {Tate cohomology in axiomatic stable homotopy theory},
 BOOKTITLE = {Cohomological methods in homotopy theory ({B}ellaterra, 1998)},
    SERIES = {Progr. Math.},
    VOLUME = {196},
     PAGES = {149--176},
 PUBLISHER = {Birkh\"auser, Basel},
      YEAR = {2001},
      ISBN = {3-7643-6588-9},
   MRCLASS = {55P42 (55P60)},
  MRNUMBER = {1851253},
       DOI = {10.1007/978-3-0348-8312-2\_12},
       URL = {https://doi.org/10.1007/978-3-0348-8312-2_12},
}

@article {HPS,
    AUTHOR = {Hovey, Mark and Palmieri, John H. and Strickland, Neil P.},
     TITLE = {Axiomatic stable homotopy theory},
   JOURNAL = {Mem. Amer. Math. Soc.},
  FJOURNAL = {Memoirs of the American Mathematical Society},
    VOLUME = {128},
      YEAR = {1997},
    NUMBER = {610},
     PAGES = {x+114},
      ISSN = {0065-9266,1947-6221},
   MRCLASS = {55U35 (18E30 18G35 55N20 55P42 55P60 55U15)},
  MRNUMBER = {1388895},
MRREVIEWER = {J.\ P. C. Greenlees},
       DOI = {10.1090/memo/0610},
       URL = {https://doi.org/10.1090/memo/0610},
}

@article {BDS,
    AUTHOR = {Balmer, Paul and Dell'Ambrogio, Ivo and Sanders, Beren},
     TITLE = {Grothendieck-{N}eeman duality and the {W}irthm\"uller
              isomorphism},
   JOURNAL = {Compos. Math.},
  FJOURNAL = {Compositio Mathematica},
    VOLUME = {152},
      YEAR = {2016},
    NUMBER = {8},
     PAGES = {1740--1776},
      ISSN = {0010-437X,1570-5846},
   MRCLASS = {18E30 (14F05 55U35)},
  MRNUMBER = {3542492},
MRREVIEWER = {Martin\ Frankland},
       DOI = {10.1112/S0010437X16007375},
       URL = {https://doi.org/10.1112/S0010437X16007375},
}

@article {MNN,
    AUTHOR = {Mathew, Akhil and Naumann, Niko and Noel, Justin},
     TITLE = {Nilpotence and descent in equivariant stable homotopy theory},
   JOURNAL = {Adv. Math.},
  FJOURNAL = {Advances in Mathematics},
    VOLUME = {305},
      YEAR = {2017},
     PAGES = {994--1084},
      ISSN = {0001-8708,1090-2082},
   MRCLASS = {55P91 (55P42)},
  MRNUMBER = {3570153},
MRREVIEWER = {Gregory\ Z.\ Arone},
       DOI = {10.1016/j.aim.2016.09.027},
       URL = {https://doi.org/10.1016/j.aim.2016.09.027},
}

@article {FHM,
    AUTHOR = {Fausk, H. and Hu, P. and May, J. P.},
     TITLE = {Isomorphisms between left and right adjoints},
   JOURNAL = {Theory Appl. Categ.},
  FJOURNAL = {Theory and Applications of Categories},
    VOLUME = {11},
      YEAR = {2003},
     PAGES = {No. 4, 107--131},
      ISSN = {1201-561X},
   MRCLASS = {18D10 (14A10 55U35)},
  MRNUMBER = {1988072},
MRREVIEWER = {Gabriele\ Vezzosi},
}

@article {Nucleus,
    AUTHOR = {Peirce, Thomas},
     TITLE = {The nucleus of a compact {L}ie group, and support of
              singularity categories},
   JOURNAL = {J. Pure Appl. Algebra},
  FJOURNAL = {Journal of Pure and Applied Algebra},
    VOLUME = {229},
      YEAR = {2025},
    NUMBER = {1},
     PAGES = {Paper No. 107780, 13},
      ISSN = {0022-4049,1873-1376},
   MRCLASS = {57T10 (18G80 55R35)},
  MRNUMBER = {4781595},
       DOI = {10.1016/j.jpaa.2024.107780},
       URL = {https://doi.org/10.1016/j.jpaa.2024.107780},
}

@article {Balmerseparable,
    AUTHOR = {Balmer, Paul},
     TITLE = {Separability and triangulated categories},
   JOURNAL = {Adv. Math.},
  FJOURNAL = {Advances in Mathematics},
    VOLUME = {226},
      YEAR = {2011},
    NUMBER = {5},
     PAGES = {4352--4372},
      ISSN = {0001-8708,1090-2082},
   MRCLASS = {18E30 (13B40 16H05 18C20)},
  MRNUMBER = {2770453},
MRREVIEWER = {Matthias\ K\"unzer},
       DOI = {10.1016/j.aim.2010.12.003},
       URL = {https://doi.org/10.1016/j.aim.2010.12.003},
}

@incollection {BIKP,
    AUTHOR = {Benson, Dave J. and Iyengar, Srikanth B. and Krause, Henning and
              Pevtsova, Julia},
     TITLE = {Local dualisable objects in local algebra},
 BOOKTITLE = {Triangulated categories in representation theory and
              beyond---the {A}bel {S}ymposium 2022},
    SERIES = {Abel Symp.},
    VOLUME = {17},
     PAGES = {85--103},
 PUBLISHER = {Springer, Cham},
      YEAR = {[2024] \copyright 2024},
      ISBN = {978-3-031-57788-8; 978-3-031-57789-5},
   MRCLASS = {13D09 (14F08 18G80)},
  MRNUMBER = {4786503},
       DOI = {10.1007/978-3-031-57789-5\_3},
       URL = {https://doi.org/10.1007/978-3-031-57789-5_3},
}

@article {HS666,
    AUTHOR = {Hovey, Mark and Strickland, Neil P.},
     TITLE = {Morava {$K$}-theories and localisation},
   JOURNAL = {Mem. Amer. Math. Soc.},
  FJOURNAL = {Memoirs of the American Mathematical Society},
    VOLUME = {139},
      YEAR = {1999},
    NUMBER = {666},
     PAGES = {viii+100},
      ISSN = {0065-9266,1947-6221},
   MRCLASS = {55P60 (55N22 55P42 55T15)},
  MRNUMBER = {1601906},
MRREVIEWER = {J.\ P. C. Greenlees},
       DOI = {10.1090/memo/0666},
       URL = {https://doi.org/10.1090/memo/0666},
}

@incollection {HMS,
    AUTHOR = {Hopkins, Michael J. and Mahowald, Mark and Sadofsky, Hal},
     TITLE = {Constructions of elements in {P}icard groups},
 BOOKTITLE = {Topology and representation theory ({E}vanston, {IL}, 1992)},
    SERIES = {Contemp. Math.},
    VOLUME = {158},
     PAGES = {89--126},
 PUBLISHER = {Amer. Math. Soc., Providence, RI},
      YEAR = {1994},
      ISBN = {0-8218-5165-9},
   MRCLASS = {55P42 (55P60 55Q10)},
  MRNUMBER = {1263713},
MRREVIEWER = {Dominique\ Arlettaz},
       DOI = {10.1090/conm/158/01454},
       URL = {https://doi.org/10.1090/conm/158/01454},
}

@article {HoveySadofsky,
    AUTHOR = {Hovey, Mark and Sadofsky, Hal},
     TITLE = {Invertible spectra in the {$E(n)$}-local stable homotopy
              category},
   JOURNAL = {J. London Math. Soc. (2)},
  FJOURNAL = {Journal of the London Mathematical Society. Second Series},
    VOLUME = {60},
      YEAR = {1999},
    NUMBER = {1},
     PAGES = {284--302},
      ISSN = {0024-6107,1469-7750},
   MRCLASS = {55P42},
  MRNUMBER = {1722151},
MRREVIEWER = {Neil\ P.\ Strickland},
       DOI = {10.1112/S0024610799007784},
       URL = {https://doi.org/10.1112/S0024610799007784},
}

@misc{BSSW,
 author = {Barthel, Tobias and Schlank, Tomer M. and Stapleton, Nathaniel and Weinstein, Jared},
 title = {On {Hopkins}' {Picard} group},
 year = {2024},
 howpublished = {Preprint, {arXiv}:2407.20958 [math.{AT}]},
 url = {https://arxiv.org/abs/2407.20958},
 arXiv = {arXiv:2407.20958}
}

@article {IRW,
    AUTHOR = {Ishak, Jocelyne and Roitzheim, Constanze and Williamson,
              Jordan},
     TITLE = {Levels of algebraicity in stable homotopy theories},
   JOURNAL = {J. Lond. Math. Soc. (2)},
  FJOURNAL = {Journal of the London Mathematical Society. Second Series},
    VOLUME = {108},
      YEAR = {2023},
    NUMBER = {2},
     PAGES = {545--577},
      ISSN = {0024-6107,1469-7750},
   MRCLASS = {55P42 (55U35)},
  MRNUMBER = {4626719},
MRREVIEWER = {Mark\ W.\ Johnson},
       DOI = {10.1112/jlms.12752},
       URL = {https://doi.org/10.1112/jlms.12752},
}

@article {BR,
    AUTHOR = {Baker, Andrew and Richter, Birgit},
     TITLE = {Invertible modules for commutative {$\Bbb S$}-algebras with
              residue fields},
   JOURNAL = {Manuscripta Math.},
  FJOURNAL = {Manuscripta Mathematica},
    VOLUME = {118},
      YEAR = {2005},
    NUMBER = {1},
     PAGES = {99--119},
      ISSN = {0025-2611,1432-1785},
   MRCLASS = {55P43 (13C20 18D10 55P15 55P42 55P60)},
  MRNUMBER = {2171294},
MRREVIEWER = {Hans-Werner\ Henn},
       DOI = {10.1007/s00229-005-0582-1},
       URL = {https://doi.org/10.1007/s00229-005-0582-1},
}

@article {Greenleesbalmer,
    AUTHOR = {Greenlees, J. P. C.},
     TITLE = {The {B}almer spectrum of rational equivariant cohomology
              theories},
   JOURNAL = {J. Pure Appl. Algebra},
  FJOURNAL = {Journal of Pure and Applied Algebra},
    VOLUME = {223},
      YEAR = {2019},
    NUMBER = {7},
     PAGES = {2845--2871},
      ISSN = {0022-4049,1873-1376},
   MRCLASS = {55N91 (18E30 55P42)},
  MRNUMBER = {3912951},
MRREVIEWER = {Gregory\ Z.\ Arone},
       DOI = {10.1016/j.jpaa.2018.10.001},
       URL = {https://doi.org/10.1016/j.jpaa.2018.10.001},
}

@inproceedings {TTGICM,
    AUTHOR = {Balmer, Paul},
     TITLE = {Tensor triangular geometry},
 BOOKTITLE = {Proceedings of the {I}nternational {C}ongress of
              {M}athematicians. {V}olume {II}},
     PAGES = {85--112},
 PUBLISHER = {Hindustan Book Agency, New Delhi},
      YEAR = {2010},
      ISBN = {978-81-85931-08-3; 978-981-4324-32-8; 981-4324-32-9},
   MRCLASS = {18E30 (14F05 19G12 55P42)},
  MRNUMBER = {2827786},
MRREVIEWER = {Sunil\ K.\ Chebolu},
}

@article {tDP,
    AUTHOR = {tom Dieck, Tammo and Petrie, Ted},
     TITLE = {Homotopy representations of finite groups},
   JOURNAL = {Inst. Hautes \'Etudes Sci. Publ. Math.},
  FJOURNAL = {Institut des Hautes \'Etudes Scientifiques. Publications
              Math\'ematiques},
    NUMBER = {56},
      YEAR = {1982},
     PAGES = {129--169},
      ISSN = {0073-8301,1618-1913},
   MRCLASS = {57S17},
  MRNUMBER = {686044},
MRREVIEWER = {C.\ B.\ Thomas},
       URL = {http://www.numdam.org/item?id=PMIHES_1982__56__129_0},
}

@article {DuggerkR,
    AUTHOR = {Dugger, Daniel},
     TITLE = {An {A}tiyah-{H}irzebruch spectral sequence for {$KR$}-theory},
   JOURNAL = {$K$-Theory},
  FJOURNAL = {$K$-Theory. An Interdisciplinary Journal for the Development,
              Application, and Influence of $K$-Theory in the Mathematical
              Sciences},
    VOLUME = {35},
      YEAR = {2005},
    NUMBER = {3-4},
     PAGES = {213--256},
      ISSN = {0920-3036,1573-0514},
   MRCLASS = {19L64 (55S45 55T25)},
  MRNUMBER = {2240234},
MRREVIEWER = {G\'erald\ Gaudens},
       DOI = {10.1007/s10977-005-1552-9},
       URL = {https://doi.org/10.1007/s10977-005-1552-9},
}

@article {SandersCompactness,
    AUTHOR = {Sanders, Beren},
     TITLE = {The compactness locus of a geometric functor and the formal
              construction of the {A}dams isomorphism},
   JOURNAL = {J. Topol.},
  FJOURNAL = {Journal of Topology},
    VOLUME = {12},
      YEAR = {2019},
    NUMBER = {2},
     PAGES = {287--327},
      ISSN = {1753-8416,1753-8424},
   MRCLASS = {18E30 (14F05 55U35)},
  MRNUMBER = {3911568},
MRREVIEWER = {Geoffrey\ M. L. Powell},
       DOI = {10.1112/topo.12089},
       URL = {https://doi.org/10.1112/topo.12089},
}

@article {KrausePicard,
    AUTHOR = {Krause, Achim},
     TITLE = {The {P}icard group in equivariant homotopy theory via stable
              module categories},
   JOURNAL = {J. Topol.},
  FJOURNAL = {Journal of Topology},
    VOLUME = {18},
      YEAR = {2025},
    NUMBER = {2},
     PAGES = {Paper No. e70020},
      ISSN = {1753-8416,1753-8424},
   MRCLASS = {18G65 (18N99 55P42 55P91)},
  MRNUMBER = {4891028},
       DOI = {10.1112/topo.70020},
       URL = {https://doi.org/10.1112/topo.70020},
}

@unpublished {Dimitar,
	AUTHOR = {Kodjabachev, Dimitar},
	TITLE = {Equivariant {G}orenstein duality},
	NOTE={PhD thesis, The University of Sheffield},
	YEAR = {2018},
}

@article {Shamir,
    AUTHOR = {Shamir, Shoham},
     TITLE = {Cellular approximations and the {E}ilenberg-{M}oore spectral
              sequence},
   JOURNAL = {Algebr. Geom. Topol.},
  FJOURNAL = {Algebraic \& Geometric Topology},
    VOLUME = {9},
      YEAR = {2009},
    NUMBER = {3},
     PAGES = {1309--1340},
      ISSN = {1472-2747,1472-2739},
   MRCLASS = {55P43 (55T20)},
  MRNUMBER = {2520402},
MRREVIEWER = {Ian\ Hambleton},
       DOI = {10.2140/agt.2009.9.1309},
       URL = {https://doi.org/10.2140/agt.2009.9.1309},
}

@article {HKS,
    AUTHOR = {Hu, Po and Kriz, Igor and Somberg, Petr},
     TITLE = {On some adjunctions in equivariant stable homotopy theory},
   JOURNAL = {Algebr. Geom. Topol.},
  FJOURNAL = {Algebraic \& Geometric Topology},
    VOLUME = {18},
      YEAR = {2018},
    NUMBER = {4},
     PAGES = {2419--2442},
      ISSN = {1472-2747,1472-2739},
   MRCLASS = {55P91 (18A40 55P42 55P92)},
  MRNUMBER = {3797071},
MRREVIEWER = {Andr\'e\ G.\ Henriques},
       DOI = {10.2140/agt.2018.18.2419},
       URL = {https://doi.org/10.2140/agt.2018.18.2419},
}

@article {Beil,
    AUTHOR = {Beligiannis, Apostolos},
     TITLE = {Relative homological algebra and purity in triangulated
              categories},
   JOURNAL = {J. Algebra},
  FJOURNAL = {Journal of Algebra},
    VOLUME = {227},
      YEAR = {2000},
    NUMBER = {1},
     PAGES = {268--361},
      ISSN = {0021-8693,1090-266X},
   MRCLASS = {18G25 (16E99 18E30 55U35)},
  MRNUMBER = {1754234},
MRREVIEWER = {Mark\ Hovey},
       DOI = {10.1006/jabr.1999.8237},
       URL = {https://doi.org/10.1006/jabr.1999.8237},
}

@article {Watanabe,
    AUTHOR = {Watanabe, Keiichi},
     TITLE = {Certain invariant subrings are {G}orenstein. {I}, {II}},
   JOURNAL = {Osaka Math. J.},
  FJOURNAL = {Osaka Mathematical Journal},
    VOLUME = {11},
      YEAR = {1974},
     PAGES = {1--8; ibid. 11 (1974), 379--388},
      ISSN = {0388-0699},
   MRCLASS = {13H10},
  MRNUMBER = {354646},
MRREVIEWER = {Melvin\ Hochster},
       URL = {http://projecteuclid.org/euclid.ojm/1200694703},
}

@book {BensonInvariants,
    AUTHOR = {Benson, D. J.},
     TITLE = {Polynomial invariants of finite groups},
    SERIES = {London Mathematical Society Lecture Note Series},
    VOLUME = {190},
 PUBLISHER = {Cambridge University Press, Cambridge},
      YEAR = {1993},
     PAGES = {x+118},
      ISBN = {0-521-45886-2},
   MRCLASS = {13A50 (13E15 20F29)},
  MRNUMBER = {1249931},
MRREVIEWER = {Frank\ D.\ Grosshans},
       DOI = {10.1017/CBO9780511565809},
       URL = {https://doi.org/10.1017/CBO9780511565809},
}

@book {Hartshorne,
    AUTHOR = {Hartshorne, Robin},
     TITLE = {Residues and duality},
    SERIES = {Lecture Notes in Mathematics},
    VOLUME = {No. 20},
      NOTE = {Lecture notes of a seminar on the work of A. Grothendieck,
              given at Harvard 1963/64,
              With an appendix by P. Deligne},
 PUBLISHER = {Springer-Verlag, Berlin-New York},
      YEAR = {1966},
     PAGES = {vii+423},
   MRCLASS = {14.55},
  MRNUMBER = {222093},
MRREVIEWER = {R.\ L.\ Knighten},
}

@incollection {Neeman,
    AUTHOR = {Neeman, Amnon},
     TITLE = {Derived categories and {G}rothendieck duality},
 BOOKTITLE = {Triangulated categories},
    SERIES = {London Math. Soc. Lecture Note Ser.},
    VOLUME = {375},
     PAGES = {290--350},
 PUBLISHER = {Cambridge Univ. Press, Cambridge},
      YEAR = {2010},
      ISBN = {978-0-521-74431-7},
   MRCLASS = {14F05 (18E30 55P42)},
  MRNUMBER = {2681711},
MRREVIEWER = {Matthias\ K\"unzer},
}

@article {FLMpicard,
    AUTHOR = {Fausk, H. and Lewis, Jr., L. G. and May, J. P.},
     TITLE = {The {P}icard group of equivariant stable homotopy theory},
   JOURNAL = {Adv. Math.},
  FJOURNAL = {Advances in Mathematics},
    VOLUME = {163},
      YEAR = {2001},
    NUMBER = {1},
     PAGES = {17--33},
      ISSN = {0001-8708,1090-2082},
   MRCLASS = {55P42 (18D10 55P91)},
  MRNUMBER = {1867202},
MRREVIEWER = {Po\ Hu},
       DOI = {10.1006/aima.2001.1997},
       URL = {https://doi.org/10.1006/aima.2001.1997},
}

@incollection {CarlsonThevenaz,
    AUTHOR = {Carlson, Jon F. and Th\'evenaz, Jacques},
     TITLE = {Torsion endo-trivial modules},
      NOTE = {Special issue dedicated to Klaus Roggenkamp on the occasion of
              his 60th birthday},
   JOURNAL = {Algebr. Represent. Theory},
  FJOURNAL = {Algebras and Representation Theory},
    VOLUME = {3},
      YEAR = {2000},
    NUMBER = {4},
     PAGES = {303--335},
      ISSN = {1386-923X,1572-9079},
   MRCLASS = {20C20},
  MRNUMBER = {1808129},
MRREVIEWER = {Sergey\ P.\ Strunkov},
       DOI = {10.1023/A:1009988424910},
       URL = {https://doi.org/10.1023/A:1009988424910},
}

@article {SchwedeShipley,
    AUTHOR = {Schwede, Stefan and Shipley, Brooke},
     TITLE = {A uniqueness theorem for stable homotopy theory},
   JOURNAL = {Math. Z.},
  FJOURNAL = {Mathematische Zeitschrift},
    VOLUME = {239},
      YEAR = {2002},
    NUMBER = {4},
     PAGES = {803--828},
      ISSN = {0025-5874,1432-1823},
   MRCLASS = {55U35 (55P42)},
  MRNUMBER = {1902062},
MRREVIEWER = {Mark\ Hovey},
       DOI = {10.1007/s002090100347},
       URL = {https://doi.org/10.1007/s002090100347},
}

@article {DGIGrossHopkins,
    AUTHOR = {Dwyer, W. G. and Greenlees, J. P. C. and Iyengar, S. B.},
     TITLE = {Gross-{H}opkins duality and the {G}orenstein condition},
   JOURNAL = {J. K-Theory},
  FJOURNAL = {Journal of K-Theory. K-Theory and its Applications in Algebra,
              Geometry, Analysis \& Topology},
    VOLUME = {8},
      YEAR = {2011},
    NUMBER = {1},
     PAGES = {107--133},
      ISSN = {1865-2433,1865-5394},
   MRCLASS = {55P60 (55M05 55P42 55P43 57N65 57P10)},
  MRNUMBER = {2826281},
MRREVIEWER = {Mark\ J.\ Behrens},
       DOI = {10.1017/is010008025jkt129},
       URL = {https://doi.org/10.1017/is010008025jkt129},
}

@misc{bchs,
 author = {Barthel, Tobias and Castellana, Natalia and Heard, Drew and Sanders, Beren},
 title = {Cosupport in tensor triangular geometry},
 year = {2023},
 howpublished = {Preprint, {arXiv}:2303.13480 [math.{CT}]},
 url = {https://arxiv.org/abs/2303.13480},
 arXiv = {arXiv:2303.13480},
note = {To appear in \emph{Astérisque}}
}

@incollection {BraveNewAlgebra,
    AUTHOR = {Greenlees, J. P. C.},
     TITLE = {First steps in brave new commutative algebra},
 BOOKTITLE = {Interactions between homotopy theory and algebra},
    SERIES = {Contemp. Math.},
    VOLUME = {436},
     PAGES = {239--275},
 PUBLISHER = {Amer. Math. Soc., Providence, RI},
      YEAR = {2007},
      ISBN = {978-0-8218-3814-3},
   MRCLASS = {55P43 (13A99 55U35)},
  MRNUMBER = {2355777},
MRREVIEWER = {Birgit\ Richter},
       DOI = {10.1090/conm/436/08412},
       URL = {https://doi.org/10.1090/conm/436/08412},
}

@article {GreenleesrelGor,
    AUTHOR = {Greenlees, J. P. C.},
     TITLE = {Borel cohomology and the relative {G}orenstein condition for
              classifying spaces of compact {L}ie groups},
   JOURNAL = {J. Pure Appl. Algebra},
  FJOURNAL = {Journal of Pure and Applied Algebra},
    VOLUME = {224},
      YEAR = {2020},
    NUMBER = {2},
     PAGES = {806--818},
      ISSN = {0022-4049,1873-1376},
   MRCLASS = {55N91 (13H10 20J06 55T20 55U20 55U25)},
  MRNUMBER = {3987977},
MRREVIEWER = {Hans-Werner\ Henn},
       DOI = {10.1016/j.jpaa.2019.06.011},
       URL = {https://doi.org/10.1016/j.jpaa.2019.06.011},
}
\end{document}